\newtheorem{definition}{Definition}[section]
\newtheorem{theorem}[definition]{Theorem}
\newtheorem{lemma}[definition]{Lemma}
\newtheorem{corollary}[definition]{Corollary}
\newtheorem{conjecture}[definition]{Conjecture}
\newtheorem{problem}[definition]{Problem}
\newtheorem{note}[definition]{Note}
\newtheorem{proposition}[definition]{Proposition}
\def\Z{\mathbb Z}
\def\F{\mathbb F}
\begin{document}
\title{\bf  
The $q$-Onsager algebra and the \\
positive part of $U_q({\widehat{\mathfrak{sl}}}_2)$
}
\author{
Paul Terwilliger}
\date{}

\maketitle
\begin{abstract}
The positive part $U^+_q$ of 
$U_q({\widehat{\mathfrak{sl}}}_2)$ has a presentation 
by two generators $X,Y$ that satisfy the $q$-Serre relations.
The $q$-Onsager algebra $\mathcal O_q$  has a presentation
by two generators
$A,B$ that satisfy the $q$-Dolan/Grady relations.
We give two results that describe how $U^+_q$ and $\mathcal O_q$ are related.
First,
we consider the filtration of $\mathcal O_q$ whose $n$th component
is spanned by the products of at most $n$ generators.
We show that the associated graded algebra 
 is isomorphic to $U^+_q$.
Second, we introduce an algebra
$\square_q$ and show how it is related to both $U^+_q$ and
$\mathcal O_q$. The algebra $\square_q$ is defined by
generators and relations. The generators are
$\lbrace x_i \rbrace_{i \in
\mathbb Z_4}$ where $\mathbb Z_4$ is the cyclic group of order 4.
For $i \in \mathbb Z_4$ the generators $x_i, x_{i+1}$ satisfy a $q$-Weyl relation,
and $x_i,x_{i+2}$ satisfy the $q$-Serre relations.
We show that $\square_q$ is related to
$U^+_q$  in the following way.
Let
$ \square^{\rm even}_q$
(resp. 
$ \square^{\rm odd}_q$)
denote the subalgebra
of $ \square_q$ generated by
 $x_0, x_2$ (resp. 
 $x_1, x_3$).
We show that 
(i) there exists an algebra isomorphism
$U^+_q \to  \square^{\rm even}_q$ that sends
$X\mapsto x_0$ and
$Y\mapsto x_2$;
(ii) there exists an algebra isomorphism
$U^+_q \to  \square^{\rm odd}_q$ that sends
$X\mapsto x_1$ and
$Y\mapsto x_3$;
(iii) the multiplication map
 $\square^{\rm even}_q
\otimes
 \square^{\rm odd}_q
  \to   \square_q$,
 $u \otimes v  \mapsto  uv$ is an isomorphism of
 vector spaces.
We show that $\square_q$ is related to
 $\mathcal O_q$ in the following way.
For nonzero scalars $a,b$
there exists an injective algebra 
homomorphism
$ \mathcal O_q \to \square_q$ that sends
$A \mapsto a x_0+ a^{-1} x_1$ and
$B \mapsto
b x_2+ b^{-1} x_3$.

\bigskip
\noindent
{\bf Keywords}. 
Quantum group,
grading, filtration, Onsager algebra. 
\hfil\break
\noindent {\bf 2010 Mathematics Subject Classification}. 
Primary: 17B37. 
 \end{abstract}
\section{Introduction}
There is a family of algebras called tridiagonal algebras
\cite[Definition~3.9]{TwoRel}
that come up in the theory of $Q$-polynomial distance-regular
graphs 
\cite[Lemma~5.4]{tersub3}
 and tridiagonal pairs
\cite[Theorem~10.1]{TD00},
\cite[Theorem~3.10]{TwoRel}.
A tridiagonal algebra has a presentation by
two generators
and two relations of a certain kind, called  tridiagonal relations
\cite[Definition~3.9]{TwoRel}.
One example of a tridiagonal algebra is 
the positive part 
$U^+_q$ of
the quantum affine algebra 
$U_q(\widehat {\mathfrak {sl}}_2)$
\cite[Corollary~3.2.6]{lusztig},
\cite[Lines~(18),~(19)]{LS99}.
The algebra $U^+_q$ has a presentation by generators $X,Y$ and
relations
\begin{eqnarray}
&&
X^3 Y - \lbrack 3 \rbrack_q X^2 Y X +
\lbrack 3 \rbrack_q X Y X^2 -Y X^3 = 0,
\label{eq:XXXYIntro}
\\
&&
Y^3 X - \lbrack 3 \rbrack_q Y^2 X Y +
\lbrack 3 \rbrack_q Y X Y^2 -X Y^3 = 0,
\label{eq:YYYXIntro}
\end{eqnarray}
where $\lbrack 3 \rbrack_q = (q^3-q^{-3})/(q-q^{-1})$.
The relations (\ref{eq:XXXYIntro}),
(\ref{eq:YYYXIntro}) are called
the
$q$-Serre relations
\cite{lusztig}.
Applications of $U^+_q$ to tridiagonal pairs can be found in
\cite[Example~1.7]{TD00},
\cite{shape},
\cite[Section~2]{nonnil},
\cite[Lemma~4.8]{TwoRel}.
The algebra 
$U^+_q$ plays a prominant role in the theory of
$U_q(\widehat {\mathfrak {sl}}_2)$
\cite{beck3,
beck2,
beck,
damiani,
uqsl2hat,
qtet,
lusztig}.
Another example of a tridiagonal algebra is 
the $q$-Onsager algebra $\mathcal O_q$
\cite[Section 2]{bas2},
\cite[Section 1]{bas1}.
 This algebra
has a presentation by generators $A,B$
and relations
\begin{eqnarray}
&&
A^3 B - \lbrack 3 \rbrack_q A^2 B A +
\lbrack 3 \rbrack_q A B A^2 -B A^3 = (q^2-q^{-2})^2 (BA-AB),
\label{eq:AAABIntro}
\\
&&
B^3 A - \lbrack 3 \rbrack_q B^2 A B +
\lbrack 3 \rbrack_q B A B^2 -A B^3 = (q^2-q^{-2})^2 (AB-BA).
\label{eq:BBBAIntro}
\end{eqnarray}
The relations
(\ref{eq:AAABIntro}),
(\ref{eq:BBBAIntro})
are called the $q$-Dolan/Grady relations
\cite[Line~(5)]{bas1}.
Applications of 
$\mathcal O_q$ to tridiagonal pairs
can be found in
\cite{TD00,
qRacahIT,
ITaug,
LS99,
TwoRel}.
The algebra $\mathcal O_q$  has applications to quantum integrable models
\cite{bas2,             
bas1,              
BK05,           
  bas4,          
bas5,           
    bas6,      
   bas7,  
bas8,
 basnc,   
basXXZ,
basKojima},
reflection equation algebras
 \cite{basnc},
and
coideal subalgebras 
\cite{bc, kolb}. 
There are algebra homomorphisms
from $\mathcal O_q$ into
$U_q(\widehat {\mathfrak {sl}}_2)$ 
 \cite[line (3.15)]{basXXZ}, 
 \cite[line (3.18)]{basXXZ},
 \cite[Example~7.6]{kolb},
the $q$-deformed loop algebra
$U_q(L({\mathfrak{sl}}_2))$
\cite[Prop.~2.2]{bas6},
\cite[Prop.~8.5]{qRacahIT},
\cite[Props.~1.1,~1.13]{ITaug},
and the 
 universal Askey-Wilson algebra $\Delta_q$ 
\cite[Sections~9,~10]{uaw}.
\medskip 

\noindent Consider how
$U^+_q$
and
$\mathcal O_q$
are related.
These algebras have at least a superficial
resemblance, since
for
the
$q$-Serre relations
and
$q$-Dolan/Grady relations
their left-hand sides match.
In this paper our goal is to describe how
$U^+_q$ and
$\mathcal O_q$  are related on an
algebraic level.
Our first main result is 
summarized as follows. 
For notational convenience abbreviate $\mathcal O = \mathcal O_q$.
We consider the filtration
$\mathcal O_0 \subseteq \mathcal O_1 \subseteq \mathcal O_2 \subseteq \ldots $
of $\mathcal O$ such that for $n \in \mathbb N$
the subspace
$\mathcal O_n$ is spanned by the products
$g_1 g_2 \cdots g_r$ for which $0 \leq r \leq n$
and $g_i$ is among $A,B$ for $1 \leq i \leq r$.
We consider the associated graded algebra
$\overline{\mathcal O}$
in the sense of
\cite[p.~203]{carter}.
We show that 
the algebras
$U^+_q$
and
$\overline{\mathcal O}$
are isomorphic.
For our second main result, we introduce 
an algebra $\square_q$ and show how it is related
to both
$U^+_q$ and
$\mathcal O_q$. Let
$\mathbb Z_4 = \mathbb Z/4 \mathbb Z$ denote the cyclic
group of order 4.
The algebra $\square_q$ has a presentation by
generators $\lbrace x_i \rbrace_{i \in \mathbb Z_4}$ and relations
\begin{eqnarray*}
&&
\quad \qquad \qquad 
\frac{q x_i x_{i+1}-q^{-1}x_{i+1}x_i}{q-q^{-1}} = 1,
\\
&&
x^3_i x_{i+2} - \lbrack 3 \rbrack_q x^2_i x_{i+2} x_i +
\lbrack 3 \rbrack_q x_i x_{i+2} x^2_i -x_{i+2} x^3_i = 0.
\end{eqnarray*}
We show that $\square_q$ is related to
$U^+_q$  in the following way.
Let
$ \square^{\rm even}_q$
(resp. 
$ \square^{\rm odd}_q$)
denote the subalgebra
of $ \square_q$ generated by
 $x_0, x_2$ (resp. 
 $x_1, x_3$).
We show that 
(i) there exists an algebra isomorphism
$U^+_q \to  \square^{\rm even}_q$ that sends
$X\mapsto x_0$ and
$Y\mapsto x_2$;
(ii) there exists an algebra isomorphism
$U^+_q \to  \square^{\rm odd}_q$ that sends
$X\mapsto x_1$ and
$Y\mapsto x_3$;
(iii) the multiplication map
 $\square^{\rm even}_q
\otimes
 \square^{\rm odd}_q
  \to   \square_q$,
 $u \otimes v  \mapsto  uv$ is an isomorphism of
 vector spaces.
We show that $\square_q$ is related to
 $\mathcal O_q$ in the following way.
For nonzero scalars $a,b$
there exists an injective algebra 
homomorphism
$ \mathcal O_q \to \square_q$ that sends
\begin{eqnarray}
A \mapsto a x_0+ a^{-1} x_1, \qquad \qquad B \mapsto
b x_2+ b^{-1} x_3.
\label{eq:IntroMap}
\end{eqnarray}
Our two main results are obtained under the following
assumptions.
The underlying field is arbitrary.
The scalar $q$ is nonzero
 and  $q^2 \not=1$.
Our two main results are closely related, and
will be proved more or less simultaneously.
These proofs use only linear algebra, and
do not employ facts invoked from the literature.
Our proof strategy is to introduce several
algebras $\widetilde \square_q$, $\widehat \square_q$
that are related to $\square_q$ via surjective algebra homomorphisms
$\widetilde \square_q \to \widehat \square_q \to \square_q$.
The algebra $\widetilde \square_q$ is very general, and
an explicit basis will be given. 
Using this  basis
we will obtain some facts about
$\widetilde \square_q$, which give facts about
$\widehat \square_q$ and
$\square_q$ 
via the
homomorphisms
$\widetilde \square_q \to \widehat \square_q \to \square_q$.
These facts
 yield
an algebra homomorphism $\mathcal O_q \to \widehat \square_q$
 such that 
the composition
$\mathcal O_q \to \widehat \square_q \to \square_q$  is
injective. This composition is the homomorphism
(\ref{eq:IntroMap}).
\medskip

\noindent   Near the end of the paper we will discuss how
$\widehat \square_q$ and $\square_q$ are related to
$U_q(\widehat {\mathfrak {sl}}_2)$ and the $q$-tetrahedrom
algebra $\boxtimes_q$ from
\cite{qtet}.  We will obtain a commuting diagram of
algebra homomorphisms
\begin{equation*}
\begin{CD}
\widehat \square_q @>>>
                      U_q(\widehat {\mathfrak {sl}}_2) 
	   \\ 
          @VVV                   @VVV \\
            \square_q @>>> 
             \boxtimes_q 
                   \end{CD}
\end{equation*}
In this diagram the homomorphism
$\square_q \to \boxtimes_q$ is injective.
Using the diagram we will explain
the homomorphisms $\mathcal O_q \to
                      U_q(\widehat {\mathfrak {sl}}_2) $
		      and 
 $\mathcal O_q \to
                      U_q(L({\mathfrak {sl}}_2)) $
		      that we mentioned earlier in this
                     section. 


\section{Preliminaries}

\noindent We now begin our formal argument. 
Throughout this paper the following notation
and assumptions are
in effect.
Recall the
 natural numbers 
$\mathbb N = \lbrace 0, 1, 2,\ldots \rbrace$ and
integers
$\mathbb Z = \lbrace 0, \pm 1, \pm 2,\ldots \rbrace$.
We will be discussing algebras.
An algebra is meant to be associative
and have a 1. A subalgebra has the same 1 as the parent
algebra. Let $\mathbb F$ denote a field and
let $\mathcal A$ denote an $\mathbb F$-algebra.
Let $H,K$ denote subspaces of
the $\mathbb F$-vector space $\mathcal A$.
 Then $HK$
denotes the subspace  
of $\mathcal A$ spanned by
 $\lbrace hk | h\in H, k \in K\rbrace$.
By an {\it $\mathbb N$-grading} of $\mathcal A$ 
we mean a sequence $\lbrace \mathcal A_n\rbrace_{n \in \mathbb N}$ 
such that (i) each $\mathcal A_n$ is a subspace 
of the $\mathbb F$-vector space $\mathcal A$;
(ii) $1 \in \mathcal A_0$;
(iii) the sum $\mathcal A = \sum_{n \in \mathbb N} \mathcal A_n$ is direct;
(iv) $\mathcal A_r \mathcal A_s \subseteq A_{r+s}$ for 
$r,s\in \mathbb N$.
A $\mathbb Z$-grading of $\mathcal A$ is similarly defined.
\medskip

\noindent 
We will be discussing algebras defined by generators and relations.
Let $T$ denote the $\mathbb F$-algebra with generators $x,y$ and
no relations; $T$ is often called a free algebra or tensor algebra.
The generators $x,y$ will be called {\it standard}.
For $n \in \mathbb N$, a {\it word of length $n$} in
$T$ is a product $g_1 g_2 \cdots g_n$ such that
$g_i$ is a standard generator 
for $1 \leq i \leq n$.
We interpret the word of length zero to be the multiplicative
identity in $T$. The words in $T$ form a basis for the
$\mathbb F$-vector space $T$.
For $n \in \mathbb N$ 
the words of length $n$  in $T$ form a basis for
a subspace $T_n$ of $T$.
For example,
$1$ is a basis for
$T_0$ and $x,y$ is a basis for $T_1$.
By construction 
the sum $T=\sum_{n\in \mathbb N} T_n$ is direct.
Also by construction
$T_r T_s = T_{r+s}$ for $r,s\in \mathbb N$.
By these comments 
the sequence $\lbrace T_n\rbrace_{n \in \mathbb N}$ is
an $\mathbb N$-grading of $T$.
\medskip

\noindent 
Fix $0 \not=q \in \mathbb F$ such that $q^2\not=1$.
Define 
\begin{eqnarray*}
\lbrack n \rbrack_q = \frac{q^n-q^{-n}}{q-q^{-1}}
\qquad \quad n \in \mathbb Z.
\end{eqnarray*}
\noindent All unadorned tensor products are meant to be
over $\mathbb F$.

\section{The algebra $U^{+}_q$}

\noindent Later in the paper we will discuss the
quantum affine algebra 
$U_q(\widehat {\mathfrak {sl}}_2)$. In the meantime
 we consider
a certain subalgebra of
$U_q(\widehat {\mathfrak {sl}}_2)$, denoted $U^+_q$ and called the positive
part of
$U_q(\widehat {\mathfrak {sl}}_2)$. 

\begin{definition} 
\label{def:Aq}
\rm
(See \cite[Corollary~3.2.6]{lusztig}.)
Let $U^+=U^+_q$ denote the $\mathbb F$-algebra with
generators $X,Y$ and relations
\begin{eqnarray}
&&
X^3 Y - \lbrack 3 \rbrack_q X^2 Y X +
\lbrack 3 \rbrack_q X Y X^2 -Y X^3 = 0,
\label{eq:XXXY}
\\
&&
Y^3 X - \lbrack 3 \rbrack_q Y^2 X Y +
\lbrack 3 \rbrack_q Y X Y^2 -X Y^3 = 0.
\label{eq:YYYX}
\end{eqnarray}
The algebra $U^+$ is called the {\it positive part of
$U_q(\widehat {\mathfrak {sl}}_2)$}. 
The relations (\ref{eq:XXXY}),
(\ref{eq:YYYX}) are called the
{\it $q$-Serre relations}.
\end{definition}

\begin{lemma}
\label{lem:xi}
Let $a,b$ denote nonzero scalars in $\mathbb F$.
Then there exists an automorphism of $U^+$ that sends
$X \mapsto a X$ and
$Y \mapsto b Y$.
\end{lemma}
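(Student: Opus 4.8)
The plan is to build the map directly from the presentation of $U^+$ by generators and relations, using the universal property: to define an algebra homomorphism out of $U^+$, it suffices to specify images of the generators $X,Y$ that satisfy the two defining relations (\ref{eq:XXXY}), (\ref{eq:YYYX}). So the first step is to check that the elements $aX$ and $bY$ satisfy the $q$-Serre relations.

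The key observation is that both relations are homogeneous in a suitable sense. The left-hand side of (\ref{eq:XXXY}) is a linear combination of words, each of which contains exactly three factors equal to $X$ and one factor equal to $Y$; similarly, the left-hand side of (\ref{eq:YYYX}) consists of words with three factors $Y$ and one factor $X$. Hence the substitution $X \mapsto aX$, $Y \mapsto bY$ multiplies the left-hand side of (\ref{eq:XXXY}) by the scalar $a^3 b$ and the left-hand side of (\ref{eq:YYYX}) by $a b^3$. Since both left-hand sides are equal to $0$ in $U^+$, their rescaled versions are as well. By the universal property of $U^+$ there exists an algebra homomorphism $\sigma : U^+ \to U^+$ with $\sigma(X) = aX$ and $\sigma(Y) = bY$.

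To see that $\sigma$ is invertible, I would apply the same construction with $a^{-1}, b^{-1}$ in place of $a,b$; these are nonzero since $a,b$ are, so the homogeneity check goes through verbatim and yields an algebra homomorphism $\sigma' : U^+ \to U^+$ with $\sigma'(X) = a^{-1} X$ and $\sigma'(Y) = b^{-1} Y$. The composites $\sigma \circ \sigma'$ and $\sigma' \circ \sigma$ then fix each of $X$ and $Y$. Since $X,Y$ generate $U^+$ as an algebra, an algebra endomorphism fixing both generators is the identity, so both composites equal $\mathrm{id}_{U^+}$. Therefore $\sigma$ is an automorphism with inverse $\sigma'$, which is exactly the claim.

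I do not anticipate a genuine obstacle: the only substantive verification is the rescaling behavior of the two relations under $X \mapsto aX$, $Y \mapsto bY$, which follows from the word-homogeneity noted above, and everything else is the routine inverse-construction argument. The one point worth stating with care is the appeal to the universal property of an algebra presented by generators and relations, which is what guarantees $\sigma$ and $\sigma'$ exist once the relations have been checked on the generators.
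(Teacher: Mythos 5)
Your proof is correct and takes essentially the same route as the paper: the paper's proof is simply ``Use Definition \ref{def:Aq},'' and your argument is the fleshed-out version of exactly that appeal to the presentation. The homogeneity check (scaling by $a^3b$ and $ab^3$) and the inverse construction via $a^{-1},b^{-1}$ are precisely the routine details the paper leaves implicit.
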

\begin{proof} Use Definition
\ref{def:Aq}.
\end{proof}

\noindent
Recall the free algebra $T$ with standard generators
$x,y$. 
There exists an $\mathbb F$-algebra homomorphism
$\mu: T \to U^+$ that sends $x\mapsto X$ and
$y\mapsto Y$. 
The homomorphism $\mu$ is surjective.
We now describe the kernel of $\mu$.
Define elements $S_x, S_y$ in $T$ by
\begin{eqnarray}
\label{eq:SgenP}
&&S_x = x^3 y - \lbrack 3 \rbrack_q x^2 y x +
\lbrack 3 \rbrack_q x y x^2 -y x^3,
\\
&&
S_y = y^3 x - \lbrack 3 \rbrack_q y^2 x y +
\lbrack 3 \rbrack_q y x y^2 -x y^3.
\label{eq:SgenM}
\end{eqnarray}
Let 
$S = T S_x T + T S_y T$
denote the 2-sided ideal of $T$ generated by
$S_x,S_y$.
Then $S$ is the kernel of $\mu$.
Recall the $\mathbb N$-grading $\lbrace T_n \rbrace_{n\in \mathbb N}$
of $T$.
For $n \in \mathbb N$ let
$U^+_n$ denote the image of 
$T_n$ under $\mu$.
The $\mathbb F$-vector space $U^+_n$ is spanned
by the products $g_1g_2\cdots g_n$ such that
$g_i$ is among $X,Y$ for $1 \leq i \leq n$.
Let $\mu_n$ denote the 
restriction of $\mu$ to $T_n$.
We view $\mu_n: T_n \to U^+_n$.
The kernel of $\mu_n $ is $S\cap T_n$. 
The subspace $S \cap T_n$ is described as follows.
Note that $S_x,S_y \in T_4$.
So for $r,s\in \mathbb N$,
$T_r S_x T_s \subseteq T_{r+s+4}$
and
$T_r S_y T_s \subseteq T_{r+s+4}$.
Consequently for $n \in \mathbb N$,
\begin{eqnarray}
S\cap T_n = 
\sum_{
\genfrac{}{}{0pt}{}{r,s \in \mathbb N}{r+s=n-4}
}
T_r S_x T_s+
\sum_{
\genfrac{}{}{0pt}{}{r,s \in \mathbb N}{r+s=n-4}
}
 T_r S_y T_s.
\label{eq:Sn}
\end{eqnarray}
Assume for the moment that $n\leq 3$.
There does not exist $r,s \in \mathbb N$ such that
$r+s=n-4$. Therefore
$S \cap T_n=0$, so
 $\mu_n: T_n \to U^+_n$ is an isomorphism.
Taking $n=0,1$ we see that
$1$ is a basis for $U^+_0$ and
$X,Y$ is a basis for $U^+_1$.
We show that the sequence
$\lbrace U^+_n \rbrace_{n \in \mathbb N}$ is an $\mathbb N$-grading
of $U^+$.
By 
(\ref{eq:Sn}) and $T=\sum_{r\in \mathbb N} T_r $
we obtain
$S=\sum_{n\in \mathbb N} (S\cap T_n)$.
Therefore the sum
$U^+ = \sum_{n \in \mathbb N} U^+_n$ is direct.
Recall that $T_r T_s = T_{r+s}$ for $r,s \in \mathbb N$.
Applying $\mu$ we find that
$U^+_r U^+_s = U^+_{r+s}$ for
$r,s\in \mathbb N$.
By these comments the sequence
$\lbrace U^+_n \rbrace_{n \in \mathbb N}$ is an $\mathbb N$-grading
of $U^+$.

\section{The $q$-Onsager algebra}

In this section we recall the $q$-Onsager algebra
and discuss its basic properties.

\begin{definition} 
\label{def:qOnsager}
\rm
(See
\cite[Section~2]{bas2},
\cite[Definition~3.9]{TwoRel}.) 
Let $\mathcal O = \mathcal O_q$ denote the $\mathbb F$-algebra with
generators $A,B$ and relations
\begin{eqnarray}
&&
A^3 B - \lbrack 3 \rbrack_q A^2 B A +
\lbrack 3 \rbrack_q A B A^2 -B A^3 = (q^2-q^{-2})^2 (BA-AB),
\label{eq:AAAB}
\\
&&
B^3 A - \lbrack 3 \rbrack_q B^2 A B +
\lbrack 3 \rbrack_q B A B^2 -A B^3 = (q^2-q^{-2})^2 (AB-BA).
\label{eq:BBBA}
\end{eqnarray}
 We call $\mathcal O$ the
 {\it $q$-Onsager algebra}.
We call
(\ref{eq:AAAB}),
(\ref{eq:BBBA})
the {\it $q$-Dolan/Grady relations}.
\end{definition}

\noindent 
Consider the elements
$\lbrace A^r B^s\rbrace_{r,s\in \mathbb N}$
in the $\mathbb F$-vector space $\mathcal O$.
We show that these elements are linearly independent.
Let $A^\vee,  B^\vee$ denote commuting indeterminates.
Let $\mathbb F\lbrack  A^\vee,  B^\vee\rbrack$
denote the $\mathbb F$-algebra consisting of the polynomials
in 
$ A^\vee,  B^\vee$ that have all coefficients in 
$\mathbb F$.
The elements $\lbrace (A^\vee)^r  (B^\vee)^s\rbrace_{r,s\in \mathbb N}$
form a basis for 
the $\mathbb F$-vector space $\mathbb F\lbrack A^\vee,  B^\vee\rbrack$.
By the nature of the relations
(\ref{eq:AAAB}),
(\ref{eq:BBBA})
there exists an $\mathbb F$-algebra homomorphism
$\mathcal O \to 
\mathbb F\lbrack  A^\vee,  B^\vee\rbrack$
that sends $A \mapsto A^\vee$ and
$B \mapsto  B^\vee$.
By these comments 
the elements $\lbrace A^r B^s\rbrace_{r,s\in \mathbb N}$
in $\mathcal O$
are linearly independent.
In particular the elements $1,A,B$ in $\mathcal O$
are linearly independent.
Recall the free   algebra $T$ with standard generators
$x,y$. There exists an $\mathbb F$-algebra homomorphism
$\nu: T \to \mathcal O$ that sends $x \mapsto A$
and $y\mapsto B$. The homomorphism $\nu$ is surjective.
For $n \in \mathbb N$ let
$\mathcal O_n$ denote the image of $T_0+T_1+\cdots+T_n$ under $\nu$.
The $\mathbb F$-vector space $\mathcal O_n$ is 
spanned by the products
$g_1g_2\cdots g_r$ such that $0 \leq r \leq n$ and
$g_i$ is among $A,B$ for $1 \leq i \leq r$.
For example
$\mathcal O_0 = \mathbb F 1$ and
$\mathcal O_1 = \mathbb F 1 + \mathbb F A + \mathbb F B$.
For notational convenience define $\mathcal O_{-1} = 0$.
For $n \in \mathbb N$ we have
$\mathcal O_{n-1} + \nu(T_n)=\mathcal O_n$
and in particular $\mathcal O_{n-1} \subseteq \mathcal O_n$.
Since $\nu$ is surjective  we have 
$\mathcal O = \cup_{n \in \mathbb N} \mathcal O_n$. By
construction $\mathcal O_r \mathcal O_s = \mathcal O_{r+s}$
for $r,s \in \mathbb N$. The sequence
$\lbrace \mathcal O_n \rbrace_{n \in \mathbb N}$ 
is a filtration of $\mathcal O$ in the sense of
\cite[p.~202]{carter}.
For $n \in \mathbb N$ consider the quotient $\mathbb F$-vector
space $\overline {\mathcal O}_n
 = \mathcal O_n /\mathcal O_{n-1}$.
We view $\overline{\mathcal O}_0 =\mathcal O_0$.
The elements
$\overline{A}, \overline{B}$ form a basis for 
$\overline {\mathcal O}_1$, where
$\overline{A}=A+\mathcal O_0$ and $\overline{B}= B+\mathcal O_0$.
Now consider
the formal direct sum
$\overline {\mathcal O} = \sum_{n \in \mathbb N}
\overline {\mathcal O}_n$.
We emphasize that for all elements 
$u = \sum_{n \in \mathbb N} u_n$ in  
$\overline {\mathcal O}$,
the summand $u_n$ is nonzero for finitely many $n \in \mathbb N$.
By construction
$\overline{\mathcal O}$
is an $\mathbb F$-vector space.
We next define a product
$\overline {\mathcal O} \times \overline {\mathcal O} \to 
\overline {\mathcal O}$
that turns $\overline{\mathcal O}$ into an $\mathbb F$-algebra.
For $r,s\in \mathbb N$ the product sends
$\overline {\mathcal O}_r \times \overline {\mathcal O}_s
\to 
\overline {\mathcal O}_{r+s}$ as follows. 
For $u\in \mathcal O_r$ and
 $v\in \mathcal O_s$ the product of
$u+\mathcal O_{r-1}$ and
$v+\mathcal O_{s-1}$ is
$uv+\mathcal O_{r+s-1}$.
We have turned 
$\overline {\mathcal O}$ 
into an $\mathbb F$-algebra
\cite[p.~203]{carter}.
By construction
$
\overline {\mathcal O}_r
\overline {\mathcal O}_s =
\overline {\mathcal O}_{r+s}$ for $r,s \in \mathbb N$.
For $n\in \mathbb N$ the
 $\mathbb F$-vector space $\overline{\mathcal O}_n$ is
spanned by the products $\overline{g}_1 \overline{g}_2 \cdots \overline{g}_n$
such that $g_i$ is among $A,B$ for $1 \leq i \leq n$.
The $\mathbb F$-algebra 
$\overline{\mathcal O}$ is generated by
$\overline {A}, \overline {B}$.
The sequence $\lbrace 
\overline {\mathcal O}_n \rbrace_{n \in \mathbb N}$ is
an $\mathbb N$-grading of $\overline {\mathcal O}$. The
$\mathbb F$-algebra
$\overline {\mathcal O}$ is called 
the {\it graded algebra
associated with the filtration
$\lbrace \mathcal O_n \rbrace_{n \in \mathbb N}$}
\cite[p.~203]{carter}.
We now construct an $\mathbb F$-algebra homomorphism
$\overline {\nu}: T \to \overline{\mathcal O}$.
For $n \in \mathbb N$ let $\nu_n$ denote the
restriction of $\nu$ to $T_n$. We view
$\nu_n: T_n \to \mathcal O_n$. Consider the
composition
\begin{equation*}
\begin{CD} 
 \overline{\nu}_n: \quad T_n @>> \nu_n >  
\mathcal O_n 
 @>> u\mapsto u+\mathcal O_{n-1} > \overline{\mathcal O}_n.
                  \end{CD}
\end{equation*}
The map $\overline{\nu}_n:T_n\to \overline{\mathcal O}_n$
is $\mathbb F$-linear and surjective.
Define an $\mathbb F$-linear map $\overline{\nu}:T \to
\overline{\mathcal O}$ that
acts on $T_n$ as $\overline{\nu}_n$ for
$n \in \mathbb N$. 
By construction $\overline{\nu}(T_n)=\overline{\mathcal O}_n$
for $n \in \mathbb N$.
The map $\overline{\nu}$ sends
$x \mapsto \overline{A}$ and 
$y \mapsto \overline{B}$.
\begin{lemma}
The map $\overline{\nu}:T\to \overline{\mathcal O}$
is an $\mathbb F$-algebra homomorphism.
\end{lemma}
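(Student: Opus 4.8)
The plan is to verify the two defining properties of a unital algebra homomorphism: that $\overline{\nu}$ preserves products and sends the identity to the identity. The map $\overline{\nu}$ is already $\mathbb{F}$-linear by construction, and it sends $1 \mapsto 1 + \mathcal{O}_{-1} = 1 + 0$, which is the multiplicative identity of $\overline{\mathcal{O}}$ (sitting in $\overline{\mathcal{O}}_0 = \mathcal{O}_0$); so the only substantive point is multiplicativity, namely $\overline{\nu}(fg) = \overline{\nu}(f)\,\overline{\nu}(g)$ for all $f, g \in T$.

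First I would reduce to the homogeneous case. Both the $\mathbb{F}$-bilinear product on $\overline{\mathcal{O}}$ and the $\mathbb{F}$-linear map $\overline{\nu}$ respect $\mathbb{F}$-linear combinations, and the sum $T = \sum_{n \in \mathbb{N}} T_n$ is direct, so it suffices to check the identity when $f \in T_r$ and $g \in T_s$ for arbitrary $r, s \in \mathbb{N}$.

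Next I would unwind the definitions on such homogeneous elements. Because $T_r T_s = T_{r+s}$, the product $fg$ lies in $T_{r+s}$, so by the definition of $\overline{\nu}$ we have $\overline{\nu}(fg) = \overline{\nu}_{r+s}(fg) = \nu(fg) + \mathcal{O}_{r+s-1}$. On the other side, $\overline{\nu}(f) = \nu(f) + \mathcal{O}_{r-1}$ with $\nu(f) \in \mathcal{O}_r$, and $\overline{\nu}(g) = \nu(g) + \mathcal{O}_{s-1}$ with $\nu(g) \in \mathcal{O}_s$. Feeding these into the definition of the product $\overline{\mathcal{O}}_r \times \overline{\mathcal{O}}_s \to \overline{\mathcal{O}}_{r+s}$, namely $(u + \mathcal{O}_{r-1})(v + \mathcal{O}_{s-1}) = uv + \mathcal{O}_{r+s-1}$ for $u \in \mathcal{O}_r$ and $v \in \mathcal{O}_s$, gives $\overline{\nu}(f)\,\overline{\nu}(g) = \nu(f)\nu(g) + \mathcal{O}_{r+s-1}$. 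Since $\nu : T \to \mathcal{O}$ is an algebra homomorphism we have $\nu(fg) = \nu(f)\nu(g)$, and the two expressions coincide.

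The argument is essentially bookkeeping, so there is no deep obstacle; the one place that requires care is ensuring the representatives lie in the correct filtration layers so that the product rule on $\overline{\mathcal{O}}$ applies verbatim — that is, that $\nu(f) \in \mathcal{O}_r$ and $\nu(g) \in \mathcal{O}_s$ (which follows from $\nu(T_r) \subseteq \mathcal{O}_r$) and that $fg$ genuinely lands in $T_{r+s}$ rather than in a lower layer. Once the homogeneous reduction is in place and these memberships are noted, multiplicativity follows immediately from the fact that $\nu$ is multiplicative together with the fact that the graded product was defined precisely to lift the product of representatives modulo the next-lower layer.
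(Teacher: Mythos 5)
Your proof is correct and follows the same route as the paper: reduce to homogeneous elements $u \in T_r$, $v \in T_s$ using linearity and the direct sum $T=\sum_n T_n$, then verify multiplicativity directly from the definitions of $\overline{\nu}$ and of the product on $\overline{\mathcal O}$. The paper leaves this verification as "routine"; you have simply written out those routine details, including the needed observations that $\nu(T_r)\subseteq \mathcal O_r$ and $T_rT_s=T_{r+s}$.
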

\begin{proof} Pick $r,s \in \mathbb N$. 
It suffices to show  that
$ \overline{\nu}(uv)=\overline{\nu}(u) \overline{\nu}(v)$
for $u \in T_r$ and
$v \in T_s$.
This equation is routinely verified using the definition of
the map $\overline{\nu}$ and the algebra $\overline{\mathcal O}$.
\end{proof}
\noindent The algebras $U^+$ and $\overline{\mathcal O}$ are related
as follows.
Using 
(\ref{eq:AAAB}),
(\ref{eq:BBBA}) 
we obtain
\begin{eqnarray}
&&
\overline {A}^3 \overline{B} - \lbrack 3 \rbrack_q
\overline {A}^2 \overline{B} \, \overline{A} +
\lbrack 3 \rbrack_q 
\overline{A}\,\overline{ B}\, \overline{A}^2 -
\overline{B}\, \overline{A}^3 =0,
\label{eq:AAABgraded}
\\
&&
\overline {B}^3 \overline{A} - \lbrack 3 \rbrack_q
\overline {B}^2 \overline{A}\, \overline{B} +
\lbrack 3 \rbrack_q 
\overline{B} \,\overline{ A} \,\overline{B}^2 -
\overline{A} \, \overline{B}^3 =0.
\label{eq:BBBAgraded}
\end{eqnarray}
By 
(\ref{eq:AAABgraded}),
(\ref{eq:BBBAgraded})
there exists an $\mathbb F$-algebra
homomorphism 
$\psi: U^+ \to \overline{\mathcal O}$ that sends
$X\mapsto \overline {A}$ and
$Y\mapsto \overline {B}$.
The homomorphism $\psi$ is surjective. 
By construction
$\psi(U^+_n)=
\overline{\mathcal O}_n$ for $n \in \mathbb N$.

\begin{lemma}
\label{lem:munu} 
The following diagram commutes:
\begin{equation*}
\begin{CD}
T @>I > >
                  T
           \\ 
          @V\mu VV                     @VV\overline{\nu} V \\
            U^+ @>>\psi > 
               \overline{\mathcal O} 
                   \end{CD}
\end{equation*}

\end{lemma}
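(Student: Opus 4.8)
The plan is to exploit the universal property of the free algebra $T$, namely that an $\mathbb F$-algebra homomorphism out of $T$ is completely determined by its values on the standard generators $x,y$. Since the top horizontal arrow $I$ is the identity map on $T$, the asserted commutativity of the diagram is nothing more than the single equation $\overline{\nu} = \psi \circ \mu$ of maps $T \to \overline{\mathcal O}$. Both sides of this equation are $\mathbb F$-algebra homomorphisms: the map $\overline{\nu}$ is a homomorphism by the preceding lemma, while $\psi \circ \mu$ is a composition of the homomorphisms $\mu: T \to U^+$ and $\psi: U^+ \to \overline{\mathcal O}$.

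The first step is to reduce the equality $\overline{\nu} = \psi \circ \mu$ to a check on the generators. Because the words in $x,y$ form a basis for the $\mathbb F$-vector space $T$ and both $\overline{\nu}$ and $\psi \circ \mu$ are algebra homomorphisms, any two such maps that agree on $x$ and on $y$ must agree on every word, hence on all of $T$. Thus it suffices to verify the two maps on the two standard generators.

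The second step is this verification, which is immediate from the definitions recorded earlier in the section. On the generator $x$ we compute $\psi(\mu(x)) = \psi(X) = \overline{A}$, and we also have $\overline{\nu}(x) = \overline{A}$; on the generator $y$ we compute $\psi(\mu(y)) = \psi(Y) = \overline{B}$, and we also have $\overline{\nu}(y) = \overline{B}$. Hence the two homomorphisms agree on $x$ and $y$, and therefore on all of $T$, which yields the commutativity of the diagram.

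There is no genuine obstacle here; the argument is purely formal once the preliminary facts are in place. The only points requiring care have already been settled in the excerpt: that $\overline{\nu}$ is truly multiplicative (the content of the previous lemma, where the identity $\overline{\nu}(uv)=\overline{\nu}(u)\overline{\nu}(v)$ was checked), and that $\psi$ is well defined as an algebra homomorphism (guaranteed by the graded $q$-Serre relations (\ref{eq:AAABgraded}), (\ref{eq:BBBAgraded})). Granting these, the commutativity follows solely from the freeness of $T$ and the agreement of the two composites on $x,y$.
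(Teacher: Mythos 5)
Your proof is correct and follows exactly the paper's approach: since every map in the diagram is an $\mathbb F$-algebra homomorphism and $T$ is generated by $x,y$, it suffices to chase the standard generators around the diagram, verifying $\psi(\mu(x))=\overline{A}=\overline{\nu}(x)$ and $\psi(\mu(y))=\overline{B}=\overline{\nu}(y)$. The paper's proof is just a terser statement of the same argument.
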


\begin{proof} Each map in the diagram is an
$\mathbb F$-algebra homomorphism. To verify
that the diagram commutes, chase the standard generators
$x,y$ around the diagram.
\end{proof}

\begin{theorem}
\label{thm:main1}
The map $\psi : U^+\to \overline{\mathcal O}$ is
an isomorphism of $\mathbb F$-algebras.
\end{theorem}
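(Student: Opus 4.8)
The plan is to exploit that $\psi$ is a \emph{surjective homomorphism of $\mathbb{N}$-graded algebras} with $\psi(U^+_n)=\overline{\mathcal O}_n$ for all $n$, and to reduce the claim to a comparison of graded dimensions. Since $\psi$ respects the grading, it is an isomorphism if and only if each restriction $\psi_n:U^+_n\to\overline{\mathcal O}_n$ is injective. (Conceptually, by the commuting square of Lemma \ref{lem:munu} one has $\overline{\nu}=\psi\circ\mu$ with $\mu,\overline{\nu},\psi$ all surjective, so $\psi$ is an isomorphism precisely when $\ker\mu=\ker\overline{\nu}$; the inclusion $\ker\mu\subseteq\ker\overline{\nu}$ is automatic, so all the content lies in the reverse inclusion $\ker\overline{\nu}\subseteq\ker\mu=S$.) Each $\psi_n$ is surjective, so it is automatic that $\dim_{\mathbb F}\overline{\mathcal O}_n\le\dim_{\mathbb F}U^+_n$; the entire problem is therefore to establish the reverse inequality $\dim_{\mathbb F}U^+_n\le\dim_{\mathbb F}\overline{\mathcal O}_n$ for every $n$. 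All these spaces are finite-dimensional, being spanned by the finitely many words of length $n$.

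To obtain the reverse inequality I would pass through the filtration of $\mathcal O$. Because $\overline{\mathcal O}_k=\mathcal O_k/\mathcal O_{k-1}$ and $\mathcal O_{-1}=0$, the dimensions telescope to $\dim_{\mathbb F}\mathcal O_n=\sum_{k=0}^{n}\dim_{\mathbb F}\overline{\mathcal O}_k$, so it suffices to exhibit enough linearly independent elements inside $\mathcal O_n$. Concretely, fix a homogeneous basis $\{u_i\}$ of $U^+$ with $u_i\in U^+_{n_i}$, choose for each $i$ a word $w_i\in T_{n_i}$ with $\mu(w_i)=u_i$, and set $a_i=\nu(w_i)\in\mathcal O_{n_i}$. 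The key claim is that the family $\{a_i\}$ is linearly independent in $\mathcal O$. Granting this, for each $n$ the elements $\{a_i:n_i\le n\}$ are linearly independent and lie in $\mathcal O_n$, so $\dim_{\mathbb F}\mathcal O_n\ge\#\{i:n_i\le n\}=\sum_{k=0}^{n}\dim_{\mathbb F}U^+_k$. Combining this with the telescoping identity and the automatic inequality $\dim_{\mathbb F}\overline{\mathcal O}_k\le\dim_{\mathbb F}U^+_k$ yields
\[
\sum_{k=0}^{n}\dim_{\mathbb F}U^+_k\ \le\ \dim_{\mathbb F}\mathcal O_n\ =\ \sum_{k=0}^{n}\dim_{\mathbb F}\overline{\mathcal O}_k\ \le\ \sum_{k=0}^{n}\dim_{\mathbb F}U^+_k,
\]
forcing equality in every degree. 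Hence $\dim_{\mathbb F}\overline{\mathcal O}_k=\dim_{\mathbb F}U^+_k$, each $\psi_k$ is an isomorphism, and therefore so is $\psi$.

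The main obstacle is the key claim itself: that lifts of a basis of $U^+$ remain linearly independent after being transported into $\mathcal O$. This difficulty is genuine, since the claim is in fact equivalent to the theorem and cannot follow from the $q$-Serre relations alone; one must show that the $q$-Dolan/Grady deformation introduces no collapse among these elements beyond the lower-order contributions already absorbed by the filtration. The independence of $\{A^rB^s\}$ established earlier, via the homomorphism $\mathcal O\to\mathbb{F}[A^\vee,B^\vee]$, is only a small instance of what is needed, since a basis of $U^+$ involves far more monomials. This is exactly the point at which the apparatus announced in the introduction enters: embedding $\mathcal O_q$ into $\square_q$ and using the explicit basis of $\widetilde{\square}_q$, together with the decomposition $\square_q\cong\square_q^{\mathrm{even}}\otimes\square_q^{\mathrm{odd}}$ whose factors are copies of $U^+$, supplies simultaneously a concrete basis of $U^+$ to lift and a setting in which the independence of the images $a_i$ can be checked by linear algebra. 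I would therefore develop that machinery first, read off the key claim from it, and then close the proof through the dimension sandwich above.
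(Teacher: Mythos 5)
Your reduction of the theorem to the ``key claim'' is sound: the dimension sandwich is valid, and linear independence of the lifted elements $a_i=\nu(w_i)$ would indeed force each $\psi_n$ to be an isomorphism. But this reduction is the routine half of the problem, and, as you concede, the key claim is equivalent to the theorem itself. What you have not supplied is any actual argument for it; ``a setting in which the independence of the images $a_i$ can be checked by linear algebra'' is a gesture, not a proof. The specific missing idea, which is the heart of the paper's proof (Proposition \ref{prop:twoInj}), is this: $\square_q$ carries a $\mathbb Z$-grading $\lbrace \square_n\rbrace_{n \in \mathbb Z}$ in which $x_0,x_2$ have degree $+1$ and $x_1,x_3$ have degree $-1$ (Proposition \ref{prop:Grading}); applying $\sharp$ to a length-$n$ word in $A,B$ and projecting onto the top component $\square_n$ kills every summand except the pure ``plus'' term $g^+_1\cdots g^+_n$ with $A^+=ax_0$, $B^+=bx_2$ (Lemmas \ref{lem:wordExpand}--\ref{lem:basisAction}), and also kills $\sharp(\mathcal O_{n-1})$ (Lemma \ref{lem:comp}). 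This projection transports the question into $\square^{\rm even}_q$, where the elements $\flat(\phi(u_i))$ are independent because $\flat:U^+\to\square^{\rm even}_q$ is injective by Proposition \ref{thm:tensorDecPreM}(i). It is exactly this leading-term device --- packaged in the paper as the homomorphism $\overline{\varphi}:\overline{\mathcal O}\to\square_q$ and the commuting square of Lemma \ref{lem:bigCom} --- that converts the $\square_q$ machinery into the independence statement; without some version of it, the machinery does not ``read off'' the key claim.

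A second, subtler point: your phrase ``embedding $\mathcal O_q$ into $\square_q$'' risks circularity. At the stage where Theorem \ref{thm:main1} must be proved, all that is available is the existence of the homomorphism $\sharp:\mathcal O\to\square_q$ (Proposition \ref{prop:ABxyiLongerM}); its injectivity is Theorem \ref{thm:xiInj}, which the paper deduces \emph{from} Theorem \ref{thm:main1}, not the other way around. Your argument can be run without injectivity --- a vanishing linear combination $\sum_i c_i a_i=0$ can be hit with the composite maps $\pi_n\circ\sharp$ directly, so only the existence of $\sharp$ and the grading are needed --- but you must say this explicitly and carry out the projection argument, rather than invoking an embedding you are not yet entitled to.
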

\noindent The proof of Theorem
\ref{thm:main1} will be completed in
Proposition
\ref{prop:twoInj}.
\medskip

\noindent We mention one significance of
Lemma
\ref{lem:munu}  and
Theorem
\ref{thm:main1}.
Consider the free algebra $T$ with $\mathbb N$-grading
$\lbrace T_n \rbrace_{n \in \mathbb N}$. 
Pick $u \in T$. For $n \in \mathbb N$,
we say that $u$ is {\it $n$-homogeneous}
whenever $u \in T_n$. We say that $u$ is 
{\it homogeneous} whenever there exists $n \in \mathbb N$
such that $u$ is $n$-homogeneous.

\begin{definition}\rm
A subset $\Omega \subseteq T$ is said to be {\it homogeneous}
whenever each element in $\Omega$ is homogeneous.
\end{definition}

\begin{proposition}
\label{prop:Omega}
Let $\Omega$ denote a homogeneous subset of $T$ such
that the vectors
$ \mu(z)$ $(z \in \Omega)$ 
form a basis for the $\mathbb F$-vector space $U^+$.
Then the vectors
$\nu(z)$ $(z \in \Omega)$ form
a basis for the $\mathbb F$-vector space
$\mathcal O$.
\end{proposition}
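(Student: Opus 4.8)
```latex
The plan is to show that the two gradings/filtrations are compatible
through the homomorphisms $\mu, \nu, \psi, \overline{\nu}$, and to
leverage Theorem~\ref{thm:main1} together with the commuting diagram of
Lemma~\ref{lem:munu}. The essential idea is that $\overline{\nu}$ is a
graded version of $\nu$, while $\mu$ already realizes $U^+$ as a graded
algebra; so a homogeneous basis for $U^+$ pulls back and pushes forward
cleanly. First I would reduce to the graded setting. Since $\Omega$ is
homogeneous, write $\Omega = \bigsqcup_{n \in \mathbb N} \Omega_n$ where
$\Omega_n = \Omega \cap T_n$. Because $\mu$ respects the gradings (the
image $\mu(T_n)=U^+_n$ and $\lbrace U^+_n\rbrace$ is an $\mathbb
N$-grading of $U^+$), the hypothesis that $\lbrace \mu(z)\rbrace_{z \in
\Omega}$ is a basis for $U^+$ splits degree by degree: for each $n \in
\mathbb N$ the vectors $\lbrace \mu(z)\rbrace_{z \in \Omega_n}$ form a
basis for $U^+_n$. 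This is the standard fact that a homogeneous spanning
set of a graded space is a basis iff it is a basis in each degree.

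Next I would transport this to $\overline{\mathcal O}$ using Theorem
\ref{thm:main1}, which asserts $\psi : U^+ \to \overline{\mathcal O}$ is
an isomorphism sending $U^+_n$ onto $\overline{\mathcal O}_n$. Applying
$\psi$ to the degree-$n$ basis, the vectors $\lbrace \psi\mu(z)
\rbrace_{z \in \Omega_n}$ form a basis for $\overline{\mathcal O}_n$. By
the commuting diagram of Lemma~\ref{lem:munu} we have $\psi\mu =
\overline{\nu}$, so $\lbrace \overline{\nu}(z)\rbrace_{z \in \Omega_n}$ is
a basis for $\overline{\mathcal O}_n$ for each $n$. Recall that
$\overline{\nu}$ acts on $T_n$ as $\overline{\nu}_n$, the composition of
$\nu_n : T_n \to \mathcal O_n$ with the quotient map $\mathcal O_n \to
\overline{\mathcal O}_n = \mathcal O_n/\mathcal O_{n-1}$. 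Thus for $z \in
\Omega_n$ we have $\overline{\nu}(z) = \nu(z) + \mathcal O_{n-1}$.

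The heart of the argument is then a standard lifting principle for
filtrations: if the images of a family of elements in each graded
quotient $\mathcal O_n/\mathcal O_{n-1}$ form a basis, then the elements
themselves form a basis for the filtered space $\mathcal O = \bigcup_n
\mathcal O_n$. Concretely, I would first prove linear independence of
$\lbrace \nu(z)\rbrace_{z \in \Omega}$. Suppose a finite linear
combination $\sum_z c_z \nu(z) = 0$ is nontrivial; let $N$ be the largest
$n$ for which some $z \in \Omega_n$ has $c_z \neq 0$. Reducing modulo
$\mathcal O_{N-1}$ kills all terms with $z \in \Omega_n$, $n < N$, and
sends the degree-$N$ terms to $\sum_{z \in \Omega_N} c_z
\overline{\nu}(z) = 0$ in $\overline{\mathcal O}_N$; since these are
linearly independent, all the top-degree coefficients vanish,
contradicting the choice of $N$. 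For spanning, I would show by induction
on $n$ that $\mathcal O_n$ lies in the span of $\lbrace \nu(z)\rbrace_{z
\in \Omega_0 \cup \cdots \cup \Omega_n}$: given $u \in \mathcal O_n$, its
class in $\overline{\mathcal O}_n$ is a combination of the
$\overline{\nu}(z)$ with $z \in \Omega_n$, so subtracting the
corresponding combination of the $\nu(z)$ lands us in $\mathcal O_{n-1}$,
where the induction hypothesis applies; the base case is $\mathcal O_0 =
\mathbb F 1$. Since $\mathcal O = \bigcup_n \mathcal O_n$, this yields
spanning, and the two together show $\lbrace \nu(z)\rbrace_{z \in
\Omega}$ is a basis.

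The main obstacle is purely organizational rather than computational: one
must keep the degree-by-degree bookkeeping precise, in particular the
filtered-to-graded passage in the linear-independence step (isolating the
top degree $N$ and using that the associated graded vectors are
independent) and the descending induction in the spanning step. No new
relations or structural facts about $\mathcal O$ are needed beyond what
the filtration $\lbrace \mathcal O_n\rbrace$, the grading $\lbrace
\overline{\mathcal O}_n\rbrace$, and the isomorphism of Theorem
\ref{thm:main1} already provide.
```
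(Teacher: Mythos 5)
Your proposal is correct and follows essentially the same route as the paper: split $\Omega$ by degree, use the grading of $U^+$ to get a basis of each $U^+_n$, transport it through $\psi$ and the commuting diagram $\psi\mu=\overline{\nu}$ to get a basis of each $\overline{\mathcal O}_n$, and then lift from the associated graded algebra to $\mathcal O$. The only difference is that you spell out the final filtered-to-graded lifting step (top-degree independence argument and induction for spanning), which the paper compresses into the single observation that the $\nu(z)$, $z\in\Omega_n$, form a basis for a complement of $\mathcal O_{n-1}$ in $\mathcal O_n$.
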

\begin{proof}
Recall the $\mathbb F$-algebra isomorphism
$\psi: U^+ \to \overline{\mathcal O}$ from
 Theorem
\ref{thm:main1}.
For $n \in \mathbb N$ let 
 $\Omega_n$ denote the set of
 $n$-homogeneous elements in
$\Omega$.
We have $\Omega_n \subseteq T_n$ and
$\mu(T_n)=U^+_n$,
 so
$\mu(z) \in U^+_n$ for $z \in \Omega_n$.
By assumption $\Omega$ is homogeneous, so
 $\Omega = \cup_{n\in \mathbb N}\Omega_n$.
Since the vectors 
$\mu(z)$ $(z \in \Omega)$
form a basis for $U^+$
and the sum
$U^+=\sum_{n \in \mathbb N} U^+_n$ is direct, we see that
for $n \in \mathbb N$ the vectors
$\mu(z)$ $(z \in \Omega_n)$
form a basis for $U^+_n$.
We mentioned above Lemma
\ref{lem:munu} 
that 
$\psi(U^+_n) = \overline{\mathcal O}_n$,
so
the vectors
$\psi(\mu(z))$ $(z \in \Omega_n)$ 
form
a basis for $\overline{\mathcal O}_n$.
By
 Lemma
\ref{lem:munu}
we have $\psi(\mu(z)) = \overline{\nu}(z)$ for $z \in \Omega_n$,
so the vectors
$\overline{\nu}(z)$ $(z \in \Omega_n)$
form
a basis for $\overline{\mathcal O}_n$.
Consequently the vectors 
$\nu(z)$ $(z \in \Omega_n)$
form a basis for
a complement of
$\mathcal O_{n-1}$ in
$\mathcal O_n$.
Therefore the vectors
$\nu(z)$ $(z \in \Omega)$ 
form a basis for $\mathcal O$.
\end{proof}

\begin{note}\rm
Assume that $\mathbb F$ is algebraically
closed with characteristic zero, and $q$ is not
a root of unity.
In 
\cite[Theorem 2.29]{shape}, T. Ito and the present
author display a homogeneous subset $\Omega$ of
$T$ such that the vectors
$\mu(z)$ $(z \in \Omega)$ 
form a basis for
 the
$\mathbb F$-vector space $U^+$.
In \cite[Theorem~2.1]{ITaug}
the same authors show that the vectors
$\nu(z)$ $(z \in\Omega)$
form a basis for the $\mathbb F$-vector space
$\mathcal O$. We point out
that
\cite[Theorem~2.1]{ITaug} follows from
\cite[Theorem 2.29]{shape} and
Proposition \ref{prop:Omega}.
\end{note}


\section{The algebra $\square_q$}

\noindent We have been discussing the
algebra $U^+=U^+_q$ which is the positive part of
$U_q(\widehat {\mathfrak {sl}}_2)$, and the $q$-Onsager
algebra $ \mathcal O = \mathcal O_q$. 
As we compare these algebras,
it is useful to  bring in another algebra 
$\square_q$.
In this section 
we introduce $\square_q$, and describe how it
is related to
$U^+$ and $\mathcal O$.
\medskip

\noindent Let $\mathbb Z_4 =  {\mathbb Z} /4 \mathbb Z$
denote the cyclic group of order $4$.

\begin{definition}
\rm
\label{def:boxqV1M}
Let $\square_q$ denote the $\mathbb F$-algebra with
generators $\lbrace x_i\rbrace_{i\in \mathbb Z_4}$
and relations
\begin{eqnarray}
&&
\quad \qquad \qquad 
\frac{q x_i x_{i+1}-q^{-1}x_{i+1}x_i}{q-q^{-1}} = 1,
\label{eq:centralM}
\\
&&
x^3_i x_{i+2} - \lbrack 3 \rbrack_q x^2_i x_{i+2} x_i +
\lbrack 3 \rbrack_q x_i x_{i+2} x^2_i -x_{i+2} x^3_i = 0.
\label{eq:qSerreM}
\end{eqnarray}
\end{definition}

\noindent We have some comments.

\begin{lemma}
\label{lem:aut1M}
There exists an automorphism $\rho$ of 
$\square_q$ that sends $x_i \mapsto x_{i+1}$ for
$i \in \mathbb Z_4$. Moreover $\rho^4=1$.
\end{lemma}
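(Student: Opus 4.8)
The plan is to exploit the universal property of an algebra presented by generators and relations: an $\mathbb F$-algebra homomorphism out of $\square_q$ is determined by the images of the generators $\lbrace x_i \rbrace_{i \in \mathbb Z_4}$, and such an assignment extends to a (necessarily unique) homomorphism precisely when the proposed images satisfy the defining relations (\ref{eq:centralM}), (\ref{eq:qSerreM}). So the construction of $\rho$ reduces to a verification that the elements $\lbrace x_{i+1}\rbrace_{i\in\mathbb Z_4}$ satisfy these relations.

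First I would observe that the defining relations of $\square_q$, as listed in Definition \ref{def:boxqV1M}, are indexed by $i \in \mathbb Z_4$ and are permuted among themselves by the index shift $i \mapsto i+1$. Concretely, substituting $x_{i+1}$ for each generator turns the $q$-Weyl relation (\ref{eq:centralM}) for index $i$ into the same relation for index $i+1$, and turns the $q$-Serre relation (\ref{eq:qSerreM}) for index $i$ into that relation for index $i+1$ (using $i+1+2=i+3$ in $\mathbb Z_4$). Since all these relations hold in $\square_q$ for every index, the images $\lbrace x_{i+1}\rbrace$ satisfy the full defining relation set. Hence there is an $\mathbb F$-algebra endomorphism $\rho$ of $\square_q$ with $\rho(x_i)=x_{i+1}$ for $i\in\mathbb Z_4$.

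Next I would show $\rho$ is invertible. By the identical argument applied to the reverse shift $i\mapsto i-1$, there is an endomorphism $\rho'$ of $\square_q$ with $\rho'(x_i)=x_{i-1}$. The composition $\rho\rho'$ fixes each generator $x_i$, and since the $x_i$ generate $\square_q$, it follows that $\rho\rho'$ is the identity; likewise $\rho'\rho=1$. Therefore $\rho$ is an automorphism with inverse $\rho'$. Finally, $\rho^4$ sends $x_i \mapsto x_{i+4}=x_i$ for all $i$, because $4\equiv 0$ in $\mathbb Z_4$; as $\rho^4$ fixes every generator, it equals the identity, giving $\rho^4=1$.

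I do not anticipate a genuine obstacle here: the entire content is the $\mathbb Z_4$-symmetry of the presentation. The only point requiring care is to confirm that the cyclic shift genuinely permutes the relation set rather than producing new relations, which is immediate once one notes that both families of relations are stated uniformly over all $i\in\mathbb Z_4$ and that the shift respects the offset $+2$ appearing in the $q$-Serre relation.
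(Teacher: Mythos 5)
Your proof is correct and is precisely the routine argument the paper has in mind: the paper states Lemma \ref{lem:aut1M} without proof, since the $\mathbb Z_4$-symmetry of the presentation in Definition \ref{def:boxqV1M} makes the existence of $\rho$ immediate. Your verification that the shift permutes the relations, the construction of the inverse via the reverse shift, and the observation that $\rho^4$ fixes the generators fill in exactly the steps the author deemed routine.
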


\begin{lemma}
\label{lem:aM}
For $0 \not=a \in \mathbb F$ there exists
an automorphism of $\square_q$ that sends
\begin{eqnarray*}
x_0 \mapsto a x_0, \qquad \quad
x_1 \mapsto a^{-1} x_1, \qquad \quad
x_2 \mapsto a x_2, \qquad \quad
x_3 \mapsto a^{-1} x_3.
\end{eqnarray*}
\end{lemma}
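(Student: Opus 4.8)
The plan is to construct the desired automorphism directly on the generators and to verify that it respects the two families of defining relations, exactly as in the proof of Lemma \ref{lem:xi} for $U^+$. Since $\square_q$ is presented by generators $\lbrace x_i \rbrace_{i \in \mathbb Z_4}$ subject to (\ref{eq:centralM}) and (\ref{eq:qSerreM}), in order to produce an $\mathbb F$-algebra endomorphism it suffices to assign images to the $x_i$ and check that each defining relation is sent to a valid relation. Accordingly I would introduce scalars $c_i \in \mathbb F$ by $c_0 = c_2 = a$ and $c_1 = c_3 = a^{-1}$, and consider the map $\sigma$ sending $x_i \mapsto c_i x_i$ for $i \in \mathbb Z_4$.

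The key computation is that both families of relations are homogeneous for the scaling prescribed by the $c_i$. For relation (\ref{eq:centralM}), note that under $\sigma$ both $x_i x_{i+1}$ and $x_{i+1} x_i$ acquire the common factor $c_i c_{i+1}$, so the left-hand side is multiplied by $c_i c_{i+1}$. The crucial point is that $c_i c_{i+1} = 1$ for every $i \in \mathbb Z_4$, since consecutive indices have opposite parity and the corresponding scalars are $a$ and $a^{-1}$; hence (\ref{eq:centralM}) is preserved. For relation (\ref{eq:qSerreM}), every monomial on the left-hand side contains exactly three factors $x_i$ and one factor $x_{i+2}$, so $\sigma$ multiplies each such monomial by the same factor $c_i^3 c_{i+2}$. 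Since $i$ and $i+2$ share the same parity we have $c_{i+2} = c_i$, so this common factor equals $c_i^4$; the whole left-hand side is therefore scaled by $c_i^4$ and remains equal to $0$. This shows that $\sigma$ descends to an $\mathbb F$-algebra endomorphism of $\square_q$.

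To upgrade $\sigma$ to an automorphism I would exhibit a two-sided inverse. Repeating the construction with $a$ replaced by $a^{-1}$ yields an endomorphism $\sigma'$, and the compositions $\sigma \sigma'$ and $\sigma' \sigma$ fix each generator $x_i$, hence act as the identity on all of $\square_q$. Therefore $\sigma$ is the required automorphism.

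As for difficulty, there is essentially no obstacle: the entire content reduces to the two parity observations $c_i c_{i+1} = 1$ and $c_{i+2} = c_i$, which together make both relation families homogeneous. One could equivalently phrase the result by saying that $\square_q$ carries a $\mathbb Z$-grading in which $x_0, x_2$ have degree $+1$ and $x_1, x_3$ have degree $-1$, so that $\sigma$ is the associated grading automorphism attached to the scalar $a$; but the direct verification above is shorter and self-contained, and it is the route I would take.
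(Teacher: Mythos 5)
Your proof is correct and takes essentially the approach the paper intends: Lemma \ref{lem:aM} is stated there without proof as a routine check (compare the proof of Lemma \ref{lem:xi}, which simply invokes the presentation), and your verification that the scaling preserves the defining relations (\ref{eq:centralM}), (\ref{eq:qSerreM}) via the two parity observations, together with the explicit inverse obtained by replacing $a$ with $a^{-1}$, is exactly that check made explicit.
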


\noindent Our next goal is to describe how $\square_q$ is related to
$U^+$. 

\begin{definition}
\label{def:squareTeToM}
\rm Define the subalgebras 
$ \square^{\rm even}_q$,
$ \square^{\rm odd}_q$
of $ \square_q$ such that
\begin{enumerate}
\item[\rm (i)]
$ \square^{\rm even}_q$
 is 
generated by $x_0, x_2$;
\item[\rm (ii)]
$ \square^{\rm odd}_q$
  is
generated by $x_1, x_3$.
\end{enumerate}
\end{definition}

\begin{proposition}
\label{thm:tensorDecPreM}
The following {\rm (i)--(iii)} hold:
\begin{enumerate}
\item[\rm (i)] 
there exists an $\mathbb F$-algebra isomorphism
$U^+ \to  \square^{\rm even}_q$ that sends
$X\mapsto x_0$ and
$Y\mapsto x_2$;
\item[\rm (ii)] 
there exists an $\mathbb F$-algebra isomorphism
$U^+ \to  \square^{\rm odd}_q$ that sends
$X\mapsto x_1$ and
$Y\mapsto x_3$;
\item[\rm (iii)] 
the following is an isomorphism of
$\mathbb F$-vector spaces:
\begin{eqnarray*}
 \square^{\rm even}_q
\otimes
 \square^{\rm odd}_q
 & \to &  \square_q
\\
 u \otimes v  &\mapsto & uv
 \end{eqnarray*}
\end{enumerate}
\end{proposition}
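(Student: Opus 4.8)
The plan is to reduce all three parts to a single assertion about the linear map
\[
\Phi: U^+\otimes U^+ \longrightarrow \square_q, \qquad u\otimes v\longmapsto \phi_{\rm even}(u)\,\phi_{\rm odd}(v),
\]
where $\phi_{\rm even},\phi_{\rm odd}:U^+\to\square_q$ are defined as follows. Taking $i=0$ and $i=2$ in \eqref{eq:qSerreM} gives exactly the $q$-Serre relations \eqref{eq:XXXY}, \eqref{eq:YYYX} for the pair $x_0,x_2$; by the presentation of $U^+$ there is thus an algebra homomorphism $\phi_{\rm even}:U^+\to\square_q$ with $X\mapsto x_0$, $Y\mapsto x_2$, and its image is $\square^{\rm even}_q$. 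Symmetrically, $i=1,3$ yield $\phi_{\rm odd}:U^+\to\square_q$ with $X\mapsto x_1$, $Y\mapsto x_3$ and image $\square^{\rm odd}_q$. Suppose $\Phi$ is a bijection. Restricting $\Phi$ to $U^+\otimes 1$ and using $\phi_{\rm odd}(1)=1$ shows $\phi_{\rm even}$ is injective, hence an isomorphism $U^+\to\square^{\rm even}_q$, which is (i); symmetrically (ii); and since the multiplication map of (iii) equals $\Phi\circ(\phi_{\rm even}^{-1}\otimes\phi_{\rm odd}^{-1})$, it too is a bijection, which is (iii). So everything follows once $\Phi$ is shown to be bijective.

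For surjectivity of $\Phi$ I would prove $\square_q=\square^{\rm even}_q\,\square^{\rm odd}_q$ by straightening. Every even--odd pair of indices differs by $1$ or $3$ in $\mathbb Z_4$, i.e.\ is cyclically adjacent, so each such pair is governed by a $q$-Weyl relation \eqref{eq:centralM}; rewriting that relation expresses any product (odd generator)(even generator) as $q^{\pm2}$ times (even generator)(odd generator) plus a scalar. Since $\square_q$ is spanned by words in the $x_i$, an induction on word length---at each step using these four rewritings to move an even generator to the left past an adjacent odd generator, modulo strictly shorter words---yields $\square_q=\square^{\rm even}_q\,\square^{\rm odd}_q$, whence $\Phi$ is onto.

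The injectivity of $\Phi$ is the crux, and I would obtain it by constructing a $\square_q$-module structure on the vector space $U^+\otimes U^+$ for which $\omega=1\otimes1$ is cyclic and
\[
\phi_{\rm even}(u)\,\phi_{\rm odd}(v)\cdot\omega=u\otimes v \qquad (u,v\in U^+).
\]
Granting such an action, injectivity is immediate: if $\Phi\big(\sum_i u_i\otimes v_i\big)=0$ then applying it to $\omega$ gives $\sum_i u_i\otimes v_i=0$. To build the action I would let $x_0,x_2$ act by left multiplication by $X,Y$ on the first factor, so that the even $q$-Serre relations hold automatically and $\phi_{\rm even}(u)\cdot(w\otimes v)=uw\otimes v$. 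The odd generators $x_1,x_3$ must act so that the odd $q$-Serre relations hold (for which they contribute left multiplication by $X,Y$ on the second factor) while producing, in the four $q$-Weyl relations with the even generators, the scalar on the right of \eqref{eq:centralM}. This forces the odd generators to be dressed by grading operators and corrected by a $q$-skew derivation of $U^+$; here I would use that the bihomogeneity of \eqref{eq:XXXY}, \eqref{eq:YYYX} (bidegrees $(3,1)$ and $(1,3)$) refines the $\mathbb N$-grading of $U^+$ to an $\mathbb N^2$-grading by $(X,Y)$-degree, giving the requisite degree operators. Because the skew derivation annihilates $1$ and the grading twists act trivially on $\omega$, the displayed identity then follows by induction on the degrees of $u$ and $v$.

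I expect the genuine obstacle to be the verification that these operators satisfy \emph{all four} $q$-Weyl relations \eqref{eq:centralM} simultaneously. Each odd generator is cyclically adjacent to both even generators but with opposite $q$-commutation conventions (compare the relations with $i=0$ and $i=1$), so the grading twist and derivation correction attached to it are constrained in two competing ways at once; reconciling them is exactly the point at which the cyclic $\mathbb Z_4$ symmetry of the relations is essential. A convenient way to organize this verification---and, I suspect, the role of the auxiliary algebras $\widetilde\square_q\to\widehat\square_q\to\square_q$ announced in the introduction---is first to pass to the associated graded algebra for the total-degree filtration, where the scalar in \eqref{eq:centralM} drops out and the even--odd interaction becomes a clean $q$-commutation; one proves the tensor decomposition there by a normal-form (diamond-lemma) argument and then lifts it to $\square_q$ by the standard principle that a filtered map inducing an isomorphism of associated graded algebras is itself an isomorphism.
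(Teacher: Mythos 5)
Your reduction of (i)--(iii) to the bijectivity of $\Phi$ is correct, your straightening argument for surjectivity is fine, and your module-theoretic strategy for injectivity is in fact the same architecture the paper uses (a module on a tensor product with cyclic vector $1\otimes 1$, cf.\ Lemma~\ref{lem:TTP} and Proposition~\ref{prop:tensorDec}). But there is a genuine gap in the injectivity step, and it sits exactly where you did \emph{not} place it. You flag the four $q$-Weyl relations as the obstacle; those are comparatively routine, since the odd operators are built from the straightening rules precisely so that the Weyl relations hold. What is not routine is, first, that your skew derivations attached to $x_1,x_3$ --- defined by letter-removal formulas on the free algebra --- descend to $U^+$ at all, i.e.\ that they map the $q$-Serre ideal $S$ into itself; and second, that the resulting operators for $x_1,x_3$ satisfy the \emph{odd} $q$-Serre relations as operator identities on $U^+\otimes U^+$. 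The latter is not automatic: it holds for the leading parts (grading twist tensored with left multiplication by $X$ or $Y$) since those commute past each other cleanly, but the derivation corrections produce cross terms whose cancellation must be proved. Both missing verifications come down to a single computational fact, which is the actual crux of the paper: in the auxiliary algebra $\widetilde\square_q$ one has $x_{i+1}S_i=q^4S_ix_{i+1}$ and $x_{i-1}S_i=q^{-4}S_ix_{i-1}$ (Proposition~\ref{lem:deltaCom}, proved via the two coefficient tables); equivalently, the derivation parts annihilate the Serre elements. Your proposal neither performs this computation nor identifies it as needed. The paper's device for avoiding any operator-level Serre verification is to build the module for $\widetilde\square_q$ (Weyl relations only, with central invertible $c_i$ in place of the scalar $1$), where Lemma~\ref{lem:TTP} is easy, and then to kill the Serre ideal $J$ and the ideal generated by the $c_i-1$ in two ideal-theoretic steps (Lemmas~\ref{lem:Jdesc} and~\ref{lem:Kdesc}), which is exactly where Proposition~\ref{lem:deltaCom} gets consumed.

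Your fallback plan for organizing the verification is moreover circular. You propose to pass to the associated graded algebra of $\square_q$ for the total-degree filtration, ``where the scalar in (\ref{eq:centralM}) drops out,'' prove the tensor decomposition there by a diamond-lemma argument, and lift. But the associated graded algebra of a quotient of a free algebra by an ideal $I$ is the quotient by the ideal of \emph{all} leading terms of elements of $I$, which may be strictly larger than the ideal generated by the leading terms of the defining relations; the assertion that $\mathrm{gr}(\square_q)$ is presented by the homogenized relations is therefore not free --- it is essentially equivalent to the basis statement you are trying to prove. (For the $q$-Onsager algebra this identification is precisely Theorem~\ref{thm:main1}, which the paper obtains as a \emph{consequence} of the $\square_q$ results, not as an input.) Relatedly, your guess about the role of $\widetilde\square_q\to\widehat\square_q\to\square_q$ is off: these are not associated graded algebras but central extensions in which the scalar $1$ of (\ref{eq:centralM}) is replaced by central units $c_i$, chosen exactly so that the straightening module exists with no Serre relations to check and so that the Serre relations and the relations $c_i=1$ can be imposed afterwards at the level of ideals.
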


\noindent The proof of Proposition
\ref{thm:tensorDecPreM}
will be completed in Section 10.
\medskip

\noindent Next we describe how
$\square_q$ is related to $\mathcal O$.

\begin{proposition}
\label{prop:ABxyiLongerM}
Pick nonzero $a,b \in \mathbb F$. 
Then there exists an $\mathbb F$-algebra 
homomorphism
$ \sharp : \mathcal O \to \square_q$ that sends
\begin{eqnarray}
\label{eq:ABDefMapM}
A \mapsto a x_0+ a^{-1} x_1, \qquad \qquad B \mapsto
b x_2+ b^{-1} x_3.
\end{eqnarray}
\end{proposition}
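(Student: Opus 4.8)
The plan is to reduce the statement to a finite computation inside $\square_q$. By the universal property of the presentation in Definition~\ref{def:qOnsager}, an $\mathbb F$-algebra homomorphism $\sharp:\mathcal O\to\square_q$ sending $A\mapsto\alpha$ and $B\mapsto\beta$, where $\alpha=ax_0+a^{-1}x_1$ and $\beta=bx_2+b^{-1}x_3$, exists if and only if the pair $\alpha,\beta$ satisfies the $q$-Dolan/Grady relations (\ref{eq:AAAB}), (\ref{eq:BBBA}) in $\square_q$. So it suffices to verify these two relations using only the defining relations (\ref{eq:centralM}), (\ref{eq:qSerreM}) of $\square_q$.

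Writing $F(u,v)=u^3v-\lbrack 3\rbrack_q u^2vu+\lbrack 3\rbrack_q uvu^2-vu^3$, I would first cut the work in half. By Lemma~\ref{lem:aut1M} the automorphism $\rho^2$ interchanges $x_0\leftrightarrow x_2$ and $x_1\leftrightarrow x_3$, hence sends $\alpha\mapsto ax_2+a^{-1}x_3$ and $\beta\mapsto bx_0+b^{-1}x_1$. Applying $\rho^2$ to relation (\ref{eq:AAAB}) for the pair $(a,b)$ therefore produces exactly relation (\ref{eq:BBBA}) for the swapped pair $(b,a)$. As the goal is to prove the statement for all nonzero $a,b$, it is enough to establish (\ref{eq:AAAB}) alone. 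One may simplify further with the scaling automorphism of Lemma~\ref{lem:aM}, which carries the $(a,b)$-instance of (\ref{eq:AAAB}) to the $(ac,bc)$-instance; choosing $c=a^{-1}$ normalizes $a=1$, leaving a verification in a single parameter $b$.

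For the computation itself I would exploit a grading. Setting $\deg x_0=\deg x_2=1$ and $\deg x_1=\deg x_3=-1$ is consistent with all defining relations: in (\ref{eq:centralM}) each quadratic term and the constant have degree $0$, and every term of (\ref{eq:qSerreM}) has degree $4$. Thus $\square_q$ is $\mathbb Z$-graded, and (\ref{eq:AAAB}) may be checked one degree at a time. Now $F(\alpha,\beta)$ has components only in degrees $-4,-2,0,2,4$, whereas the right-hand side $(q^2-q^{-2})^2(\beta\alpha-\alpha\beta)$ has components only in degrees $-2,0,2$. The degree-$4$ component of $F(\alpha,\beta)$ is $a^3bF(x_0,x_2)$, which is zero by the $q$-Serre relation (\ref{eq:qSerreM}) with $i=0$; symmetrically the degree-$(-4)$ component is $a^{-3}b^{-1}F(x_1,x_3)=0$ by (\ref{eq:qSerreM}) with $i=1$. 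These extreme degrees thus vanish for free, which is the structural reason the $q$-Serre relations of $\square_q$ enter.

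What remains is to show that the degree $2,0,-2$ components of $F(\alpha,\beta)-(q^2-q^{-2})^2(\beta\alpha-\alpha\beta)$ vanish, and this is the crux. In these components the four letters of each word mix even- and odd-indexed generators. Since there is no quadratic relation between the diagonal pairs $x_0,x_2$ and $x_1,x_3$, the only available tool is the $q$-Weyl relation (\ref{eq:centralM}) for the cyclically adjacent pairs $(x_0,x_1),(x_1,x_2),(x_2,x_3),(x_3,x_0)$; fortunately each odd-indexed letter is adjacent to each even-indexed letter in $\mathbb Z_4$, so (\ref{eq:centralM}) can be used to move odd letters past even ones. The hard part will be the bookkeeping of the resulting scalar corrections: each use of (\ref{eq:centralM}) either commutes a pair up to $q^{\pm2}$ or collapses it to the constant $q-q^{-1}$, lowering the degree, and I must verify that across the degree-$0$ component (and, by symmetry, the degree-$\pm2$ components) these corrections accumulate to precisely $(q^2-q^{-2})^2(\beta\alpha-\alpha\beta)$ while all higher-length contributions cancel. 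This is a finite but delicate reduction, and it is the unique place where the characteristic constant $(q^2-q^{-2})^2$ of the $q$-Onsager algebra must be produced with exactly the right coefficient; correctly accounting for it is the heart of the argument.
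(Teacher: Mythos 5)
Your reductions are all sound, and several of them track the paper's own strategy: the appeal to the universal property of the presentation of $\mathcal O$; the use of $\rho^2$ from Lemma \ref{lem:aut1M} to get the second $q$-Dolan/Grady relation from the first with parameters swapped (the paper does exactly this with $\widetilde \rho^2$ to pass from (\ref{eq:1Assertion}) to (\ref{eq:2Assertion})); the normalization $a=1$ via Lemma \ref{lem:aM}; and the $\mathbb Z$-grading with $\deg x_0=\deg x_2=1$, $\deg x_1=\deg x_3=-1$, which is legitimate because both defining relations (\ref{eq:centralM}), (\ref{eq:qSerreM}) are homogeneous, and which anticipates the grading the paper itself constructs in Section 10 (Definition \ref{def:squareN}, Proposition \ref{prop:Grading}). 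Your observation that the degree $\pm 4$ components of $F(\alpha,\beta)$ are $a^{3}b\,F(x_0,x_2)$ and $a^{-3}b^{-1}F(x_1,x_3)$, hence vanish by (\ref{eq:qSerreM}), is correct and structurally matches the paper, where precisely these extreme components survive as $S_0+S_1$ in Proposition \ref{prop:tildeMain}.

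The genuine gap is that the proposal stops exactly where the proof begins. The vanishing of the degree $0,\pm 2$ components --- equivalently, the claim that the scalar corrections produced by repeated use of the $q$-Weyl relation accumulate to precisely $(q^2-q^{-2})^2(\beta\alpha-\alpha\beta)$ while all longer words cancel --- is announced (``a finite but delicate reduction,'' ``the heart of the argument'') but never carried out. That computation is not a routine detail; it is the bulk of the paper's proof. The paper performs it by passing to $\widetilde\square_q$, which has an explicit basis by Proposition \ref{prop:SquareBasis}(iv), expanding every term of the Dolan/Grady combination in that basis (Lemmas \ref{lem:AAAB}, \ref{lem:NTL1}--\ref{lem:NTL3}, and the coefficient tables of Lemma \ref{lem:expand}), obtaining the exact identity of Proposition \ref{prop:tildeMain}, and only then applying $\widetilde g(a,a^{-1},b,b^{-1})$ followed by the canonical homomorphism to land in $\square_q$. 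Your alternative of rewriting directly in $\square_q$ is viable in principle --- to verify an identity in a quotient it suffices to reduce the difference to zero using the relations --- but until that finite bookkeeping is actually exhibited, with the constant $(q^2-q^{-2})^2$ shown to emerge with the right coefficient, the existence of $\sharp$ has not been established. As written, the proposal is a correct and well-organized proof plan whose decisive step is missing.
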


\noindent The proof of Proposition
\ref{prop:ABxyiLongerM}
will be completed in Section 8. 
In
Theorem \ref{thm:xiInj}
we will show that 
the map $\sharp$ from
Proposition
\ref{prop:ABxyiLongerM} is injective.

\section{The algebra $\widetilde \square_q$}

\noindent In the previous section we introduced the
algebra $\square_q$. As we investigate $\square_q$,
it is useful to consider a certain homomorphic preimage
denoted $\widetilde \square_q$. In this section
we introduce 
$\widetilde \square_q$ and describe its basic properties.

\begin{definition}
\label{def:boxqV2}
\rm
Let $\widetilde \square_q$ denote the $\mathbb F$-algebra with
generators $c^{\pm 1}_i, x_i$ $(i \in \mathbb Z_4)$
and relations
\begin{eqnarray}
&&c_i c^{-1}_i = c^{-1}_i c_i = 1,
\label{eq:cci}
\\
&&\mbox{$c^{\pm 1}_i$ is central in $\widetilde \square_q$},
\label{eq:cciCentral}
\\
&& \frac{q x_i x_{i+1}-q^{-1}x_{i+1}x_i}{q-q^{-1}} = c_i.
\label{eq:central2}
\end{eqnarray}
\end{definition}

\noindent We have some comments.

\begin{lemma}
\label{lem:aut2}
There exists an automorphism $\widetilde \rho$ of 
$\widetilde \square_q$ that sends
$c_i \mapsto c_{i+1}$ and $x_i \mapsto x_{i+1}$ for
$i \in \mathbb Z_4$. Moreover $\widetilde \rho^4=1$.
\end{lemma}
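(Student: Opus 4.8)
The plan is to exploit the presentation of $\widetilde\square_q$ by generators and relations given in Definition \ref{def:boxqV2}. To produce an algebra homomorphism out of $\widetilde\square_q$, it suffices to assign to each generator an element of the target algebra and to check that these assigned elements satisfy the defining relations (\ref{eq:cci})--(\ref{eq:central2}); the assignment then extends uniquely to an algebra homomorphism. So first I would define $\widetilde\rho$ on the generators by $c_i \mapsto c_{i+1}$, $c_i^{-1}\mapsto c_{i+1}^{-1}$, and $x_i \mapsto x_{i+1}$ for all $i \in \mathbb Z_4$, taking the images inside $\widetilde\square_q$ itself.

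The key observation is that the complete list of defining relations is invariant under the index shift $i \mapsto i+1$. Indeed, the relations in Definition \ref{def:boxqV2} are indexed by $i$ running over all of $\mathbb Z_4$, so applying the shift carries the relation at index $i$ to the corresponding relation at index $i+1$, which is again one of the defining relations and hence holds in $\widetilde\square_q$. Checking this for each of the three families is immediate: (\ref{eq:cci}) at index $i$ becomes (\ref{eq:cci}) at index $i+1$; centrality (\ref{eq:cciCentral}) of $c_i^{\pm 1}$ passes to centrality of $c_{i+1}^{\pm 1}$; and the $q$-Weyl relation (\ref{eq:central2}) at index $i$ becomes the same relation at index $i+1$ with $c_i$ replaced by $c_{i+1}$. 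Thus the proposed images satisfy every defining relation, and $\widetilde\rho$ extends to an algebra endomorphism of $\widetilde\square_q$.

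It remains to see that $\widetilde\rho$ is invertible with $\widetilde\rho^4=1$. For this I would evaluate $\widetilde\rho^4$ on the generators: since $\mathbb Z_4$ has order $4$, we have $i+4=i$, so $\widetilde\rho^4$ fixes each $c_i^{\pm 1}$ and each $x_i$. As these elements generate $\widetilde\square_q$ and $\widetilde\rho^4$ is an algebra endomorphism agreeing with the identity on generators, it follows that $\widetilde\rho^4$ is the identity map. In particular $\widetilde\rho$ has two-sided inverse $\widetilde\rho^3$, so $\widetilde\rho$ is an automorphism and $\widetilde\rho^4=1$, as claimed. There is no genuine obstacle here; the only point needing care is the bookkeeping that the shift $i\mapsto i+1$ permutes the defining relations among themselves rather than creating new ones, which is transparent because the index $i$ in (\ref{eq:cci})--(\ref{eq:central2}) already ranges over all of $\mathbb Z_4$. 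This argument parallels the automorphism $\rho$ of $\square_q$ in Lemma \ref{lem:aut1M}.
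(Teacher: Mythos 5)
Your proof is correct and follows exactly the routine argument the paper has in mind: the paper states Lemma \ref{lem:aut2} without proof, treating it as immediate because the defining relations (\ref{eq:cci})--(\ref{eq:central2}) are permuted by the index shift $i \mapsto i+1$ over $\mathbb Z_4$. Your verification that the relations are shift-invariant and that $\widetilde\rho^4$ fixes the generators (hence equals the identity, making $\widetilde\rho^3$ the inverse) is precisely the omitted standard argument.
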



\begin{lemma}
\label{lem:squareCanon}
There exists a unique $\mathbb F$-algebra homomorphism
 $ \widetilde \square_q \to \square_q$ that
 sends $c^{\pm 1}_i \mapsto 1$ and
 $x_i \mapsto x_i$ for $i \in \mathbb Z_4$.
This homomorphism is surjective.
\end{lemma}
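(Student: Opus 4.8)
The plan is to construct the homomorphism via the universal property of an algebra defined by generators and relations: to specify an $\mathbb F$-algebra homomorphism out of $\widetilde\square_q$, it suffices to assign images to the generators in such a way that the defining relations (\ref{eq:cci})--(\ref{eq:central2}) are satisfied by those images, and any such assignment extends uniquely to an algebra homomorphism. I would therefore declare the images $c_i^{\pm 1} \mapsto 1$ and $x_i \mapsto x_i$ for $i \in \mathbb Z_4$, where the targets are understood to lie in $\square_q$, and then check the three families of relations one at a time.

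The verification is routine. For relation (\ref{eq:cci}), the images give $1 \cdot 1 = 1$, which holds trivially. For relation (\ref{eq:cciCentral}), the image of each $c_i^{\pm 1}$ is the multiplicative identity $1$ of $\square_q$, and $1$ is central in any algebra, so this holds as well. For relation (\ref{eq:central2}), the left-hand side maps to
\[
\frac{q x_i x_{i+1} - q^{-1} x_{i+1} x_i}{q - q^{-1}},
\]
while the right-hand side $c_i$ maps to $1$; the resulting equation is precisely the defining relation (\ref{eq:centralM}) of $\square_q$, which holds by construction. Since all defining relations of $\widetilde\square_q$ are respected, the universal property yields the desired homomorphism, and it is unique because a homomorphism is determined by its action on a generating set and $\widetilde\square_q$ is generated by the $c_i^{\pm 1}$ and $x_i$.

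For surjectivity I would observe that $\square_q$ is generated by $\{x_i\}_{i \in \mathbb Z_4}$, and each $x_i$ is the image of the generator $x_i$ of $\widetilde\square_q$; hence the image of the homomorphism contains a generating set of $\square_q$ and therefore equals all of $\square_q$. There is no genuine obstacle here: the one conceptual point worth flagging is that, because we are mapping \emph{into} $\square_q$, we need only verify the relations of the domain $\widetilde\square_q$ and not those of the codomain. In particular the $q$-Serre relations (\ref{eq:qSerreM}) of $\square_q$ play no role in constructing this map; they are automatically satisfied by the images since those images already live in $\square_q$, where (\ref{eq:qSerreM}) holds.
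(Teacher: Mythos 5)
Your proof is correct and follows exactly the route the paper intends (the paper states this lemma without proof, treating it as an immediate comparison of Definitions \ref{def:boxqV2} and \ref{def:boxqV1M}): map out of the presented algebra $\widetilde\square_q$ by checking that the proposed images satisfy (\ref{eq:cci})--(\ref{eq:central2}), with (\ref{eq:central2}) becoming (\ref{eq:centralM}), then get uniqueness from generation and surjectivity because the image contains $\lbrace x_i \rbrace_{i \in \mathbb Z_4}$. Your closing remark that only the domain's relations need verification, and that (\ref{eq:qSerreM}) plays no role in constructing the map, is exactly the right point to flag.
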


\begin{definition}
\label{def:Canon}
\rm The homomorphism
 $ \widetilde \square_q \to \square_q$ from  
Lemma
\ref{lem:squareCanon} will be called {\it canonical}.
\end{definition}

\begin{definition}
\label{def:TeTo}
\rm Define the subalgebras 
$\widetilde \square^{\rm even}_q$,
$\widetilde \square^{\rm odd}_q$,
$\widetilde C$ of 
$\widetilde \square_q$ such that
\begin{enumerate}
\item[\rm (i)]
$\widetilde \square^{\rm even}_q$
 is 
generated by $x_0, x_2$;
\item[\rm (ii)]
$\widetilde \square^{\rm odd}_q$
  is
generated by $x_1, x_3$;
\item[\rm (iii)]
  $\widetilde C$ is
generated by $\lbrace c^{\pm 1}_i\rbrace_{i \in \mathbb Z_4}$.
\end{enumerate}
\end{definition}


\begin{lemma}
\label{lem:xiadjT}
Let $\lbrace \alpha_i \rbrace_{i\in \mathbb Z_4}$
denote invertible elements in 
$\widetilde C$.
Then there exists
an  $\mathbb F$-algebra homomorphism 
$\widetilde \square_q\to \widetilde \square_q $ that sends
$x_i \mapsto \alpha_i x_i $ and
$c_i \mapsto \alpha_i \alpha_{i+1} c_i$ for $i \in \mathbb Z_4$.
\end{lemma}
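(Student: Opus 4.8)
The plan is to invoke the presentation of $\widetilde\square_q$ from Definition \ref{def:boxqV2}. To construct an $\mathbb F$-algebra homomorphism out of $\widetilde\square_q$, it suffices to assign to each generator an image in $\widetilde\square_q$ and to check that the defining relations (\ref{eq:cci})--(\ref{eq:central2}) hold for these images. I would set
$x_i \mapsto \alpha_i x_i$, $c_i \mapsto \alpha_i\alpha_{i+1} c_i$, and $c^{-1}_i \mapsto \alpha^{-1}_i\alpha^{-1}_{i+1} c^{-1}_i$ for $i \in \mathbb Z_4$, where $\alpha^{-1}_i$ denotes the inverse of $\alpha_i$ in $\widetilde C$. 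Note that assigning $c^{-1}_i$ its own image in this way is exactly what will make relation (\ref{eq:cci}) survive.

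The engine of the verification is that every element of $\widetilde C$ is central in $\widetilde\square_q$. Indeed, by Definition \ref{def:TeTo} the subalgebra $\widetilde C$ is generated by the elements $c^{\pm 1}_j$ $(j \in \mathbb Z_4)$, which are central by (\ref{eq:cciCentral}); since the center of $\widetilde\square_q$ is a subalgebra, each $\alpha_i$ and each $\alpha^{-1}_i$ is central, and in particular the $\alpha_i$ commute with one another and with every $x_j$ and $c_j$.

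With this in hand I would check the three families of relations in turn. For (\ref{eq:cci}), the image of $c_i c^{-1}_i$ is $\alpha_i\alpha_{i+1} c_i\,\alpha^{-1}_i\alpha^{-1}_{i+1} c^{-1}_i$, which by centrality rearranges to $(\alpha_i\alpha^{-1}_i)(\alpha_{i+1}\alpha^{-1}_{i+1})(c_i c^{-1}_i)=1$, and similarly for $c^{-1}_i c_i$. For (\ref{eq:cciCentral}), the image $\alpha_i\alpha_{i+1} c_i$ of $c_i$ is a product of central elements, hence central, and likewise for its inverse. The only relation involving the noncommutative generators is the $q$-Weyl relation (\ref{eq:central2}); for it I would compute the image of the left-hand side,
\begin{eqnarray*}
&& \frac{q(\alpha_i x_i)(\alpha_{i+1}x_{i+1}) - q^{-1}(\alpha_{i+1}x_{i+1})(\alpha_i x_i)}{q-q^{-1}}
\\
&& \qquad = \alpha_i\alpha_{i+1}\,\frac{q x_i x_{i+1} - q^{-1} x_{i+1} x_i}{q-q^{-1}}
= \alpha_i\alpha_{i+1} c_i,
\end{eqnarray*}
where the first equality uses centrality of $\alpha_i,\alpha_{i+1}$ and the second uses (\ref{eq:central2}). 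The right-hand side is precisely the chosen image of $c_i$, so the relation is respected for every $i \in \mathbb Z_4$.

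I do not anticipate a genuine obstacle here: the entire content is that $\widetilde C$ is central, so the scalars $\alpha_i$ pass through the $q$-Weyl relation and reproduce exactly the prescribed twist $\alpha_i\alpha_{i+1}$ of $c_i$. The only bookkeeping to keep straight is the separate assignment for $c^{-1}_i$ needed for (\ref{eq:cci}), and the uniform use of the cyclic index convention (reading $i+1$ in $\mathbb Z_4$, so that $i=3$ pairs $x_3$ with $x_0$).
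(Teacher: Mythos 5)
Your proof is correct and is exactly the routine verification the paper has in mind (the paper states Lemma \ref{lem:xiadjT} without proof, treating it as immediate from the presentation in Definition \ref{def:boxqV2}). Your key observation---that $\widetilde C$ lies in the center so the $\alpha_i$ pass through the relation (\ref{eq:central2}) and reproduce the twist $\alpha_i\alpha_{i+1}c_i$---together with the explicit assignment for $c_i^{-1}$, is precisely the intended argument.
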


\begin{definition}
\label{def:alphaAutT}
\rm
The homomorphism in Lemma
\ref{lem:xiadjT} will be denoted by $\widetilde g(\alpha_0, \alpha_1, \alpha_2, \alpha_3)$.
\end{definition}

\begin{lemma} 
\label{lem:alphaAutT}
Referring to Lemma
\ref{lem:xiadjT} and Definition
\ref{def:alphaAutT}, assume that $0 \not=\alpha_i \in \mathbb F$
for $i \in \mathbb Z_4$. Then
$\widetilde g(\alpha_0, \alpha_1, \alpha_2, \alpha_3)$ is an automorphism
of $\widetilde \square_q$. Its inverse is 
$\widetilde g(\alpha^{-1}_0, \alpha^{-1}_1, \alpha^{-1}_2, \alpha^{-1}_3)$.
\end{lemma}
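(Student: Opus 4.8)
The plan is to exhibit an explicit two-sided inverse, namely $h := \widetilde g(\alpha_0^{-1},\alpha_1^{-1},\alpha_2^{-1},\alpha_3^{-1})$, and to verify that $h$ and $g := \widetilde g(\alpha_0,\alpha_1,\alpha_2,\alpha_3)$ compose to the identity in both orders. The first thing I would record is that each nonzero scalar $\alpha_i \in \mathbb F$ is an invertible element of $\widetilde C$: indeed $\widetilde C$ contains $\mathbb F 1$, and the inverse of $\alpha_i 1$ in $\widetilde C$ is $\alpha_i^{-1} 1$. Hence Lemma \ref{lem:xiadjT} applies to both tuples $(\alpha_0,\alpha_1,\alpha_2,\alpha_3)$ and $(\alpha_0^{-1},\alpha_1^{-1},\alpha_2^{-1},\alpha_3^{-1})$, so $g$ and $h$ are genuine $\mathbb F$-algebra homomorphisms $\widetilde\square_q \to \widetilde\square_q$, and in particular $g\circ h$ and $h\circ g$ are $\mathbb F$-algebra homomorphisms as well.

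The core of the argument is a short computation on generators. Since $\widetilde\square_q$ is generated by $\lbrace c_i^{\pm 1}, x_i\rbrace_{i\in\mathbb Z_4}$ and an $\mathbb F$-algebra homomorphism is determined by its action on a generating set, it suffices to check that $h\circ g$ and $g\circ h$ fix each $x_i$ and each $c_i$; the relations $c_i c_i^{-1}=1$ then force the $c_i^{-1}$ to be fixed automatically. Using $\mathbb F$-linearity I would compute $(h\circ g)(x_i) = h(\alpha_i x_i) = \alpha_i\, h(x_i) = \alpha_i \alpha_i^{-1} x_i = x_i$ and $(h\circ g)(c_i) = h(\alpha_i\alpha_{i+1} c_i) = \alpha_i\alpha_{i+1}\, h(c_i) = \alpha_i\alpha_{i+1}\,\alpha_i^{-1}\alpha_{i+1}^{-1} c_i = c_i$ for all $i\in\mathbb Z_4$. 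Thus $h\circ g$ agrees with the identity map on a generating set, whence $h\circ g = \mathrm{id}$. The computation for $g\circ h$ is identical with the roles of $\alpha_i$ and $\alpha_i^{-1}$ interchanged, giving $g\circ h = \mathrm{id}$.

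Having $g\circ h = \mathrm{id} = h\circ g$ shows that $g$ is a bijection with inverse $h$, so $g$ is an $\mathbb F$-algebra automorphism of $\widetilde\square_q$ whose inverse is $\widetilde g(\alpha_0^{-1},\alpha_1^{-1},\alpha_2^{-1},\alpha_3^{-1})$, as claimed. I anticipate no real obstacle here: the only point requiring a moment's care is the passage from ``invertible elements of $\widetilde C$'' in Lemma \ref{lem:xiadjT} to ``nonzero scalars,'' i.e. observing that $\mathbb F^\times \subseteq \widetilde C^\times$ so that Lemma \ref{lem:xiadjT} is applicable to both $g$ and $h$; once that is in hand, the verification is a routine check on generators. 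One could equivalently phrase this via a composition law $\widetilde g(\beta_0,\beta_1,\beta_2,\beta_3)\circ\widetilde g(\alpha_0,\alpha_1,\alpha_2,\alpha_3) = \widetilde g(\alpha_0\beta_0,\alpha_1\beta_1,\alpha_2\beta_2,\alpha_3\beta_3)$, proved the same way on generators, and then specialize $\beta_i=\alpha_i^{-1}$.
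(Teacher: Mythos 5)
Your proof is correct and follows essentially the same route as the paper, which simply notes that $\widetilde g(\alpha_0,\alpha_1,\alpha_2,\alpha_3)$ and $\widetilde g(\alpha^{-1}_0,\alpha^{-1}_1,\alpha^{-1}_2,\alpha^{-1}_3)$ are mutually inverse and hence automorphisms; your version spells out the generator-by-generator check and the (correct) observation that nonzero scalars are invertible in $\widetilde C$, so Lemma \ref{lem:xiadjT} applies to both maps.
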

\begin{proof}
One checks that
$\widetilde g(\alpha_0, \alpha_1, \alpha_2, \alpha_3)$ and
$\widetilde g(\alpha^{-1}_0, \alpha^{-1}_1, \alpha^{-1}_2, \alpha^{-1}_3)$
are inverses.
Therefore
$\widetilde g(\alpha_0, \alpha_1, \alpha_2, \alpha_3)$ is invertible and
hence an automorphism of $\widetilde \square_q$.
\end{proof}

\noindent Our next goal is to obtain
an analog of
Proposition
\ref{thm:tensorDecPreM}
that applies to $\widetilde \square_q$.

\begin{lemma}
\label{lem:RR}
In $\widetilde \square_q$,
\begin{eqnarray*}
x_1 x_0 &=& q^2 x_0 x_1 + (1-q^2)c_0,
\\
x_1 x_2 &=& q^{-2} x_2 x_1 + (1-q^{-2})c_1,
\\
x_3 x_2 &=& q^2 x_2 x_3 + (1-q^2)c_2,
\\
x_3 x_0 &=& q^{-2} x_0 x_3 + (1-q^{-2})c_3.
\end{eqnarray*}
\end{lemma}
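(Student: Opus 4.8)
The plan is to derive all four relations directly from the single defining relation (\ref{eq:central2}), which upon clearing the denominator reads
\[
q\, x_i x_{i+1} - q^{-1} x_{i+1} x_i = (q - q^{-1}) c_i, \qquad i \in \mathbb Z_4.
\]
From this one identity there are exactly two useful rearrangements. Solving for the reversed product $x_{i+1} x_i$ gives
\[
x_{i+1} x_i = q^2 x_i x_{i+1} + (1 - q^2) c_i,
\]
while solving instead for the forward product $x_i x_{i+1}$ gives
\[
x_i x_{i+1} = q^{-2} x_{i+1} x_i + (1 - q^{-2}) c_i.
\]
Both are valid for every $i \in \mathbb Z_4$, and each is a one-line manipulation (isolate the desired product, then multiply by the appropriate power of $q$; the coefficient $-q(q-q^{-1}) = 1-q^2$, respectively $q^{-1}(q-q^{-1}) = 1-q^{-2}$, appears after simplification).

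Next I would read off the four asserted relations as specific instances of these two forms. The first and third relations are the reversed form taken at $i = 0$ and $i = 2$: indeed $x_1 x_0 = q^2 x_0 x_1 + (1 - q^2) c_0$ and $x_3 x_2 = q^2 x_2 x_3 + (1 - q^2) c_2$ are precisely that identity. The second and fourth relations are the forward form taken at $i = 1$ and $i = 3$. For $i=1$ this reads $x_1 x_2 = q^{-2} x_2 x_1 + (1-q^{-2}) c_1$ verbatim. For $i = 3$ one must track the index arithmetic in $\mathbb Z_4$: since $3 + 1 = 0$, the forward product $x_i x_{i+1}$ is $x_3 x_0$ and the reversed product $x_{i+1} x_i$ is $x_0 x_3$, yielding $x_3 x_0 = q^{-2} x_0 x_3 + (1 - q^{-2}) c_3$.

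Since every step is just a rearrangement of (\ref{eq:central2}), there is no genuine obstacle here; the only point requiring a moment's care is the subscript bookkeeping modulo $4$ in the fourth relation, where the neighbor of $x_3$ is $x_0$ rather than a nonexistent $x_4$. In particular, the centrality and invertibility relations (\ref{eq:cci}), (\ref{eq:cciCentral}) for the $c_i$ play no role in this lemma and need not be invoked.
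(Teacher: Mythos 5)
Your proposal is correct and is exactly the paper's argument: the paper's entire proof reads ``These are reformulations of (\ref{eq:central2})'', and your two rearrangements (solving for $x_{i+1}x_i$, respectively $x_i x_{i+1}$) together with the $\mathbb Z_4$ index bookkeeping spell out precisely those reformulations. The algebra checks out, including the coefficients $1-q^{2}$ and $1-q^{-2}$ and the observation that centrality of the $c_i$ is not needed.
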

\begin{proof} These are reformulations of
(\ref{eq:central2}).
\end{proof}

\begin{definition}\rm
The four relations in Lemma
\ref{lem:RR} will be called {\it reduction rules} for
$\widetilde \square_q$.
\end{definition}

\noindent We now express the reduction rules in
a uniform way.

\begin{lemma}
\label{lem:RRuniform}
Referring to the algebra 
$\widetilde \square_q$,
pick $u \in \lbrace x_0, x_2 \rbrace$ and
 $v \in \lbrace x_1, x_3\rbrace $.
Then 
\begin{eqnarray}
\label{eq:xycom}
vu = uv q^{\langle u,v\rangle} + \gamma(u,v)(1-q^{\langle u,v\rangle})
\end{eqnarray}
where
\bigskip

\centerline{
\begin{tabular}[t]{c|cc}
$\langle\,,\,\rangle$ & $x_1$ & $x_3$
   \\  \hline
$x_0$ &
$2$ & $-2$ 
  \\ 
$x_2$ &
 $-2$ & $2$
   \\
     \end{tabular}
  \qquad \qquad
\begin{tabular}[t]{c|cc}
$\gamma(\,,\,)$ & $x_1$ & $x_3$
   \\  \hline
$x_0$ &
$c_0$ & $c_3$ 
  \\ 
$x_2$ &
 $c_1$ & $c_2$
   \\
     \end{tabular}}
\bigskip
\end{lemma}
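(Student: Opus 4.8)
The plan is to treat (\ref{eq:xycom}) as a uniform bookkeeping device that repackages the four reduction rules of Lemma \ref{lem:RR} into a single formula, and to verify it by exhausting the four choices of the pair $(u,v)$ with $u \in \lbrace x_0,x_2\rbrace$ and $v\in\lbrace x_1,x_3\rbrace$. For each such pair I would read the exponent $\langle u,v\rangle$ and the element $\gamma(u,v)$ off the two displayed tables, substitute them into (\ref{eq:xycom}), and check that the resulting identity is exactly one of the four lines of Lemma \ref{lem:RR}.

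Carrying this out: the pair $(u,v)=(x_0,x_1)$ has $\langle u,v\rangle=2$ and $\gamma(u,v)=c_0$, so (\ref{eq:xycom}) becomes $x_1x_0=q^2x_0x_1+c_0(1-q^2)$, which is the first line of Lemma \ref{lem:RR}. Likewise $(x_2,x_1)$ has $\langle u,v\rangle=-2$ and $\gamma(u,v)=c_1$, reproducing the second line; $(x_2,x_3)$ has $\langle u,v\rangle=2$ and $\gamma(u,v)=c_2$, reproducing the third line; and $(x_0,x_3)$ has $\langle u,v\rangle=-2$ and $\gamma(u,v)=c_3$, reproducing the fourth line. Since Lemma \ref{lem:RR} has already been established as a reformulation of (\ref{eq:central2}), each of the four instances of (\ref{eq:xycom}) holds, and these four instances are precisely the assertion of the lemma.

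The only point requiring care is the sign convention in the exponent $\langle u,v\rangle$ together with the matching choice of index on $\gamma(u,v)$. The value $\langle u,v\rangle=+2$ occurs exactly when the pair sits as $(u,v)=(x_i,x_{i+1})$ in the cyclic order, so that $vu=x_{i+1}x_i$ and $c_i$ appears; the value $\langle u,v\rangle=-2$ occurs when it sits as $(v,u)=(x_i,x_{i+1})$. Tracing these indices through (\ref{eq:central2}) is what pins down both tables simultaneously. This is pure index bookkeeping and presents no genuine obstacle: the mathematical content lies entirely in Lemma \ref{lem:RR}, and (\ref{eq:xycom}) merely records those four relations in the form that is convenient for the normal-ordering arguments to follow.
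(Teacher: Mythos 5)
Your proposal is correct and matches the paper's proof, which simply cites Lemma \ref{lem:RR}; you have just spelled out the four-case verification that the paper leaves implicit. The case-by-case substitutions and the index bookkeeping $(x_i,x_{i+1}) \leftrightarrow c_i$ are all accurate.
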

\begin{proof} Use Lemma
\ref{lem:RR}.
\end{proof}

\begin{lemma}
\label{lem:uLong}
Fix $r \in \mathbb N$.
Referring to the algebra $\widetilde \square_q$,
pick 
$u_i \in \lbrace x_0, x_2\rbrace $ for $1 \leq i \leq r$, 
and also
$v \in \lbrace x_1, x_3 \rbrace$.
Then 
\begin{eqnarray*}
&&
v u_1u_2\cdots u_r = u_1 u_2 \cdots u_r v 
q^{
\langle u_1, v\rangle
+
\cdots
+
\langle u_r, v\rangle
}
\\
&& \qquad \qquad \qquad 
+ 
\sum_{i=1}^r u_1\cdots u_{i-1} u_{i+1} \cdots u_r \gamma(u_i,v)
q^{\langle u_1,v\rangle + \cdots + \langle u_{i-1},v\rangle}
(1-q^{\langle u_i, v\rangle}).
\end{eqnarray*}
The functions $\langle \,,\,\rangle $ and
$\gamma (\,,\,)$ are from
Lemma
\ref{lem:RRuniform}.
\end{lemma}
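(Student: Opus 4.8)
The plan is to prove the identity by induction on $r$, using the single-step commutation rule of Lemma~\ref{lem:RRuniform} together with the centrality of the elements $c_i$. For the base case $r=0$ the product $u_1\cdots u_r$ is empty and both sides reduce to $v$ (the sum on the right being empty and the leading exponent giving $q^0=1$); equivalently one may take $r=1$ as the base case, where the assertion is exactly~(\ref{eq:xycom}).

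For the inductive step, assume $r\geq 1$ and that the formula holds for products of $r-1$ factors from $\lbrace x_0,x_2\rbrace$. First I would use Lemma~\ref{lem:RRuniform} to move $v$ past the single factor $u_1$, obtaining
\begin{eqnarray*}
v u_1 u_2\cdots u_r
= u_1\,(v u_2\cdots u_r)\,q^{\langle u_1,v\rangle}
+ \gamma(u_1,v)\,(1-q^{\langle u_1,v\rangle})\,u_2\cdots u_r.
\end{eqnarray*}
In the second term, $\gamma(u_1,v)$ is one of $c_0,c_1,c_2,c_3$, hence central by~(\ref{eq:cciCentral}); sliding it to the right gives $u_2\cdots u_r\,\gamma(u_1,v)(1-q^{\langle u_1,v\rangle})$, which is precisely the $i=1$ summand of the desired right-hand side. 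Here one reads the conventions that for $i=1$ the prefactor $u_1\cdots u_{i-1}$ is the empty product $1$ and the exponent $\langle u_1,v\rangle+\cdots+\langle u_{i-1},v\rangle$ is the empty sum $0$. To the first term I would apply the induction hypothesis to $v u_2\cdots u_r$, a product involving the $r-1$ factors $u_2,\ldots,u_r$ from $\lbrace x_0,x_2\rbrace$, and then multiply the result on the left by $u_1$ and by the scalar $q^{\langle u_1,v\rangle}$.

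The remaining work is bookkeeping of the exponents, which I expect to be the only point requiring care, though it is routine rather than a genuine obstacle. Multiplying the induction hypothesis through by $q^{\langle u_1,v\rangle}$ turns the leading exponent $\langle u_2,v\rangle+\cdots+\langle u_r,v\rangle$ into $\langle u_1,v\rangle+\cdots+\langle u_r,v\rangle$, matching the desired leading term, and turns each exponent $\langle u_2,v\rangle+\cdots+\langle u_{i-1},v\rangle$ (for $2\leq i\leq r$) into $\langle u_1,v\rangle+\cdots+\langle u_{i-1},v\rangle$, so that the $i$th summand acquires exactly the prefactor $u_1 u_2\cdots u_{i-1}u_{i+1}\cdots u_r$ and exponent required. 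Combining these $i=2,\ldots,r$ summands with the $i=1$ summand produced by the first reduction step yields the full sum from $i=1$ to $r$, completing the induction.
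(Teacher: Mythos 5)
Your proof is correct and follows the same route as the paper, whose entire proof reads ``By Lemma \ref{lem:RRuniform} and induction on $r$''; you have simply spelled out the induction that the paper leaves implicit. Your observation that the centrality of the $c_i$ (used to slide $\gamma(u_1,v)$ past $u_2\cdots u_r$) is needed is a correct and worthwhile detail that the paper's one-line proof glosses over.
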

\begin{proof} By Lemma
\ref{lem:RRuniform}
and induction on $r$.
\end{proof}

\begin{lemma} 
\label{lem:vTe}
In the algebra $\widetilde \square_q$,
\begin{eqnarray*}
&&
x_1 \widetilde \square^{\rm even}_q 
\subseteq 
\widetilde \square^{\rm even}_q  x_1 + 
\widetilde \square^{\rm even}_q  c_0 +
\widetilde \square^{\rm even}_q  c_1,
\\
&&
x_3 \widetilde \square^{\rm even}_q 
\subseteq 
\widetilde \square^{\rm even}_q  x_3 + 
\widetilde \square^{\rm even}_q  c_2 +
\widetilde \square^{\rm even}_q  c_3.
\end{eqnarray*}
\end{lemma}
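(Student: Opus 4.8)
The plan is to reduce each containment to the case of a single spanning vector of $\widetilde\square^{\rm even}_q$ and then apply Lemma \ref{lem:uLong} directly. Since $\widetilde\square^{\rm even}_q$ is generated by $x_0, x_2$, the $\mathbb F$-vector space $\widetilde\square^{\rm even}_q$ is spanned by the products $u_1 u_2 \cdots u_r$ with $r \in \mathbb N$ and each $u_i \in \{x_0, x_2\}$. By linearity it therefore suffices to verify each asserted containment when the element of $\widetilde\square^{\rm even}_q$ is such a product $w = u_1 u_2 \cdots u_r$.

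For the first containment I would put $v = x_1$ and apply Lemma \ref{lem:uLong} to $v u_1 u_2 \cdots u_r$. The leading term $u_1 u_2 \cdots u_r v$ (times a power of $q$) lies in $\widetilde\square^{\rm even}_q x_1$, since $u_1 \cdots u_r \in \widetilde\square^{\rm even}_q$ and $v = x_1$. Each remaining term is, up to a scalar, $(u_1 \cdots u_{i-1} u_{i+1} \cdots u_r)\gamma(u_i, x_1)$; here the product with the $i$th factor omitted is again a word in $x_0, x_2$, hence lies in $\widetilde\square^{\rm even}_q$, while the table of Lemma \ref{lem:RRuniform} gives $\gamma(x_0, x_1) = c_0$ and $\gamma(x_2, x_1) = c_1$. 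Because $c_0, c_1$ are central by \eqref{eq:cciCentral}, each such term lies in $\widetilde\square^{\rm even}_q c_0 + \widetilde\square^{\rm even}_q c_1$, and the first containment follows.

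The second containment is handled identically with $v = x_3$: now $\gamma(x_0, x_3) = c_3$ and $\gamma(x_2, x_3) = c_2$ by the same table, so the omitted-factor terms land in $\widetilde\square^{\rm even}_q c_2 + \widetilde\square^{\rm even}_q c_3$ instead, while the leading term lies in $\widetilde\square^{\rm even}_q x_3$. I do not anticipate a genuine obstacle here, as the computational content is entirely contained in Lemma \ref{lem:uLong}; the only points that require care are the spanning reduction at the outset and the appeal to centrality of the $c_i$, which is exactly what permits moving each $\gamma(u_i, v)$ to the right past the word in $x_0, x_2$ so as to exhibit membership in $\widetilde\square^{\rm even}_q c_j$.
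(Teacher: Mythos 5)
Your proof is correct and follows essentially the same route as the paper, whose one-line proof cites exactly Definition \ref{def:TeTo} and Lemma \ref{lem:uLong}; you have simply written out the spanning-by-words reduction and the term-by-term bookkeeping that the paper leaves implicit. One small remark: the appeal to centrality of the $c_i$ is not actually needed, since in the statement of Lemma \ref{lem:uLong} each factor $\gamma(u_i,v)$ already appears to the right of the word with the $i$th letter omitted, so each summand lies in $\widetilde \square^{\rm even}_q c_j$ as written.
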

\begin{proof}  
By Definition
\ref{def:TeTo} and
 Lemma
\ref{lem:uLong}.
\end{proof}

\begin{lemma} 
 \label{lem:factor}
 We have
 \begin{eqnarray}
 \label{eq:threeFactor}
 \widetilde \square_q = 
 \widetilde \square^{\rm even}_q 
 \widetilde \square^{\rm odd}_q 
 \widetilde C.
 \end{eqnarray}
 \end{lemma}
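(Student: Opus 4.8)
The plan is to show that the right-hand side, viewed as a subspace
$W = \widetilde \square^{\rm even}_q \widetilde \square^{\rm odd}_q \widetilde C$
of $\widetilde \square_q$, is a left ideal containing $1$. Indeed $1 \in W$ since $1$ lies in each of the three subalgebras. The algebra $\widetilde \square_q$ is spanned by words in the generators $x_i, c^{\pm 1}_i$ $(i \in \mathbb Z_4)$, and every such word is obtained from $1$ by a sequence of left multiplications by generators. Hence, once $W$ is known to be closed under left multiplication by each generator, an induction on word length gives $\widetilde \square_q \subseteq W$, and the reverse inclusion is automatic. So the whole proof reduces to checking stability of $W$ under left multiplication by $x_0, x_1, x_2, x_3$ and $c^{\pm 1}_i$.

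Two of these cases are immediate. For a typical spanning element $uvc$ with $u \in \widetilde \square^{\rm even}_q$, $v \in \widetilde \square^{\rm odd}_q$, $c \in \widetilde C$, left multiplication by $x_0$ or $x_2$ gives $(x_0 u)vc$, and $x_0 u \in \widetilde \square^{\rm even}_q$ by Definition \ref{def:TeTo}, so the result stays in $W$. Left multiplication by $c^{\pm 1}_i$ is handled using centrality (\ref{eq:cciCentral}): one commutes $c^{\pm 1}_i$ to the right past $u$ and $v$ to obtain $uv(c^{\pm 1}_i c)$ with $c^{\pm 1}_i c \in \widetilde C$, again in $W$.

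The substantive case is left multiplication by the odd generators $x_1, x_3$, and this is exactly where Lemma \ref{lem:vTe} does the work. For $x_1 \cdot (uvc)$ we first apply Lemma \ref{lem:vTe} to rewrite $x_1 u \in \widetilde \square^{\rm even}_q x_1 + \widetilde \square^{\rm even}_q c_0 + \widetilde \square^{\rm even}_q c_1$. In the first summand, the factor $x_1 v$ belongs to $\widetilde \square^{\rm odd}_q$ (both factors being odd), so $\widetilde \square^{\rm even}_q (x_1 v) c \subseteq W$. In the remaining two summands, $c_0$ and $c_1$ are central, so after commuting them to the right we obtain terms of the form $\widetilde \square^{\rm even}_q\, v\, (c_j c) \subseteq W$. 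Thus $x_1 (uvc) \in W$, and the identical argument with the second inclusion of Lemma \ref{lem:vTe} handles $x_3$.

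I expect the odd-generator step to be the only real obstacle, and it is already surmounted by Lemma \ref{lem:vTe}: the whole point is that pushing an odd generator to the right across an even word produces only a shift by central factors $c_i$, never new even content. With all generator cases verified, $W$ is a left ideal containing $1$, whence $W = \widetilde \square_q$, which is the asserted factorization (\ref{eq:threeFactor}).
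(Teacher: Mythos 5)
Your proof is correct and follows essentially the same route as the paper: both arguments hinge on Lemma \ref{lem:vTe} together with the centrality of $\widetilde C$ to push the odd generators rightward past even elements, collecting only central factors $c_i$ along the way. The difference is purely organizational — you verify stability of $\widetilde \square^{\rm even}_q \widetilde \square^{\rm odd}_q \widetilde C$ under left multiplication by each generator and induct on word length, whereas the paper first deduces $\widetilde \square^{\rm odd}_q \widetilde \square^{\rm even}_q \subseteq \widetilde \square^{\rm even}_q \widetilde \square^{\rm odd}_q \widetilde C$ and concludes that the product is a subalgebra containing the three generating subalgebras.
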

 \begin{proof} 
 By Definition
 \ref{def:TeTo}
 and
 Lemma
 \ref{lem:vTe} we obtain
 $
 \widetilde \square^{\rm odd}_q
 \widetilde \square^{\rm even}_q \subseteq 
 \widetilde \square^{\rm even}_q
  \widetilde \square^{\rm odd}_q
 \widetilde C$.
 By this and since $\widetilde C$ is central,
 we see that
 $
 \widetilde \square^{\rm even}_q
 \widetilde \square^{\rm odd}_q
 \widetilde C
 $
 is a subalgebra of
 $\widetilde \square_q$.
 This subalgebra contains
 $\widetilde \square^{\rm even}_q$,
 $\widetilde \square^{\rm odd}_q$,
 $\widetilde C$
  and these together generate $\widetilde \square_q$.
 The result follows.
 \end{proof}


\begin{definition}
\label{def:Laurent}
\rm 
Let $\lbrace \lambda_i \rbrace_{i\in \mathbb Z_4}$ denote
mutually commuting indeterminates. Let
$\mathbb F \lbrack
\lambda^{\pm 1}_0,
\lambda^{\pm 1}_1,
\lambda^{\pm 1}_2,
\lambda^{\pm 1}_3\rbrack$
denote the $\mathbb F$-algebra
consisting of the Laurent polynomials in 
$\lbrace \lambda_i \rbrace_{i\in \mathbb Z_4}$ that have all coefficients
in $\mathbb F$.
We abbreviate
\begin{eqnarray}
\label{eq:Ldef}
L= 
\mathbb F \lbrack
\lambda^{\pm 1}_0,
\lambda^{\pm 1}_1,
\lambda^{\pm 1}_2,
\lambda^{\pm 1}_3\rbrack.
\end{eqnarray}
\end{definition}

\noindent Recall the free algebra $T$ with standard generators
$x,y$.

\begin{lemma}
\label{lem:TTP}
The $\mathbb F$-vector space $T\otimes T\otimes L$
has a 
unique 
$\widetilde \square_q$-module structure 
such that for all $u,v \in T$ and $w \in L$,
\begin{enumerate}
\item[\rm (i)] 
$c^{\pm 1}_i$ sends 
$u\otimes v \otimes w \mapsto 
u\otimes v \otimes (\lambda^{\pm 1}_i w)$  for $i \in \mathbb Z_4$;
\item[\rm (ii)] 
$x_0 $ sends 
$u\otimes v \otimes w \mapsto 
(xu)\otimes v \otimes  w$;
\item[\rm (iii)] 
$x_1 $ sends 
$1\otimes v \otimes w \mapsto 
1\otimes (xv) \otimes  w$;
\item[\rm (iv)] 
$x_2 $ sends 
$u\otimes v \otimes w \mapsto 
(yu)\otimes v \otimes w$;
\item[\rm (v)] 
$x_3 $ sends 
$1\otimes v \otimes w \mapsto 
1\otimes (yv) \otimes  w$.
\end{enumerate}
\end{lemma}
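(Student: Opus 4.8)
The plan is to view a $\widetilde\square_q$-module structure on $M = T\otimes T\otimes L$ as an $\mathbb F$-algebra homomorphism $\widetilde\square_q \to \mathrm{End}_{\mathbb F}(M)$. Since $\widetilde\square_q$ is presented by generators and relations (Definition \ref{def:boxqV2}), giving such a homomorphism amounts to producing $\mathbb F$-linear operators on $M$ realizing $c^{\pm 1}_i, x_i$ $(i \in \mathbb Z_4)$ that satisfy (\ref{eq:cci}), (\ref{eq:cciCentral}), (\ref{eq:central2}). Conditions (i), (ii), (iv) fix the operators $\mathcal C^{\pm 1}_i, \mathcal X_0, \mathcal X_2$ for $c^{\pm 1}_i, x_0, x_2$ completely, so the substance of the argument concerns $x_1, x_3$, whose action is prescribed by (iii), (v) only on the subspace $1\otimes T\otimes L$. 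Both the existence and the uniqueness of the structure hinge on how this partial prescription propagates.

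The key structural observation I would use is that the first tensor factor decomposes as $T = \mathbb F 1 \oplus xT \oplus yT$, since every word is either empty or begins with a unique letter $x$ or $y$. This yields a direct sum $M = (1\otimes T\otimes L)\oplus \mathcal X_0 M \oplus \mathcal X_2 M$, with $\mathcal X_0, \mathcal X_2$ restricting to bijections onto the latter two summands. I would then define $\mathcal X_1$ by recursion on the length of the first-factor word: on $1\otimes T\otimes L$ set $\mathcal X_1(1\otimes v\otimes w) = 1\otimes xv\otimes w$ as required by (iii), and on the remaining summands force the reduction rules of Lemma \ref{lem:RR} by declaring
\[
\mathcal X_1\mathcal X_0 = q^2 \mathcal X_0 \mathcal X_1 + (1-q^2)\mathcal C_0,
\qquad
\mathcal X_1\mathcal X_2 = q^{-2} \mathcal X_2 \mathcal X_1 + (1-q^{-2})\mathcal C_1 .
\]
Because $\mathcal X_0 m$ and $\mathcal X_2 m$ strictly increase the first-factor length, this recursion is well defined; it is simply the operator version of the closed ``move everything to the right'' formula of Lemma \ref{lem:uLong}. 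The operator $\mathcal X_3$ is defined in the same way, using the values $\langle x_0,x_3\rangle = -2$, $\langle x_2,x_3\rangle = 2$ and $\gamma(x_0,x_3)=c_3$, $\gamma(x_2,x_3)=c_2$ from Lemma \ref{lem:RRuniform}.

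With these definitions I would verify the defining relations. Relation (\ref{eq:cci}) is immediate, since $\mathcal C^{\pm 1}_i$ is multiplication by $\lambda^{\pm 1}_i$ on the third factor and is thus invertible. The four instances of (\ref{eq:central2}) are exactly the reduction rules of Lemma \ref{lem:RR}, which now hold by the very construction of $\mathcal X_1, \mathcal X_3$. For the centrality relation (\ref{eq:cciCentral}) it remains only to check that each $\mathcal C_i$ commutes with $\mathcal X_1$ and $\mathcal X_3$ (commutation with $\mathcal C_j, \mathcal X_0, \mathcal X_2$ being clear, as those operators act on disjoint tensor factors or by commuting Laurent monomials). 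I would prove this by induction on first-factor length, the inductive step being a one-line computation pushing $\mathcal C_i$ through the recursive formula for $\mathcal X_1$ and invoking that $\mathcal C_i$ already commutes with $\mathcal X_0, \mathcal X_2, \mathcal C_0, \mathcal C_1$. This establishes existence, and the structure manifestly satisfies (i)--(v).

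For uniqueness, observe that in any $\widetilde\square_q$-module satisfying (i)--(v) the operators for $c^{\pm 1}_i, x_0, x_2$ are forced by (i), (ii), (iv), while the reduction rules of Lemma \ref{lem:RR}, being consequences of (\ref{eq:central2}), express the action of $x_1$ on $\mathcal X_0 M$ and $\mathcal X_2 M$ in terms of its action on elements of shorter first-factor length together with $\mathcal C_0, \mathcal C_1$. Induction on first-factor length, with base case (iii) on $1\otimes T\otimes L$, then shows that the action of $x_1$ is completely determined; the same applies to $x_3$. Hence the structure is unique. I expect no genuine obstacle here: the only delicate point in the whole argument is the bookkeeping of the recursion along the grading of the first tensor factor, and once the decomposition $T = \mathbb F 1 \oplus xT \oplus yT$ is in hand, both existence and uniqueness reduce to routine inductions.
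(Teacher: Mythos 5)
Your proof is correct, and it executes the construction in a way that differs meaningfully from the paper's. The paper defines the actions of $x_1$ and $x_3$ on a word basis element $u\otimes v\otimes w$ by the explicit closed formula of Lemma \ref{lem:uLong} (a sum over the letters of $u$), and then asserts that ``one checks'' the defining relations (\ref{eq:cci})--(\ref{eq:central2}); the substantive part of that check is relation (\ref{eq:central2}), which must be verified by computing with the closed formula. You instead use the decomposition $T=\mathbb F 1\oplus xT\oplus yT$ of the free algebra to define $\mathcal X_1,\mathcal X_3$ recursively along the length grading of the first tensor factor, building all four instances of (\ref{eq:central2}) into the definition itself; the recursion is well defined because left multiplication by $x$ and by $y$ is injective on $T$ and strictly raises degree, so the only relation left to verify is centrality (\ref{eq:cciCentral}), which follows by an easy induction on first-factor length. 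The trade-off is that your route replaces the paper's omitted verification by a well-definedness observation that is immediate from the direct sum decomposition, while the paper's route produces the explicit formula for the action at the outset. Since your recursion unwinds to exactly the paper's closed formula (as you note), the two module structures coincide, and your uniqueness induction is essentially the paper's appeal to Lemma \ref{lem:uLong} made explicit: in any structure satisfying (i)--(v), the relations force the action of $x_1,x_3$ on $\mathcal X_0 M\oplus \mathcal X_2 M$ in terms of their action in lower degree, with (iii), (v) as base case.
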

\begin{proof} We first show that
the $\widetilde \square_q$-module structure exists.
For notational convenience abbreviate $V = T \otimes T \otimes L$.
Let ${\rm End}(V)$ denote the $\mathbb F$-algebra
consisting of the $\mathbb F$-linear maps from 
$V$ to $V$. We now define some elements in
${\rm End}(V)$. Momentarily abusing notation, we call
these elements 
$\lbrace  {c}^{\pm 1}_i \rbrace_{i \in \mathbb Z_4}$
and
$\lbrace  {x}_i \rbrace_{i \in \mathbb Z_4}$.
For $i \in \mathbb Z_4$ there exist 
 ${c}^{\pm 1}_i \in
{\rm End}(V)$
that satisfy (i).
There exists 
 $x_0 \in
{\rm End}(V)$
 that satisfies (ii).
There exists 
 $x_2 \in
{\rm End}(V)$
 that satisfies (iv).
We now define 
 $x_1 \in
{\rm End}(V)$
and
$x_3 \in
{\rm End}(V)$.
To do this, we specify how
$x_1$ and  $x_3$ 
act on $u \otimes v \otimes w$.
Here we use Lemma
\ref{lem:uLong} as a guide.
Without loss of generality, we may assume that $u$ is a word in $T$.
Write $u$ as a product 
$ u_1 u_2 \cdots u_r$ of standard generators.
 The element $x_1$ sends
\begin{eqnarray*}
&&u\otimes v \otimes w \mapsto 
u \otimes (xv) \otimes  w
q^{
\langle u_1, x\rangle
+
\cdots
+
\langle u_r, x\rangle
}
\\
&& \qquad \quad
+ 
\sum_{i=1}^r u_1\cdots u_{i-1} u_{i+1} \cdots u_r 
\otimes v \otimes w
\gamma(u_i,x)
q^{\langle u_1,x\rangle + \cdots + \langle u_{i-1},x\rangle}
(1-q^{\langle u_i, x\rangle}),
\end{eqnarray*}
and $x_3 $ sends
\begin{eqnarray*}
&&u\otimes v \otimes w \mapsto 
u \otimes (yv) \otimes  w
q^{
\langle u_1, y\rangle
+
\cdots
+
\langle u_r, y\rangle
}
\\
&& \qquad \quad
+ 
\sum_{i=1}^r u_1\cdots u_{i-1} u_{i+1} \cdots u_r 
\otimes v \otimes w
\gamma(u_i,y)
q^{\langle u_1,y\rangle + \cdots + \langle u_{i-1},y\rangle}
(1-q^{\langle u_i, y\rangle}),
\end{eqnarray*}
where
\bigskip

\centerline{
\begin{tabular}[t]{c|cc}
$\langle\,,\,\rangle$ & $x$ & $y$
   \\  \hline
$x$ &
$2$ & $-2$ 
  \\ 
$y$ &
 $-2$ & $2$
   \\
     \end{tabular}
  \qquad \qquad
\begin{tabular}[t]{c|cc}
$\gamma(\,,\,)$ & $x$ & $y$
   \\  \hline
$x$ &
$\lambda_0$ & $\lambda_3$ 
  \\ 
$y$ &
 $\lambda_1$ & $\lambda_2$
   \\
     \end{tabular}}
\bigskip
\noindent We just specified how $x_1$ and $x_3$ act on
$u \otimes v \otimes w$. For $u=1$
these actions become (iii) and (v) in the 
lemma statement.
We have defined the elements
$\lbrace c^{\pm 1}_i\rbrace_{i \in \mathbb Z_4}$
and $\lbrace x_i\rbrace_{i \in \mathbb Z_4}$ in
${\rm End}(V)$.
One checks that these elements satisfy the defining relations
(\ref{eq:cci})--(\ref{eq:central2}) for
$\widetilde \square_q$.
This turns 
$V$ into
a 
$\widetilde \square_q$-module, and by construction
this 
$\widetilde \square_q$-module
 satisfies
the requirements in the lemma statement.
We have shown that
the $\widetilde \square_q$-module structure exists.
The 
 $\widetilde \square_q$-module structure is unique in
 view of Lemma
\ref{lem:uLong}.
\end{proof}

\begin{proposition}
\label{prop:tensorDec}
The following {\rm (i)--(iv)} hold:
\begin{enumerate}
\item[\rm (i)] 
there exists an $\mathbb F$-algebra isomorphism
$T\to \widetilde \square^{\rm even}_q$ that sends
$x\mapsto x_0$ and
$y\mapsto x_2$;
\item[\rm (ii)] 
there exists an $\mathbb F$-algebra isomorphism
$T\to \widetilde \square^{\rm odd}_q$ that sends
$x\mapsto x_1$ and
$y\mapsto x_3$;
\item[\rm (iii)] 
there exists an 
$\mathbb F$-algebra isomorphism
$L\to \widetilde C$ 
that sends $\lambda^{\pm 1}_i \mapsto c^{\pm 1}_i$ for
$i \in \mathbb Z_4$;
\item[\rm (iv)] 
the following is an isomorphism of
$\mathbb F$-vector spaces:
\begin{eqnarray*}
\widetilde \square^{\rm even}_q
\otimes
\widetilde \square^{\rm odd}_q
\otimes
\widetilde C
 & \to & \widetilde \square_q
\\
 u \otimes v \otimes c  &\mapsto & uv c
 \end{eqnarray*}
\end{enumerate}
\end{proposition}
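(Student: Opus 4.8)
The plan is to exploit the explicit $\widetilde\square_q$-module $V = T\otimes T\otimes L$ from Lemma~\ref{lem:TTP}, which I expect to be the engine driving all four parts. First I would dispose of parts (i)--(iii) by constructing one-sided inverses. For (i), Definition~\ref{def:boxqV2} supplies an $\mathbb F$-algebra homomorphism $T\to\widetilde\square^{\rm even}_q$ sending $x\mapsto x_0$, $y\mapsto x_2$, because $\widetilde\square^{\rm even}_q$ is generated by $x_0,x_2$ and $T$ is free (no relations to check). To see this is an isomorphism, I would use the action on $V$: the element $1\otimes 1\otimes 1\in V$ is a ``highest weight'' vector, and by Lemma~\ref{lem:TTP}(ii),(iv) the subalgebra $\widetilde\square^{\rm even}_q$ acts on $1\otimes 1\otimes 1$ by sending it to $w\otimes 1\otimes 1$ where $w$ ranges over all words in $x,y$ (since $x_0$ prepends $x$ and $x_2$ prepends $y$). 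Because the words form a basis of $T$, the composite $T\to\widetilde\square^{\rm even}_q\to V$, $z\mapsto z\cdot(1\otimes 1\otimes 1)$, is injective, forcing the homomorphism $T\to\widetilde\square^{\rm even}_q$ to be injective, hence an isomorphism. Part (ii) is identical using $x_1,x_3$ and Lemma~\ref{lem:TTP}(iii),(v), applied to $1\otimes 1\otimes 1$ so that $\widetilde\square^{\rm odd}_q$ produces $1\otimes w\otimes 1$. For (iii), the map $L\to\widetilde C$ exists since $\widetilde C$ is generated by the central invertible $c_i^{\pm1}$; injectivity follows because $c_i^{\pm1}$ acts on $V$ as multiplication by $\lambda_i^{\pm1}$ in the third tensor factor, and the $\lambda_i$ are algebraically independent.

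For part (iv), the multiplication map $m:\widetilde\square^{\rm even}_q\otimes\widetilde\square^{\rm odd}_q\otimes\widetilde C\to\widetilde\square_q$ is surjective by Lemma~\ref{lem:factor} (the factorization $\widetilde\square_q=\widetilde\square^{\rm even}_q\widetilde\square^{\rm odd}_q\widetilde C$). The hard part is injectivity, and this is exactly where the $V$-module structure earns its keep. Using the isomorphisms from (i)--(iii), I would identify the source with $T\otimes T\otimes L$ and consider the composite
\[
T\otimes T\otimes L \xrightarrow{\ \cong\ } \widetilde\square^{\rm even}_q\otimes\widetilde\square^{\rm odd}_q\otimes\widetilde C \xrightarrow{\ m\ } \widetilde\square_q \xrightarrow{\ \cdot(1\otimes1\otimes1)\ } V = T\otimes T\otimes L.
\]
The claim is that this composite is (essentially) the identity, or at least triangular with respect to a suitable grading, hence bijective; since the first and last spaces coincide, bijectivity of the composite forces $m$ to be injective. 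Concretely, an element $u\otimes v\otimes c$ with $u=u(x_0,x_2)$, $v=v(x_1,x_3)$, $c=c(c_i^{\pm1})$ acts on $1\otimes1\otimes1$ and I would track the result through Lemma~\ref{lem:TTP}. Because $\widetilde C$ is central and $c_i^{\pm1}$ acts only on the $L$-factor, and because $\widetilde\square^{\rm even}_q$ acts only on the first $T$-factor (leaving $1$ in the second), the ordering even$\cdot$odd$\cdot c$ applied to $1\otimes1\otimes1$ yields $\bar u\otimes\bar v\otimes\bar c$ up to lower-order corrections, where $\bar u,\bar v,\bar c$ are the images of $u,v,c$ under (i)--(iii).

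The main obstacle is the ``lower-order corrections.'' When $x_1,x_3$ act on a vector $u\otimes1\otimes1$ with $u\neq1$, Lemma~\ref{lem:TTP} shows they do \emph{not} simply drop $x$ or $y$ into the second factor; they also produce a sum of terms in which a letter is deleted from $u$ and absorbed into the $L$-factor via $\gamma$. Thus the composite is not literally the identity but is upper-triangular for the total-degree filtration by $\deg_T(u)+\deg_T(v)$: the leading term of $m(u\otimes v\otimes c)\cdot(1\otimes1\otimes1)$ is $u\otimes v\otimes c$ (reading the $T$-factors as words and $c$ as the Laurent monomial), while every correction term has strictly smaller degree in the first $T$-factor. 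I would make this precise by filtering $V$ by the degree of the first tensor factor and checking that the associated graded of the composite is the identity; a map that is the identity on the associated graded of a filtered space with finite-dimensional graded pieces (here graded by multidegree in $x,y$ and by the $\lambda$-exponents) is itself bijective. This gives injectivity of $m$, completing part (iv) and with it the proposition.
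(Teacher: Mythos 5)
Your proposal follows the same strategy as the paper: everything is reduced to the bijectivity of the multiplication map $f : T\otimes T\otimes L \to \widetilde\square_q$, $u\otimes v\otimes w\mapsto f_1(u)f_2(v)f_3(w)$, with surjectivity supplied by Lemma~\ref{lem:factor} and injectivity extracted from the $\widetilde\square_q$-module $V=T\otimes T\otimes L$ of Lemma~\ref{lem:TTP} by composing $f$ with evaluation at $1\otimes1\otimes1$ (parts (i)--(iii) then follow exactly as you say).

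The one place you part ways with the paper is the filtration/triangularity machinery in part (iv), and it is worth seeing why it is unnecessary. In the product $f_1(u)f_2(v)f_3(w)$ acting on $1\otimes1\otimes1$, the \emph{rightmost} factor acts first: $f_3(w)$ sends $1\otimes1\otimes1\mapsto1\otimes1\otimes w$; then $f_2(v)$, a combination of words in $x_1,x_3$, only ever acts on vectors whose first tensor factor is $1$, so only the clean rules (iii), (v) of Lemma~\ref{lem:TTP} are invoked and the result is exactly $1\otimes v\otimes w$; finally $f_1(u)$ builds up the first factor, giving $u\otimes v\otimes w$. So the composite is literally the identity, and injectivity of $f$ is immediate --- this is the paper's proof. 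The ``lower-order corrections'' you anticipate would appear only if $x_1,x_3$ had to act on a vector with nontrivial first factor, which happens when computing the action of $f_3(w)f_2(v)f_1(u)$, not of $f_1(u)f_2(v)f_3(w)$; your description (the even part first produces $u\otimes1\otimes1$, then the odd part acts on it) has the order of operations of a left action reversed. Your triangularity claim is nevertheless true --- trivially, since the corrections vanish --- and the graded-bijective-implies-bijective step is sound, so your outline does yield a proof; it simply carries machinery built to handle a phenomenon that never occurs. (Had the multiplication map been defined in the order odd$\,\cdot\,$even$\,\cdot\,$central, a filtration argument of your kind would genuinely be needed, with the caveat that the leading coefficients would then be powers of $q$, so the associated graded map would be invertible-diagonal rather than the identity.)
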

\begin{proof} 
There exists a surjective $\mathbb F$-algebra
homomorphism 
$f_1:T\to \widetilde \square^{\rm even}_q$
that sends
$x \mapsto x_0$  and
$y \mapsto x_2$.
There exists a surjective $\mathbb F$-algebra
homomorphism 
$f_2:T\to \widetilde \square^{\rm odd}_q$
that sends
$x \mapsto x_1$  and
$y \mapsto x_3$.
There exists a surjective $\mathbb F$-algebra
homomorphism 
$f_3:L\to \widetilde C$
that sends
$\lambda^{\pm 1}_i \mapsto c^{\pm 1}_i$
for $i \in \mathbb Z_4$.
There exists an
$\mathbb F$-linear map $f: T\otimes T \otimes L \to 
 \widetilde \square_q$ that sends
 $u\otimes v \otimes w \mapsto f_1(u)f_2(v)f_3(w)$ for all
 $u,v \in T$ and $w \in L$. It suffices to show that $f$
 is bijective.
The map $f$ is surjective by Lemma
 \ref{lem:factor}.
 To see that
$f$ is injective, view
$T\otimes T \otimes L$ as a
$\widetilde \square_q$-module
as in Lemma
\ref{lem:TTP}.
 The map $f$ is injective because
the composition
\begin{equation*}
\begin{CD} 
T\otimes T \otimes L  @>> f >  
\widetilde \square_q  
 @>> z \mapsto z(1\otimes 1 \otimes 1) > T \otimes T \otimes L 
                  \end{CD}
\end{equation*}
is the identity map on $T\otimes T\otimes L$.
By these comments $f$ is bijective.
The result follows.
\end{proof}


\begin{proposition}
\label{prop:SquareBasis}
For the algebra $\widetilde \square_q$,
\begin{enumerate}
\item[\rm (i)] the following is a basis for the
$\mathbb F$-vector space $\widetilde \square^{\rm even}_q$:
\begin{eqnarray}
u_1 u_2 \cdots u_r \qquad \qquad r \in \mathbb N, \qquad
u_i \in  \lbrace x_0, x_2\rbrace, \qquad  1 \leq i \leq r;
\label{eq:NevenBasis}
\end{eqnarray}
\item[\rm (ii)] the following is a basis for the
$\mathbb F$-vector space $\widetilde \square^{\rm odd}_q$:
\begin{eqnarray}
v_1 v_2 \cdots v_s \qquad \qquad s \in \mathbb N, \qquad
v_i  \in \lbrace x_1, x_3\rbrace, \qquad 1 \leq i \leq s;
\label{eq:NoddBasis}
\end{eqnarray}
\item[\rm (iii)] the following is a basis for the
$\mathbb F$-vector space $\widetilde C$:
\begin{eqnarray}
c^{n_0}_0
c^{n_1}_1
c^{n_2}_2
c^{n_3}_3,
\qquad \qquad n_i \in \mathbb Z, \qquad i \in \mathbb Z_4;
\label{eq:Cbasis}
\end{eqnarray}
\item[\rm (iv)] the
$\mathbb F$-vector space $\widetilde \square_q$ has a basis consisting
of the elements
\begin{eqnarray}
u 
v
c
\label{eq:basisWWC}
\end{eqnarray}
such that 
$u$, 
$v$, 
$c$ are contained in the bases
$(\ref{eq:NevenBasis})$,
$(\ref{eq:NoddBasis})$,
$(\ref{eq:Cbasis})$, respectively.
\end{enumerate}
\end{proposition}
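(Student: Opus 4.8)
The plan is to deduce all four statements directly from the structural result Proposition \ref{prop:tensorDec}, transporting known bases through the isomorphisms established there. The only background inputs required are the standard bases of the free algebra $T$ (its words, recalled in the Preliminaries) and of the Laurent polynomial algebra $L$ (its monomials); everything else is already in hand.

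First I would treat (i). By construction the words $g_1 g_2 \cdots g_r$ with each $g_i \in \{x,y\}$ form a basis for the $\mathbb F$-vector space $T$. Proposition \ref{prop:tensorDec}(i) provides an $\mathbb F$-algebra isomorphism $T \to \widetilde{\square}^{\rm even}_q$ sending $x \mapsto x_0$ and $y \mapsto x_2$; being in particular a linear isomorphism, it carries this basis of $T$ onto a basis of $\widetilde{\square}^{\rm even}_q$. The image of the word $g_1 \cdots g_r$ is exactly a product $u_1 \cdots u_r$ with each $u_i \in \{x_0, x_2\}$, and as the words range over their basis these images range over precisely the list $(\ref{eq:NevenBasis})$. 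This gives (i). Part (ii) is identical, using instead the isomorphism $T \to \widetilde{\square}^{\rm odd}_q$ of Proposition \ref{prop:tensorDec}(ii) with $x \mapsto x_1$ and $y \mapsto x_3$, under which the words of $T$ map onto the list $(\ref{eq:NoddBasis})$. For (iii), the monomials $\lambda^{n_0}_0 \lambda^{n_1}_1 \lambda^{n_2}_2 \lambda^{n_3}_3$ with $n_i \in \mathbb Z$ form a basis for $L$, and the isomorphism $L \to \widetilde{C}$ of Proposition \ref{prop:tensorDec}(iii) sends them onto the elements $(\ref{eq:Cbasis})$, which are therefore a basis for $\widetilde{C}$.

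Finally (iv) follows by combining these with the tensor decomposition. Proposition \ref{prop:tensorDec}(iv) asserts that the multiplication map $\widetilde{\square}^{\rm even}_q \otimes \widetilde{\square}^{\rm odd}_q \otimes \widetilde{C} \to \widetilde{\square}_q$, $u \otimes v \otimes c \mapsto uvc$, is an isomorphism of $\mathbb F$-vector spaces. The set of tensors $u \otimes v \otimes c$, with $u, v, c$ ranging over the three bases produced in (i)--(iii), is a basis of the domain, since a product of bases of the factors indexes a basis of a tensor product of vector spaces. Applying the isomorphism, the images $uvc$ form a basis of $\widetilde{\square}_q$; these images are exactly the elements described in $(\ref{eq:basisWWC})$, proving (iv).

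I do not anticipate any real obstacle here: the substantive content—in particular the injectivity of the multiplication map, which rests on the explicit $\widetilde{\square}_q$-module structure on $T \otimes T \otimes L$ from Lemma \ref{lem:TTP}—has already been discharged in the course of proving Proposition \ref{prop:tensorDec}. The present proposition is essentially a matter of bookkeeping, transporting bases across isomorphisms. The only points warranting an explicit word are the two routine linear-algebra facts invoked, namely that a linear isomorphism sends a basis to a basis, and that the tensor product of bases is a basis of the tensor product; I would state these in passing rather than belabor them.
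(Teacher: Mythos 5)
Your proposal is correct and matches the paper's proof exactly: the paper's own proof of this proposition is the single line ``By Proposition \ref{prop:tensorDec}'', and your write-up simply makes explicit the same basis-transport argument (a linear isomorphism carries a basis to a basis, and tensors of basis elements form a basis of the tensor product) that the paper leaves implicit. No gaps; nothing further needed.
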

\begin{proof}
By Proposition
\ref{prop:tensorDec}.
\end{proof}

\section{Some calculations in $\widetilde \square_q$}

\noindent We continue to investigate the algebra $\widetilde \square_q$
from Definition
\ref{def:boxqV2}.
In this section, we obtain some results about
$\widetilde \square_q$
that will be used
in the proof of
Proposition
\ref{thm:tensorDecPreM}. 

\begin{definition}
\label{def:Deltai}
\rm For the algebra $\widetilde \square_q$ define
\begin{eqnarray}
S_i = x^3_ix_{i+2} -
\lbrack 3 \rbrack_q  x^2_i x_{i+2}x_i
+
\lbrack 3 \rbrack_q  x_i x_{i+2}x^2_i
-
x_{i+2}x^3_i \qquad \qquad i \in \mathbb Z_4.
\label{eq:Delta}
\end{eqnarray}
\end{definition}

\noindent We note that the elements $S_i$ from 
Definition
\ref{def:Deltai} are in the kernel of the canonical 
homomorphism 
$\widetilde \square_q \to \square_q$.


\begin{lemma}
We have
$S_0, S_2 \in \widetilde \square^{\rm even}_q$
and
$S_1, S_3 \in \widetilde \square^{\rm odd}_q$.
\end{lemma}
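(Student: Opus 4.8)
The plan is to observe that the statement is immediate from the arithmetic of $\mathbb Z_4$ together with Definition \ref{def:TeTo}. The key point is that the two indices $i$ and $i+2$ appearing in the expression $S_i$ from Definition \ref{def:Deltai} always have the same parity: in $\mathbb Z_4$ we have $0+2=2$ and $2+2=0$, so for even $i$ both indices lie in $\lbrace 0,2\rbrace$, while $1+2=3$ and $3+2=1$, so for odd $i$ both indices lie in $\lbrace 1,3\rbrace$.

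First I would recall from Definition \ref{def:Deltai} that each $S_i$ is a fixed noncommutative polynomial (with coefficients $1, -[3]_q, [3]_q, -1$) in the two generators $x_i$ and $x_{i+2}$ only. Then, for $i\in\lbrace 0,2\rbrace$, I would note that $\lbrace x_i, x_{i+2}\rbrace = \lbrace x_0, x_2\rbrace$, so $S_i$ is a polynomial in $x_0, x_2$. Since $\widetilde \square^{\rm even}_q$ is by Definition \ref{def:TeTo}(i) the subalgebra generated by $x_0, x_2$, it contains every such polynomial, whence $S_0, S_2 \in \widetilde \square^{\rm even}_q$.

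Symmetrically, for $i\in\lbrace 1,3\rbrace$ we have $\lbrace x_i, x_{i+2}\rbrace = \lbrace x_1, x_3\rbrace$, so $S_i$ is a polynomial in $x_1, x_3$, which lies in the subalgebra $\widetilde \square^{\rm odd}_q$ generated by $x_1, x_3$ (Definition \ref{def:TeTo}(ii)); thus $S_1, S_3 \in \widetilde \square^{\rm odd}_q$. There is no genuine obstacle here: the entire content is the parity-preservation of the shift $i \mapsto i+2$ in $\mathbb Z_4$, and the proof reduces to citing Definitions \ref{def:Deltai} and \ref{def:TeTo}.
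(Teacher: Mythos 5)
Your proof is correct and matches the paper's own argument, which simply cites Definition \ref{def:Deltai} together with Definition \ref{def:TeTo}(i),(ii); you have merely spelled out the parity observation ($i$ and $i+2$ agree mod $2$ in $\mathbb Z_4$) that the paper leaves implicit. No gap, and no difference in approach.
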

\begin{proof} By Definition
\ref{def:TeTo}(i),(ii) and
Definition
\ref{def:Deltai}.
\end{proof}

\begin{proposition}
\label{lem:deltaCom}
In the algebra $\widetilde \square_q$ the following
{\rm (i), (ii)} hold for
$i \in \mathbb Z_4$:
\begin{enumerate}
\item[\rm (i)]
$ x_{i+1} S_i =
q^4 S_i x_{i+1}$, 
\item[\rm (ii)]
$x_{i-1} S_i =
q^{-4} S_i x_{i-1}$.
\end{enumerate}
\end{proposition}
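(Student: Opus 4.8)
The plan is to reduce to the single case $i=0$ and then transport the generator $x_1$ (for part (i)) or $x_3$ (for part (ii)) across $S_0$ by means of the commutation formula of Lemma \ref{lem:uLong}. First I would handle the index reduction. The automorphism $\widetilde \rho$ of Lemma \ref{lem:aut2} sends $x_i \mapsto x_{i+1}$ and $c_i \mapsto c_{i+1}$, hence sends $S_i \mapsto S_{i+1}$ and carries the asserted identity for index $i$ to the one for index $i+1$. Thus it suffices to prove (i) and (ii) for $i=0$, where $S_0 \in \widetilde \square^{\rm even}_q$ is built from $x_0, x_2$, and the generators $x_1, x_3$ being moved across it lie in $\widetilde \square^{\rm odd}_q$; this is exactly the configuration governed by Lemma \ref{lem:uLong}.

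For (i) with $i=0$, write $S_0$ as the linear combination, with coefficients $1, -\lbrack 3 \rbrack_q, \lbrack 3 \rbrack_q, -1$, of the four length-four words $x_0 x_0 x_0 x_2$, $x_0 x_0 x_2 x_0$, $x_0 x_2 x_0 x_0$, $x_2 x_0 x_0 x_0$, each carrying three factors $x_0$ and one factor $x_2$. Applying Lemma \ref{lem:uLong} with $v = x_1$ to each word, the leading term is the same word followed by $x_1$ and weighted by $q^{3\langle x_0, x_1\rangle + \langle x_2, x_1\rangle} = q^{3\cdot 2 + (-2)} = q^4$, a factor that is independent of the word. Summing over the four words with their coefficients, the leading terms assemble precisely into $q^4 S_0 x_1$, which is the right-hand side of (i). It therefore remains to show that all the correction terms cancel.

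The correction terms split into two families according to which factor is deleted: deleting an $x_0$ produces the central coefficient $c_0(1-q^2)$ together with a length-three word of content $x_0^2 x_2$, while deleting the lone $x_2$ produces $c_1(1-q^{-2})$ together with the word $x_0^3$. Collecting the contributions to each resulting word and using $\lbrack 3 \rbrack_q = q^2 + 1 + q^{-2}$, the coefficient attached to $x_0^3$ in the $c_1$-family reduces to $q^6 - \lbrack 3 \rbrack_q q^4 + \lbrack 3 \rbrack_q q^2 - 1 = 0$, and the coefficients attached to $x_0 x_0 x_2$, $x_0 x_2 x_0$, $x_2 x_0 x_0$ in the $c_0$-family reduce to shifted analogues such as $1 + q^2 + q^4 - \lbrack 3 \rbrack_q q^2 = 0$, each vanishing identically. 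Hence every correction cancels and (i) follows. Part (ii) is identical with $v = x_3$: here $\langle x_0, x_3\rangle = -2$ and $\langle x_2, x_3\rangle = 2$ give leading weight $q^{-4}$, while $\gamma(x_0, x_3) = c_3$ and $\gamma(x_2, x_3) = c_2$ organize the corrections, which again cancel by the same polynomial identities.

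The conceptual explanation behind the cancellation is that $S_0$ is homogeneous of weight $4$ for the $\mathbb Z$-grading of $\widetilde \square^{\rm even}_q$ that assigns $x_0$ and $x_2$ the weights $\langle x_0, x_1\rangle$ and $\langle x_2, x_1\rangle$, and that the coefficients of $S_0$ are exactly the $q$-Serre coefficients; this alignment is what forces the weighted alternating sums to telescope to zero. The only real obstacle is the bookkeeping of these finitely many correction terms, which is purely mechanical once the two families above are separated.
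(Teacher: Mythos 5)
Your proposal is correct and follows essentially the same route as the paper: reduce to $i=0$ (the paper's ``without loss of generality,'' justified by the automorphism $\widetilde\rho$ of Lemma \ref{lem:aut2}), then use Lemma \ref{lem:uLong} to push $x_1$ (resp. $x_3$) across each of the four words of $S_0$, observing that every leading term carries the uniform weight $q^{4}$ (resp. $q^{-4}$) and that the $c_0$- and $c_1$-corrections (resp. $c_3$- and $c_2$-corrections) cancel via the identities $q^6-\lbrack 3\rbrack_q q^4+\lbrack 3\rbrack_q q^2-1=0$ and its shifts. The paper organizes the same expansion as a coefficient table in the basis of Proposition \ref{prop:SquareBasis}(iv), but the underlying computation and cancellations are identical to yours.
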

\begin{proof}(i) Without loss of generality we may assume
that $i=0$. Our strategy is to
express 
$
 x_1 S_0-
q^4 S_0 x_1$
in the basis
for $\widetilde \square_q$ from Proposition
\ref{prop:SquareBasis}(iv).
By Definition
\ref{def:Deltai},
\begin{eqnarray}
x_1 S_0 = x_1 x^3_0 x_2 - \lbrack 3 \rbrack_q x_1x^2_0 x_2 x_0 + 
\lbrack 3 \rbrack_q x_1x_0 x_2 x^2_0 - x_1 x_2 x^3_0,
\label{ex:x1Delta0}
\\
S_0 x_1 =  x^3_0 x_2 x_1- \lbrack 3 \rbrack_q x^2_0 x_2 x_0 x_1+ 
\lbrack 3 \rbrack_q x_0 x_2 x^2_0 x_1 -  x_2 x^3_0 x_1.
\label{ex:Delta0x1}
\end{eqnarray}
Consider the elements
\begin{eqnarray}
\label{ex:DeltaTerm}
x_1 x^3_0 x_2,
\qquad
x_1 x^2_0 x_2 x_0,
\qquad
x_1 x_0 x_2 x^2_0,
\qquad
x_1 x_2 x^3_0,
\qquad
S_0 x_1.
\end{eqnarray}
By 
Lemma \ref{lem:uLong} and
(\ref{ex:Delta0x1}),
the elements
(\ref{ex:DeltaTerm}) are weighted sums involving 
the following terms and coefficients:
\medskip

\centerline{
\begin{tabular}[t]{c|ccccc}
{\rm term} & {\rm $x_1x^3_0x_2$ coef.} & {\rm $x_1x^2_0x_2x_0$ coef.} & 
{\rm $x_1x_0x_2x^2_0$ coef.}  & {\rm $x_1x_2x^3_0$ coef.} & 
{\rm $S_0 x_1$ coef.} 
   \\ \hline \hline 
$x^3_0 x_2 x_1 $   & $q^4$ & $0$ & $0$ & $0$ &  $1$
\\
$x^2_0 x_2 x_0x_1$     & $0$   & $q^4$ & $0$ & $0$ & $-\lbrack 3 \rbrack_q$
\\
$x_0   x_2 x^2_0x_1 $  & $0$ & $0$ & $q^4$ & $0$ & $\lbrack 3 \rbrack_q$ 
\\
$      x_2 x^3_0 x_1 $  &$0$ & $0$ &$0$ & $q^4$ & $-1$
\\
\hline
$x^2_0 x_2 c_0$     & $1-q^6$   & $q^2-q^4$ & $0$ & $0$ & $0$
\\
$x_0 x_2 x_0 c_0$ & $0$    & $1-q^4$   & $1-q^4$ & $0$ & $0$
\\
$ x_2 x^2_0 c_0$ & $0$ &  $0$    & $1-q^2$   & $q^{-2}-q^4$ & $0$ 
\\
\hline
$ x^3_0 c_1$ & $q^6-q^4$ &  $q^4-q^2$    & $q^2-1$   & $1-q^{-2}$ & $0$ 
\\
\end{tabular}}
\bigskip

\noindent 
We can now easily write
$x_1S_0 - q^4 S_0 x_1$  in the basis for
$\widetilde \square_q$ from Proposition
\ref{prop:SquareBasis}(iv).
To do this,
expand 
$x_1 S_0 - q^4 S_0 x_1$ 
using
(\ref{ex:x1Delta0})
and evaluate
the result using the 
 data in the above table.
After a routine cancellation we obtain
$x_1 S_0 - q^4 S_0 x_1=0$.
Therefore 
$x_1 S_0 =q^4 S_0 x_1$.
\\
\noindent (ii) We proceed as in part (i) above.
Without loss of generality we may assume
that $i=0$. We express
$
 x_3 S_0-
q^{-4} S_0 x_3$
in the basis
for $\widetilde \square_q$ from Proposition
\ref{prop:SquareBasis}(iv).
By Definition
\ref{def:Deltai},
\begin{eqnarray}
x_3 S_0 = x_3 x^3_0 x_2 - \lbrack 3 \rbrack_q x_3x^2_0 x_2 x_0 + 
\lbrack 3 \rbrack_q x_3x_0 x_2 x^2_0 - x_3 x_2 x^3_0,
\label{ex:x3Delta0}
\\
S_0 x_3 =  x^3_0 x_2 x_3- \lbrack 3 \rbrack_q x^2_0 x_2 x_0 x_3+ 
\lbrack 3 \rbrack_q x_0 x_2 x^2_0 x_3 -  x_2 x^3_0 x_3.
\label{ex:Delta0x3}
\end{eqnarray}
Consider the elements
\begin{eqnarray}
\label{ex:DeltaTermx3}
x_3 x^3_0 x_2,
\qquad
x_3 x^2_0 x_2 x_0,
\qquad
x_3 x_0 x_2 x^2_0,
\qquad
x_3 x_2 x^3_0,
\qquad
S_0 x_3.
\end{eqnarray}
By 
Lemma \ref{lem:uLong} and
(\ref{ex:Delta0x3}),
the elements
(\ref{ex:DeltaTermx3}) are weighted sums involving
the following terms and coefficients:
\medskip

\centerline{
\begin{tabular}[t]{c|ccccc}
{\rm term} & {\rm $x_3x^3_0x_2$ coef.} & {\rm $x_3x^2_0x_2x_0$ coef.} & 
{\rm $x_3x_0x_2x^2_0$ coef.}  & {\rm $x_3x_2x^3_0$ coef.} & 
{\rm $S_0 x_3$ coef.} 
   \\ \hline \hline 
$x^3_0 x_2 x_3 $   & $q^{-4}$ & $0$ & $0$ & $0$ &  $1$
\\
$x^2_0 x_2 x_0x_3$     & $0$   & $q^{-4}$ & $0$ & $0$ & $-\lbrack 3 \rbrack_q$
\\
$x_0   x_2 x^2_0x_3 $  & $0$ & $0$ & $q^{-4}$ & $0$ & $\lbrack 3 \rbrack_q$ 
\\
$      x_2 x^3_0 x_3 $  &$0$ & $0$ &$0$ & $q^{-4}$ & $-1$
\\
\hline
$x^2_0 x_2 c_3$     & $1-q^{-6}$   & $q^{-2}-q^{-4}$ & $0$ & $0$ & $0$
\\
$x_0 x_2 x_0 c_3$ & $0$    & $1-q^{-4}$   & $1-q^{-4}$ & $0$ & $0$
\\
$ x_2 x^2_0 c_3$ & $0$ &  $0$    & $1-q^{-2}$   & $q^{2}-q^{-4}$ & $0$ 
\\
\hline
$ x^3_0 c_2$ & $q^{-6}-q^{-4}$ &  $q^{-4}-q^{-2}$    & $q^{-2}-1$   & $1-q^{2}$ & $0$ 
\\
\end{tabular}}
\bigskip

\noindent 
Now to write
$x_3 S_0 - q^{-4} S_0 x_3$  in the basis for
$\widetilde \square_q$ from Proposition
\ref{prop:SquareBasis}(iv),
expand 
$x_3 S_0 - q^{-4} S_0 x_3$ 
using
(\ref{ex:x3Delta0})
and evaluate
the result using the 
 data in the above table.
After a routine cancellation we obtain
$x_3 S_0 - q^{-4} S_0 x_3=0$.
Therefore 
$x_3 S_0 =q^{-4} S_0 x_3$.
\end{proof}

\begin{corollary}
\label{cor:DeltaCom}
In the algebra $\widetilde \square_q$,
\begin{eqnarray*}
&&
S_0 \widetilde \square^{\rm odd}_q 
=
\widetilde \square^{\rm odd}_q S_0 ,
\qquad \qquad 
S_1 \widetilde \square^{\rm even}_q 
=
\widetilde \square^{\rm even}_q S_1,
\\
&&
S_2 \widetilde \square^{\rm odd}_q 
=
\widetilde \square^{\rm odd}_q S_2,
\qquad \qquad 
S_3 \widetilde \square^{\rm even}_q 
=
\widetilde \square^{\rm even}_q S_3.
\end{eqnarray*}
\end{corollary}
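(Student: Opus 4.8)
The plan is to obtain all four identities as immediate consequences of Proposition \ref{lem:deltaCom}, together with the observation that each subalgebra appearing in the statement is generated by exactly the two variables with which $S_i$ is shown to $q$-commute. I would prove the first identity $S_0 \widetilde \square^{\rm odd}_q = \widetilde \square^{\rm odd}_q S_0$ in full, and then reduce the remaining three to it by means of the rotation automorphism $\widetilde \rho$ of Lemma \ref{lem:aut2}.

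For the $S_0$ case, recall from Definition \ref{def:TeTo}(ii) that $\widetilde \square^{\rm odd}_q$ is generated by $x_1$ and $x_3$, and note that in $\mathbb Z_4$ these are precisely $x_{0+1}$ and $x_{0-1}$. Specializing Proposition \ref{lem:deltaCom} to $i=0$ gives $x_1 S_0 = q^4 S_0 x_1$ and $x_3 S_0 = q^{-4} S_0 x_3$, which I would rewrite as $S_0 x_1 = q^{-4} x_1 S_0$ and $S_0 x_3 = q^4 x_3 S_0$. Thus for each generator $v \in \{x_1, x_3\}$ there is a nonzero scalar $\theta(v) \in \{q^{4}, q^{-4}\}$ with $S_0 v = \theta(v)\, v\, S_0$. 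Propagating this through a word $v_1 v_2 \cdots v_s$ in $x_1, x_3$ yields $S_0\, v_1 v_2 \cdots v_s = \theta(v_1)\cdots \theta(v_s)\, v_1 v_2 \cdots v_s\, S_0$, a nonzero scalar times a word times $S_0$. Since such words span $\widetilde \square^{\rm odd}_q$ and the relation is linear, I obtain $S_0 w \in \widetilde \square^{\rm odd}_q S_0$ for every $w \in \widetilde \square^{\rm odd}_q$, so $S_0 \widetilde \square^{\rm odd}_q \subseteq \widetilde \square^{\rm odd}_q S_0$. Because every scalar $\theta(v)$ is invertible, the same computation run backwards gives $w S_0 \in S_0 \widetilde \square^{\rm odd}_q$, hence the reverse inclusion and therefore equality.

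To finish, I would apply the automorphism $\widetilde \rho$ of Lemma \ref{lem:aut2}. Since $\widetilde \rho$ sends $x_i \mapsto x_{i+1}$, it carries $S_i$ to $S_{i+1}$ and interchanges $\widetilde \square^{\rm even}_q$ with $\widetilde \square^{\rm odd}_q$; applying $\widetilde \rho$, $\widetilde \rho^2$, $\widetilde \rho^3$ to the identity for $S_0$ then produces the identities for $S_1$, $S_2$, $S_3$ respectively. I do not expect a genuine obstacle here, as the statement is essentially a bookkeeping corollary of Proposition \ref{lem:deltaCom}; the only point requiring care is the index arithmetic in $\mathbb Z_4$, namely confirming that for each $i$ the two generators $x_{i\pm 1}$ lie in, and in fact generate, the subalgebra named in the corresponding identity (the odd subalgebra when $i$ is even, the even subalgebra when $i$ is odd). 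Once that is checked, the nonvanishing of the $q$-power scalars is what upgrades the two inclusions to an equality.
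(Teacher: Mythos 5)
Your proposal is correct and is essentially the paper's own proof: the paper's one-line argument cites exactly the two ingredients you use (Definition \ref{def:TeTo}(i),(ii) identifying the generators of each subalgebra, and the $q$-commutation relations of Proposition \ref{lem:deltaCom}), and your propagation of $S_i v = \theta(v)\, v\, S_i$ through words, with invertibility of the powers of $q$ yielding both inclusions, is precisely the argument the paper leaves implicit. Your use of the rotation automorphism $\widetilde \rho$ to deduce the $S_1$, $S_2$, $S_3$ cases from the $S_0$ case is a minor stylistic shortcut, not a genuinely different route.
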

\begin{proof} By Definition
\ref{def:TeTo}(i),(ii)
and
Proposition
\ref{lem:deltaCom}.
\end{proof}

\section{More calculations in $\widetilde \square_q$
}

\noindent We continue to
investigate the algebra 
$\widetilde \square_q$ from
Definition
\ref{def:boxqV2}.
 In this section, we first obtain some results
about $\widetilde \square_q$. We then use these results
to prove
Proposition
\ref{prop:ABxyiLongerM}.
\medskip

\noindent 
For the algebra $\widetilde \square_q$ define
\begin{eqnarray}
A = x_0+x_1, \qquad \qquad B = x_2+x_3.
\label{eq:AB}
\end{eqnarray}
Our next goal is to express 
\begin{eqnarray}
&&
A^3 B - \lbrack 3 \rbrack_q A^2 B A +
\lbrack 3 \rbrack_q A B A^2 -B A^3 + 
(q^2-q^{-2})^2 c_0(AB-BA)
\label{eq:terms}
\end{eqnarray}
in the basis 
for $\widetilde \square_q$ from
Proposition
\ref{prop:SquareBasis}(iv).
\medskip

\noindent 
For the rest of this section, the notation 
(\ref{eq:AB}) 
is in effect. 

\begin{lemma}
\label{lem:AAAB}
In the algebra $\widetilde \square_q$,
\begin{enumerate}
\item[\rm (i)] 
the element $A^2$ is a weighted sum involving the following
terms and coefficients:
\medskip

\centerline{
\begin{tabular}[t]{c|cccc}
{\rm term} & $x^2_0$ & $x_0x_1$ & $x^2_1$ & $c_0$
   \\ \hline 
{\rm coefficient} & $1$ & $1+q^2$ & $1$ & $1-q^2$
\\
     \end{tabular}}
\item[\rm (ii)] 
the element $AB$ is a weighted sum involving the following
terms and coefficients:
\medskip

\centerline{
\begin{tabular}[t]{c|ccccc}
{\rm term} & $x_0x_2$ & $x_0x_3$ & $x_2x_1$ & $x_1x_3$ & $c_1$
   \\ \hline 
{\rm coefficient} & $1$ & $1$ & $q^{-2}$ & $1$ & $1-q^{-2}$
\\
     \end{tabular}}
\item[\rm (iii)] 
the element $BA$ is a weighted sum involving the following
terms and coefficients:
\medskip

\centerline{
\begin{tabular}[t]{c|ccccc}
{\rm term} & $x_2x_0$ & $x_0x_3$ & $x_2x_1$ & $x_3x_1$ & $c_3$
   \\ \hline 
{\rm coefficient} & $1$ & $q^{-2}$ & $1$ & $1$ & $1-q^{-2}$
\\
     \end{tabular}}

\medskip
\end{enumerate}
\end{lemma}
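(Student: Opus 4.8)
The plan is to expand each of $A^2$, $AB$, $BA$ using the definitions $A = x_0+x_1$ and $B = x_2+x_3$ from (\ref{eq:AB}), and then rewrite each resulting monomial in the normal form of Proposition \ref{prop:SquareBasis}(iv), namely as a product of an even factor (a word in $x_0,x_2$), an odd factor (a word in $x_1,x_3$), and a central factor (a monomial in the $c_i$). That basis requires every factor from $\lbrace x_0, x_2\rbrace$ to stand to the left of every factor from $\lbrace x_1, x_3\rbrace$. Consequently the only work is to move any odd generator that appears to the left of an even generator past it, using the reduction rules of Lemma \ref{lem:RR}.

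For part (i), I would expand $A^2 = x_0^2 + x_0 x_1 + x_1 x_0 + x_1^2$. Of these four monomials, $x_0^2$, $x_0 x_1$, and $x_1^2$ are already in normal form, while $x_1 x_0$ is out of order. Applying $x_1 x_0 = q^2 x_0 x_1 + (1-q^2)c_0$ and collecting the two $x_0 x_1$ contributions gives the coefficient $1+q^2$ for $x_0 x_1$ and the central term $(1-q^2)c_0$, matching the stated table. Parts (ii) and (iii) proceed identically. In $AB = x_0 x_2 + x_0 x_3 + x_1 x_2 + x_1 x_3$ only $x_1 x_2$ is out of order, so I apply $x_1 x_2 = q^{-2} x_2 x_1 + (1-q^{-2})c_1$; in $BA = x_2 x_0 + x_2 x_1 + x_3 x_0 + x_3 x_1$ only $x_3 x_0$ is out of order, so I apply $x_3 x_0 = q^{-2} x_0 x_3 + (1-q^{-2})c_3$. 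In each case the remaining three monomials are already normal, and reading off coefficients yields the tables.

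There is no genuine obstacle here; the argument is a direct application of Lemma \ref{lem:RR} followed by collecting terms. The only point requiring care is the bookkeeping: one must check each monomial against the normal-form ordering, identify the single out-of-order term in each expansion, and rewrite exactly that term, so that the central contribution ($c_0$ in (i), $c_1$ in (ii), $c_3$ in (iii)) is recorded with the correct sign and power of $q$. Since each expansion contains only one such term, the computation is short and the verification reduces to matching entries with the three tables.
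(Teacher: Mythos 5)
Your proposal is correct and is exactly the paper's argument: the paper's proof of this lemma is the one-line instruction ``Use Lemma \ref{lem:RR} and (\ref{eq:AB})'', and your write-up simply carries out that expansion, correctly identifying the single out-of-order monomial in each of $A^2$, $AB$, $BA$ ($x_1x_0$, $x_1x_2$, $x_3x_0$ respectively) and applying the matching reduction rule. The resulting coefficients agree with all three tables, so nothing is missing.
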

\begin{proof} Use Lemma
\ref{lem:RR} and
(\ref{eq:AB}).
\end{proof}

\noindent Next we use Lemma
\ref{lem:AAAB} to 
evaluate some terms in (\ref{eq:terms}).
Consider $A^3B=(A^2)(AB)$.
In this equation, evaluate the right-hand side
using
Lemma \ref{lem:AAAB}(i),(ii) and expand the result;
the details are 
 in the table below.
The table has two header columns that describe $A^2$,
and two header rows that describe $AB$.
The expressions inside  parentheses
are ``out of order''
and will be subject to further reduction shortly.
\begin{eqnarray*} \qquad \qquad \qquad  \qquad \qquad AB
\end{eqnarray*}
\centerline{ 
\begin{tabular}[t]{c}
\\
\\
\\
$A^2$
\\
\\
     \end{tabular}
\qquad
\begin{tabular}[t]{cc|ccccc}
& & $1$ & $1$ & $q^{-2}$ & $1$ & $1-q^{-2}$ 
\\
& & $x_0x_2$ & $x_0x_3$ & $x_2x_1$ & $x_1x_3$ & $c_1$ 
   \\ \hline 
$1$ & $x^2_0$
 & $x^3_0x_2$ & $x^3_0x_3$ & $x^2_0x_2x_1$ & $x^2_0x_1x_3$ & $x^2_0c_1$ 
\\
$1+q^2$ & $x_0x_1$ &
 $x_0(x_1 x_0x_2)$ & $x_0(x_1 x_0)x_3$ & $x_0(x_1x_2)x_1$ & $x_0x^2_1x_3$ &
 $x_0x_1c_1$ 
\\
$1$ & $x^2_1$ &
 $(x^2_1x_0x_2)$ & $(x^2_1x_0)x_3$ & $(x^2_1x_2)x_1$ & $x^3_1x_3$ & $x^2_1c_1$ 
\\
$1-q^2$ & $c_0$ &
$x_0x_2c_0$ & $x_0x_3c_0$ & $x_2x_1c_0$ & $x_1x_3c_0$ & $c_0c_1$ 
\\
     \end{tabular}}
\bigskip

\noindent Similarly for
$A^2BA=(A^2)(BA)$,
\begin{eqnarray*} \qquad \qquad \qquad  \qquad \qquad BA
\end{eqnarray*}
\centerline{ 
\begin{tabular}[t]{c}
\\
\\
\\
$A^2$
\\
\\
     \end{tabular}
\qquad
\begin{tabular}[t]{cc|ccccc}
& & $1$ & $q^{-2}$ & $1$ & $1$ & $1-q^{-2}$ 
\\
& & $x_2x_0$ & $x_0x_3$ & $x_2x_1$ & $x_3x_1$ & $c_3$ 
   \\ \hline 
$1$ & $x^2_0$
 & $x^2_0x_2x_0$ & $x^3_0x_3$ & $x^2_0x_2x_1$ & $x^2_0 x_3x_1$ & $x^2_0c_3$ 
\\
$1+q^2$ & $x_0x_1$ &
 $x_0(x_1 x_2x_0)$ & $x_0(x_1 x_0)x_3$ & $x_0(x_1x_2)x_1$ & $x_0x_1x_3x_1$ &
 $x_0x_1c_3$ 
\\
$1$ & $x^2_1$ &
 $(x^2_1x_2x_0)$ & $(x^2_1x_0)x_3$ & $(x^2_1x_2)x_1$ & $x^2_1x_3x_1$ & $x^2_1c_3$ 
\\
$1-q^2$ & $c_0$ &
$x_2x_0c_0$ & $x_0x_3c_0$ & $x_2x_1c_0$ & $x_3x_1c_0$ & $c_0c_3$ 
\\
     \end{tabular}}
\bigskip

\noindent Similarly for
$ABA^2=(AB)(A^2)$,
\begin{eqnarray*} \qquad  \qquad \qquad \qquad  \qquad \qquad A^2
\end{eqnarray*}
\centerline{ 
\begin{tabular}[t]{c}
\\
\\
\\
\\
$AB$
\\
\\
     \end{tabular}
\qquad
\begin{tabular}[t]{cc|cccc}
& & $1$ & $1+q^2$ & $1$ &  $1-q^2$ 
\\
& & $x^2_0$ & $x_0x_1$ & $x^2_1$ &  $c_0$ 
   \\ \hline 
$1$ & $x_0x_2$
 & $x_0x_2x^2_0$ & $x_0x_2x_0x_1$ & $x_0x_2x^2_1$ & $x_0x_2c_0$ 
\\
$1$ & $x_0x_3$ &
 $x_0(x_3 x^2_0)$ & $x_0(x_3 x_0)x_1$ & $x_0x_3x^2_1$ &
 $x_0x_3c_0$ 
\\
$q^{-2}$ & $x_2x_1$ &
 $x_2(x_1x^2_0)$ & $x_2(x_1x_0)x_1$ & $x_2x^3_1$ & $x_2x_1c_0$ 
\\
$1$ & $x_1x_3$ &
$(x_1x_3x^2_0)$ & $(x_1x_3x_0)x_1$ & $x_1x_3x^2_1$ & $x_1x_3c_0$ 
\\
$1-q^{-2}$ & $c_1$  &
$x^2_0c_1$ & $x_0x_1c_1$ & $x^2_1c_1$ & $c_0c_1$ 
\\
\end{tabular}}
\bigskip

\noindent Similarly for
$BA^3=(BA)(A^2)$,
\begin{eqnarray*} \qquad  \qquad \qquad \qquad  \qquad \qquad A^2
\end{eqnarray*}
\centerline{ 
\begin{tabular}[t]{c}
\\
\\
\\
\\
$BA$
\\
\\
     \end{tabular}
\qquad
\begin{tabular}[t]{cc|cccc}
& & $1$ & $1+q^2$ & $1$ &  $1-q^2$ 
\\
& & $x^2_0$ & $x_0x_1$ & $x^2_1$ &  $c_0$ 
   \\ \hline 
$1$ & $x_2x_0$
 & $x_2x^3_0$ & $x_2x^2_0x_1$ & $x_2x_0x^2_1$ & $x_2x_0c_0$ 
\\
$q^{-2}$ & $x_0x_3$ &
 $x_0(x_3 x^2_0)$ & $x_0(x_3 x_0)x_1$ & $x_0x_3x^2_1$ &
 $x_0x_3c_0$ 
\\
$1$ & $x_2x_1$ &
 $x_2(x_1x^2_0)$ & $x_2(x_1x_0)x_1$ & $x_2x^3_1$ & $x_2x_1c_0$ 
\\
$1$ & $x_3x_1$ &
$(x_3x_1x^2_0)$ & $(x_3x_1x_0)x_1$ & $x_3x^3_1$ & $x_3x_1c_0$ 
\\
$1-q^{-2}$ & $c_3$  &
$x^2_0c_3$ & $x_0x_1c_3$ & $x^2_1c_3$ & $c_0c_3$ 
\\
\end{tabular}}
\bigskip

\noindent The above four tables 
contain some 
parenthetical expressions.
We will write these parenthetical expressions in the
 basis for
$\widetilde \square_q$ from
Proposition
\ref{prop:SquareBasis}(iv).
For the parenthetical expressions of length two,
this is done using Lemma
\ref{lem:RR}.
For the remaining parenthetical expressions,
this will be done over the next three lemmas.

\begin{lemma}
\label{lem:NTL1}
In the algebra $\widetilde \square_q$,
\begin{enumerate}
\item[\rm (i)] 
the element $x_1x_0x_2$ is a weighted sum involving the following
terms and coefficients:
\medskip

\centerline{
\begin{tabular}[t]{c|ccc}
{\rm term} & $x_0x_2x_1$ & $x_0c_1$ & $x_2c_0$ 
   \\ \hline 
{\rm coefficient} & $1$ & $q^2-1$ & $1-q^2$ 
\\
     \end{tabular}}
\item[\rm (ii)] 
the element $x^2_1x_0$ is a weighted sum involving the following
terms and coefficients:
\medskip

\centerline{
\begin{tabular}[t]{c|cc}
{\rm term} & $x_0x^2_1$ & $x_1c_0$ 
   \\ \hline 
{\rm coefficient} & $q^4$ & $1-q^4$ 
\\
     \end{tabular}}
\item[\rm (iii)] 
the element $x^2_1x_2$ is a weighted sum involving the following
terms and coefficients:
\medskip

\centerline{
\begin{tabular}[t]{c|cc}
{\rm term} & $x_2x^2_1$ & $x_1c_1$ 
   \\ \hline 
{\rm coefficient} & $q^{-4}$ & $1-q^{-4}$ 
\\
     \end{tabular}}

\item[\rm (iv)] 
the element $x_1x_2x_0$ is a weighted sum involving the following
terms and coefficients:
\medskip

\centerline{
\begin{tabular}[t]{c|ccc}
{\rm term} & $x_2x_0x_1$ & $x_2c_0$ & $x_0c_1$ 
   \\ \hline 
{\rm coefficient} & $1$ & $q^{-2}-1$ & $1-q^{-2}$ 
\\
     \end{tabular}}

\medskip
\end{enumerate}
\end{lemma}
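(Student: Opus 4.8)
The plan is to reduce each of the four words to the normal form supplied by Proposition~\ref{prop:SquareBasis}(iv), in which every even-index generator stands to the left of every odd-index generator and the central factors are collected on the right. The only tool needed is the reduction rules of Lemma~\ref{lem:RR}, which rewrite an odd-index generator times an even-index generator as $q^{\pm 2}$ times the reversed product plus a central correction; repeated application pushes each $x_1$ to the right of each $x_0, x_2$ and leaves behind central terms, after which relation (\ref{eq:cciCentral}) lets us gather the $c_i$ and combine like coefficients.

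For parts (i) and (iv) I would simply invoke Lemma~\ref{lem:uLong} with $r=2$ and $v=x_1$: in (i) take $(u_1,u_2)=(x_0,x_2)$, and in (iv) take $(u_1,u_2)=(x_2,x_0)$. In each case the leading exponent is $\langle u_1,x_1\rangle+\langle u_2,x_1\rangle = 2+(-2)=0$, so the leading term $u_1u_2x_1$ carries coefficient $1$, while the two correction terms come from the sum in Lemma~\ref{lem:uLong} with $\gamma(x_0,x_1)=c_0$ and $\gamma(x_2,x_1)=c_1$; reading off the attendant $q$-powers reproduces exactly the coefficients displayed in the two tables.

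For parts (ii) and (iii) Lemma~\ref{lem:uLong} does not apply directly, since the word now contains two odd-index factors $x_1$ rather than one. Instead I would apply a single reduction rule of Lemma~\ref{lem:RR} twice. For (ii), write $x_1^2 x_0 = x_1(x_1 x_0)$ and use $x_1 x_0 = q^2 x_0 x_1 + (1-q^2)c_0$ to obtain $q^2 x_1 x_0 x_1 + (1-q^2)x_1 c_0$; applying the same rule to $x_1 x_0$ once more and invoking centrality of $c_0$ collapses the two central contributions into the single coefficient $(1-q^2)(1+q^2)=1-q^4$, yielding $q^4 x_0 x_1^2 + (1-q^4)x_1 c_0$. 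Part (iii) is identical with $x_1 x_2 = q^{-2}x_2 x_1 + (1-q^{-2})c_1$ in place of the first rule, producing $q^{-4}x_2 x_1^2 + (1-q^{-4})x_1 c_1$.

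I do not anticipate any genuine obstacle here: once the reduction rules are in hand the computation is purely mechanical, and the only point requiring care is the bookkeeping of the $q$-powers and of the signs in the central correction terms, together with the repeated use of centrality to merge the two $c_0$ (resp.\ $c_1$) contributions in parts (ii) and (iii).
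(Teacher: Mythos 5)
Your proposal is correct and matches the paper's approach: the paper's entire proof is ``Apply Lemma~\ref{lem:RR} repeatedly,'' which is exactly what you do, with the cosmetic difference that for parts (i) and (iv) you invoke Lemma~\ref{lem:uLong} (itself just an iterated form of the reduction rules). All four computations, including the factorizations $(1-q^{2})(1+q^{2})=1-q^{4}$ and $(1-q^{-2})(1+q^{-2})=1-q^{-4}$ in parts (ii) and (iii), check out against the stated tables.
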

\begin{proof} Apply Lemma
\ref{lem:RR} repeatedly.
\end{proof}

\begin{lemma}
\label{lem:NTL2}
In the algebra $\widetilde \square_q$,
\begin{enumerate}
\item[\rm (i)] 
the element $x_1x_3x_0$ is a weighted sum involving the following
terms and coefficients:
\medskip

\centerline{
\begin{tabular}[t]{c|ccc}
{\rm term} & $x_0x_1x_3$ & $x_1c_3$ & $x_3c_0$ 
   \\ \hline 
{\rm coefficient} & $1$ &  $1-q^{-2}$ & 
$q^{-2}-1$ 
\\
     \end{tabular}}
\item[\rm (ii)] 
the element $x_1x^2_0$ is a weighted sum involving the following
terms and coefficients:
\medskip

\centerline{
\begin{tabular}[t]{c|cc}
{\rm term} & $x^2_0x_1$ & $x_0c_0$ 
   \\ \hline 
{\rm coefficient} & $q^4$ & $1-q^4$ 
\\
     \end{tabular}}
\item[\rm (iii)] 
the element $x_3x^2_0$ is a weighted sum involving the following
terms and coefficients:
\medskip

\centerline{
\begin{tabular}[t]{c|cc}
{\rm term} & $x^2_0x_3$ & $x_0c_3$ 
   \\ \hline 
{\rm coefficient} & $q^{-4}$ & $ 1-q^{-4}$ 
\\
     \end{tabular}}

\item[\rm (iv)] 
the element $x_3x_1x_0$ is a weighted sum involving the following
terms and coefficients:
\medskip

\centerline{
\begin{tabular}[t]{c|ccc}
{\rm term} & $x_0x_3x_1$ & $x_1c_3$ & $x_3c_0$ 
   \\ \hline 
{\rm coefficient} & $1$ & $q^2-1$ & $1-q^2$ 
\\
     \end{tabular}}

\medskip
\end{enumerate}
\end{lemma}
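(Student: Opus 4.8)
The plan is to handle Lemma~\ref{lem:NTL2} exactly as its companion Lemma~\ref{lem:NTL1} was handled: each of the four products is brought into the basis of Proposition~\ref{prop:SquareBasis}(iv) by repeated application of the reduction rules in Lemma~\ref{lem:RR}. Recall that these rules rewrite $x_1x_0$, $x_1x_2$, $x_3x_2$, $x_3x_0$; that is, they move an odd generator ($x_1$ or $x_3$) to the right past an even generator ($x_0$ or $x_2$) at the cost of a single central correction term $c_i$. Since $c_0,c_1,c_2,c_3$ are central, any such correction may be slid freely to the rightmost position, which is precisely what produces the normal form displayed in each table.

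For parts (ii) and (iii) I would first write $x_1x_0^2=(x_1x_0)x_0$ and $x_3x_0^2=(x_3x_0)x_0$, so that one rule is applied twice. Applying $x_1x_0=q^2x_0x_1+(1-q^2)c_0$ once gives $q^2x_0(x_1x_0)+(1-q^2)x_0c_0$; applying it a second time inside the parenthesis and collecting the two central contributions $q^2(1-q^2)x_0c_0$ and $(1-q^2)x_0c_0$ yields the coefficient $q^4$ on $x_0^2x_1$ and $(1-q^2)(1+q^2)=1-q^4$ on $x_0c_0$, as claimed. Part (iii) is identical after replacing $q$ by $q^{-1}$ throughout and using $x_3x_0=q^{-2}x_0x_3+(1-q^{-2})c_3$.

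For parts (i) and (iv) the products $x_1x_3x_0$ and $x_3x_1x_0$ mix the two odd generators, and this is where the one genuine subtlety lies: Lemma~\ref{lem:RR} offers no step that interchanges $x_1$ and $x_3$, so the reduction must be organized so that an odd--even pair is always exposed, and the two answers $x_0x_1x_3$ and $x_0x_3x_1$ must be kept distinct. For (i) I would reduce $x_3x_0$ first, getting $x_1x_3x_0=q^{-2}x_1x_0x_3+(1-q^{-2})x_1c_3$, then reduce the freshly created $x_1x_0$; the term $x_0x_1x_3$ emerges with coefficient $q^{-2}\cdot q^2=1$, while the central terms reorganize, via $c_0x_3=x_3c_0$, into $(q^{-2}-1)x_3c_0$ and $(1-q^{-2})x_1c_3$. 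For (iv) the order is reversed: reduce $x_1x_0$ first and then the resulting $x_3x_0$, producing $x_0x_3x_1$ with coefficient $1$ together with the central terms $(q^2-1)x_1c_3$ and $(1-q^2)x_3c_0$.

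The only real obstacle is bookkeeping rather than any new idea: one must track which central element $c_i$ each correction carries and use centrality to move it right, and in the mixed cases (i) and (iv) one must resist attempting an illegal $x_1\leftrightarrow x_3$ swap by fixing the order of reductions as above. Once that order is chosen, every step is a direct substitution from Lemma~\ref{lem:RR} followed by collecting like terms, so the proof reduces to the routine remark ``apply Lemma~\ref{lem:RR} repeatedly.''
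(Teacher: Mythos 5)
Your proposal is correct and follows essentially the same route as the paper: the paper proves Lemma~\ref{lem:NTL2} by noting it is "similar to the proof of Lemma~\ref{lem:NTL1}," which in turn is just "apply Lemma~\ref{lem:RR} repeatedly," exactly the strategy you carry out (and your explicit coefficient checks, e.g.\ $(1-q^2)(1+q^2)=1-q^4$ in (ii) and the reduction orders chosen in (i), (iv), all match the stated tables). Your additional observation that the mixed products must be reduced in an order that never requires commuting $x_1$ past $x_3$ is a useful clarification of the same computation, not a departure from it.
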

\begin{proof} Similar to the proof of
Lemma \ref{lem:NTL1}.
\end{proof}

\begin{lemma}
\label{lem:NTL3}
In the algebra $\widetilde \square_q$,
\begin{enumerate}
\item[\rm (i)] 
the element $x^2_1x_0x_2$ is a weighted sum involving the following
terms and coefficients:
\medskip

\centerline{
\begin{tabular}[t]{c|cccc}
{\rm term} & $x_0x_2x^2_1$ & $x_0x_1c_1$ & $x_2x_1c_0$  & $c_0c_1$
   \\ \hline 
{\rm coefficient} &
$1$ &  $q^4-1$ & 
$q^{-2}-q^{2}$ &
$(1-q^2)(q^2-q^{-2})$ 
\\
     \end{tabular}}
\item[\rm (ii)] 
the element $x^2_1x_2x_0$ is a weighted sum involving the following
terms and coefficients:
\medskip

\centerline{
\begin{tabular}[t]{c|cccc}
{\rm term} & $x_2x_0 x^2_1$ & $x_0x_1c_1$  & $x_2x_1c_0$ & $c_0c_1$
   \\ \hline 
{\rm coefficient} & $1$ & $q^2-q^{-2}$ & 
$q^{-4}-1$ & $(q^{-2}-1)(q^2-q^{-2})$ 
\\
     \end{tabular}}
\item[\rm (iii)] 
the element $x_1x_3x^2_0$ is a weighted sum involving the following
terms and coefficients:
\medskip

\centerline{
\begin{tabular}[t]{c|cccc}
{\rm term} & $x^2_0x_1x_3$ & $x_0x_1c_3$ & $x_0x_3c_0$ & $c_0c_3$ 
   \\ \hline 
{\rm coefficient} & $1$ & $q^2-q^{-2}$ &
$q^{-4}-1$ &
$(q^{-2}-1)(q^2-q^{-2})$
\\
     \end{tabular}}

\item[\rm (iv)] 
the element $x_3x_1x^2_0$ is a weighted sum involving the following
terms and coefficients:
\medskip

\centerline{
\begin{tabular}[t]{c|cccc}
{\rm term} & $x^2_0x_3x_1$ & $x_0x_1c_3$ & $x_0x_3c_0$  & $c_0c_3$
   \\ \hline 
{\rm coefficient} & $1$ & $q^4-1$ & $q^{-2}-q^{2}$  
 & $ (1-q^2)(q^2-q^{-2})$ 
\\
     \end{tabular}}

\medskip
\end{enumerate}
\end{lemma}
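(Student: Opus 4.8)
The goal is to express each of the four length-four products in the basis for $\widetilde\square_q$ from Proposition \ref{prop:SquareBasis}(iv), whose members have the shape (word in $x_0,x_2$)(word in $x_1,x_3$)(monomial in the $c_i$). Each product in (i)--(iv) has the shape odd$\,\cdot\,$odd$\,\cdot\,$even$\,\cdot\,$even (for instance $x_1^2 x_0 x_2$ in (i)), so the plan is to migrate the two leading odd generators rightward past the even generators, one elementary step at a time, via the reduction rules of Lemma \ref{lem:RR}. Using the notation of Lemma \ref{lem:RRuniform}, each move replaces an adjacency $vu$ (with $v \in \{x_1,x_3\}$, $u \in \{x_0,x_2\}$) by $uv\,q^{\langle u,v\rangle}$ plus a central correction $\gamma(u,v)(1-q^{\langle u,v\rangle})$; Lemma \ref{lem:uLong} packages several such moves at once. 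Central generators are then pushed to the far right by (\ref{eq:cciCentral}).

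Rather than reduce each product from scratch, I would bootstrap off the length-three results already established in Lemmas \ref{lem:NTL1} and \ref{lem:NTL2}. For (i), first apply Lemma \ref{lem:NTL1}(ii) to write $x_1^2 x_0 = q^4 x_0 x_1^2 + (1-q^4)x_1 c_0$, multiply on the right by $x_2$, and reduce the two resulting pieces: $x_1^2 x_2$ via Lemma \ref{lem:NTL1}(iii) and $x_1 c_0 x_2 = x_1 x_2 c_0$ via Lemma \ref{lem:RR}. Collecting terms yields the coefficients $1$, $q^4-1$, $q^{-2}-q^2$, $(1-q^2)(q^2-q^{-2})$ claimed for (i). Parts (ii)--(iv) run identically: part (iii), say, starts from $x_3 x_0^2 = q^{-4} x_0^2 x_3 + (1-q^{-4}) x_0 c_3$ (Lemma \ref{lem:NTL2}(iii)), then applies Lemma \ref{lem:NTL2}(ii) to $x_1 x_0^2$ and Lemma \ref{lem:RR} to the leftover $x_1 x_0$. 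In each case the basis of Proposition \ref{prop:SquareBasis}(iv) guarantees uniqueness of the normal form, so matching against the stated table is a legitimate verification.

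One might hope to shorten the work using symmetry: the automorphism $\widetilde\rho$ of Lemma \ref{lem:aut2} cyclically shifts the indices, and the defining relation (\ref{eq:central2}) is carried to itself under the combination of reversing products and replacing $q$ by $q^{-1}$. However, $\widetilde\rho$ sends an odd$\,\cdot\,$odd$\,\cdot\,$even$\,\cdot\,$even product to an even$\,\cdot\,$even$\,\cdot\,$odd$\,\cdot\,$odd one, which is not among (i)--(iv), so these symmetries relate the four parts only partially. I would therefore treat each part by direct reduction and invoke the symmetries only as an internal consistency check on the coefficients.

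The main obstacle is purely the bookkeeping of the correction terms. Each reduction step produces a central correction, and several of these corrections (such as $x_1 c_0 x_2$ above) are not yet in normal form: moving the central factor out by (\ref{eq:cciCentral}) re-exposes an $x_1 x_2$ or $x_1 x_0$ adjacency that must be reduced a second time. The terms quadratic in the $c_i$ --- namely the $c_0 c_1$ and $c_0 c_3$ coefficients $(1-q^2)(q^2-q^{-2})$ and $(q^{-2}-1)(q^2-q^{-2})$ --- arise from two successive corrections and are the most error-prone; confirming that they collapse to exactly the stated products, with no surviving single-$c$ or order-scrambled remnants, is the delicate point of the computation.
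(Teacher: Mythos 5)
Your proposal is correct and takes essentially the same route as the paper: the paper's proof of this lemma is simply ``similar to the proof of Lemma \ref{lem:NTL1}'', whose proof is ``apply Lemma \ref{lem:RR} repeatedly'', and your bootstrapping through Lemmas \ref{lem:NTL1}, \ref{lem:NTL2} together with one further application of Lemma \ref{lem:RR} and centrality of the $c_i$ is just an organized way of performing that same normal-form reduction. Carrying out your recipe (e.g.\ for (i): $x_1^2x_0x_2 = q^4 x_0 x_1^2 x_2 + (1-q^4)x_1x_2c_0$, then reducing $x_1^2x_2$ and $x_1x_2$) reproduces all the stated coefficients in all four parts, so the verification against the basis of Proposition \ref{prop:SquareBasis}(iv) goes through exactly as you describe.
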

\begin{proof} Similar to the proof of
Lemma \ref{lem:NTL1}.
\end{proof}

\noindent Referring to the algebra
$\widetilde \square_q$, we now write 
the elements
\begin{eqnarray}
A^3B, \qquad A^2BA, \qquad ABA^2, \qquad BA^3
\label{eq:fiveTerms}
\end{eqnarray}
in the basis for $\widetilde \square_q$ from Proposition
\ref{prop:SquareBasis}(iv).

\begin{lemma}
\label{lem:expand}
In the algebra $\widetilde \square_q$, the elements
{\rm (\ref{eq:fiveTerms})} are weighted sums involving
the following terms and coefficients.

\medskip

\centerline{
\begin{tabular}[t]{c|ccccc}
{\rm term} & {\rm $A^3B$ coef.} & {\rm $A^2BA$ coef.} & 
{\rm $ABA^2$ coef.}  & {\rm $BA^3$ coef.} 
   \\ \hline \hline 
$x^3_0x_2$ &      $1$ & $0$ & $0$ & $0$ 
\\
$x^2_0x_2x_0$  &      $0$ & $1$ & $0$ &$0$
\\
$x_0x_2x^2_0$  &      $0$ & $0$ & $1$ & $0$ 
\\
$x_2x^3_0$  &      $0$ & $0$ & $0$ & $1$
\\
\hline
$x^3_1x_3$ &      $1$ & $0$ & $0$ & $0$
\\
$x^2_1x_3x_1$ &      $0$ & $1$ & $0$ & $0$ 
\\
$x_1x_3x^2_1$ &      $0$ & $0$ & $1$ & $0$
\\
$x_3x^3_1$ &      $0$ & $0$ & $0$ & $1$ 
\\
\hline
$x^3_0x_3$ &      $1$ & $q^{-2}$ & $q^{-4}$ & $q^{-6}$ 
\\
$x_2x^3_1$ &      $q^{-6}$ & $q^{-4}$ & $q^{-2}$ & $1$ 
\\
\hline
$x_0x^2_1x_3$ &      $q^{2}\lbrack 3\rbrack_q$ & $q^{2}$ & $0$ & $0$ 
\\
$x_0x_1x_3x_1$ &    $0$ & $q^{2}+1$ & $q^{2}+1$ &  $0$ 
\\
$x_0x_3x^2_1$ &   $0$ &  $0$ & $1$ & $\lbrack 3 \rbrack_q$
\\
\hline
$x^2_0x_2x_1$ &   $\lbrack 3 \rbrack_q$ &  $1$ & $0$ & $0$ 
\\
$x_0x_2x_0x_1$ &    $0$ & $q^{2}+1$ & $q^{2}+1$ &  $0$ 
\\
$x_2x^2_0x_1$ &   $0$ &  $0$ & $q^2$ & $q^2 \lbrack 3 \rbrack_q$  
\\
\hline
$x^2_0x_1x_3$ &      $q^{2}\lbrack 3\rbrack_q$ & $q^{2}+1$ & $1$ & $0$ 
\\
$x^2_0x_3x_1$ &   $0$ &  $1$ & $q^{-2}+1$ & $q^{-2}\lbrack 3 \rbrack_q$ 
\\
\hline
$x_0x_2x^2_1$ &  $q^{-2}\lbrack 3\rbrack_q$ & $q^{-2}+1$ & $1$ & $0$ 
\\
$x_2x_0x^2_1$ &   $0$ &  $1$ & $q^{2}+1$ & $q^{2}\lbrack 3 \rbrack_q$ 
\\
\end{tabular}}
\bigskip

\centerline{
\begin{tabular}[t]{c|cccc}
{\rm term} & {\rm $A^3B$ coef.} & {\rm $A^2BA$ coef.} & 
{\rm $ABA^2$ coef.}  & {\rm $BA^3$ coef.} 
   \\ \hline \hline 
$x^2_0 c_1$ &         $(q^2-1)\lbrack 3 \rbrack_q$ & $q^2-q^{-2}$ & $1-q^{-2}$ & $0$
\\
$x^2_0 c_3$ &         $0$ & $1-q^{-2}$ & $1-q^{-4}$ & 
$q^{-2}(1-q^{-2})\lbrack 3 \rbrack_q $ 
\\
$x_0x_2 c_0$ &         $(1-q^2)(2+q^2)$ & $q^{-2}-q^2$ & $1-q^2$ & $0$ 
\\
$x_2x_0 c_0$ &         $0$ & $1-q^2$ & $q^{-2}-q^2$ & $(1-q^2)(2+q^2)$ 
\\
\hline
$x_0x_1 c_1$ &         $(q^2-q^{-2})\lbrack 3 \rbrack_q$ & $2(q^2-q^{-2})$ & 
$q^2-q^{-2}$ & $0$ 
\\
$x_0x_1 c_3$ &         $0$ & $q^2-q^{-2}$ & $2(q^2-q^{-2})$
& $(q^2-q^{-2})\lbrack 3 \rbrack_q$ 
\\
$x_0x_3 c_0$ &         $(1-q^2)(2+q^2)$ & $(q^{-2}-1)(q^2+2)$ &
$(q^{-2}-1)\lbrack 3 \rbrack_q$ & $(q^{-2}-1)(q^2+2)$ 
\\
$x_2x_1 c_0$ &         $(q^{-2}-1)(2+q^2)$ & $(q^{-2}-1)\lbrack 3\rbrack_q$ 
& $(q^{-2}-1)(q^2+2)$ & $(1-q^2)(2+q^2)$ 
\\
\hline
$x^2_1 c_1$ &         $q^{-2}(1-q^{-2})\lbrack 3 \rbrack_q$ & $1-q^{-4}$ &
$1-q^{-2}$& $0$ 
\\
$x^2_1 c_3$ &         $0$ & $1-q^{-2}$ & $q^2-q^{-2}$ & $(q^2-1)\lbrack 3 \rbrack_q$ 
\\
$x_1x_3 c_0$ &         $(1-q^2)(2+q^2)$ & $q^{-2}-q^2$ & $1-q^2$ & $0$ 
\\
$x_3x_1 c_0$ &         $0$ & $1-q^{2}$ & $q^{-2}-q^2$ & $(1-q^2)(2+q^2)$ 
\\
\hline
$ c_0c_1$ &         $-(q-q^{-1})^2(2+q^2)$ & $(q^{-2}-1)(q^2-q^{-2})$ 
& $-(q-q^{-1})^2$ & $0$ 
\\
$ c_0c_3$ &         $0$ & $-(q-q^{-1})^2$ & $(q^{-2}-1)(q^2-q^{-2})$ & $-(q-q^{-1})^2(2+q^2)$ 
\\
\end{tabular}}
\bigskip

\end{lemma}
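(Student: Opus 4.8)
The plan is to obtain each of the four expansions by factoring the degree-four monomial as a product of two degree-two factors and then substituting the quadratic expansions already recorded in Lemma \ref{lem:AAAB}. Concretely, I would write $A^3B=(A^2)(AB)$, $A^2BA=(A^2)(BA)$, $ABA^2=(AB)(A^2)$, and $BA^3=(BA)(A^2)$, and in each case replace the two factors by their expansions from Lemma \ref{lem:AAAB}(i)--(iii). Distributing produces, for each product, the array of monomials with coefficients equal to the products of the corresponding row and column coefficients from the quadratic tables. These are precisely the four arrays displayed just before the lemma statement, and constructing them is the first step.

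Next I would put every monomial appearing in these arrays into the normal form dictated by the basis in Proposition \ref{prop:SquareBasis}(iv): the even generators $x_0,x_2$ first, the odd generators $x_1,x_3$ next, and the central elements $c_i$ last. Many entries are already normally ordered; the remaining ones are exactly the parenthetical expressions flagged in the arrays. For the parenthetical products of length two, such as $x_1x_0$ or $x_3x_0$, I would apply the reduction rules of Lemma \ref{lem:RR} directly. For the parenthetical products of length three and four, I would invoke the precomputed normal forms in Lemmas \ref{lem:NTL1}, \ref{lem:NTL2}, and \ref{lem:NTL3}, which were recorded precisely so that each out-of-order monomial here is replaced by a known linear combination of basis monomials, carrying explicit $q$-powers and central factors.

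Finally, I would collect, for each basis monomial, the total coefficient coming from all contributions to a given product: each array entry contributes its scalar factor times the coefficient that the relevant reduction assigns to that basis monomial. Summing these yields a single scalar per basis monomial, and these scalars are the entries of the two tables in the lemma. I would carry out this collection independently for all four products $A^3B$, $A^2BA$, $ABA^2$, $BA^3$, thereby filling in the four data columns. Linear independence from Proposition \ref{prop:SquareBasis}(iv) guarantees that the resulting coefficients are well defined and unique.

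The computation involves no conceptual difficulty beyond the reduction rules; the hard part will be the bookkeeping. The main danger is miscollecting the several contributions to the ``mixed'' monomials such as $x_0x_1c_1$, $x_2x_1c_0$, and $c_0c_1$, which receive terms both from already-ordered entries of the arrays and from the length-three and length-four reductions. To guard against this I would tabulate, for each target basis monomial, every array entry that can feed into it together with the $q$-power and central factor its reduction introduces, and then verify that the totals agree with the lemma's tables.
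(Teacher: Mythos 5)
Your proposal is correct and follows essentially the same route as the paper: the paper's proof also factors the four products as $(A^2)(AB)$, $(A^2)(BA)$, $(AB)(A^2)$, $(BA)(A^2)$, expands them via Lemma \ref{lem:AAAB} into the four displayed arrays, and then normal-orders the parenthetical expressions using Lemma \ref{lem:RR} together with Lemmas \ref{lem:NTL1}, \ref{lem:NTL2}, \ref{lem:NTL3} before collecting coefficients. Your plan, including the bookkeeping safeguard for the mixed monomials, is a faithful elaboration of that argument.
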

\begin{proof}  In the four tables below
Lemma \ref{lem:AAAB}, evaluate
the parenthetical expressions using
Lemma \ref{lem:RR} along with Lemmas
\ref{lem:NTL1}, 
\ref{lem:NTL2}, 
\ref{lem:NTL3}.
\end{proof}


\noindent Recall the elements
$\lbrace S_i \rbrace_{i \in \mathbb Z_4}$
in $\widetilde \square_q$, from Definition
\ref{def:Deltai}.

\begin{proposition}
\label{prop:tildeMain}
For the algebra $\widetilde \square_q$ define
\begin{eqnarray*}
A = x_0+x_1, \qquad \qquad B = x_2+x_3.
\end{eqnarray*}
Then both
\begin{eqnarray}
&&
A^3 B - \lbrack 3 \rbrack_q A^2 B A +
\lbrack 3 \rbrack_q A B A^2 -B A^3 + (q^2-q^{-2})^2 c_0(AB-BA)
 = S_0 + S_1, \qquad 
\label{eq:1Assertion}
\\
&&
B^3 A - \lbrack 3 \rbrack_q B^2 A B +
\lbrack 3 \rbrack_q B A B^2 -A B^3 + (q^2-q^{-2})^2 c_2 (BA-AB)
= 
S_2 + S_3.
\label{eq:2Assertion}
\end{eqnarray}
\end{proposition}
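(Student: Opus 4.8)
The plan is to prove the first identity (\ref{eq:1Assertion}) by a direct computation in the basis for $\widetilde\square_q$ from Proposition \ref{prop:SquareBasis}(iv), and then to deduce the second identity (\ref{eq:2Assertion}) from it by a symmetry argument. All the preparatory expansions have already been assembled: Lemma \ref{lem:expand} writes each of $A^3B$, $A^2BA$, $ABA^2$, $BA^3$ in this basis, and Lemma \ref{lem:AAAB}(ii),(iii) writes $AB$ and $BA$ in this basis. So the entire left-hand side of (\ref{eq:1Assertion}) can be read off as a single linear combination of basis vectors, and the assertion reduces to checking that the resulting coefficients agree with those of $S_0 + S_1$.

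Concretely, first I would form $\Phi_0 = A^3B - \lbrack 3\rbrack_q A^2BA + \lbrack 3\rbrack_q ABA^2 - BA^3$ by taking, for each basis vector in the tables of Lemma \ref{lem:expand}, the combination (first column) $-\lbrack 3\rbrack_q\cdot$(second column) $+\lbrack 3\rbrack_q\cdot$(third column) $-$(fourth column). The eight ``pure'' basis vectors $x_0^3x_2, x_0^2x_2x_0, x_0x_2x_0^2, x_2x_0^3$ and $x_1^3x_3, x_1^2x_3x_1, x_1x_3x_1^2, x_3x_1^3$ are each hit by exactly one of the four products, so they contribute precisely $S_0$ (in $\widetilde\square^{\rm even}_q$) and $S_1$ (in $\widetilde\square^{\rm odd}_q$); this is the desired right-hand side. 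Every remaining basis vector must cancel. For the mixed $c$-free vectors (such as $x_0^3x_3$ and $x_2x_1^3$) and for the vectors carrying a factor $c_1$ or $c_3$, the coefficient of $\Phi_0$ already vanishes once the $\lbrack 3\rbrack_q$-weighted combination is collected; these cancellations use only the identity $\lbrack 3\rbrack_q(q^{-2}-q^{-4}) = 1-q^{-6}$ and its companions. For the vectors carrying a factor $c_0$ (namely $x_0x_2c_0, x_2x_0c_0, x_0x_3c_0, x_2x_1c_0, x_1x_3c_0, x_3x_1c_0, c_0c_1, c_0c_3$) the coefficient of $\Phi_0$ does not vanish, and this is where the correction term enters. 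I would compute $c_0(AB-BA)$ from Lemma \ref{lem:AAAB}(ii),(iii) — since every monomial there already has its even factor to the left of its odd factor, no further reduction is needed — multiply by $(q^2-q^{-2})^2$, and verify that the result exactly cancels the surviving $c_0$-coefficients of $\Phi_0$. For instance the coefficient of $x_0x_2c_0$ in $\Phi_0$ works out to $2-q^4-q^{-4}$, which is cancelled by the contribution $(q^2-q^{-2})^2 = q^4-2+q^{-4}$ from the correction term. After all cancellations only $S_0+S_1$ remains, proving (\ref{eq:1Assertion}).

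For the second identity I would invoke the automorphism $\widetilde\rho$ of Lemma \ref{lem:aut2}. Its square $\widetilde\rho^2$ sends $x_i\mapsto x_{i+2}$ and $c_i\mapsto c_{i+2}$, hence $A\mapsto B$, $B\mapsto A$, $c_0\mapsto c_2$, $S_0\mapsto S_2$, and $S_1\mapsto S_3$. Applying $\widetilde\rho^2$ to (\ref{eq:1Assertion}) therefore transforms it term by term into exactly (\ref{eq:2Assertion}), so the second identity is immediate once the first is established.

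The main obstacle is purely bookkeeping: assembling the full list of basis coefficients of $\Phi_0$ from the four tables of Lemma \ref{lem:expand} and checking each cancellation is a long but mechanical computation with Laurent expressions in $q$. The one genuinely structural point is recognizing that exactly the $c_0$-terms fail to cancel inside $\Phi_0$, and that the prefactor $(q^2-q^{-2})^2$ in front of $c_0(AB-BA)$ is forced precisely in order to repair them; the appearance of $c_0$ (rather than any other $c_i$) is dictated by the reduction rules of Lemma \ref{lem:RR}, which govern how the out-of-order factors in the four tables are straightened.
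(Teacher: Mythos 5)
Your proposal is correct and takes essentially the same approach as the paper's proof: expand the left-hand side of (\ref{eq:1Assertion}) in the basis of Proposition \ref{prop:SquareBasis}(iv) using Lemma \ref{lem:expand} for the four quartic terms and Lemma \ref{lem:AAAB}(ii),(iii) for $c_0(AB-BA)$, verify the cancellation leaving $S_0+S_1$, and then obtain (\ref{eq:2Assertion}) by applying $\widetilde\rho^{\,2}$ from Lemma \ref{lem:aut2}. Your sample coefficient checks (e.g. the coefficient $2-q^4-q^{-4}$ of $x_0x_2c_0$ being cancelled by $(q^2-q^{-2})^2$) are consistent with the tables, so the ``routine computation'' the paper alludes to is carried out correctly.
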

\begin{proof}
To obtain 
(\ref{eq:1Assertion}),
write 
(\ref{eq:terms}) in the basis for
$\widetilde \square_q$ from
Proposition
\ref{prop:SquareBasis}(iv).
To do this, evaluate the terms
(\ref{eq:fiveTerms})
using
Lemma
\ref{lem:expand}, 
and the term $c_0(AB-BA)$ using
Lemma
\ref{lem:AAAB}(ii),(iii).
Assertion
(\ref{eq:1Assertion})
follows after a routine computation.
Assertion
(\ref{eq:2Assertion})
is obtained from
(\ref{eq:1Assertion})
by applying the square of
the automorphism
$\widetilde \rho$ from Lemma
\ref{lem:aut2}.
\end{proof}

\noindent {\it Proof of Proposition 
\ref{prop:ABxyiLongerM}}.
The composition
\begin{equation*}
\begin{CD} 
\widetilde \square_q  @>> \widetilde g(a,a^{-1},b,b^{-1}) >  
\widetilde \square_q  
 @>> can > \square_q
                  \end{CD}
\end{equation*}
is an $\mathbb F$-algebra homomorphism. 
Apply this homomorphism
to everything in Proposition
\ref{prop:tildeMain}.
$\hfill \square$

\section{The algebra $\widehat \square_q$}

\noindent We have been discussing the
algebras
$\square_q$ and
 $\widetilde \square_q$.
As we compare these algebras,
it is useful to bring in
 an ``intermediate'' algebra
$\widehat \square_q$
such that
 the canonical homomorphism
$ \widetilde \square_q \to \square_q$
has a factorization
$ \widetilde \square_q \to \widehat \square_q \to \square_q $.

\begin{definition}
\rm
\label{def:boxqV1}
Let $\widehat \square_q$ denote the $\mathbb F$-algebra with
generators $c^{\pm 1}_i, x_i$ $(i \in \mathbb Z_4)$
and relations
\begin{eqnarray}
&& \quad \qquad \qquad c_ic^{-1}_i = c^{-1}_i c_i = 1,
\label{eq:Box1}
\\
&& \quad \qquad \qquad 
\mbox{$c^{\pm 1}_i$ are central in $\widehat \square_q$},
\label{eq:Box2}
\\
&&
\quad \qquad \qquad
\frac{q x_i x_{i+1}-q^{-1}x_{i+1}x_i}{q-q^{-1}} = c_i,
\label{eq:central}
\\
&& 
x^3_i x_{i+2} - \lbrack 3 \rbrack_q x^2_i x_{i+2} x_i +
\lbrack 3 \rbrack_q x_i x_{i+2} x^2_i -x_{i+2} x^3_i = 0.
\label{eq:Box4}
\end{eqnarray}
\end{definition}

\noindent We have some comments.

\begin{lemma}
\label{lem:aut1}
There exists an automorphism $\widehat \rho$ of 
$\widehat \square_q$
that sends $c_i \mapsto c_{i+1}$ and $x_i \mapsto x_{i+1}$ for
$i \in \mathbb Z_4$. Moreover $\widehat \rho^4=1$.
\end{lemma}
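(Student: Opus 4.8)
The plan is to exploit the fact that $\widehat\square_q$ is presented by generators and relations, and to verify that the prescribed cyclic shift respects every defining relation. Concretely, I would first define a candidate map on the generating set by $c_i \mapsto c_{i+1}$, $c_i^{-1} \mapsto c_{i+1}^{-1}$, and $x_i \mapsto x_{i+1}$ for $i \in \mathbb Z_4$. To show this extends to an algebra endomorphism $\widehat\rho \colon \widehat\square_q \to \widehat\square_q$, it suffices, by the universal property of a presentation (i.e.\ the map out of the free algebra on these generators factors through the defining ideal precisely when the relations are sent to valid relations), to check that the images of the generators satisfy each of (\ref{eq:Box1})--(\ref{eq:Box4}).

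The key observation is that each family of relations is indexed by $i \in \mathbb Z_4$, and that the substitution $i \mapsto i+1$ merely relabels that index. Thus the image of (\ref{eq:Box1}) is $c_{i+1} c_{i+1}^{-1} = c_{i+1}^{-1} c_{i+1} = 1$, again a relation in the list; the image of (\ref{eq:Box2}) is the centrality of $c_{i+1}^{\pm 1}$; the image of (\ref{eq:central}) reads $(q x_{i+1} x_{i+2} - q^{-1} x_{i+2} x_{i+1})/(q-q^{-1}) = c_{i+1}$, which is the $(i+1)$ instance of (\ref{eq:central}); and (\ref{eq:Box4}) maps to its $(i+1)$ instance. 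Since $i \mapsto i+1$ is a bijection of $\mathbb Z_4$, the full set of defining relations is permuted among itself, so all relations are preserved and $\widehat\rho$ extends to an algebra endomorphism.

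To promote $\widehat\rho$ from an endomorphism to an automorphism I would compute $\widehat\rho^4$. Because $4 \equiv 0$ in $\mathbb Z_4$, iterating the generator assignment four times returns each generator to itself, so $\widehat\rho^4$ fixes a generating set and hence equals the identity endomorphism. This exhibits $\widehat\rho^3$ as a two-sided inverse of $\widehat\rho$, so $\widehat\rho$ is bijective, i.e.\ an automorphism, and the asserted identity $\widehat\rho^4 = 1$ is simultaneously established.

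I do not expect any genuine obstacle: the content is entirely formal, resting only on the invariance (as a set) of the defining relations under the cyclic shift of the index group $\mathbb Z_4$. The single point needing a little care is the bookkeeping for the inverse pair $c_i^{\pm 1}$, namely that $\widehat\rho(c_i) = c_{i+1}$ forces $\widehat\rho(c_i^{-1}) = c_{i+1}^{-1}$ so as to be compatible with (\ref{eq:Box1}); this is immediate. The argument is essentially identical to the one behind Lemma \ref{lem:aut2} for $\widetilde\square_q$ and Lemma \ref{lem:aut1M} for $\square_q$.
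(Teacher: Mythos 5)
Your proof is correct: checking that the cyclic shift $i \mapsto i+1$ permutes the defining relations (\ref{eq:Box1})--(\ref{eq:Box4}) and then using $\widehat\rho^4 = 1$ on generators to get invertibility is exactly the routine argument intended here. The paper states Lemma \ref{lem:aut1} without proof, treating it as immediate from Definition \ref{def:boxqV1}, so your write-up simply supplies the standard verification the author omitted.
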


\begin{lemma}
\label{lem:can01}
There exists an $\mathbb F$-algebra
homomorphism $\widetilde \square_q \to \widehat \square_q$
that sends $c^{\pm 1}_i \mapsto c^{\pm 1}_i$
and $x_i \mapsto x_i$ for
$i \in \mathbb Z_4$.
This homomorphism is surjective.
\end{lemma}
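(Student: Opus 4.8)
The plan is to invoke the universal property of a presentation by generators and relations, exactly as in the proof of Lemma \ref{lem:squareCanon}. Recall that $\widetilde \square_q$ is the $\mathbb F$-algebra freely generated by the symbols $c^{\pm 1}_i, x_i$ ($i \in \mathbb Z_4$) subject only to the relations (\ref{eq:cci})--(\ref{eq:central2}). Consequently, to produce an $\mathbb F$-algebra homomorphism out of $\widetilde \square_q$ it suffices to name, for each generator, a target element in $\widehat \square_q$ and to verify that these targets satisfy the relations (\ref{eq:cci})--(\ref{eq:central2}). I would of course take the targets to be the like-named generators $c^{\pm 1}_i, x_i$ of $\widehat \square_q$.

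The verification step is where one might anticipate work, but in fact there is nothing to compute. The relations (\ref{eq:cci}), (\ref{eq:cciCentral}), (\ref{eq:central2}) defining $\widetilde \square_q$ are, symbol for symbol, among the defining relations (\ref{eq:Box1}), (\ref{eq:Box2}), (\ref{eq:central}) of $\widehat \square_q$; passing to $\widehat \square_q$ merely imposes in addition the $q$-Serre relations (\ref{eq:Box4}). Hence the generators $c^{\pm 1}_i, x_i$ of $\widehat \square_q$ automatically satisfy (\ref{eq:cci})--(\ref{eq:central2}), and the desired homomorphism $\widetilde \square_q \to \widehat \square_q$ exists by the universal property. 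For surjectivity, I would note that the image is a subalgebra of $\widehat \square_q$ containing every generator $c^{\pm 1}_i, x_i$ ($i \in \mathbb Z_4$); since these generate $\widehat \square_q$, the image is all of $\widehat \square_q$.

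I expect no genuine obstacle here: the only conceptual content is the observation that forming $\widehat \square_q$ from $\widetilde \square_q$ is precisely the act of adjoining the $q$-Serre relations (\ref{eq:Box4}), so that $\widehat \square_q$ is a quotient of $\widetilde \square_q$ and the map in question is the corresponding canonical surjection. This is also exactly the first arrow in the factorization $\widetilde \square_q \to \widehat \square_q \to \square_q$ of the canonical homomorphism announced at the start of the section, the intended second arrow being the map that additionally sets each $c^{\pm 1}_i \mapsto 1$.
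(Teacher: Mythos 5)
Your proposal is correct and is exactly the argument the paper intends: Lemma \ref{lem:can01} is stated without proof precisely because, as you observe, the defining relations (\ref{eq:cci})--(\ref{eq:central2}) of $\widetilde \square_q$ appear verbatim among the defining relations (\ref{eq:Box1})--(\ref{eq:central}) of $\widehat \square_q$, so the universal property yields the map and surjectivity follows since the image contains the generators (compare the paper's one-line proof of Lemma \ref{lem:canKer}, which cites the same comparison of Definitions \ref{def:boxqV2} and \ref{def:boxqV1}). Your closing remark that $\widehat \square_q$ is the quotient of $\widetilde \square_q$ by the ideal generated by the $q$-Serre elements is likewise exactly the content the paper records later as Lemma \ref{lem:canKer}.
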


\begin{lemma}
\label{lem:can12}
There exists an $\mathbb F$-algebra
homomorphism 
$\widehat \square_q \to  \square_q$
that sends $c^{\pm 1}_i \mapsto 1$
and $x_i \mapsto x_i$ for
$i \in \mathbb Z_4$.
This homomorphism is surjective.
\end{lemma}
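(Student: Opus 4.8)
The plan is to invoke the universal property of an algebra presented by generators and relations. Since $\widehat \square_q$ is, by Definition \ref{def:boxqV1}, the free $\mathbb F$-algebra on the symbols $c^{\pm 1}_i, x_i$ $(i \in \mathbb Z_4)$ modulo the two-sided ideal generated by the relations (\ref{eq:Box1})--(\ref{eq:Box4}), to produce the asserted homomorphism it suffices to send $c^{\pm 1}_i \mapsto 1$ and $x_i \mapsto x_i$ into $\square_q$ and then verify that these images satisfy each defining relation of $\widehat \square_q$. Equivalently, there is an obvious $\mathbb F$-algebra homomorphism from the free algebra on $c^{\pm 1}_i, x_i$ into $\square_q$ realizing this assignment, and I would check that it annihilates the defining ideal of $\widehat \square_q$, so that it factors through $\widehat \square_q$.

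First I would dispatch the relations involving the $c_i$. Relation (\ref{eq:Box1}) becomes $1\cdot 1 = 1\cdot 1 = 1$ in $\square_q$, which holds, and relation (\ref{eq:Box2}) holds automatically because the identity element is central in any algebra. Next, under the assignment the left-hand side of relation (\ref{eq:central}) maps to $(q x_i x_{i+1}-q^{-1}x_{i+1}x_i)/(q-q^{-1})$ computed in $\square_q$; by relation (\ref{eq:centralM}) of Definition \ref{def:boxqV1M} this equals $1$, which is exactly the image of the right-hand side $c_i$. Finally, relation (\ref{eq:Box4}) maps to the corresponding $q$-Serre relation in $\square_q$, which holds by (\ref{eq:qSerreM}). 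Hence every defining relation of $\widehat \square_q$ is satisfied by the images, so the homomorphism exists. For surjectivity I would simply note that the images of the $x_i$ are the generators $x_i$ of $\square_q$, and by Definition \ref{def:boxqV1M} these generate $\square_q$; therefore the map is onto.

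I do not expect any substantive obstacle here. The essential observation—and the reason the verification is immediate rather than delicate—is that $\square_q$ is obtained from $\widehat \square_q$ precisely by imposing $c_i = 1$ for all $i$, so each defining relation of $\widehat \square_q$ degenerates, under the assignment $c^{\pm 1}_i \mapsto 1$, into a relation that already holds in $\square_q$ by construction. The only point requiring a moment's care is matching the right-hand side $c_i$ of (\ref{eq:central}) with the right-hand side $1$ of (\ref{eq:centralM}), which is exactly what setting $c_i = 1$ accomplishes.
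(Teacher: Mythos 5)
Your proof is correct and matches the paper's intent exactly: the paper states this lemma without proof, treating it as the routine comparison of Definition \ref{def:boxqV1} with Definition \ref{def:boxqV1M} (the same pattern used for Lemma \ref{lem:Kmeaning}), which is precisely the relation-by-relation verification and generator argument you spell out. No gaps.
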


\begin{definition}
\label{def:cancan}
\rm The homomorphisms
 $\widetilde \square_q \to \widehat \square_q$
from Lemma
\ref{lem:can01}
and 
 $\widehat \square_q \to \square_q$
from Lemma
\ref{lem:can12} will be called {\it canonical}.
\end{definition}

\begin{note}
\rm In Definition
\ref{def:Canon}
we defined the canonical
homomorphism
$\widetilde \square_q \to \square_q$,
and in 
Definition
\ref{def:cancan} we defined the canonical homomorphisms
$\widetilde \square_q \to \widehat \square_q$ and
$\widehat \square_q \to \square_q$.
When we speak of the canonical homomorphism, it
should be clear from the context
which version we refer to.
\end{note}

\begin{lemma}
The following diagram commutes:

\begin{equation*}
\begin{CD}
\widetilde \square_q @>can > >
                  \widehat \square_q 
           \\ 
          @VIVV                     @VVcan V \\
            \widetilde \square_q @>>can > 
               \square_q 
                   \end{CD}
\end{equation*}

\end{lemma}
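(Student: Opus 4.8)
The plan is to use that every arrow in the diagram is an $\mathbb F$-algebra homomorphism, so to prove commutativity it suffices to check that the two composite homomorphisms $\widetilde\square_q \to \square_q$ agree on a generating set for $\widetilde\square_q$. By Definition \ref{def:boxqV2} the algebra $\widetilde\square_q$ is generated by $c^{\pm 1}_i, x_i$ $(i\in\mathbb Z_4)$, so it is enough to track the image of each such generator along both paths; this is exactly the generator-chase already employed in the proof of Lemma \ref{lem:munu}.

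First I would follow the top edge then the right edge, namely $\widetilde\square_q \xrightarrow{can} \widehat\square_q \xrightarrow{can} \square_q$. By Lemma \ref{lem:can01} the first canonical homomorphism sends $c^{\pm 1}_i\mapsto c^{\pm 1}_i$ and $x_i\mapsto x_i$; by Lemma \ref{lem:can12} the second sends $c^{\pm 1}_i\mapsto 1$ and $x_i\mapsto x_i$. Composing, the generator $c^{\pm 1}_i$ is sent to $1$ and $x_i$ is sent to $x_i$.

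Next I would follow the left edge then the bottom edge, namely $\widetilde\square_q \xrightarrow{I} \widetilde\square_q \xrightarrow{can} \square_q$. The map $I$ is the identity, and the canonical homomorphism $\widetilde\square_q\to\square_q$ of Definition \ref{def:Canon} sends $c^{\pm 1}_i\mapsto 1$ and $x_i\mapsto x_i$ by Lemma \ref{lem:squareCanon}. Hence along this path $c^{\pm 1}_i\mapsto 1$ and $x_i\mapsto x_i$ as well, matching the images found for the first path.

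Since the two composites agree on each generator $c^{\pm 1}_i, x_i$ and both are $\mathbb F$-algebra homomorphisms out of the algebra generated by these elements, they coincide, and so the diagram commutes. I expect no genuine obstacle here: the verification is purely a matter of bookkeeping on generators. The only point requiring care is to match up \emph{which} canonical homomorphism is meant on each edge, precisely the ambiguity flagged in the preceding Note; once the three canonical maps of Lemmas \ref{lem:squareCanon}, \ref{lem:can01}, and \ref{lem:can12} are correctly identified with their edges, the computation closes immediately.
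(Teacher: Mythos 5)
Your proof is correct and takes essentially the same approach as the paper: the paper's proof simply cites Definitions \ref{def:Canon} and \ref{def:cancan}, which amounts to exactly the generator chase you spell out (both composites send $c^{\pm 1}_i \mapsto 1$ and $x_i \mapsto x_i$). Your version just makes the bookkeeping explicit.
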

\begin{proof} By Definitions
\ref{def:Canon},
\ref{def:cancan}.
\end{proof}

\begin{definition}
\label{def:squareTeTo}
\rm Define the subalgebras 
$ \widehat \square^{\rm even}_q$,
$\widehat \square^{\rm odd}_q$,
$\widehat C$ of 
$ \widehat  \square_q$ such that
\begin{enumerate}
\item[\rm (i)]
$ \widehat \square^{\rm even}_q$
 is 
generated by $x_0, x_2$;
\item[\rm (ii)]
$ \widehat \square^{\rm odd}_q$
  is
generated by $x_1, x_3$;
\item[\rm (iii)]
  $\widehat C$ is
generated by $\lbrace c^{\pm 1}_i\rbrace_{i \in \mathbb Z_4}$.
\end{enumerate}
\end{definition}

\begin{lemma}
\label{lem:xiadj}
Let $\lbrace \alpha_i \rbrace_{i\in \mathbb Z_4}$
denote invertible elements in 
$\widehat C$.
Then there exists
an  $\mathbb F$-algebra homomorphism
$\widehat \square_q\to \widehat \square_q $ that sends
$x_i \mapsto \alpha_i x_i $ and
$c_i \mapsto \alpha_i \alpha_{i+1} c_i$ for $i \in \mathbb Z_4$.
\end{lemma}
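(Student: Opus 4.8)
The plan is to invoke the universal property of the presentation in Definition \ref{def:boxqV1}: to construct an $\mathbb F$-algebra homomorphism out of $\widehat \square_q$ it suffices to assign images to the generators $c^{\pm 1}_i, x_i$ and to verify that these images satisfy the defining relations (\ref{eq:Box1})--(\ref{eq:Box4}). I would set $x_i \mapsto \alpha_i x_i$ and $c_i \mapsto \alpha_i \alpha_{i+1} c_i$ for $i \in \mathbb Z_4$; invertibility of each $\alpha_i$ in $\widehat C$ then forces $c^{-1}_i \mapsto \alpha^{-1}_{i+1}\alpha^{-1}_i c^{-1}_i$. Throughout, the one structural fact I would use repeatedly is that $\widehat C$ is central in $\widehat \square_q$: by Definition \ref{def:squareTeTo}(iii) it is generated by the elements $c^{\pm 1}_i$, which are central by (\ref{eq:Box2}), so each $\alpha_i$ commutes with every element of $\widehat \square_q$. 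This is the exact analog of Lemma \ref{lem:xiadjT}, which established the corresponding statement for $\widetilde \square_q$.

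Next I would check the four families of relations in turn. For (\ref{eq:Box1}), the proposed images of $c_i$ and $c^{-1}_i$ multiply to $\alpha_i \alpha_{i+1}\alpha^{-1}_i \alpha^{-1}_{i+1} c_i c^{-1}_i = 1$, using that the $\alpha$'s commute. For (\ref{eq:Box2}), the image $\alpha_i \alpha_{i+1} c_i$ lies in $\widehat C$ and is therefore central. For (\ref{eq:central}), centrality of $\alpha_i$ and $\alpha_{i+1}$ lets me pull the scalar $\alpha_i \alpha_{i+1}$ out of the expression $q x_i x_{i+1} - q^{-1} x_{i+1} x_i$, reducing the left-hand side to $\alpha_i \alpha_{i+1} c_i$ by the original relation (\ref{eq:central}); this matches the proposed image of $c_i$.

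The one new point compared with Lemma \ref{lem:xiadjT} is the $q$-Serre relation (\ref{eq:Box4}), which is absent from the presentation of $\widetilde \square_q$. Here I would observe that every monomial appearing in $x^3_i x_{i+2} - \lbrack 3 \rbrack_q x^2_i x_{i+2} x_i + \lbrack 3 \rbrack_q x_i x_{i+2} x^2_i - x_{i+2} x^3_i$ contains exactly three factors $x_i$ and one factor $x_{i+2}$; after the substitution, using centrality of the $\alpha$'s to bring them to the front, each monomial acquires the same scalar $\alpha^3_i \alpha_{i+2}$. Thus the image of the left-hand side equals $\alpha^3_i \alpha_{i+2}$ times the original expression, which is $0$, so the relation is preserved. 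Having checked all defining relations, the homomorphism exists. The verification is entirely routine; the only thing to be careful about is the uniformity of the degree in each generator across the $q$-Serre monomials, which is exactly what makes the scalar $\alpha^3_i \alpha_{i+2}$ factor out cleanly, so I do not expect any genuine obstacle.
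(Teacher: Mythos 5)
Your proof is correct, and it takes exactly the approach the paper intends: the paper states Lemma \ref{lem:xiadj} without proof (just as it does for the analogous Lemma \ref{lem:xiadjT}), treating as routine precisely the verification you carried out — checking that the proposed images of the generators satisfy the defining relations (\ref{eq:Box1})--(\ref{eq:Box4}), with centrality of $\widehat C$ letting the $\alpha$'s factor out, and with the uniform degree pattern of the $q$-Serre monomials handling (\ref{eq:Box4}).
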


\begin{definition}
\label{def:alphaAut}
\rm
The homomorphism in Lemma
\ref{lem:xiadj} will be denoted by $\widehat g(\alpha_0, \alpha_1, \alpha_2, \alpha_3)$.
\end{definition}

\begin{lemma} 
\label{lem:alphaAut}
Referring to Lemma
\ref{lem:xiadj} and Definition
\ref{def:alphaAut}, assume that $0 \not=\alpha_i \in \mathbb F$
for $i \in \mathbb Z_4$. Then
$\widehat g(\alpha_0, \alpha_1, \alpha_2, \alpha_3)$ is an automorphism
of $\widehat \square_q$. Its inverse is 
$\widehat g(\alpha^{-1}_0, \alpha^{-1}_1, \alpha^{-1}_2, \alpha^{-1}_3)$.
\end{lemma}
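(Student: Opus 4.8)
The plan is to mirror the proof of Lemma~\ref{lem:alphaAutT}, which established exactly this statement for $\widetilde \square_q$. Concretely, I would show that $\widehat g(\alpha_0,\alpha_1,\alpha_2,\alpha_3)$ and $\widehat g(\alpha^{-1}_0,\alpha^{-1}_1,\alpha^{-1}_2,\alpha^{-1}_3)$ are two-sided inverses of one another; this forces each to be a bijective $\mathbb F$-algebra homomorphism, hence an automorphism of $\widehat \square_q$, and identifies the inverse of $\widehat g(\alpha_0,\alpha_1,\alpha_2,\alpha_3)$ as claimed.

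First I would observe that since $0 \not= \alpha_i \in \mathbb F$ and $\mathbb F 1 \subseteq \widehat C$, each $\alpha_i$ is an invertible element of $\widehat C$, with inverse the scalar $\alpha^{-1}_i$. Consequently both $\widehat g(\alpha_0,\alpha_1,\alpha_2,\alpha_3)$ and $\widehat g(\alpha^{-1}_0,\alpha^{-1}_1,\alpha^{-1}_2,\alpha^{-1}_3)$ are well-defined $\mathbb F$-algebra homomorphisms $\widehat \square_q \to \widehat \square_q$ by Lemma~\ref{lem:xiadj}. Abbreviate $g = \widehat g(\alpha_0,\alpha_1,\alpha_2,\alpha_3)$ and $g' = \widehat g(\alpha^{-1}_0,\alpha^{-1}_1,\alpha^{-1}_2,\alpha^{-1}_3)$.

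Next I would evaluate the composites $g' \circ g$ and $g \circ g'$ on the generating set $\lbrace c^{\pm 1}_i, x_i \rbrace_{i \in \mathbb Z_4}$. By the definition in Lemma~\ref{lem:xiadj} we have $g(x_i) = \alpha_i x_i$ and $g(c_i) = \alpha_i \alpha_{i+1} c_i$. Applying $g'$, and using that the $\alpha_i$ are ground-field scalars fixed by the $\mathbb F$-linear map $g'$, I obtain $g'(g(x_i)) = \alpha_i\, g'(x_i) = \alpha_i \alpha^{-1}_i x_i = x_i$ and $g'(g(c_i)) = \alpha_i \alpha_{i+1}\, g'(c_i) = \alpha_i \alpha_{i+1} \alpha^{-1}_i \alpha^{-1}_{i+1} c_i = c_i$; the identity $g'(g(c^{-1}_i)) = c^{-1}_i$ follows likewise, and by symmetry $g \circ g'$ also fixes every generator.

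Finally, since $g' \circ g$ and $g \circ g'$ are $\mathbb F$-algebra endomorphisms of $\widehat \square_q$ that fix a generating set, each must equal the identity map. Hence $g$ is invertible with inverse $g'$, so $g$ is an automorphism, completing the proof. I do not expect any genuine obstacle here: the only point requiring care is the bookkeeping of the central scalar factors $\alpha_i \alpha_{i+1}$ attached to the $c_i$, but these cancel precisely because the maps are $\mathbb F$-linear and the $\alpha_i$ lie in $\mathbb F$, so the verification is purely mechanical and parallels Lemma~\ref{lem:alphaAutT} verbatim.
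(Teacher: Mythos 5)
Your proof is correct and follows essentially the same route as the paper: the paper's proof of Lemma~\ref{lem:alphaAut} simply defers to Lemma~\ref{lem:alphaAutT}, whose proof is exactly your argument of checking that $\widehat g(\alpha_0,\alpha_1,\alpha_2,\alpha_3)$ and $\widehat g(\alpha^{-1}_0,\alpha^{-1}_1,\alpha^{-1}_2,\alpha^{-1}_3)$ are mutually inverse. Your write-up just makes explicit the generator-chasing and the scalar cancellation that the paper leaves to the reader.
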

\begin{proof}
Similar to the proof of Lemma
\ref{lem:alphaAutT}.
\end{proof}

\noindent Our next goal is to obtain an analog of
Propositions \ref{thm:tensorDecPreM},
\ref{prop:tensorDec}
that applies to $\widehat \square_q$.

\begin{definition}  
\label{def:J}
\rm
 Let $J$ denote the 2-sided ideal of $\widetilde \square_q$
generated by $\lbrace S_i \rbrace_{i \in \mathbb Z_4}$.
Thus 
\begin{eqnarray}
\label{eq:Jdef}
J = \sum_{i \in \mathbb Z_4} 
\widetilde \square_q
S_i 
\widetilde \square_q.
\end{eqnarray}
\end{definition}

\begin{lemma}
\label{lem:canKer}
The canonical homomorphism 
$ \widetilde \square_q \to \widehat \square_q $ has
kernel $J$.
\end{lemma}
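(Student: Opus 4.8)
The plan is to recognize the canonical homomorphism $\widetilde \square_q \to \widehat \square_q$ as the quotient map onto $\widetilde \square_q / J$, and thereby read off its kernel. The whole argument is an exercise in the universal property of algebras presented by generators and relations: comparing the two presentations in Definitions~\ref{def:boxqV2} and~\ref{def:boxqV1}, the relations (\ref{eq:Box1})--(\ref{eq:central}) defining $\widehat \square_q$ are literally the defining relations (\ref{eq:cci})--(\ref{eq:central2}) of $\widetilde \square_q$, and the only additional relations in $\widehat \square_q$ are the $q$-Serre relations (\ref{eq:Box4}), i.e. $S_i = 0$. So one expects $\widehat \square_q \cong \widetilde \square_q / J$, and no computation beyond this bookkeeping is needed.

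First I would dispatch the easy inclusion $J \subseteq \ker$. The canonical homomorphism fixes each generator $x_i$, so it sends the element $S_i$ of Definition~\ref{def:Deltai} to the corresponding $q$-Serre expression in $\widehat \square_q$, which vanishes by (\ref{eq:Box4}). Hence each $S_i$ lies in the kernel; since the kernel is a two-sided ideal and $J = \sum_{i} \widetilde \square_q S_i \widetilde \square_q$ is the two-sided ideal generated by $\{S_i\}_{i \in \mathbb Z_4}$, we get $J \subseteq \ker$. Consequently the canonical map factors as $\widetilde \square_q \xrightarrow{\pi} \widetilde \square_q / J \xrightarrow{\psi} \widehat \square_q$, where $\pi$ is the natural projection (with $\ker \pi = J$) and $\psi$ is the induced surjection.

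Next I would build a homomorphism in the reverse direction. Writing $\bar c_i, \bar x_i$ for the images of the generators in $\widetilde \square_q / J$, relations (\ref{eq:Box1})--(\ref{eq:central}) hold for $\bar c_i, \bar x_i$ because they already hold in $\widetilde \square_q$, and relation (\ref{eq:Box4}) holds because $S_i \in J$ forces $\bar S_i = 0$. Thus all the defining relations of $\widehat \square_q$ are satisfied in $\widetilde \square_q / J$, so its universal property yields an $\mathbb F$-algebra homomorphism $\phi : \widehat \square_q \to \widetilde \square_q / J$ with $c_i^{\pm 1} \mapsto \bar c_i^{\pm 1}$ and $x_i \mapsto \bar x_i$.

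Finally I would check that $\psi$ and $\phi$ are mutually inverse by chasing the generators $c_i^{\pm 1}, x_i$ around $\psi \circ \phi$ and $\phi \circ \psi$: each composition fixes every generator, and the generators generate the respective algebras, so both compositions are the identity. Hence $\psi$ is an isomorphism, and in particular injective; since the canonical homomorphism equals $\psi \circ \pi$, its kernel is $\pi^{-1}(\ker \psi) = \ker \pi = J$, which is the claim. I do not expect any genuine obstacle here: the sole point deserving care is verifying that the first three relations of $\widehat \square_q$ coincide exactly with those of $\widetilde \square_q$, so that quotienting by $J$ imposes precisely the $q$-Serre relations and nothing further.
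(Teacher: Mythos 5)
Your proof is correct and is essentially the paper's own argument: the paper's proof consists of the single line ``Compare Definitions~\ref{def:boxqV2}, \ref{def:boxqV1}'', i.e.\ exactly the observation that $\widehat\square_q$ has the same presentation as $\widetilde\square_q$ with the additional relations $S_i=0$, so the canonical map is the quotient by the ideal $J$ they generate. Your write-up simply carries out in full the universal-property bookkeeping that the paper leaves implicit.
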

\begin{proof} Compare Definitions
\ref{def:boxqV2},
\ref{def:boxqV1}.
\end{proof}

\begin{definition}
\label{def:Jevenodd}
\rm
 Let $J^{\rm even}$ 
(resp. 
$J^{\rm odd}$)
 denote the 2-sided ideal of
$\widetilde \square^{\rm even}_q$
(resp. 
$\widetilde \square^{\rm odd}_q$)
generated by
$S_0, S_2$
(resp. 
$S_1, S_3$).
Thus
\begin{eqnarray}
\label{eq:Jeve}
&&
J^{\rm even} =
\widetilde \square^{\rm even}_q 
S_0
\widetilde \square^{\rm even}_q 
+
\widetilde \square^{\rm even}_q 
S_2
\widetilde \square^{\rm even}_q,
\\
&&
\label{eq:Jodd}
J^{\rm odd} =
\widetilde \square^{\rm odd}_q 
S_1
\widetilde \square^{\rm odd}_q 
+
\widetilde \square^{\rm odd}_q 
S_3
\widetilde \square^{\rm odd}_q.
\end{eqnarray}
\end{definition}

\noindent Recall the free algebra $T$ with standard generators $x,y$.
Below 
(\ref{eq:SgenM}) 
we defined the 2-sided ideal $S$ of $T$.

\begin{lemma} 
\label{lem:imageIso}
The following {\rm (i), (ii)} hold:
\begin{enumerate}
\item[\rm (i)] $J^{\rm even}$ is the image of $S$
under the isomorphism 
$T \to \widetilde \square^{\rm even}_q$
from Proposition \ref{prop:tensorDec}(i);
\item[\rm (ii)] $J^{\rm odd}$ is the image of $S$
under the isomorphism 
$T \to \widetilde \square^{\rm odd}_q$
from
Proposition \ref{prop:tensorDec}(ii).
\end{enumerate}
\end{lemma}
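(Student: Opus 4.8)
The plan is to use the elementary fact that a surjective algebra homomorphism carries the two-sided ideal generated by a subset onto the two-sided ideal generated by the image of that subset. Since the maps in Proposition~\ref{prop:tensorDec}(i),(ii) are isomorphisms, this lets me transport the ideal $S \subseteq T$ directly onto $J^{\rm even}$ and $J^{\rm odd}$, with no need to reason about elements one at a time.

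First I would pin down the images of the two generators $S_x, S_y$ of $S$. Write $f_1$ for the isomorphism $T \to \widetilde\square^{\rm even}_q$ of Proposition~\ref{prop:tensorDec}(i), which sends $x \mapsto x_0$ and $y \mapsto x_2$. Substituting into (\ref{eq:SgenP}) and (\ref{eq:SgenM}) and comparing with the definition (\ref{eq:Delta}) of $S_i$, I find that $i=0$ gives $f_1(S_x) = S_0$, while $i=2$ (so that $x_{i+2} = x_0$, since $2+2=0$ in $\mathbb{Z}_4$) gives $f_1(S_y) = S_2$.

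With these identities in hand the conclusion is immediate. Applying $f_1$ to $S = T S_x T + T S_y T$ and using that $f_1$ is an algebra homomorphism with $f_1(T) = \widetilde\square^{\rm even}_q$ gives
\[
f_1(S) = \widetilde\square^{\rm even}_q S_0 \widetilde\square^{\rm even}_q + \widetilde\square^{\rm even}_q S_2 \widetilde\square^{\rm even}_q,
\]
which equals $J^{\rm even}$ by (\ref{eq:Jeve}); this proves (i). For (ii) I would run the identical argument with the isomorphism $f_2 : T \to \widetilde\square^{\rm odd}_q$ of Proposition~\ref{prop:tensorDec}(ii), sending $x \mapsto x_1$ and $y \mapsto x_3$. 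Here one checks $f_2(S_x) = S_1$ and $f_2(S_y) = S_3$ (using $3+2 = 1$ in $\mathbb{Z}_4$), so $f_2(S) = J^{\rm odd}$ by (\ref{eq:Jodd}).

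I expect no genuine obstacle: the entire content is the bookkeeping verification that $f_1, f_2$ send $S_x, S_y$ exactly onto the prescribed generators $S_0, S_2$ and $S_1, S_3$. Once that matching of definitions is confirmed, the passage from generators to generated ideals is a formal property of algebra isomorphisms, requiring only that $f_1, f_2$ be bijective, hence surjective onto their images $\widetilde\square^{\rm even}_q$ and $\widetilde\square^{\rm odd}_q$.
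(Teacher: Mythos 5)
Your proof is correct and takes essentially the same approach as the paper, whose entire proof is the instruction to compare Definition \ref{def:Jevenodd} with the definition of $S$ below (\ref{eq:SgenM}); you have simply carried out that comparison explicitly, checking $f_1(S_x)=S_0$, $f_1(S_y)=S_2$, $f_2(S_x)=S_1$, $f_2(S_y)=S_3$ and then transporting the generated ideals through the isomorphisms. The index bookkeeping in $\mathbb Z_4$ and the identity $f(HK)=f(H)f(K)$ for algebra homomorphisms are exactly the details the paper leaves to the reader.
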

\begin{proof}
Compare
Definition \ref{def:Jevenodd} with the definition of
$S$ below
(\ref{eq:SgenM}).
\end{proof}

\begin{lemma}
\label{lem:Jdesc}
Referring to the vector space isomorphism
from Proposition
\ref{prop:tensorDec}(iv), the preimage of
$J$ is
\begin{eqnarray}
\label{eq:Preim}
J^{\rm even} \otimes 
\widetilde \square^{\rm odd}_q \otimes \widetilde C + 
\widetilde \square^{\rm even}_q \otimes 
J^{\rm odd} \otimes \widetilde C.
\end{eqnarray}
\end{lemma}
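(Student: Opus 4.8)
The plan is to prove the stronger, coordinate-free statement that $J$ itself factors as
\[
J = J^{\rm even}\,\widetilde\square^{\rm odd}_q\,\widetilde C \;+\; \widetilde\square^{\rm even}_q\,J^{\rm odd}\,\widetilde C,
\]
and then to read off the preimage. Write $f:\widetilde\square^{\rm even}_q\otimes\widetilde\square^{\rm odd}_q\otimes\widetilde C\to\widetilde\square_q$, $u\otimes v\otimes c\mapsto uvc$, for the isomorphism of Proposition \ref{prop:tensorDec}(iv). Since $f$ is a bijection, the preimage of $J$ equals (\ref{eq:Preim}) as soon as $f$ carries (\ref{eq:Preim}) onto $J$. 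Now $f$ sends $J^{\rm even}\otimes\widetilde\square^{\rm odd}_q\otimes\widetilde C$ onto the product subspace $J^{\rm even}\widetilde\square^{\rm odd}_q\widetilde C$, and $\widetilde\square^{\rm even}_q\otimes J^{\rm odd}\otimes\widetilde C$ onto $\widetilde\square^{\rm even}_q J^{\rm odd}\widetilde C$, so $f$ of (\ref{eq:Preim}) is exactly the right-hand side of the displayed factorization. Thus it suffices to establish that factorization.

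To do this I would factor each generating piece $\widetilde\square_q S_i\widetilde\square_q$ of $J$ (recall $J=\sum_{i\in\mathbb Z_4}\widetilde\square_q S_i\widetilde\square_q$ by Definition \ref{def:J}). Substituting $\widetilde\square_q=\widetilde\square^{\rm even}_q\widetilde\square^{\rm odd}_q\widetilde C$ from Lemma \ref{lem:factor} on both sides of $S_i$, and collecting the central factors $\widetilde C$ on the right, reduces the problem to straightening the middle. For $i\in\{0,2\}$ one has $S_i\in\widetilde\square^{\rm even}_q$ and, by Corollary \ref{cor:DeltaCom}, $S_i$ commutes with $\widetilde\square^{\rm odd}_q$; sliding $S_i$ leftward past the first $\widetilde\square^{\rm odd}_q$ factor and then applying the straightening inclusion $\widetilde\square^{\rm odd}_q\widetilde\square^{\rm even}_q\subseteq\widetilde\square^{\rm even}_q\widetilde\square^{\rm odd}_q\widetilde C$ (established in the proof of Lemma \ref{lem:factor} via Lemma \ref{lem:vTe}), together with the subalgebra identities $\widetilde\square^{\rm even}_q\widetilde\square^{\rm even}_q=\widetilde\square^{\rm even}_q$, $\widetilde\square^{\rm odd}_q\widetilde\square^{\rm odd}_q=\widetilde\square^{\rm odd}_q$, $\widetilde C\widetilde C=\widetilde C$, collapses the expression to $\widetilde\square_q S_i\widetilde\square_q=(\widetilde\square^{\rm even}_q S_i\widetilde\square^{\rm even}_q)\widetilde\square^{\rm odd}_q\widetilde C$. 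The reverse inclusion is immediate since $\widetilde\square^{\rm even}_q\subseteq\widetilde\square_q$ and $\widetilde\square^{\rm even}_q\widetilde\square^{\rm odd}_q\widetilde C=\widetilde\square_q$. The case $i\in\{1,3\}$ is the mirror image: here $S_i\in\widetilde\square^{\rm odd}_q$ commutes with $\widetilde\square^{\rm even}_q$ (again Corollary \ref{cor:DeltaCom}), and the identical manipulation gives $\widetilde\square_q S_i\widetilde\square_q=\widetilde\square^{\rm even}_q(\widetilde\square^{\rm odd}_q S_i\widetilde\square^{\rm odd}_q)\widetilde C$.

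Summing these four identities over $i\in\mathbb Z_4$ and recalling the definitions (\ref{eq:Jeve}), (\ref{eq:Jodd}) of $J^{\rm even}$ and $J^{\rm odd}$ produces the displayed factorization of $J$, which by the first paragraph finishes the proof. I expect the only delicate point to be the collapsing step: every product here is a product of subspaces, so each asserted equality is genuinely a pair of inclusions, and I must check that combining the commutation of $S_i$ with the straightening inclusion and the centrality of $\widetilde C$ really pushes all out-of-order factors into the canonical order $\widetilde\square^{\rm even}_q\widetilde\square^{\rm odd}_q\widetilde C$ without ever enlarging the middle factor $S_i$. This is the main obstacle, and it is essentially bookkeeping resting on Lemma \ref{lem:factor}, Lemma \ref{lem:vTe}, and Corollary \ref{cor:DeltaCom}; no new computation is needed.
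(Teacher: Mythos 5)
Your proposal is correct and takes essentially the same route as the paper: both convert the preimage statement into the factorization $J = J^{\rm even}\,\widetilde\square^{\rm odd}_q\,\widetilde C + \widetilde\square^{\rm even}_q\,J^{\rm odd}\,\widetilde C$ via bijectivity of the multiplication map, and both establish it by straightening each generating piece $\widetilde\square_q S_i \widetilde\square_q$ using Lemma \ref{lem:factor}, Corollary \ref{cor:DeltaCom}, and the centrality of $\widetilde C$. The only difference is organizational: you prove an equality for each $i$ and sum, whereas the paper proves the two inclusions $J' \subseteq J$ and $J \subseteq J'$ globally, with the easy direction handled by noting $J^{\rm even}, J^{\rm odd} \subseteq J$.
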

\begin{proof} Let $\widetilde m$ denote the isomorphism in question.
Let $J'$ denote the image of
(\ref{eq:Preim}) under $\widetilde m$.
We show that $J=J'$. We have
$J \supseteq J'$ by construction and since
each of $J^{\rm even}, 
J^{\rm odd}$ 
is contained in $J$.
To obtain 
$J \subseteq J'$,  by
(\ref{eq:Jdef})
it suffices to show
that $\widetilde \square_q S_i \widetilde \square_q \subseteq J'$
for $i \in \mathbb Z_4$. Let $i$ be given,
and first assume that $i$ is even.
By Lemma
 \ref{lem:factor},
Corollary
\ref{cor:DeltaCom}, and since $\widetilde C$ is central in $\widetilde \square_q$,
\begin{eqnarray*}
\widetilde \square_q S_i \widetilde \square_q
=
\widetilde \square^{\rm even}_q
\widetilde \square^{\rm odd}_q
\widetilde C
S_i \widetilde \square_q
=
\widetilde \square^{\rm even}_q
S_i 
\widetilde \square^{\rm odd}_q
\widetilde C
\widetilde \square_q
\subseteq 
\widetilde \square^{\rm even}_q
S_i 
\widetilde \square_q.
\end{eqnarray*}
By Lemma
 \ref{lem:factor},
line (\ref{eq:Jeve}), and the definition of
$\widetilde m$,
\begin{eqnarray*}
\widetilde \square^{\rm even}_q
S_i 
\widetilde \square_q =
\widetilde \square^{\rm even}_q
S_i 
\widetilde \square^{\rm even}_q
\widetilde \square^{\rm odd}_q
\widetilde C
\subseteq 
J^{\rm even}
\widetilde \square^{\rm odd}_q
\widetilde C
=
\widetilde m(
J^{\rm even}
\otimes \widetilde \square^{\rm odd}_q
\otimes \widetilde C)
\subseteq J'.
\end{eqnarray*}
 We have shown
that $\widetilde \square_q S_i \widetilde \square_q \subseteq J'$
 for $i$ even.
We similarly show that
 $\widetilde \square_q S_i \widetilde \square_q \subseteq  J'$ for $i$ odd.
Therefore $J\subseteq J'$. We have shown that $J=J'$,
and the result follows.
\end{proof}

\begin{lemma}
\label{lem:Jint}
In the algebra $\widetilde \square_q$,
\begin{enumerate}
\item[\rm (i)]
 $
 J \cap 
\widetilde \square^{\rm even}_q
=
J^{\rm even}$;
\item[\rm (ii)]
$
J \cap \widetilde \square^{\rm odd}_q
=
J^{\rm odd}$;
\item[\rm (iii)]
$
 J \cap 
\widetilde C = 0$.
\end{enumerate}
\end{lemma}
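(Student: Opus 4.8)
The plan is to transport all three assertions to the tensor product $\widetilde{\square}^{\rm even}_q \otimes \widetilde{\square}^{\rm odd}_q \otimes \widetilde{C}$ and reduce them to bookkeeping in a direct sum. Write $\widetilde{m}$ for the vector space isomorphism $\widetilde{\square}^{\rm even}_q \otimes \widetilde{\square}^{\rm odd}_q \otimes \widetilde{C} \to \widetilde{\square}_q$, $u \otimes v \otimes c \mapsto uvc$ of Proposition \ref{prop:tensorDec}(iv). By Lemma \ref{lem:Jdesc} the preimage of $J$ is
\[
\widetilde{m}^{-1}(J) = J^{\rm even} \otimes \widetilde{\square}^{\rm odd}_q \otimes \widetilde{C} + \widetilde{\square}^{\rm even}_q \otimes J^{\rm odd} \otimes \widetilde{C}.
\]
First I would record the preimages of the three subalgebras. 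Using the basis of $\widetilde{\square}_q$ from Proposition \ref{prop:SquareBasis}(iv) --- in which a basis element $uvc$ lies in $\widetilde{\square}^{\rm even}_q$ exactly when $v=1$ and $c=1$, lies in $\widetilde{\square}^{\rm odd}_q$ exactly when $u=1$ and $c=1$, and lies in $\widetilde{C}$ exactly when $u=1$ and $v=1$ --- one reads off
\[
\widetilde{m}^{-1}(\widetilde{\square}^{\rm even}_q) = \widetilde{\square}^{\rm even}_q \otimes \mathbb{F}1 \otimes \mathbb{F}1, \qquad \widetilde{m}^{-1}(\widetilde{\square}^{\rm odd}_q) = \mathbb{F}1 \otimes \widetilde{\square}^{\rm odd}_q \otimes \mathbb{F}1, \qquad \widetilde{m}^{-1}(\widetilde{C}) = \mathbb{F}1 \otimes \mathbb{F}1 \otimes \widetilde{C}.
\]

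Next I would refine each tensor factor into a direct sum. Because $J^{\rm even}$ is the image of the ideal $S\subseteq T$ (Lemma \ref{lem:imageIso}(i)) and $S$ has no component in degrees $0$ through $3$, we have $1\notin J^{\rm even}$, and likewise $1\notin J^{\rm odd}$. Hence $\mathbb{F}1\cap J^{\rm even}=0$ and we may fix decompositions
\[
\widetilde{\square}^{\rm even}_q = \mathbb{F}1 \oplus J^{\rm even} \oplus E', \qquad \widetilde{\square}^{\rm odd}_q = \mathbb{F}1 \oplus J^{\rm odd} \oplus O', \qquad \widetilde{C} = \mathbb{F}1 \oplus C'
\]
for suitable complements $E',O',C'$. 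These split $\widetilde{\square}^{\rm even}_q \otimes \widetilde{\square}^{\rm odd}_q \otimes \widetilde{C}$ into a direct sum of $3\times 3\times 2$ ``coordinate'' summands, and every subspace above is a sum of certain of these summands: $\widetilde{m}^{-1}(J)$ collects the summands whose even-factor is $J^{\rm even}$ or whose odd-factor is $J^{\rm odd}$; $\widetilde{m}^{-1}(\widetilde{\square}^{\rm even}_q)$ collects those with odd-factor $\mathbb{F}1$ and $\widetilde{C}$-factor $\mathbb{F}1$; $\widetilde{m}^{-1}(\widetilde{\square}^{\rm odd}_q)$ those with even-factor $\mathbb{F}1$ and $\widetilde{C}$-factor $\mathbb{F}1$; and $\widetilde{m}^{-1}(\widetilde{C})$ those with even-factor $\mathbb{F}1$ and odd-factor $\mathbb{F}1$.

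Finally I would intersect, using the elementary fact that for a fixed direct-sum decomposition the intersection of two sub-sums is the sum over the common summands. Intersecting $\widetilde{m}^{-1}(J)$ with $\widetilde{m}^{-1}(\widetilde{\square}^{\rm even}_q)$ leaves exactly the single summand $J^{\rm even}\otimes \mathbb{F}1\otimes \mathbb{F}1$ (the constraint ``odd-factor $=\mathbb{F}1$'' forces the alternative ``even-factor $=J^{\rm even}$''), so applying $\widetilde{m}$ yields $J\cap \widetilde{\square}^{\rm even}_q = J^{\rm even}$, which is (i); the identical computation with the even and odd roles exchanged leaves $\mathbb{F}1\otimes J^{\rm odd}\otimes \mathbb{F}1$ and gives (ii). Intersecting $\widetilde{m}^{-1}(J)$ with $\widetilde{m}^{-1}(\widetilde{C})$ leaves no summand, since there both the even- and odd-factors are $\mathbb{F}1$, so $J\cap \widetilde{C}=0$, which is (iii). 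I expect the only real content to be the identification of the preimages of the subalgebras as the indicated coordinate subspaces --- this is where Proposition \ref{prop:SquareBasis} and the facts $1\notin J^{\rm even},J^{\rm odd}$ enter --- after which everything is formal bookkeeping; as an alternative, (ii) can be deduced from (i) by applying the automorphism $\widetilde{\rho}$ of Lemma \ref{lem:aut2}, which interchanges $\widetilde{\square}^{\rm even}_q$ and $\widetilde{\square}^{\rm odd}_q$, fixes $J$ setwise, and sends $J^{\rm even}$ to $J^{\rm odd}$.
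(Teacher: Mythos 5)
Your proposal is correct and takes essentially the same approach as the paper: its entire proof of this lemma is ``By Lemma \ref{lem:Jdesc} and since neither of $J^{\rm even}$, $J^{\rm odd}$ contains $1$,'' and your argument simply makes explicit the direct-sum bookkeeping that this one-liner leaves to the reader. Your alternative deduction of (ii) from (i) via the automorphism $\widetilde\rho$ of Lemma \ref{lem:aut2} is also valid, though not needed.
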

\begin{proof}
By Lemma
\ref{lem:Jdesc} and since
neither of
$J^{\rm even},
J^{\rm odd}$ contains $1$.
\end{proof}

\noindent 
Recall from
 Definition
\ref{def:squareTeTo} 
the subalgebras
$\widehat \square^{\rm even}_q$,
$\widehat \square^{\rm odd}_q$,
$\widehat C$ of $\widehat \square_q$.

\begin{lemma} 
\label{lem:canonImage}
For the canonical homomorphism 
$ \widetilde \square_q \to \widehat \square_q $,
the images of
$\widetilde  \square^{\rm even}_q$,
$\widetilde  \square^{\rm odd}_q$,
$\widetilde  C$ are
$ \widehat \square^{\rm even}_q$
$ \widehat \square^{\rm odd}_q$,
$\widehat C$, respectively.
\end{lemma}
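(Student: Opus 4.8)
The plan is to invoke the elementary fact that, for any $\mathbb F$-algebra homomorphism $f:\mathcal A \to \mathcal B$ and any subset $G \subseteq \mathcal A$, the image under $f$ of the subalgebra of $\mathcal A$ generated by $G$ equals the subalgebra of $\mathcal B$ generated by $f(G)$. This holds because $f$ preserves $1$, sums, scalar multiples, and products, so it carries the set of all $\mathbb F$-linear combinations of products of elements of $G$ onto the corresponding set for $f(G)$. I would state this principle once at the outset and then apply it three times, once for each of the subalgebras in question.

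Let $\mathrm{can}$ denote the canonical homomorphism $\widetilde \square_q \to \widehat \square_q$ from Lemma \ref{lem:can01}, which by Definition \ref{def:cancan} sends $x_i \mapsto x_i$ and $c^{\pm 1}_i \mapsto c^{\pm 1}_i$ for $i \in \mathbb Z_4$. By Definition \ref{def:TeTo}(i), $\widetilde \square^{\rm even}_q$ is generated by $x_0, x_2$, so its image under $\mathrm{can}$ is the subalgebra of $\widehat \square_q$ generated by $\mathrm{can}(x_0)=x_0$ and $\mathrm{can}(x_2)=x_2$; by Definition \ref{def:squareTeTo}(i) this subalgebra is precisely $\widehat \square^{\rm even}_q$. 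The same argument, applied to Definition \ref{def:TeTo}(ii) and Definition \ref{def:squareTeTo}(ii), shows that the image of $\widetilde \square^{\rm odd}_q$ is $\widehat \square^{\rm odd}_q$; and applied to the generating set $\lbrace c^{\pm 1}_i\rbrace_{i \in \mathbb Z_4}$ of $\widetilde C$ (Definition \ref{def:TeTo}(iii)), it shows that the image of $\widetilde C$ is the subalgebra of $\widehat \square_q$ generated by $\lbrace c^{\pm 1}_i\rbrace_{i \in \mathbb Z_4}$, which is $\widehat C$ by Definition \ref{def:squareTeTo}(iii).

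There is essentially no obstacle here: the statement is a direct consequence of the homomorphism property together with the observation that each canonical generating set is carried onto the matching generating set on the $\widehat \square_q$ side. The only points requiring a moment's care are verifying from Lemma \ref{lem:can01} and Definition \ref{def:cancan} that $\mathrm{can}$ fixes each listed generator, and confirming that the generators-to-image principle is being applied to generating sets that genuinely match those in Definitions \ref{def:TeTo} and \ref{def:squareTeTo}. Accordingly I would keep the written proof to just a few lines, citing Lemma \ref{lem:can01} alongside Definitions \ref{def:TeTo} and \ref{def:squareTeTo}.
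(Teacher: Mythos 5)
Your proposal is correct and matches the paper's proof, which simply cites Definitions \ref{def:TeTo}, \ref{def:cancan}, \ref{def:squareTeTo} and Lemma \ref{lem:can01}; you have merely made explicit the standard generators-to-image principle that the paper leaves implicit. No gaps.
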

\begin{proof} By
Definitions
\ref{def:TeTo},
\ref{def:cancan},
\ref{def:squareTeTo}
and 
Lemma
\ref{lem:can01}.
\end{proof}

\begin{definition}
\rm
For the canonical homomorphism 
$ \widetilde \square_q \to \widehat \square_q$,
the restrictions to 
$\widetilde \square^{\rm even}_q$,
$\widetilde \square^{\rm odd}_q$,
$\widetilde C$
induce surjective
$\mathbb F$-algebra homomorphisms
\begin{eqnarray}
\widetilde \square^{\rm even}_q \to \widehat \square^{\rm even}_q,
\qquad \qquad
\widetilde \square^{\rm odd}_q \to \widehat \square^{\rm odd}_q,
\qquad \qquad
\widetilde C \to \widehat C.
\label{eq:threeRes}
\end{eqnarray}
Each of the homomorphisms 
(\ref{eq:threeRes})
will be called 
{\it restricted canonical}.
\end{definition}

\begin{lemma}
\label{lem:KerRest}
The following {\rm (i)--(iii)} hold:
\begin{enumerate}
\item[\rm (i)] the restricted canonical homomorphism 
$\widetilde \square^{\rm even}_q \to \widehat \square^{\rm even}_q$
has kernel $J^{\rm even}$;
\item[\rm (ii)] the restricted canonical homomorphism
$\widetilde \square^{\rm odd}_q \to \widehat \square^{\rm odd}_q$
has kernel $J^{\rm odd}$;
\item[\rm (iii)] the restricted canonical homomorphism
$\widetilde C \to \widehat C$
is a bijection.
\end{enumerate}
\end{lemma}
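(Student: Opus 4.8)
The plan is to deduce all three parts directly from the description of $J$ intersected with the three subalgebras recorded in Lemma \ref{lem:Jint}, together with the identification of the kernel of the canonical homomorphism $\widetilde\square_q \to \widehat\square_q$ as $J$ from Lemma \ref{lem:canKer}. The guiding observation is that each restricted canonical homomorphism is, by the definition preceding the statement, literally the restriction of the canonical homomorphism to the relevant subalgebra (well defined into the appropriate target by Lemma \ref{lem:canonImage}), so its kernel is the intersection of that subalgebra with the kernel of the canonical homomorphism.

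First I would record the elementary fact that for an algebra homomorphism $f$ and a subalgebra $A'$ of its domain, the kernel of $f|_{A'}$ equals $A' \cap \ker f$. Applying this to the restricted canonical homomorphism $\widetilde\square^{\rm even}_q \to \widehat\square^{\rm even}_q$ gives kernel $\widetilde\square^{\rm even}_q \cap J$, which is $J^{\rm even}$ by Lemma \ref{lem:Jint}(i); this is part (i). The identical argument with Lemma \ref{lem:Jint}(ii) yields part (ii), the kernel being $\widetilde\square^{\rm odd}_q \cap J = J^{\rm odd}$.

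For part (iii) the same reasoning shows that the kernel of the restricted canonical homomorphism $\widetilde C \to \widehat C$ is $\widetilde C \cap J$, which vanishes by Lemma \ref{lem:Jint}(iii). Since this homomorphism is surjective by the definition preceding the lemma and now has trivial kernel, it is a bijection.

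I do not expect any genuine obstacle in this step: the real content has already been carried out in Lemma \ref{lem:Jdesc}, where the preimage of $J$ under the tensor decomposition of Proposition \ref{prop:tensorDec}(iv) is computed, and then packaged into Lemma \ref{lem:Jint}. The only points requiring care are to interpret ``the kernel of the restricted canonical homomorphism'' correctly as the intersection with $J$ rather than something larger, and, in part (iii), to upgrade injectivity to bijectivity by invoking the previously established surjectivity rather than stopping at injectivity.
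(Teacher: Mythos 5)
Your proposal is correct and matches the paper's proof, which simply cites Lemma \ref{lem:canKer} and Lemma \ref{lem:Jint}; you have just spelled out the implicit steps (kernel of a restriction equals the intersection with the kernel, and injectivity plus the already-known surjectivity gives bijectivity in part (iii)).
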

\begin{proof} 
By Lemmas
\ref{lem:canKer},
\ref{lem:Jint}.
\end{proof}

\begin{proposition}
\label{thm:tensorDec}
The following {\rm (i)--(iv)} hold:
\begin{enumerate}
\item[\rm (i)] 
there exists an $\mathbb F$-algebra isomorphism
$U^+ \to  \widehat \square^{\rm even}_q$ that sends
$X\mapsto x_0$ and
$Y\mapsto x_2$;
\item[\rm (ii)] 
there exists an $\mathbb F$-algebra isomorphism
$U^+ \to  \widehat \square^{\rm odd}_q$ that sends
$X\mapsto x_1$ and
$Y\mapsto x_3$;
\item[\rm (iii)] 
there exists an 
$\mathbb F$-algebra isomorphism
$L\to \widehat C$ 
that sends $\lambda^{\pm 1}_i \mapsto c^{\pm 1}_i$ for
$i \in \mathbb Z_4$;
\item[\rm (iv)] 
the following is an isomorphism of
$\mathbb F$-vector spaces:
\begin{eqnarray*}
\widehat \square^{\rm even}_q
\otimes
\widehat \square^{\rm odd}_q
\otimes
\widehat C
 & \to &  \widehat \square_q
\\
 u \otimes v \otimes c  &\mapsto & uvc
 \end{eqnarray*}
\end{enumerate}
\end{proposition}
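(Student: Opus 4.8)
The plan is to realize $\widehat\square_q$ as the quotient $\widetilde\square_q/J$ and to push the tensor decomposition of $\widetilde\square_q$ (Proposition \ref{prop:tensorDec}) through this quotient. Every part will then follow from kernel computations already in hand, so the argument is a sequence of diagram chases rather than any new calculation. I would prove (i)--(iii) first, as they are short, and reserve the work for (iv).

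For (i), recall the algebra isomorphism $f_1:T\to\widetilde\square^{\rm even}_q$ of Proposition \ref{prop:tensorDec}(i), sending $x\mapsto x_0$ and $y\mapsto x_2$. By Lemma \ref{lem:imageIso}(i) it carries the ideal $S$ onto $J^{\rm even}$, so $f_1^{-1}(J^{\rm even})=S$. I would compose $f_1$ with the restricted canonical homomorphism $\widetilde\square^{\rm even}_q\to\widehat\square^{\rm even}_q$, which is surjective with kernel $J^{\rm even}$ by Lemma \ref{lem:KerRest}(i). The composite $T\to\widehat\square^{\rm even}_q$ is then surjective with kernel $S$; since $S$ is exactly the kernel of $\mu:T\to U^+$, it induces an algebra isomorphism $U^+\to\widehat\square^{\rm even}_q$, and chasing $x,y$ shows $X\mapsto x_0$, $Y\mapsto x_2$. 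Part (ii) is identical, using $f_2$, Lemma \ref{lem:imageIso}(ii) and Lemma \ref{lem:KerRest}(ii). Part (iii) is immediate: the restricted canonical homomorphism $\widetilde C\to\widehat C$ is a bijection by Lemma \ref{lem:KerRest}(iii), so composing it with the isomorphism $L\to\widetilde C$ of Proposition \ref{prop:tensorDec}(iii) yields the desired isomorphism $L\to\widehat C$.

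The substance is in (iv). Let $\widetilde m$ denote the vector-space isomorphism of Proposition \ref{prop:tensorDec}(iv), let $\pi:\widetilde\square_q\to\widehat\square_q$ be the canonical homomorphism, which has kernel $J$ by Lemma \ref{lem:canKer}, and let
\[
p:\ \widetilde\square^{\rm even}_q\otimes\widetilde\square^{\rm odd}_q\otimes\widetilde C\ \longrightarrow\ \widehat\square^{\rm even}_q\otimes\widehat\square^{\rm odd}_q\otimes\widehat C
\]
be the tensor product of the three restricted canonical maps (well-defined with these targets by Lemma \ref{lem:canonImage}). I would first observe that the square with top $\widetilde m$, bottom the multiplication map $\widehat m:u\otimes v\otimes c\mapsto uvc$, left edge $p$, and right edge $\pi$ commutes, since $\pi$ is an algebra homomorphism and hence $\pi(uvc)=\pi(u)\pi(v)\pi(c)$. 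Next, $p$ is surjective (a tensor product of surjections), and because its $\widetilde C\to\widehat C$ factor is injective by Lemma \ref{lem:KerRest}(iii), its kernel is $J^{\rm even}\otimes\widetilde\square^{\rm odd}_q\otimes\widetilde C+\widetilde\square^{\rm even}_q\otimes J^{\rm odd}\otimes\widetilde C$, which by Lemma \ref{lem:Jdesc} is exactly $\widetilde m^{-1}(J)$.

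With these facts the bijectivity of $\widehat m$ is a diagram chase. For surjectivity, $\pi$ and $\widetilde m$ are onto, hence so is $\pi\widetilde m=\widehat m\,p$, hence $\widehat m$ is onto. For injectivity, given $\xi$ with $\widehat m(\xi)=0$, write $\xi=p(\eta)$ using surjectivity of $p$; then $\pi(\widetilde m(\eta))=\widehat m(p(\eta))=0$, so $\widetilde m(\eta)\in J$, so $\eta\in\widetilde m^{-1}(J)=\ker p$, forcing $\xi=p(\eta)=0$. Thus $\widehat m$ is a vector-space isomorphism. The only genuinely delicate point is the identification $\ker p=\widetilde m^{-1}(J)$, i.e.\ that quotienting the triple tensor product by its even and odd sub-pieces reproduces the expected tensor product of quotients; but this is precisely what Lemma \ref{lem:Jdesc} supplies, together with the elementary kernel formula for a tensor product of surjective linear maps, so no further computation is required.
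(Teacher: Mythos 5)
Your proposal is correct and follows essentially the same route as the paper: parts (i)--(iii) identify each subalgebra of $\widehat\square_q$ as a quotient of the corresponding subalgebra of $\widetilde\square_q$ using Lemmas \ref{lem:imageIso} and \ref{lem:KerRest}, and part (iv) is exactly the paper's commuting-square argument (with $\widetilde m$, $\widehat m$, the canonical map, and the tensor product of restricted canonical maps), with surjectivity from Lemma \ref{lem:can01} and injectivity from Lemmas \ref{lem:Jdesc}, \ref{lem:canKer}, \ref{lem:KerRest}. The only difference is presentational: you spell out the kernel computation $\ker p=\widetilde m^{-1}(J)$ and the diagram chase that the paper leaves implicit.
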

\begin{proof} (i) 
Recall 
the free algebra $T$ with
standard generators $x,y$.
By Proposition
\ref{prop:tensorDec}(i) there exists an
$\mathbb F$-algebra isomorphism
$T \to \widetilde \square^{\rm even}_q$ 
that sends $x \mapsto x_0$ and
$y \mapsto x_2$.
The inverse of this isomorphism will be denoted by
$\widetilde \theta$.
Consider the
$\mathbb F$-algebra homomorphism
$\mu: T \to  U^+ $
that sends $x \mapsto X$ and
$y \mapsto Y$. By Lemma
\ref{lem:imageIso}(i),
the composition
\begin{equation*}
\begin{CD} 
\widetilde \square^{\rm even}_q  @>>  \widetilde \theta >  
 T @>> \mu > U^+
                  \end{CD}
\end{equation*}
is surjective with kernel $J^{\rm even}$.
Therefore there exists an $\mathbb F$-algebra isomorphism
$
\theta:
 \widetilde \square^{\rm even}_q /J^{\rm even} 
\to
U^+$
that sends
$ x_0 + J^{\rm even} \mapsto X$ and
$ x_2 + J^{\rm even} \mapsto Y$.
The restricted canonical homomorphism 
 $\widetilde \square^{\rm even}_q \to \widehat \square^{\rm even}_q$
sends $x_0 \mapsto x_0$ and
$x_2 \mapsto x_2$. This map is surjective by
construction, and has kernel $J^{\rm even}$
by Lemma
\ref{lem:KerRest}(i).
Therefore there exists an $\mathbb F$-algebra isomorphism
 $\vartheta :
 \widetilde \square^{\rm even}_q/J^{\rm even} \to \widehat \square^{\rm even}_q$
 that sends 
 $x_0+J^{\rm even} \mapsto x_0$ and
 $x_2+J^{\rm even} \mapsto x_2$.
The composition
\begin{equation*}
\begin{CD} 
U^+  @>>\theta^{-1} >
\widetilde \square^{\rm even}_q / J^{\rm even}   
  @>> \vartheta > \widehat \square^{\rm even}_q
                  \end{CD}
\end{equation*}
is the desired  $\mathbb F$-algebra isomorphism. 
\\
\noindent (ii), (iii). Similar to the proof of (i) above.
\\
\noindent (iv).
The multiplication map
$\widehat m:
 \widehat \square^{\rm even}_q
\otimes
\widehat \square^{\rm odd}_q
\otimes
\widehat C
  \to  \widehat \square_q
  $,
 $u \otimes v \otimes c  \mapsto  uvc$
 is $\mathbb F$-linear.
We show that $\widehat m$ is a bijection.
By Proposition
\ref{prop:tensorDec}(iv), the multiplication map
$\widetilde m:
 \widetilde \square^{\rm even}_q
\otimes
 \widetilde \square^{\rm odd}_q
\otimes
 \widetilde C
  \to \widetilde  \square_q
  $,
 $u \otimes v \otimes c  \mapsto  uvc$
 is an isomorphism of $\mathbb F$-vector spaces.
Recall the canonical homomorphism 
$\widetilde \square_q \to \widehat \square_q $ from
Definition
\ref{def:cancan}.
By construction the following diagram commutes:
\begin{equation*}
\begin{CD}
\mbox{
$
\widetilde \square^{\rm even}_q \otimes
\widetilde \square^{\rm odd}_q \otimes
\widetilde C
$  } @>
u \otimes v \otimes c \mapsto 
can(u)\otimes can(v) \otimes can(c)
>>
                \mbox{ $
 \widehat \square^{\rm even}_q \otimes
 \widehat \square^{\rm odd}_q \otimes
 \widehat C$
		} 
           \\ 
          @V\widetilde m VV                     @VV \widehat m V \\
                \mbox{$\widetilde \square_q $} @>>{can}> 
                \mbox{$ \widehat \square_q $}
                   \end{CD}
\end{equation*}
The map $\widehat m$ is
surjective by these comments and
the last assertion of Lemma
\ref{lem:can01}.
The map $\widehat m$
is
injective 
in view of
Lemma
\ref{lem:Jdesc}
along with Lemmas
\ref{lem:canKer},
\ref{lem:KerRest}.
 Therefore $\widehat m$ is a bijection.
\end{proof}


\noindent For the rest of this section, we
 describe how $\widehat \square_q$ is related to the $q$-Onsager algebra
 $\mathcal O$.

\begin{proposition}
\label{prop:ABxy}
For the algebra $\widehat \square_q$, define
\begin{eqnarray*}
A = x_0+x_1, \qquad \qquad B = x_2+x_3.
\end{eqnarray*}
Then
\begin{eqnarray*}
&&
A^3 B - \lbrack 3 \rbrack_q A^2 B A +
\lbrack 3 \rbrack_q A B A^2 -B A^3 = (q^2-q^{-2})^2 c_0(BA-AB),
\\
&&
B^3 A - \lbrack 3 \rbrack_q B^2 A B +
\lbrack 3 \rbrack_q B A B^2 -A B^3 = (q^2-q^{-2})^2 c_2(AB-BA).
\end{eqnarray*}
\end{proposition}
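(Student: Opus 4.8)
The plan is to deduce this proposition directly from Proposition~\ref{prop:tildeMain} by pushing the two identities established there forward along the canonical homomorphism $\widetilde\square_q \to \widehat\square_q$ of Lemma~\ref{lem:can01}. Write $\mathrm{can}$ for this homomorphism. Since $\mathrm{can}$ fixes each generator $x_i$ and $c_i$, it sends the element $A = x_0+x_1$ of $\widetilde\square_q$ to the element $A = x_0+x_1$ of $\widehat\square_q$, and likewise $B = x_2+x_3 \mapsto B = x_2+x_3$; here we abuse notation and let $A,B$ denote the corresponding elements in whichever algebra is under discussion.

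The key point is that each $S_i$ maps to $0$ under $\mathrm{can}$. Indeed, by Lemma~\ref{lem:canKer} the kernel of $\mathrm{can}$ is the ideal $J$ generated by $\lbrace S_i\rbrace_{i\in \mathbb Z_4}$; equivalently, the defining relation (\ref{eq:Box4}) of $\widehat\square_q$ asserts precisely that the image of each $S_i$ (see Definition~\ref{def:Deltai}) is zero. In particular $\mathrm{can}(S_0+S_1)=0$ and $\mathrm{can}(S_2+S_3)=0$. First I would apply $\mathrm{can}$ to (\ref{eq:1Assertion}); the right-hand side vanishes, yielding in $\widehat\square_q$ the identity
\[
A^3 B - \lbrack 3 \rbrack_q A^2 B A + \lbrack 3 \rbrack_q A B A^2 - B A^3 + (q^2-q^{-2})^2 c_0(AB-BA) = 0.
\]
Moving the last term across the equality and using $-(AB-BA)=BA-AB$ gives the first asserted equation. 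Next I would apply $\mathrm{can}$ to (\ref{eq:2Assertion}) in the same way; the right-hand side again vanishes, and rearranging the resulting identity — now the surviving term is $(q^2-q^{-2})^2 c_2(BA-AB)$, so transferring it produces $(q^2-q^{-2})^2 c_2(AB-BA)$ — gives the second asserted equation.

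There is essentially no obstacle here: all the genuine computational work was carried out in Proposition~\ref{prop:tildeMain}, whose proof expanded both sides in the basis of Proposition~\ref{prop:SquareBasis}(iv). The only care required is the bookkeeping of signs when the $c_0$- and $c_2$-terms are transferred across the equality, together with the observation that $\mathrm{can}$ sends $A$ and $B$ to the elements of the same name in $\widehat\square_q$, which is immediate from the definition of $\mathrm{can}$.
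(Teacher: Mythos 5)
Your proposal is correct and matches the paper's own proof, which likewise just applies the canonical homomorphism $\widetilde\square_q \to \widehat\square_q$ to both sides of (\ref{eq:1Assertion}) and (\ref{eq:2Assertion}) and uses that each $S_i$ lies in the kernel. Your sign bookkeeping when transferring the $c_0$- and $c_2$-terms is also accurate.
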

\begin{proof}
Apply the canonical homomorphism $\widetilde \square_q \to \widehat \square_q$
to each side of
(\ref{eq:1Assertion}),
(\ref{eq:2Assertion}).
\end{proof}

\noindent The following more general
version of 
 Proposition
\ref{prop:ABxy} is obtained by applying
Lemma
\ref{lem:xiadj}.

\begin{corollary}
\label{prop:ABxyiLong}
 Let
 $\lbrace \alpha_i \rbrace_{i\in \mathbb Z_4}$ denote invertible
elements in 
$\widehat C$.
For the algebra
$\widehat \square_q$,
define
\begin{eqnarray}
\label{eq:ABDef}
A = \alpha_0 x_0+ \alpha_1 x_1, \qquad \qquad B = \alpha_2 x_2+ \alpha_3 x_3.
\end{eqnarray}
Then
\begin{eqnarray}
&&
A^3 B - \lbrack 3 \rbrack_q A^2 B A +
\lbrack 3 \rbrack_q A B A^2 -B A^3 =
 (q^2-q^{-2})^2 \alpha_0 \alpha_1 c_0(BA-AB),
\label{eq:qDGLong1}
\\
&&
B^3 A - \lbrack 3 \rbrack_q B^2 A B +
\lbrack 3 \rbrack_q B A B^2 -A B^3 = 
(q^2-q^{-2})^2 \alpha_2 \alpha_3 c_2(AB-BA).
\label{eq:qDGLong2}
\end{eqnarray}
\end{corollary}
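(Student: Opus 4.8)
The plan is to deduce the corollary from Proposition \ref{prop:ABxy} by a single application of the homomorphism supplied by Lemma \ref{lem:xiadj}. Write $\widehat g = \widehat g(\alpha_0,\alpha_1,\alpha_2,\alpha_3)$ for the $\mathbb F$-algebra homomorphism $\widehat \square_q \to \widehat \square_q$ of Lemma \ref{lem:xiadj} (Definition \ref{def:alphaAut}), which sends $x_i \mapsto \alpha_i x_i$ and $c_i \mapsto \alpha_i\alpha_{i+1}c_i$ for $i \in \mathbb Z_4$. The key observation is that $\widehat g$ carries the distinguished elements of Proposition \ref{prop:ABxy} to those of the present statement, so applying $\widehat g$ to each identity of that proposition should produce the two identities of the corollary.

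First I would record the relevant images. Since the $\alpha_i$ lie in the central subalgebra $\widehat C$, we have $\widehat g(x_0+x_1) = \alpha_0 x_0 + \alpha_1 x_1$ and $\widehat g(x_2+x_3) = \alpha_2 x_2 + \alpha_3 x_3$; these are exactly the elements $A,B$ defined in (\ref{eq:ABDef}). Likewise $\widehat g(c_0) = \alpha_0\alpha_1 c_0$ and $\widehat g(c_2) = \alpha_2\alpha_3 c_2$, and each of these remains central in $\widehat \square_q$.

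Next I would apply $\widehat g$ to each side of the two identities in Proposition \ref{prop:ABxy}. Because $\widehat g$ is an algebra homomorphism it respects the products and $\mathbb F$-linear combinations appearing on the left-hand sides, so those left-hand sides become precisely the left-hand sides of (\ref{eq:qDGLong1}), (\ref{eq:qDGLong2}) with $A,B$ as in (\ref{eq:ABDef}). On the right-hand sides, applying $\widehat g$ to $(q^2-q^{-2})^2 c_0(BA-AB)$ produces $(q^2-q^{-2})^2 \alpha_0\alpha_1 c_0(BA-AB)$, using $\widehat g(c_0)=\alpha_0\alpha_1 c_0$ together with the centrality of this factor, and similarly the second right-hand side acquires the factor $\alpha_2\alpha_3 c_2$. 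This yields (\ref{eq:qDGLong1}), (\ref{eq:qDGLong2}).

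There is no serious obstacle here; the only thing requiring care is the bookkeeping of the central scalars, namely that the image of $c_0$ picks up the factor $\alpha_0\alpha_1$ (and not, say, $\alpha_0\alpha_2$) while the image of $c_2$ picks up $\alpha_2\alpha_3$, which is exactly what the rule $c_i \mapsto \alpha_i\alpha_{i+1}c_i$ of Lemma \ref{lem:xiadj} gives for $i=0$ and $i=2$. One should also confirm that $\alpha_0\alpha_1 c_0$ and $\alpha_2\alpha_3 c_2$ may be pulled outside the commutators $BA-AB$ and $AB-BA$, which is immediate since $\widehat C$ is central in $\widehat \square_q$.
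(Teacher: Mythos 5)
Your proposal is correct and is exactly the paper's own argument: the paper proves this corollary by applying the homomorphism $\widehat g(\alpha_0,\alpha_1,\alpha_2,\alpha_3)$ of Definition \ref{def:alphaAut} to everything in Proposition \ref{prop:ABxy}. Your additional bookkeeping (tracking the images of $x_i$, $c_0$, $c_2$ and noting centrality of $\widehat C$) just makes explicit the details the paper leaves to the reader.
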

\begin{proof}
Apply the homomorphism
$\widehat g(\alpha_0,\alpha_1,\alpha_2,\alpha_3)$
from Definition
\ref{def:alphaAut}
to everything in 
Proposition
\ref{prop:ABxy}.
\end{proof}

\begin{proposition}
\label{prop:ABxyiLonger}
 Let
 $\lbrace \alpha_i \rbrace_{i\in \mathbb Z_4}$ denote
  elements in 
$\widehat C$ such that 
 $\alpha_0 \alpha_1 c_0=1 $ and
 $\alpha_2 \alpha_3 c_2=1$.
Then there exists an $\mathbb F$-algebra 
homomorphism
$ \natural: \mathcal O \to \widehat \square_q$ that sends
\begin{eqnarray}
\label{eq:ABDefMap}
A \mapsto \alpha_0 x_0+ \alpha_1 x_1,
\qquad \qquad 
B \mapsto \alpha_2 x_2+ \alpha_3 x_3.
\end{eqnarray}
\end{proposition}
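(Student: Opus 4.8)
The plan is to produce $\natural$ directly from the presentation of $\mathcal O$ in Definition \ref{def:qOnsager}: since $\mathcal O$ is the free algebra on two generators $A,B$ modulo the $q$-Dolan/Grady relations (\ref{eq:AAAB}), (\ref{eq:BBBA}), it suffices to check that the proposed images $\alpha_0 x_0 + \alpha_1 x_1$ and $\alpha_2 x_2 + \alpha_3 x_3$ satisfy those same two relations inside $\widehat\square_q$. I would read the left-hand sides of those relations straight off Corollary \ref{prop:ABxyiLong}, which has already computed them for $A = \alpha_0 x_0 + \alpha_1 x_1$ and $B = \alpha_2 x_2 + \alpha_3 x_3$.

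Before invoking Corollary \ref{prop:ABxyiLong} I must verify its hypothesis, namely that each $\alpha_i$ is invertible in $\widehat C$; this is the one genuinely non-formal step, since the given data assume only $\alpha_0\alpha_1 c_0 = 1$ and $\alpha_2\alpha_3 c_2 = 1$. Because $c_0^{\pm 1}$ and $c_2^{\pm 1}$ are units by (\ref{eq:Box1}), these relations say that $\alpha_0\alpha_1 = c_0^{-1}$ and $\alpha_2\alpha_3 = c_2^{-1}$ are units of $\widehat C$. By Proposition \ref{thm:tensorDec}(iii) the algebra $\widehat C$ is isomorphic to the Laurent polynomial ring $L$ of (\ref{eq:Ldef}), whose units are exactly the scalar multiples of Laurent monomials; clearing denominators puts the question inside an ordinary polynomial ring, a unique factorization domain in which any factor of a monomial is again a scalar times a monomial. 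Hence any divisor of a unit is a unit, so $\alpha_0,\alpha_1$ are invertible, and likewise $\alpha_2,\alpha_3$.

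With invertibility established, Corollary \ref{prop:ABxyiLong} supplies (\ref{eq:qDGLong1}) and (\ref{eq:qDGLong2}). Substituting $\alpha_0\alpha_1 c_0 = 1$ into the right-hand side of (\ref{eq:qDGLong1}) and $\alpha_2\alpha_3 c_2 = 1$ into that of (\ref{eq:qDGLong2}) collapses them to $(q^2-q^{-2})^2(BA-AB)$ and $(q^2-q^{-2})^2(AB-BA)$, which are precisely the $q$-Dolan/Grady relations (\ref{eq:AAAB}), (\ref{eq:BBBA}). Thus the images of $A$ and $B$ obey the defining relations of $\mathcal O$, and the universal property of the presentation in Definition \ref{def:qOnsager} yields the desired $\mathbb F$-algebra homomorphism $\natural:\mathcal O\to\widehat\square_q$ acting as in (\ref{eq:ABDefMap}). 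I expect the invertibility argument of the second paragraph to be the only real obstacle; once it is in place, the remainder is the routine substitution just described.
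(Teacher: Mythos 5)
Your proposal is correct and takes essentially the same route as the paper, whose entire proof is the citation ``By Definition \ref{def:qOnsager} and Corollary \ref{prop:ABxyiLong}'' --- i.e.\ exactly the substitution of $\alpha_0\alpha_1 c_0=1$ and $\alpha_2\alpha_3 c_2=1$ into (\ref{eq:qDGLong1}), (\ref{eq:qDGLong2}) that you carry out. The only overcomplication is your invertibility step: since $\widehat C$ is commutative, the hypothesis $\alpha_0\alpha_1 c_0=1$ already exhibits $\alpha_1 c_0$ as a two-sided inverse of $\alpha_0$ (and similarly for $\alpha_1,\alpha_2,\alpha_3$), so no appeal to Proposition \ref{thm:tensorDec}(iii), the unit group of $L$, or unique factorization is needed.
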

\begin{proof}
By Definition
\ref{def:qOnsager}
and
Corollary
\ref{prop:ABxyiLong}.
\end{proof}

\noindent 
In 
Theorem \ref{thm:naturalINJ}
we will show that
the map $\natural$ from
Proposition
\ref{prop:ABxyiLonger} is injective.
\medskip

\noindent
Let the elements
 $\lbrace \alpha_i \rbrace_{i\in \mathbb Z_4}$ 
and the map $\natural$ be as in
 Proposition
\ref{prop:ABxyiLonger}.  By construction
there exist nonzero $a,b \in \mathbb F$
such that
the canonical homomorphism $\widehat \square_q \to \square_q$
sends
\begin{eqnarray}
\alpha_0 \mapsto a, \qquad  \quad
\alpha_1 \mapsto a^{-1}, \qquad  \quad
\alpha_2 \mapsto b, \qquad  \quad
\alpha_3 \mapsto b^{-1}.
\label{eq:canonsend}
\end{eqnarray}
Using $a,b$ we obtain the
$\mathbb F$-algebra homomorphism
$\sharp: \mathcal O \to \square_q$
as in
Proposition
\ref{prop:ABxyiLongerM}.

\begin{lemma}
\label{lem:naturalSharp}
With the above notation,
the following diagram commutes:

\begin{equation*}
\begin{CD}
\mathcal O @>\natural > >
                 \widehat \square_q 
           \\ 
          @VIVV                     @VV can V \\
          \mathcal O       @>>\sharp> 
               \square_q 
                   \end{CD}
\end{equation*}
\end{lemma}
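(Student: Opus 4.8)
The plan is to exploit the fact that all four arrows in the diagram are $\mathbb F$-algebra homomorphisms. Since $\mathcal O$ is generated by $A$ and $B$ (Definition \ref{def:qOnsager}), the two composites $can \circ \natural$ and $\sharp \circ I = \sharp$ from $\mathcal O$ to $\square_q$ will agree everywhere provided they agree on $A$ and $B$. So the entire argument reduces to chasing these two generators around the diagram, and no heavier machinery is needed.

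First I would track $A$. Across the top, $\natural$ sends $A \mapsto \alpha_0 x_0 + \alpha_1 x_1$ by (\ref{eq:ABDefMap}); applying the canonical homomorphism $can : \widehat \square_q \to \square_q$ of Lemma \ref{lem:can12}, which fixes each $x_i$ and, by (\ref{eq:canonsend}), sends $\alpha_0 \mapsto a$ and $\alpha_1 \mapsto a^{-1}$, we arrive at $a x_0 + a^{-1} x_1$. Down the left side $I$ fixes $A$, and across the bottom $\sharp$ sends $A \mapsto a x_0 + a^{-1} x_1$ by (\ref{eq:ABDefMapM}). The two routes produce the same element of $\square_q$. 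The computation for $B$ is identical in form: across-then-down gives $B \mapsto \alpha_2 x_2 + \alpha_3 x_3 \mapsto b x_2 + b^{-1} x_3$, again using (\ref{eq:canonsend}), while down-then-across gives $B \mapsto b x_2 + b^{-1} x_3$, so these agree as well.

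Having matched both generators, I would conclude that $can \circ \natural$ and $\sharp$ coincide on all of $\mathcal O$, whence the square commutes. There is no genuine obstacle here: the entire content resides in the compatibility (\ref{eq:canonsend}) between the scalars $a, b$ and the central elements $\lbrace \alpha_i \rbrace_{i \in \mathbb Z_4}$, which was set up in the paragraph preceding the lemma precisely so that $can$ carries $\natural$ onto $\sharp$. The only point worth a moment's care is to confirm that $can$ sends each $\alpha_i$ to the asserted scalar rather than to some other element of $\square_q$, and this is exactly the defining property (\ref{eq:canonsend}) of $a$ and $b$.
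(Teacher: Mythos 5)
Your proposal is correct and takes essentially the same approach as the paper: both reduce the claim to chasing the generators $A,B$ around the diagram, using (\ref{eq:ABDefMap}) for $\natural$, (\ref{eq:ABDefMapM}) for $\sharp$, and the compatibility (\ref{eq:canonsend}) for the canonical map. The paper merely states this chase in one line, while you carry it out explicitly; there is no difference in substance.
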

\begin{proof} 
Each map in the diagram is an $\mathbb F$-algebra
homomorphism.
The $\mathbb F$-algebra $\mathcal O$
is generated by $A,B$. To verify that the diagram commutes,
chase $A,B$ around the diagram
using
(\ref{eq:ABDefMapM}) and
(\ref{eq:ABDefMap}),
(\ref{eq:canonsend}).
\end{proof}

\section{The algebra $\square_q$, revisited}

\noindent
In Section 5 we  defined the algebra $\square_q$, and
in Sections 6--9 we investigated its homomorphic
preimages
 $\widetilde \square_q$,
$\widehat \square_q$.
In this section we return our attention to $\square_q$.
We will first prove Proposition
\ref{thm:tensorDecPreM}. We will then prove
Theorem
\ref{thm:main1}, and establish the
injectivity of the 
 maps $\sharp$ and $\natural$ from
Propositions
\ref{prop:ABxyiLongerM} and
\ref{prop:ABxyiLonger}, respectively.
\medskip

\noindent Recall the $\mathbb F$-algebra $L$ from
(\ref{eq:Ldef}). Let $L_0$ denote the ideal of $L$ generated by
$\lbrace \lambda_i -1 \rbrace_{i\in \mathbb Z_4}$.
Thus
\begin{eqnarray}
L_0 = \sum_{i \in \mathbb Z_4} L(\lambda_i-1).
\label{eq:L0}
\end{eqnarray}
The ideal $L_0$ is the kernel of the 
$\mathbb F$-algebra
homomorphism $L\to \mathbb F$ that sends $\lambda_i \mapsto 1$
for $i \in \mathbb Z_4$. 
The sum $L = L_0 + \mathbb F$ is direct.

\begin{definition}\rm
\label{def:K}
Let $K$ denote the 2-sided ideal of $\widehat \square_q$ generated by
$\lbrace c_i -1 \rbrace_{i \in \mathbb Z_4}$. Since the
$\lbrace c_i \rbrace_{i \in \mathbb Z_4}$ are central,
\begin{eqnarray*}
K = \sum_{i \in \mathbb Z_4} \widehat \square_q (c_i-1).
\end{eqnarray*}
\end{definition}
\begin{lemma}
\label{lem:Kmeaning}
The canonical homomorphism $\widehat \square_q \to \square_q$
has kernel $K$.
\end{lemma}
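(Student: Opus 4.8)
The plan is to identify $\square_q$ with the quotient $\widehat\square_q/K$ in a way compatible with the canonical homomorphism, so that the assertion about the kernel drops out at the end. Throughout I write $\mathrm{can}$ for the canonical homomorphism $\widehat\square_q \to \square_q$ of Lemma \ref{lem:can12}.

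First I would dispose of the easy inclusion $K \subseteq \ker(\mathrm{can})$. By Lemma \ref{lem:can12} the map $\mathrm{can}$ sends $c_i \mapsto 1$, so $\mathrm{can}(c_i-1)=0$ for each $i \in \mathbb Z_4$. Since $\ker(\mathrm{can})$ is a two-sided ideal containing each generator $c_i-1$ of $K$ (see Definition \ref{def:K}), we get $K \subseteq \ker(\mathrm{can})$. Consequently $\mathrm{can}$ factors through the quotient: there is an $\mathbb F$-algebra homomorphism $\overline{\mathrm{can}}:\widehat\square_q/K \to \square_q$ with $\overline{\mathrm{can}}(z+K)=\mathrm{can}(z)$ for all $z \in \widehat\square_q$.

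Next I would build an inverse to $\overline{\mathrm{can}}$. In $\widehat\square_q/K$ one has $c_i+K = 1+K$, and also $c_i^{-1}+K = 1+K$ because $c_i^{-1}-1 = -c_i^{-1}(c_i-1)\in K$; hence $\widehat\square_q/K$ is generated by the images $x_i+K$ of the $x_i$. Reducing the defining relations of $\widehat\square_q$ modulo $K$, relation (\ref{eq:central}) becomes $\frac{q x_i x_{i+1}-q^{-1}x_{i+1}x_i}{q-q^{-1}} = 1$ in $\widehat\square_q/K$, which is exactly relation (\ref{eq:centralM}) of $\square_q$, while relation (\ref{eq:Box4}) is literally relation (\ref{eq:qSerreM}). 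Thus the elements $x_i+K$ satisfy the defining relations of $\square_q$, so by the presentation in Definition \ref{def:boxqV1M} there exists an $\mathbb F$-algebra homomorphism $\phi:\square_q \to \widehat\square_q/K$ sending $x_i \mapsto x_i+K$ for $i \in \mathbb Z_4$.

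Finally I would verify that $\phi$ and $\overline{\mathrm{can}}$ are mutually inverse by chasing generators. The composite $\overline{\mathrm{can}}\circ\phi$ fixes each generator $x_i$ of $\square_q$ and so is the identity. The composite $\phi\circ\overline{\mathrm{can}}$ sends $x_i+K \mapsto x_i+K$ and $c_i+K \mapsto 1+K = c_i+K$; as these generate $\widehat\square_q/K$, it too is the identity. Hence $\overline{\mathrm{can}}$ is an isomorphism, and for $z \in \widehat\square_q$ we have $\mathrm{can}(z)=0 \iff \overline{\mathrm{can}}(z+K)=0 \iff z+K=0 \iff z \in K$, giving $\ker(\mathrm{can})=K$. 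I do not expect a genuine obstacle here: this is a standard presentation argument, and the only step needing a little care is confirming that both $c_i$ and $c_i^{-1}$ become trivial in $\widehat\square_q/K$, so that the reduced presentation is precisely that of $\square_q$.
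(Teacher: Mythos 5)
Your proposal is correct and is precisely the argument the paper leaves implicit: its one-line proof of Lemma \ref{lem:Kmeaning} ("compare Definition \ref{def:boxqV1M} and Definition \ref{def:boxqV1}") amounts to observing that $\square_q$ has exactly the presentation of $\widehat\square_q$ with the added relations $c_i=1$, so that $\square_q \cong \widehat\square_q/K$ with the canonical map as the quotient map. You have simply written out in full the standard presentation/universal-property argument that this comparison encodes, including the minor point that $c_i^{-1}-1 \in K$, so there is no gap and no difference in approach.
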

\begin{proof} Compare Definition
\ref{def:boxqV1M}
and Definition
\ref{def:boxqV1}.
\end{proof}

\noindent Recall from Definition 
\ref{def:squareTeTo}
the subalgebras 
$ \widehat \square^{\rm even}_q$,
$\widehat \square^{\rm odd}_q$,
$\widehat C$ of 
$ \widehat  \square_q$.

\begin{definition}\rm
\label{def:K0}
Let $K_0$ denote the ideal of $\widehat C$ generated by
$\lbrace c_i -1\rbrace_{i \in \mathbb Z_4}$. Thus
\begin{eqnarray}
\label{eq:K0}
K_0 = \sum_{i \in \mathbb Z_4} \widehat C (c_i-1).
\end{eqnarray}
\end{definition}

\begin{lemma}
The ideal $K_0 $ is the image of $L_0$ under the isomorphism
$L \to \widehat C$ from Proposition
\ref{thm:tensorDec}(iii).
\end{lemma}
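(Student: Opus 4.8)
The plan is to use the elementary fact that an $\mathbb F$-algebra isomorphism carries the ideal generated by a subset onto the ideal generated by the images of that subset. Write $\phi : L \to \widehat C$ for the isomorphism of Proposition \ref{thm:tensorDec}(iii), so that $\phi$ sends $\lambda_i \mapsto c_i$ for $i \in \mathbb Z_4$. The goal is to verify $\phi(L_0) = K_0$ by a direct computation with the defining expressions (\ref{eq:L0}) and (\ref{eq:K0}).

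First I would record that $\phi(\lambda_i - 1) = c_i - 1$ for each $i \in \mathbb Z_4$; this is immediate since $\phi$ is an algebra homomorphism sending $\lambda_i \mapsto c_i$ and fixing the multiplicative identity. Next, using that $\phi$ is additive and multiplicative together with the surjectivity $\phi(L) = \widehat C$, I would compute
\begin{eqnarray*}
\phi(L_0)
&=& \phi\Bigl(\sum_{i \in \mathbb Z_4} L(\lambda_i - 1)\Bigr) \\
&=& \sum_{i \in \mathbb Z_4} \phi(L)\,\phi(\lambda_i - 1) \\
&=& \sum_{i \in \mathbb Z_4} \widehat C\,(c_i - 1)
= K_0,
\end{eqnarray*}
where the first equality is the definition (\ref{eq:L0}) of $L_0$ and the last is the definition (\ref{eq:K0}) of $K_0$. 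This chain of equalities gives $\phi(L_0) = K_0$, which is exactly the assertion.

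There is no genuine obstacle here; the computation is purely formal. The only points that deserve explicit mention are that $\phi$ is surjective (so that $\phi(L) = \widehat C$, allowing the factor $L$ to be replaced by $\widehat C$) and injective, both of which are supplied by Proposition \ref{thm:tensorDec}(iii). These two properties together guarantee that $\phi$ sends the generating set $\{\lambda_i - 1\}_{i \in \mathbb Z_4}$ of $L_0$ onto the generating set $\{c_i - 1\}_{i \in \mathbb Z_4}$ of $K_0$, with no elements lost or introduced, so the images of the two generated ideals coincide.
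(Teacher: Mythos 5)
Your proof is correct and matches the paper's approach: the paper's own proof is simply ``Compare (\ref{eq:L0}), (\ref{eq:K0})'', and your computation is exactly the written-out version of that comparison, using that the isomorphism sends $\lambda_i - 1 \mapsto c_i - 1$ and that $\phi(L) = \widehat C$. (One small remark: injectivity of $\phi$ is not actually needed to compute the image $\phi(L_0)$; surjectivity and the homomorphism property suffice.)
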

\begin{proof} Compare
(\ref{eq:L0}),
(\ref{eq:K0}).
\end{proof}

\begin{lemma}
\label{lem:Kdesc}
Referring to the vector space isomorphism
from Proposition
\ref{thm:tensorDec}(iv), the preimage of $K$ is 
\begin{eqnarray}
\label{eq:SSK}
\widehat \square^{\rm even}_q
\otimes
\widehat \square^{\rm odd}_q
\otimes
K_0.
\end{eqnarray}
\end{lemma}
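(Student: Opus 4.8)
The plan is to imitate, in a simpler setting, the proof of Lemma \ref{lem:Jdesc}: there the ideal was $J$ and the isomorphism was $\widetilde m$, whereas here the ideal is $K$ and the isomorphism is the multiplication map $\widehat m$ of Proposition \ref{thm:tensorDec}(iv). Let $K'$ denote the image under $\widehat m$ of the subspace $\widehat \square^{\rm even}_q \otimes \widehat \square^{\rm odd}_q \otimes K_0$, so that $K' = \widehat \square^{\rm even}_q \widehat \square^{\rm odd}_q K_0$. Since $\widehat m$ is a bijection, it suffices to prove $K = K'$: once this is known, applying $\widehat m^{-1}$ shows that the preimage of $K$ is exactly $\widehat \square^{\rm even}_q \otimes \widehat \square^{\rm odd}_q \otimes K_0$, which is (\ref{eq:SSK}).

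The containment $K \supseteq K'$ is the easy half. By Definitions \ref{def:K} and \ref{def:K0}, each generator $c_i - 1$ of $K_0$ lies in $K$, so $K_0 \subseteq K$; since $K$ is a two-sided ideal of $\widehat \square_q$, the product $\widehat \square^{\rm even}_q \widehat \square^{\rm odd}_q K_0$ is contained in $K$, that is, $K' \subseteq K$.

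For the reverse containment $K \subseteq K'$, I would first record that Proposition \ref{thm:tensorDec}(iv) yields the factorization $\widehat \square_q = \widehat \square^{\rm even}_q \widehat \square^{\rm odd}_q \widehat C$, since the surjectivity of $\widehat m$ means these three subalgebras together span $\widehat \square_q$. Because the $c_i$ are central by (\ref{eq:Box2}), Definition \ref{def:K} gives $K = \sum_{i \in \mathbb Z_4} \widehat \square_q (c_i - 1)$, so it is enough to show $\widehat \square_q (c_i - 1) \subseteq K'$ for each $i$. Using the factorization and centrality of $\widehat C$, one has $\widehat \square_q (c_i - 1) = \widehat \square^{\rm even}_q \widehat \square^{\rm odd}_q \widehat C (c_i - 1)$, and $\widehat C (c_i - 1)$ is precisely one of the defining summands of $K_0$ in Definition \ref{def:K0}, whence $\widehat C (c_i - 1) \subseteq K_0$. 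Therefore $\widehat \square_q (c_i - 1) \subseteq \widehat \square^{\rm even}_q \widehat \square^{\rm odd}_q K_0 = K'$, giving $K \subseteq K'$ and hence $K = K'$.

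The only place where something nontrivial is used is the ordered factorization $\widehat \square_q = \widehat \square^{\rm even}_q \widehat \square^{\rm odd}_q \widehat C$ together with the centrality of the $c_i$; both are already available from Proposition \ref{thm:tensorDec}(iv) and the relations (\ref{eq:Box2}), so I expect no genuine obstacle. In fact this argument is strictly simpler than that of Lemma \ref{lem:Jdesc}, because the relevant ideal $K_0$ lives entirely inside the central factor $\widehat C$, so no commutation analysis analogous to Corollary \ref{cor:DeltaCom} is required to move the ideal generators past the even and odd parts.
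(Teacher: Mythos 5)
Your proposal is correct and takes essentially the approach the paper intends: the paper's own proof is just the citation ``By Definitions \ref{def:K}, \ref{def:K0}'', and your argument---proving both containments $K \supseteq K'$ and $K \subseteq K'$ using the centrality of $\widehat C$ and the factorization $\widehat \square_q = \widehat \square^{\rm even}_q \widehat \square^{\rm odd}_q \widehat C$, then pulling back along the bijection $\widehat m$---is exactly the verification that citation stands for, patterned on the paper's proof of Lemma \ref{lem:Jdesc}.
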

\begin{proof} By 
Definitions \ref{def:K},
\ref{def:K0}.
\end{proof}

\begin{lemma} 
\label{lem:Kintersect}
In the algebra $\widehat \square_q$,
\begin{enumerate}
\item[\rm (i)] $K \cap \widehat \square^{\rm even}_q = 0$;
\item[\rm (ii)] $K \cap \widehat \square^{\rm odd}_q = 0$;
\item[\rm (iii)] $K \cap \widehat C= K_0$.
\end{enumerate}
\end{lemma}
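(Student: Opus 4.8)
The plan is to transport all three intersections through the vector-space isomorphism $\widehat m$ of Proposition \ref{thm:tensorDec}(iv), under which the ideal $K$ already has an explicit tensor description. First I would record the preimages of the three subalgebras under $\widehat m$. Since $\widehat m$ sends $u \otimes v \otimes c \mapsto uvc$ and is a bijection, the subalgebra $\widehat \square^{\rm even}_q$ is the image of $\widehat \square^{\rm even}_q \otimes \mathbb F 1 \otimes \mathbb F 1$, the subalgebra $\widehat \square^{\rm odd}_q$ is the image of $\mathbb F 1 \otimes \widehat \square^{\rm odd}_q \otimes \mathbb F 1$, and $\widehat C$ is the image of $\mathbb F 1 \otimes \mathbb F 1 \otimes \widehat C$. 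By Lemma \ref{lem:Kdesc}, the preimage of $K$ is $\widehat \square^{\rm even}_q \otimes \widehat \square^{\rm odd}_q \otimes K_0$.

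Next I would compute each intersection inside the triple tensor product, using the elementary fact that for subspaces of a tensor product of vector spaces the intersection distributes over the factors, i.e.\ $(A_1 \otimes B_1 \otimes C_1) \cap (A_2 \otimes B_2 \otimes C_2) = (A_1 \cap A_2) \otimes (B_1 \cap B_2) \otimes (C_1 \cap C_2)$. For part (i) this gives
\[
(\widehat \square^{\rm even}_q \otimes \widehat \square^{\rm odd}_q \otimes K_0) \cap (\widehat \square^{\rm even}_q \otimes \mathbb F 1 \otimes \mathbb F 1) = \widehat \square^{\rm even}_q \otimes \mathbb F 1 \otimes (K_0 \cap \mathbb F 1),
\]
and part (ii) is handled symmetrically. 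For part (iii) the same formula yields $\mathbb F 1 \otimes \mathbb F 1 \otimes K_0$, whose image under $\widehat m$ is $K_0$; hence $K \cap \widehat C = K_0$.

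The one input that makes (i) and (ii) collapse to $0$ is that $K_0 \cap \mathbb F 1 = 0$, equivalently $1 \notin K_0$. I would justify this by recalling that the isomorphism $L \to \widehat C$ of Proposition \ref{thm:tensorDec}(iii) carries $L_0$ onto $K_0$, together with the stated direct sum $L = L_0 + \mathbb F$; transporting this decomposition gives $\widehat C = K_0 \oplus \mathbb F 1$, so $K_0$ meets $\mathbb F 1$ trivially. This mirrors the proof of Lemma \ref{lem:Jint}, where the analogous collapse came from the fact that neither $J^{\rm even}$ nor $J^{\rm odd}$ contains $1$.

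I do not anticipate a serious obstacle; the content is entirely linear-algebraic once Lemma \ref{lem:Kdesc} is available. The only point needing a little care is the distributivity of intersection over the tensor factors, which for vector spaces over a field is routine (choose complements $\widehat \square^{\rm even}_q = \mathbb F 1 \oplus W^{\rm even}$ and $\widehat \square^{\rm odd}_q = \mathbb F 1 \oplus W^{\rm odd}$, expand, and project), and keeping the three preimage descriptions straight.
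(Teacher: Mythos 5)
Your proposal is correct and takes essentially the same route as the paper: the paper's entire proof is ``Use Lemma \ref{lem:Kdesc},'' and your argument—transporting $K$ and the three subalgebras through the isomorphism of Proposition \ref{thm:tensorDec}(iv), distributing the intersection over the tensor factors, and invoking $1 \notin K_0$ (from $L = L_0 \oplus \mathbb F$ and the isomorphism $L \to \widehat C$)—is exactly the intended fleshing-out, mirroring the paper's proof of Lemma \ref{lem:Jint}.
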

\begin{proof} Use Lemma
\ref{lem:Kdesc}.
\end{proof}

\noindent 
Recall from
 Definition
\ref{def:squareTeToM}
the subalgebras
$\square^{\rm even}_q$,
$\square^{\rm odd}_q$
of 
$\square_q$.

\begin{lemma} 
\label{lem:canonImageA}
For the canonical homomorphism 
$ \widehat \square_q \to  \square_q $,
the images of
$\widehat  \square^{\rm even}_q$,
$\widehat  \square^{\rm odd}_q$,
$\widehat  C$
are
$ \square^{\rm even}_q$
$ \square^{\rm odd}_q$,
$\mathbb F$,
respectively.
\end{lemma}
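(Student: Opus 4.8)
The plan is to invoke the elementary fact that, for any $\mathbb F$-algebra homomorphism $\phi : \mathcal A \to \mathcal B$, the image under $\phi$ of the subalgebra of $\mathcal A$ generated by a set $G$ is exactly the subalgebra of $\mathcal B$ generated by $\phi(G)$. This is the same strategy that proves Lemma~\ref{lem:canonImage}, and here it will be applied to the canonical homomorphism $\widehat \square_q \to \square_q$ of Lemma~\ref{lem:can12} (see Definition~\ref{def:cancan}), which sends $x_i \mapsto x_i$ and $c^{\pm 1}_i \mapsto 1$ for $i \in \mathbb Z_4$.

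First I would treat the even subalgebra. By Definition~\ref{def:squareTeTo}(i) the algebra $\widehat \square^{\rm even}_q$ is generated by $x_0, x_2$, and the canonical homomorphism fixes each of these. Hence the image of $\widehat \square^{\rm even}_q$ is the subalgebra of $\square_q$ generated by $x_0, x_2$, which by Definition~\ref{def:squareTeToM}(i) is exactly $\square^{\rm even}_q$. The odd case is identical: by Definition~\ref{def:squareTeTo}(ii) the algebra $\widehat \square^{\rm odd}_q$ is generated by $x_1, x_3$, and these too are fixed, so the image is the subalgebra of $\square_q$ generated by $x_1, x_3$, namely $\square^{\rm odd}_q$ by Definition~\ref{def:squareTeToM}(ii).

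Finally I would handle $\widehat C$. By Definition~\ref{def:squareTeTo}(iii) this subalgebra is generated by $\lbrace c^{\pm 1}_i \rbrace_{i \in \mathbb Z_4}$, and the canonical homomorphism sends every $c^{\pm 1}_i$ to $1$. Thus the image is the subalgebra of $\square_q$ generated by $1$, which is $\mathbb F 1 = \mathbb F$.

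I expect no genuine obstacle here: this is a bookkeeping lemma whose entire content is carried by the action of the canonical homomorphism on the generators, together with the fact that an algebra homomorphism sends a generated subalgebra onto the subalgebra generated by the images. The only point deserving a moment of care is the $\widehat C$ assertion, where one must recall that, by our convention that every subalgebra shares the $1$ of the parent algebra, the subalgebra generated by the single element $1$ is all of $\mathbb F 1$, and this is what is identified with $\mathbb F$ in the statement.
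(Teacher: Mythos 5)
Your proposal is correct and matches the paper's proof, which simply cites Definitions~\ref{def:squareTeToM}, \ref{def:squareTeTo} and Lemma~\ref{lem:can12}: the subalgebras are generated by $x_0,x_2$ (even), $x_1,x_3$ (odd), and $\lbrace c^{\pm 1}_i\rbrace_{i\in\mathbb Z_4}$, and the canonical homomorphism fixes the $x_i$ and sends each $c^{\pm 1}_i$ to $1$, so the images are exactly $\square^{\rm even}_q$, $\square^{\rm odd}_q$, and $\mathbb F 1=\mathbb F$. You have merely written out the generator-chasing argument that the paper leaves implicit.
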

\begin{proof} By
Definitions
\ref{def:squareTeToM},
\ref{def:squareTeTo}
and Lemma
\ref{lem:can12}.
\end{proof}

\begin{definition}
\rm
For the canonical homomorphism 
$ \widehat \square_q \to \square_q$,
the restrictions to 
$\widehat \square^{\rm even}_q$,
$\widehat \square^{\rm odd}_q$,
$\widehat C$
induce surjective
$\mathbb F$-algebra homomorphisms
\begin{eqnarray}
\widehat \square^{\rm even}_q \to  \square^{\rm even}_q,
\qquad \qquad
\widehat \square^{\rm odd}_q \to  \square^{\rm odd}_q,
\qquad \qquad
\widehat C \to  \mathbb F.
\label{eq:threeResA}
\end{eqnarray}
Each of the homomorphisms 
(\ref{eq:threeResA})
will be called 
{\it restricted canonical}.
\end{definition}

\begin{lemma}
\label{lem:KerRestA}
The following {\rm (i)--(iii)} hold:
\begin{enumerate}
\item[\rm (i)] the restricted canonical homomorphism 
$\widehat \square^{\rm even}_q \to \square^{\rm even}_q$
is an isomorphism;
\item[\rm (ii)] the restricted canonical homomorphism
$\widehat \square^{\rm odd}_q \to \square^{\rm odd}_q$
is an isomorphism;
\item[\rm (iii)] the restricted canonical homomorphism
$\widehat C \to \mathbb F$
has kernel $K_0$.
\end{enumerate}
\end{lemma}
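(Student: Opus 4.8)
The plan is to observe that each of the three restricted canonical homomorphisms is, by definition, the restriction of the single canonical homomorphism $\widehat \square_q \to \square_q$ to one of the subalgebras $\widehat \square^{\rm even}_q$, $\widehat \square^{\rm odd}_q$, $\widehat C$. Consequently the kernel of each restricted map is just the intersection of the full kernel with the relevant subalgebra, and those intersections have already been computed in Lemma~\ref{lem:Kintersect}. So the whole lemma reduces to one elementary observation about kernels of restrictions, together with the three lines of Lemma~\ref{lem:Kintersect}.

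Concretely, I would first recall from Lemma~\ref{lem:Kmeaning} that the canonical homomorphism $\widehat \square_q \to \square_q$ has kernel $K$. For part (i), since the restricted canonical homomorphism $\widehat \square^{\rm even}_q \to \square^{\rm even}_q$ is obtained by restricting this map to $\widehat \square^{\rm even}_q$, its kernel is $K \cap \widehat \square^{\rm even}_q$. By Lemma~\ref{lem:Kintersect}(i) this intersection is $0$, so the map is injective; it is already surjective by the definition of restricted canonical, hence an isomorphism. Part (ii) follows in exactly the same way, using Lemma~\ref{lem:Kintersect}(ii) in place of (i). For part (iii), the restricted map $\widehat C \to \mathbb F$ has kernel $K \cap \widehat C$, which equals $K_0$ by Lemma~\ref{lem:Kintersect}(iii).

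I do not expect any genuine obstacle in this lemma: all the substantive work has been done in Lemma~\ref{lem:Kdesc} (the tensor-product description of the preimage of $K$) and Lemma~\ref{lem:Kintersect} (the resulting intersection formulas), and the present statement merely repackages those facts map-by-map. The one point to state carefully is the general principle that the kernel of the restriction of an algebra homomorphism to a subalgebra equals the intersection of the full kernel with that subalgebra; this is immediate, but it is the hinge on which all three parts turn.
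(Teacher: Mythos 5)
Your proposal is correct and follows the paper's own route exactly: the paper's proof is simply ``By Lemmas \ref{lem:Kmeaning}, \ref{lem:Kintersect},'' which is precisely your argument that the kernel of each restricted canonical map is the intersection of $K$ with the relevant subalgebra, combined with the surjectivity built into the definition of the restricted canonical homomorphisms. You have merely spelled out the details that the paper leaves implicit.
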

\begin{proof}  By Lemmas
\ref{lem:Kmeaning},
\ref{lem:Kintersect}.
\end{proof}

\noindent {\it Proof of Proposition 
\ref{thm:tensorDecPreM}}.
(i) 
The desired isomorphism is the composition
of the isomorphism $U^+ \to \widehat \square^{\rm even}_q$
from Proposition
\ref{thm:tensorDec}(i)
and the isomorphism
$\widehat \square^{\rm even}_q \to \square^{\rm even}_q$
from
Lemma \ref{lem:KerRestA}(i).
\\
\noindent (ii) 
The desired isomorphism is the composition
of the isomorphism $U^+ \to \widehat \square^{\rm odd}_q$
from Proposition
\ref{thm:tensorDec}(ii)
and the isomorphism
$\widehat \square^{\rm odd}_q \to \square^{\rm odd}_q$
from
Lemma \ref{lem:KerRestA}(ii).
\\
\noindent (iii) By Lemma
\ref{lem:Kdesc} and since $1 \not\in K_0$ we see that, for the 
vector space isomorphism
in Proposition
\ref{thm:tensorDec}(iv),
the preimage of $K$ has zero  intersection with
$\widehat \square^{\rm even}_q
\otimes
\widehat \square^{\rm odd}_q$.
The result follows.
$\hfill \square$
\medskip

\noindent We now turn our attention to 
Theorem
\ref{thm:main1} and the maps
$\sharp$, $\natural$.
We comment on the notation.
In earlier sections we discussed the
algebras
$\square_q$,
$\square^{\rm even}_q$,
$\square^{\rm odd}_q$. 
In our discussion going forward, in order to simplify
the notation 
we will
drop the reference to $q$, and write
$\square=\square_q$,
$\square^{\rm even}=\square^{\rm even}_q$,
$\square^{\rm odd}=\square^{\rm odd}_q$.
\medskip

\noindent
Recall the $\mathbb F$-algebra $U^+=U^+_q$ and
its $\mathbb N$-grading
$\lbrace U^+_n\rbrace_{n \in \mathbb N}$.

\begin{definition}
\label{def:recall1}
\rm
Recall from Proposition
\ref{thm:tensorDecPreM}(i)
the $\mathbb F$-algebra isomorphism
$U^+ \to \square^{\rm even}$ that sends $X \mapsto x_0$
and $Y \mapsto x_2$.
Under this isomorphism,
for $n \in \mathbb N$ the image of
$U^+_n$ will be denoted 
by $\square^{\rm even}_n$.
Recall from Proposition
\ref{thm:tensorDecPreM}(ii)
the $\mathbb F$-algebra isomorphism
$U^+ \to \square^{\rm odd}$ that sends $X \mapsto x_1$
and $Y \mapsto x_3$.
Under this isomorphism,
for $n \in \mathbb N$ the image of
$U^+_n$ will be denoted by
 $\square^{\rm odd}_n$.
 \end{definition}

\begin{lemma} The sequence
$\lbrace 
 \square^{\rm even}_n \rbrace_{n \in \mathbb N}$ is an $\mathbb N$-grading
 of 
 $\square^{\rm even}$.
The sequence
$\lbrace 
 \square^{\rm odd}_n \rbrace_{n \in \mathbb N}$ is an $\mathbb N$-grading
 of 
 $\square^{\rm odd}$.
\end{lemma}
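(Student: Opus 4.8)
The plan is to exploit the fact that every defining property of an $\mathbb N$-grading is preserved by an $\mathbb F$-algebra isomorphism, and then transport the $\mathbb N$-grading $\lbrace U^+_n\rbrace_{n\in\mathbb N}$ of $U^+$ across the isomorphisms of Proposition \ref{thm:tensorDecPreM}(i),(ii). Recall from Section 3 that $\lbrace U^+_n\rbrace_{n\in\mathbb N}$ is an $\mathbb N$-grading of $U^+$, and recall from Definition \ref{def:recall1} that for $n\in\mathbb N$ the subspace $\square^{\rm even}_n$ is the image of $U^+_n$ under the isomorphism $U^+\to\square^{\rm even}$ of Proposition \ref{thm:tensorDecPreM}(i). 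Denoting this isomorphism by $\phi$, we thus have $\square^{\rm even}_n=\phi(U^+_n)$ for all $n\in\mathbb N$.

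Next I would check the four conditions defining an $\mathbb N$-grading for $\lbrace\square^{\rm even}_n\rbrace_{n\in\mathbb N}$ by applying $\phi$ to the corresponding conditions for $\lbrace U^+_n\rbrace_{n\in\mathbb N}$. Since $\phi$ is linear and each $U^+_n$ is a subspace, each $\square^{\rm even}_n=\phi(U^+_n)$ is a subspace of $\square^{\rm even}$. Since $\phi$ preserves the identity, $1=\phi(1)\in\phi(U^+_0)=\square^{\rm even}_0$. Since $\phi$ is a linear bijection, applying it to the direct sum $U^+=\sum_{n\in\mathbb N}U^+_n$ shows that the sum $\square^{\rm even}=\sum_{n\in\mathbb N}\square^{\rm even}_n$ is direct. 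Finally, since $\phi$ is multiplicative, $\square^{\rm even}_r\square^{\rm even}_s=\phi(U^+_r U^+_s)\subseteq\phi(U^+_{r+s})=\square^{\rm even}_{r+s}$ for $r,s\in\mathbb N$, using $U^+_r U^+_s\subseteq U^+_{r+s}$.

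The assertion for $\lbrace\square^{\rm odd}_n\rbrace_{n\in\mathbb N}$ follows in exactly the same way, using instead the isomorphism $U^+\to\square^{\rm odd}$ of Proposition \ref{thm:tensorDecPreM}(ii). There is no genuine obstacle here: the substantive input---that $\lbrace U^+_n\rbrace_{n\in\mathbb N}$ grades $U^+$ and that the two maps are algebra isomorphisms---has already been established, and the only remaining point is the routine observation that the grading axioms are transported by an isomorphism, with directness of the sum preserved precisely because the map is injective.
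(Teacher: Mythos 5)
Your proposal is correct and matches the paper's approach exactly: the paper's proof is the one-line citation ``By Definition \ref{def:recall1} and since $\lbrace U^+_n\rbrace_{n\in\mathbb N}$ is an $\mathbb N$-grading of $U^+$,'' which is precisely the transport-of-grading argument you carry out. You have simply written out the routine verification that the paper leaves implicit.
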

\begin{proof}
By Definition
\ref{def:recall1} and since
$\lbrace 
 U^+_n \rbrace_{n \in \mathbb N}$ is an $\mathbb N$-grading
 of $U^+$.
 \end{proof}

\begin{definition}
\label{def:squareNPre}
\rm
Recall from Proposition
\ref{thm:tensorDecPreM}(iii)
the isomorphism of $\mathbb F$-vector spaces
$\square^{\rm even} \otimes
\square^{\rm odd}  \to \square$.
Under this isomorphism,
for $r,s \in \mathbb N$  the image of
$
\square^{\rm even}_r
\otimes
\square^{\rm odd}_s
$
will be denoted by $\square_{r,s}$.
\end{definition}

\noindent Referring to Definition
\ref{def:squareNPre},
the sum $\square= \sum_{r,s \in \mathbb N} \square_{r,s}$ is
direct.
Moreover $1 \in \square_{0,0}$ and
$x_0,x_2 \in \square_{1,0}$ and
$x_1,x_3 \in \square_{0,1}$.

\begin{lemma}
\label{lem:rstcom}
The following
{\rm (i), (ii)} hold for $r,s,t \in \mathbb N$:
\begin{enumerate}
\item[\rm (i)]
$\square^{\rm even}_t \square_{r,s} \subseteq \square_{r+t,s}$;
\item[\rm (ii)]
$\square_{r,s}\square^{\rm odd}_t \subseteq \square_{r,s+t}$.
\end{enumerate}
\end{lemma}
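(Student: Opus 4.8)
The plan is to reduce everything to the product description of $\square_{r,s}$ together with the grading properties of $\square^{\rm even}$ and $\square^{\rm odd}$. By Definition \ref{def:squareNPre} the subspace $\square_{r,s}$ is the image of $\square^{\rm even}_r \otimes \square^{\rm odd}_s$ under the multiplication map $u \otimes v \mapsto uv$ from Proposition \ref{thm:tensorDecPreM}(iii); in the notation $HK$ fixed in the Preliminaries this image is precisely the product subspace $\square^{\rm even}_r \square^{\rm odd}_s$. So first I would record the identity $\square_{r,s} = \square^{\rm even}_r \square^{\rm odd}_s$, which converts both claims into statements about products of subspaces inside the associative algebra $\square$.

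For part (i), I would compute $\square^{\rm even}_t \square_{r,s} = \square^{\rm even}_t \square^{\rm even}_r \square^{\rm odd}_s$, using associativity to regroup the first two factors. Since $\lbrace \square^{\rm even}_n \rbrace_{n \in \mathbb N}$ is an $\mathbb N$-grading of $\square^{\rm even}$ (the lemma just above Definition \ref{def:squareNPre}), condition (iv) in the definition of an $\mathbb N$-grading gives $\square^{\rm even}_t \square^{\rm even}_r \subseteq \square^{\rm even}_{t+r}$, hence $\square^{\rm even}_t \square_{r,s} \subseteq \square^{\rm even}_{t+r} \square^{\rm odd}_s = \square_{t+r,s}$, as desired.

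Part (ii) is symmetric: writing $\square_{r,s}\square^{\rm odd}_t = \square^{\rm even}_r \square^{\rm odd}_s \square^{\rm odd}_t$ and regrouping the last two factors, the grading of $\square^{\rm odd}$ yields $\square^{\rm odd}_s \square^{\rm odd}_t \subseteq \square^{\rm odd}_{s+t}$, so $\square_{r,s}\square^{\rm odd}_t \subseteq \square^{\rm even}_r \square^{\rm odd}_{s+t} = \square_{r,s+t}$.

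There is no genuine obstacle here; the only point to handle with care is the very first identification $\square_{r,s} = \square^{\rm even}_r \square^{\rm odd}_s$, i.e. remembering that the image of the tensor product under the multiplication map is exactly the span of the products $uv$, so that associativity in $\square$ can be applied directly to the even and odd factors. Once that is in place, both inclusions are immediate consequences of the graded multiplication property of $\square^{\rm even}$ and $\square^{\rm odd}$, respectively.
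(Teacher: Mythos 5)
Your proof is correct and takes essentially the same route as the paper's: the paper's one-line argument for each part cites Definition \ref{def:squareNPre} together with the inclusion $\square^{\rm even}_t \square^{\rm even}_r \subseteq \square^{\rm even}_{r+t}$ (resp. $\square^{\rm odd}_s \square^{\rm odd}_t \subseteq \square^{\rm odd}_{s+t}$), which is exactly your regrouping argument. The only difference is that you make explicit the identification $\square_{r,s} = \square^{\rm even}_r \square^{\rm odd}_s$ (the image of $\square^{\rm even}_r \otimes \square^{\rm odd}_s$ under the multiplication map is the span of the products $uv$), a point the paper leaves implicit.
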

\begin{proof} (i) By Definition
\ref{def:squareNPre} and since
$\square^{\rm even}_t 
\square^{\rm even}_r \subseteq \square^{\rm even}_{r+t}$. 
\\
\noindent (ii) By Definition
\ref{def:squareNPre} and since
$\square^{\rm odd}_s 
\square^{\rm odd}_t \subseteq \square^{\rm odd}_{s+t}$. 
\end{proof}

\begin{definition}
\label{def:squareN}
\rm
For $n \in \mathbb Z$ 
define
\begin{eqnarray*}
\square_n = 
\sum_
{
\genfrac{}{}{0pt}{}{r,s \in \mathbb N}{r-s=n}
} 
\square_{r,s}.
\end{eqnarray*}
\end{definition}

\noindent 
Referring to Definition
\ref{def:squareN},
our next goal is to show that the sequence
$\lbrace \square_n\rbrace_{n \in \mathbb Z}$ is
a $\mathbb Z$-grading of $\square$.

\begin{lemma}
\label{lem:grade12a}
The sum 
$\square= \sum_{n\in \mathbb Z} \square_n$ is direct.
Moreover
 $ 1 \in  \square_0$ and
$x_0,x_2 \in \square_1$ and
$x_1,x_3 \in \square_{-1}$.
\end{lemma}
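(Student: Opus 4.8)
The plan is to deduce everything directly from the decomposition $\square=\sum_{r,s\in\mathbb N}\square_{r,s}$ (direct) recorded just after Definition \ref{def:squareNPre}, using only the elementary fact that regrouping the summands of a direct sum according to a partition of the index set again yields a direct sum. All the substantive work---the tensor decomposition and the $\mathbb N$-gradings of $\square^{\rm even}$ and $\square^{\rm odd}$---has already been carried out in Proposition \ref{thm:tensorDecPreM} and Definition \ref{def:recall1}.

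First I would dispose of the \emph{moreover} assertions, since these are immediate from Definition \ref{def:squareN}. The index pair $(0,0)$ satisfies $r-s=0$, so $\square_{0,0}\subseteq\square_0$ and hence $1\in\square_0$. Likewise $(1,0)$ satisfies $r-s=1$, giving $x_0,x_2\in\square_{1,0}\subseteq\square_1$, and $(0,1)$ satisfies $r-s=-1$, giving $x_1,x_3\in\square_{0,1}\subseteq\square_{-1}$.

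For the directness of $\square=\sum_{n\in\mathbb Z}\square_n$, the key observation is that the diagonals $D_n=\{(r,s)\in\mathbb N^2: r-s=n\}$, as $n$ ranges over $\mathbb Z$, partition $\mathbb N^2$, and by Definition \ref{def:squareN} we have $\square_n=\sum_{(r,s)\in D_n}\square_{r,s}$. I would then argue as follows: suppose $\sum_{n\in\mathbb Z}u_n=0$ with $u_n\in\square_n$ and only finitely many $u_n$ nonzero. Expanding each $u_n=\sum_{(r,s)\in D_n}u_{r,s}$ with $u_{r,s}\in\square_{r,s}$, the relation becomes $\sum_{(r,s)\in\mathbb N^2}u_{r,s}=0$. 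Since the sum $\square=\sum_{r,s\in\mathbb N}\square_{r,s}$ is direct, every $u_{r,s}=0$, whence every $u_n=0$. This proves that $\sum_{n\in\mathbb Z}\square_n$ is direct.

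There is no genuine obstacle here; the only point requiring any care is the regrouping step, namely that directness of the finer decomposition indexed by $\mathbb N^2$ forces directness of the coarser one indexed by $\mathbb Z$ obtained by collapsing each diagonal. This is purely formal once one notes that the $D_n$ are pairwise disjoint with union $\mathbb N^2$, so I expect this lemma to be short, with the expansion-and-cancellation argument above as its entire content.
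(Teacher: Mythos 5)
Your proposal is correct and is exactly the paper's argument: the paper's proof cites Definition \ref{def:squareN} together with the remarks above Lemma \ref{lem:rstcom} (directness of $\square=\sum_{r,s\in\mathbb N}\square_{r,s}$ and the memberships $1\in\square_{0,0}$, $x_0,x_2\in\square_{1,0}$, $x_1,x_3\in\square_{0,1}$), which is precisely the regroup-by-diagonals argument you spelled out. You have merely made explicit the routine expansion-and-cancellation step that the paper leaves implicit.
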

\begin{proof} By Definition
\ref{def:squareN}
and the comments
above Lemma
\ref{lem:rstcom}.
\end{proof}

\begin{lemma}
\label{lem:grade12}
The following {\rm (i), (ii)}
hold for $n\in \mathbb Z$ and $t\in \mathbb N$:
\begin{enumerate}
\item[\rm (i)] $\square^{\rm even}_t \square_n \subseteq
\square_{n+t}$;
\item[\rm (ii)] $\square_n \square^{\rm odd}_t \subseteq
\square_{n-t}$.
\end{enumerate}
\end{lemma}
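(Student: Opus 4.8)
The plan is to deduce both inclusions directly from Lemma \ref{lem:rstcom} and the defining formula for $\square_n$ in Definition \ref{def:squareN}, by distributing multiplication over the direct sum decomposition and tracking the index shift.

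For part (i), I would start from $\square_n = \sum_{r-s=n} \square_{r,s}$ (the sum over $r,s \in \mathbb N$ with $r-s=n$) and multiply on the left by $\square^{\rm even}_t$, pushing the factor inside the sum to get
\begin{eqnarray*}
\square^{\rm even}_t \square_n = \sum_{\genfrac{}{}{0pt}{}{r,s \in \mathbb N}{r-s=n}} \square^{\rm even}_t \square_{r,s}.
\end{eqnarray*}
By Lemma \ref{lem:rstcom}(i) each summand satisfies $\square^{\rm even}_t \square_{r,s} \subseteq \square_{r+t,s}$. The key bookkeeping observation is that when $r-s=n$ we have $(r+t)-s = n+t$, so each $\square_{r+t,s}$ is one of the summands defining $\square_{n+t}$; hence $\square_{r+t,s}\subseteq \square_{n+t}$ and the whole sum lies in $\square_{n+t}$.

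Part (ii) is entirely parallel: multiplying $\square_n = \sum_{r-s=n}\square_{r,s}$ on the right by $\square^{\rm odd}_t$ and applying Lemma \ref{lem:rstcom}(ii) gives $\square_{r,s}\square^{\rm odd}_t \subseteq \square_{r,s+t}$, and now $r-(s+t) = n-t$, so each term lies in $\square_{n-t}$.

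There is essentially no obstacle here; the only thing to be careful about is the direction of the index shift, namely that left multiplication by the even part raises the degree while right multiplication by the odd part lowers it, which is exactly the asymmetry already built into Lemma \ref{lem:rstcom}. No new computation is needed beyond invoking that lemma and the definition of $\square_n$.
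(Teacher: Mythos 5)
Your proof is correct and follows essentially the same route as the paper, whose proof is simply the instruction to combine Lemma \ref{lem:rstcom} with Definition \ref{def:squareN}; you have just written out the distribution over the sum and the index bookkeeping that the paper leaves implicit.
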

\begin{proof}
Use Lemma
\ref{lem:rstcom}
and Definition 
\ref{def:squareN}.
\end{proof}

\begin{lemma}
\label{lem:grading1}
For $r,s\in \mathbb N$,
\begin{eqnarray}
\label{eq:outoforder}
\square^{\rm odd}_s \square^{\rm even}_r \subseteq 
\sum_{\ell=0}^{{\rm {min}} (r,s)}
\square_{r-\ell,s-\ell}.
\end{eqnarray}
Moreover
$\square^{\rm odd}_s 
\square^{\rm even}_r \subseteq \square_{r-s}$. 
\end{lemma}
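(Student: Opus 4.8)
The plan is to establish the displayed inclusion by induction on $s$, and then to read off the final assertion from it. Throughout I adopt the convention that a symbol $\square_{a,b}$ with $a<0$ or $b<0$ denotes $0$. The key preliminary step is a one-generator reduction. Recall that $\square^{\rm even}_r$ is spanned by the products $u_1u_2\cdots u_r$ with $u_i\in\{x_0,x_2\}$, and that $\square^{\rm odd}_1=\mathbb F x_1+\mathbb F x_3$. Fix $v\in\{x_1,x_3\}$. Applying the canonical homomorphism $\widetilde\square_q\to\square$ of Lemma \ref{lem:squareCanon} (under which each $c_i\mapsto 1$) to the identity of Lemma \ref{lem:uLong}, I obtain in $\square$ that $v\,u_1\cdots u_r$ equals a scalar multiple of $u_1\cdots u_r\,v$ plus an $\mathbb F$-linear combination of the shortened products $u_1\cdots u_{i-1}u_{i+1}\cdots u_r$ for $1\le i\le r$. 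The leading term lies in $\square^{\rm even}_r\square^{\rm odd}_1=\square_{r,1}$, while each shortened term lies in $\square^{\rm even}_{r-1}=\square_{r-1,0}$, so that
\[
\square^{\rm odd}_1\,\square^{\rm even}_r\subseteq\square_{r,1}+\square_{r-1,0}.
\]

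Now I would carry out the induction on $s$. The case $s=0$ is immediate, since $\square^{\rm odd}_0=\mathbb F 1$ gives $\square^{\rm odd}_0\square^{\rm even}_r=\square_{r,0}$. For the inductive step, every length-$s$ product of $x_1,x_3$ factors as a single such generator times a length-$(s-1)$ product, whence $\square^{\rm odd}_s\subseteq\square^{\rm odd}_1\square^{\rm odd}_{s-1}$; combining this with the induction hypothesis gives
\[
\square^{\rm odd}_s\,\square^{\rm even}_r\subseteq\sum_{\ell=0}^{\min(r,s-1)}\square^{\rm odd}_1\,\square_{r-\ell,\,s-1-\ell}.
\]
For a single summand, writing $\square_{r',s'}=\square^{\rm even}_{r'}\square^{\rm odd}_{s'}$, applying the one-generator reduction to $\square^{\rm odd}_1\square^{\rm even}_{r'}$, and then invoking Lemma \ref{lem:rstcom}(ii), I get
\[
\square^{\rm odd}_1\,\square_{r',s'}\subseteq\square_{r',\,s'+1}+\square_{r'-1,\,s'}.
\]
With $r'=r-\ell$ and $s'=s-1-\ell$ the right side becomes $\square_{r-\ell,\,s-\ell}+\square_{r-(\ell+1),\,s-(\ell+1)}$. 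Summing over $\ell$ and relabelling, the bidegrees that occur are exactly $\square_{r-m,\,s-m}$ for $0\le m\le\min(r,s-1)+1$; when $r<s$ the extra value $m=\min(r,s-1)+1$ has a negative first subscript and hence equals $0$, so the range collapses to $0\le m\le\min(r,s)$, completing the induction. The final assertion then follows at once, since each term $\square_{r-\ell,\,s-\ell}$ has $(r-\ell)-(s-\ell)=r-s$ and so lies in $\square_{r-s}$ by Definition \ref{def:squareN}; therefore the whole sum, and with it $\square^{\rm odd}_s\square^{\rm even}_r$, is contained in $\square_{r-s}$.

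I expect the crux to be the one-generator reduction together with the attendant degree bookkeeping: one must confirm that moving a single odd generator past $r$ even factors lowers the even-degree by at most one (keeping the leading term at bidegree $(r,1)$ and dropping the corrections to $(r-1,0)$), and then verify that the two families of bidegrees produced at each inductive step telescope precisely onto $\{(r-m,s-m):0\le m\le\min(r,s)\}$, with the spurious negative-index term vanishing exactly in the regime $r<s$. The underlying computation is routine once Lemma \ref{lem:uLong} is in hand; the care lies entirely in the degree accounting and in the negative-subscript convention that absorbs the boundary terms.
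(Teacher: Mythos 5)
Your proposal is correct and takes essentially the same route as the paper: the paper's own proof of this lemma is exactly ``use Lemma \ref{lem:uLong} and induction on $s$'' for (\ref{eq:outoforder}), with the last assertion read off from Definition \ref{def:squareN}. Your write-up merely supplies the details the paper leaves implicit --- pushing Lemma \ref{lem:uLong} into $\square$ via the canonical homomorphism, the resulting one-generator reduction $\square^{\rm odd}_1\,\square^{\rm even}_r\subseteq\square_{r,1}+\square_{r-1,0}$, the use of Lemma \ref{lem:rstcom}(ii), and the boundary bookkeeping --- all of which is sound.
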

\begin{proof} To obtain
(\ref{eq:outoforder}) 
use Lemma
\ref{lem:uLong}
and induction on $s$.
The last assertion follows from
(\ref{eq:outoforder})
and Definition 
\ref{def:squareN}.
\end{proof}

\begin{lemma}
\label{lem:grading2}
We have $\square_r \square_s \subseteq \square_{r+s}$
for 
 $r,s\in \mathbb Z$.
\end{lemma}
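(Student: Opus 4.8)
The plan is to reduce the statement to homogeneous pieces and then push a single product through the three ``degree-shift'' lemmas already established. By Definition \ref{def:squareN}, $\square_r$ (resp.\ $\square_s$) is the sum of the subspaces $\square_{a_1,b_1}$ (resp.\ $\square_{a_2,b_2}$) over all $a_1-b_1=r$ (resp.\ $a_2-b_2=s$), and by Definition \ref{def:squareNPre} each $\square_{a,b}$ is spanned by products $uv$ with $u\in\square^{\rm even}_a$ and $v\in\square^{\rm odd}_b$, i.e.\ $\square_{a,b}=\square^{\rm even}_a\square^{\rm odd}_b$. Since multiplication is bilinear, it therefore suffices to show that $\square^{\rm even}_{a_1}\square^{\rm odd}_{b_1}\square^{\rm even}_{a_2}\square^{\rm odd}_{b_2}\subseteq \square_{r+s}$ whenever $a_1-b_1=r$ and $a_2-b_2=s$.

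First I would deal with the troublesome ``out of order'' factor in the middle of this four-fold product, namely $\square^{\rm odd}_{b_1}\square^{\rm even}_{a_2}$, an odd block followed by an even block. This is precisely the content of Lemma \ref{lem:grading1}, which gives $\square^{\rm odd}_{b_1}\square^{\rm even}_{a_2}\subseteq \square_{a_2-b_1}$. Substituting this containment yields $\square^{\rm even}_{a_1}\square^{\rm odd}_{b_1}\square^{\rm even}_{a_2}\square^{\rm odd}_{b_2}\subseteq \square^{\rm even}_{a_1}\,\square_{a_2-b_1}\,\square^{\rm odd}_{b_2}$.

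Next I would strip off the outer factors using the degree-shift rules. By Lemma \ref{lem:grade12}(i) left multiplication by $\square^{\rm even}_{a_1}$ raises the $\mathbb Z$-degree by $a_1$, so $\square^{\rm even}_{a_1}\square_{a_2-b_1}\subseteq \square_{a_1+a_2-b_1}$; and by Lemma \ref{lem:grade12}(ii) right multiplication by $\square^{\rm odd}_{b_2}$ lowers the degree by $b_2$, so $\square_{a_1+a_2-b_1}\square^{\rm odd}_{b_2}\subseteq \square_{a_1+a_2-b_1-b_2}$. Finally I would note that $a_1+a_2-b_1-b_2=(a_1-b_1)+(a_2-b_2)=r+s$, which closes the argument.

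The only genuine difficulty is the reordering step, where an odd block must be moved past an even block; but all the work there has already been absorbed into Lemma \ref{lem:grading1} (itself proved from the reduction rules of Lemma \ref{lem:uLong} by induction). Once that lemma is in hand, the remaining steps are pure degree bookkeeping through Lemma \ref{lem:grade12}, so I expect no further obstacle.
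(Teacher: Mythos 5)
Your proposal is correct and follows essentially the same route as the paper: the paper's proof simply cites Definition \ref{def:squareN}, Lemma \ref{lem:grade12}, and the last assertion of Lemma \ref{lem:grading1}, and your argument spells out precisely how those three ingredients combine. The bookkeeping $a_1+a_2-b_1-b_2=(a_1-b_1)+(a_2-b_2)=r+s$ is exactly the intended computation.
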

\begin{proof} By
Definition
\ref{def:squareN},
Lemma \ref{lem:grade12}, and the last assertion of
Lemma
\ref{lem:grading1}.
\end{proof}

\begin{proposition}
\label{prop:Grading}
The sequence $\lbrace \square_n\rbrace_{n \in \mathbb Z}$
is a $\mathbb Z$-grading of $\square$.
\end{proposition}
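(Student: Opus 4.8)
The plan is to verify directly the four conditions in the definition of a $\mathbb Z$-grading recorded in the Preliminaries, since at this point every ingredient has already been assembled in the preceding lemmas. Condition (i), that each $\square_n$ is a subspace of $\square$, is immediate from Definition \ref{def:squareN}, because $\square_n$ is defined there as a sum of the subspaces $\square_{r,s}$. So the first requirement needs no argument beyond unwinding the definition.

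For conditions (ii) and (iii) I would invoke Lemma \ref{lem:grade12a}, which states precisely that the sum $\square = \sum_{n \in \mathbb Z} \square_n$ is direct and that $1 \in \square_0$. Nothing further is needed for these two requirements; they are quoted verbatim from that lemma.

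The multiplicative condition (iv), namely $\square_r \square_s \subseteq \square_{r+s}$ for all $r,s \in \mathbb Z$, is exactly the content of Lemma \ref{lem:grading2}. Thus I would cite it and conclude that all four defining properties hold, whence $\lbrace \square_n\rbrace_{n\in \mathbb Z}$ is a $\mathbb Z$-grading of $\square$.

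The proposition itself therefore presents no real obstacle; it is a matter of collecting the previously established facts. The genuine difficulty lies upstream, in Lemma \ref{lem:grading1}, where one must control the ``out of order'' products $\square^{\rm odd}_s \square^{\rm even}_r$ by commuting the odd generators past the even ones. That step rests on the reduction rules encoded in Lemma \ref{lem:uLong} together with an induction on $s$, producing lower-order correction terms that land in $\sum_{\ell} \square_{r-\ell,\,s-\ell}$, all of which lie in $\square_{r-s}$ by Definition \ref{def:squareN}. This containment is what ultimately makes the homogeneity of products work across the whole algebra and feeds into Lemma \ref{lem:grading2}. Once those lemmas are granted, assembling Proposition \ref{prop:Grading} is purely bookkeeping.
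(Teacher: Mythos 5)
Your proposal is correct and matches the paper's proof, which is simply ``By Lemmas \ref{lem:grade12a}, \ref{lem:grading2}'' — the same two citations you use, with the four defining conditions of a $\mathbb Z$-grading checked exactly as you describe. Your closing remarks about the real work residing upstream in Lemma \ref{lem:grading1} (controlling $\square^{\rm odd}_s \square^{\rm even}_r$ via Lemma \ref{lem:uLong}) accurately reflect the structure of the paper's argument as well.
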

\begin{proof} By Lemmas
\ref{lem:grade12a},
\ref{lem:grading2}.
\end{proof}

\noindent 
For the rest of this section, fix nonzero $a,b\in \mathbb F$.
Using $a,b$ we obtain the $\mathbb F$-algebra homomorphism
$\sharp: \mathcal O \to \square$ as in
Proposition
\ref{prop:ABxyiLongerM}.
Define
\begin{eqnarray}
A^+ = a x_0, \qquad
A^- = a^{-1} x_1, \qquad
B^+ = b x_2, \qquad
B^- = b^{-1} x_3
\label{def:ABPM}
\end{eqnarray}
so that
\begin{eqnarray}
\sharp(A) = A^+ + A^-, \qquad \qquad 
\sharp(B) = B^+ + B^-. 
\label{eq:ABxi}
\end{eqnarray}
\noindent By Lemma
\ref{lem:grade12a} and
(\ref{def:ABPM}),
\begin{eqnarray}
A^+ \in \square_1, \qquad
A^- \in \square_{-1}, \qquad
B^+ \in \square_{1}, \qquad
B^- \in \square_{-1}.
\label{eq:ABloc}
\end{eqnarray}
By 
(\ref{eq:ABxi}),
(\ref{eq:ABloc}),
\begin{eqnarray}
\sharp (A) \in \square_1  + \square_{-1},
\qquad \qquad 
\sharp (B) \in \square_1  + \square_{-1}.
\label{lem:xiAB} 
\end{eqnarray}

\begin{lemma}
\label{lem:wordExpand}
Let $n \in \mathbb N$. For $1 \leq i \leq n$ pick
$g_i \in \lbrace A,B\rbrace$. Then 
\begin{eqnarray}
\label{eq:xiExpand}
\sharp(g_1g_2\cdots g_n) =
\sum
g^{\epsilon_1}_1 
g^{\epsilon_2}_2
\cdots
g^{\epsilon_n}_n,
\end{eqnarray}
where the sum is over all sequences
$\epsilon_1, 
\epsilon_2,
\ldots,
\epsilon_n$
such that $\epsilon_i \in \lbrace +,-\rbrace$ for
$1 \leq i \leq n$.
\end{lemma}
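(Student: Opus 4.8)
The plan is to exploit the fact that $\sharp$ is an $\mathbb F$-algebra homomorphism together with the description of $\sharp(A)$ and $\sharp(B)$ recorded in (\ref{eq:ABxi}). First I would note that, since $\sharp$ is multiplicative,
\[
\sharp(g_1 g_2 \cdots g_n) = \sharp(g_1)\,\sharp(g_2) \cdots \sharp(g_n).
\]
Next, for a generator $g \in \lbrace A,B\rbrace$ I interpret the superscript notation of (\ref{def:ABPM}) so that $g^+$ and $g^-$ denote $A^+, A^-$ when $g=A$ and $B^+, B^-$ when $g=B$; with this reading, (\ref{eq:ABxi}) says precisely that $\sharp(g) = g^+ + g^-$ for each $g \in \lbrace A,B\rbrace$. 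Substituting this into the displayed factorization gives
\[
\sharp(g_1 g_2 \cdots g_n) = \bigl(g_1^+ + g_1^-\bigr)\bigl(g_2^+ + g_2^-\bigr)\cdots\bigl(g_n^+ + g_n^-\bigr).
\]

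The remaining step is to expand this product of $n$ binomials by the distributive law. Each term in the expansion arises by choosing, for each factor $1 \le i \le n$, one of the two summands $g_i^+$ or $g_i^-$; recording that choice as a sign $\epsilon_i \in \lbrace +,-\rbrace$ yields the monomial $g_1^{\epsilon_1} g_2^{\epsilon_2} \cdots g_n^{\epsilon_n}$, and the distributive law produces each such monomial exactly once as $(\epsilon_1,\ldots,\epsilon_n)$ ranges over all $2^n$ sign sequences. This is exactly the right-hand side of (\ref{eq:xiExpand}), so the lemma follows. There is no genuine obstacle here: the entire content is the homomorphism property of $\sharp$ (guaranteed by Proposition \ref{prop:ABxyiLongerM}) together with the elementary expansion of a product of sums, so the only point requiring care is the bookkeeping that matches the index $\epsilon_i$ to the choice of summand in the $i$th factor.
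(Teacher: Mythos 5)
Your proof is correct and follows exactly the same route as the paper's own (very terse) proof: expand $\sharp(g_1\cdots g_n)$ using the homomorphism property of $\sharp$ and the decomposition $\sharp(A)=A^++A^-$, $\sharp(B)=B^++B^-$ from (\ref{eq:ABxi}), then distribute. You have simply spelled out the bookkeeping that the paper leaves implicit.
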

\begin{proof} To verify
(\ref{eq:xiExpand}),
expand
the left-hand side
using 
(\ref{eq:ABxi}) and the fact that $\sharp$
is an algebra homomorphism.
\end{proof}

\begin{lemma}
\label{lem:whereSummand}
Refer to Lemma
\ref{lem:wordExpand}. 
In the sum on the right in
{\rm (\ref{eq:xiExpand})}, consider any summand
$g^{\epsilon_1}_1 
g^{\epsilon_2}_2
\cdots
g^{\epsilon_n}_n$. This summand is contained in $\square_\ell$,
where
\begin{eqnarray*}
\ell
= 
|\lbrace i|1 \leq i \leq n,\; \epsilon_i = +\rbrace |
-
|\lbrace i|1 \leq i \leq n, \,\epsilon_i = -\rbrace |.
\end{eqnarray*}
\end{lemma}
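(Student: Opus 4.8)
The plan is to read off the statement directly from the $\mathbb{Z}$-grading $\lbrace \square_n \rbrace_{n \in \mathbb{Z}}$ established in Proposition \ref{prop:Grading}, together with the multiplicativity of that grading in Lemma \ref{lem:grading2}. The key observation is that each individual factor $g_i^{\epsilon_i}$ in the summand is one of the four elements $A^+, A^-, B^+, B^-$, and the location of each of these in the grading was already pinned down in (\ref{eq:ABloc}).

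First I would note that, by (\ref{eq:ABloc}), if $\epsilon_i = +$ then $g_i^{\epsilon_i} \in \lbrace A^+, B^+\rbrace \subseteq \square_1$, while if $\epsilon_i = -$ then $g_i^{\epsilon_i} \in \lbrace A^-, B^-\rbrace \subseteq \square_{-1}$. Thus each factor is homogeneous, of degree $+1$ when $\epsilon_i = +$ and degree $-1$ when $\epsilon_i = -$. Next I would apply Lemma \ref{lem:grading2} repeatedly (equivalently, by induction on $n$): since $\square_r \square_s \subseteq \square_{r+s}$ for all $r,s \in \mathbb{Z}$, the product $g_1^{\epsilon_1} g_2^{\epsilon_2} \cdots g_n^{\epsilon_n}$ lies in $\square_\ell$, where $\ell$ is the sum of the degrees of the $n$ factors. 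Finally, that sum counts $+1$ for each index $i$ with $\epsilon_i = +$ and $-1$ for each index $i$ with $\epsilon_i = -$, so $\ell = |\lbrace i \mid 1 \leq i \leq n,\ \epsilon_i = +\rbrace| - |\lbrace i \mid 1 \leq i \leq n,\ \epsilon_i = -\rbrace|$, as claimed.

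I expect no genuine obstacle here: the statement is a bookkeeping consequence of the grading, and all the substantive work (constructing the $\mathbb{Z}$-grading and verifying $\square_r \square_s \subseteq \square_{r+s}$) has already been done. The only point deserving a word of care is that Lemma \ref{lem:grading2} is stated for two factors, so for $n \geq 3$ one should phrase the conclusion as an easy induction on the number of factors rather than a single invocation; but this is routine and needs no further comment.
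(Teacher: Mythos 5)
Your proof is correct and is exactly the paper's argument: the paper's own proof consists of citing Lemma \ref{lem:grading2} together with (\ref{eq:ABloc}), which is precisely the pair of facts you combine. Your only addition is to spell out the routine induction on the number of factors, which the paper leaves implicit.
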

\begin{proof}
By Lemma
\ref{lem:grading2}
and
(\ref{eq:ABloc}).
\end{proof}

\begin{lemma}
\label{lem:spread}
For $n \in \mathbb N$,
\begin{eqnarray}
\sharp(\mathcal O_n) \subseteq \sum_{\ell=-n}^n \square_{\ell}.
\label{eq:spread}
\end{eqnarray}
\end{lemma}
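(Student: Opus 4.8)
The plan is to reduce everything to the two preceding lemmas, since Lemma~\ref{lem:spread} is essentially a bookkeeping consequence of them. First I would observe that it suffices to check the containment on a spanning set of $\mathcal O_n$: the map $\sharp$ is $\mathbb F$-linear, and $\mathcal O_n$ is spanned by the products $g_1 g_2 \cdots g_r$ with $0 \leq r \leq n$ and each $g_i \in \lbrace A, B\rbrace$. So the claim follows once I show that $\sharp(g_1 g_2 \cdots g_r) \in \sum_{\ell=-n}^{n} \square_\ell$ for each such word.

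Fix such a word of length $r \leq n$. The key step is to apply Lemma~\ref{lem:wordExpand}, which expands
\begin{eqnarray*}
\sharp(g_1 g_2 \cdots g_r) = \sum g^{\epsilon_1}_1 g^{\epsilon_2}_2 \cdots g^{\epsilon_r}_r,
\end{eqnarray*}
the sum running over all sign sequences $\epsilon_1, \ldots, \epsilon_r$ with $\epsilon_i \in \lbrace +, -\rbrace$. Then Lemma~\ref{lem:whereSummand} locates each summand: the monomial $g^{\epsilon_1}_1 \cdots g^{\epsilon_r}_r$ lies in $\square_\ell$ where $\ell$ is the number of $+$ signs minus the number of $-$ signs. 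Since there are exactly $r$ factors, this $\ell$ satisfies $-r \leq \ell \leq r$, and hence $-n \leq \ell \leq n$ because $r \leq n$. Therefore every summand lies in $\sum_{\ell=-n}^{n}\square_\ell$, and so does their sum $\sharp(g_1 \cdots g_r)$.

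I do not expect any real obstacle here; the only point requiring a moment of care is that $\mathcal O_n$ includes words of length strictly smaller than $n$, so the per-word bound $|\ell|\leq r$ must be relaxed to $|\ell|\leq n$ before taking the union over $0 \leq r \leq n$. Once that is noted, summing over the spanning words and using linearity of $\sharp$ gives $\sharp(\mathcal O_n) \subseteq \sum_{\ell=-n}^{n}\square_\ell$, as desired. The result is a clean corollary of Lemmas~\ref{lem:wordExpand} and~\ref{lem:whereSummand} together with the $\mathbb Z$-grading established in Proposition~\ref{prop:Grading}.
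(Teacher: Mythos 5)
Your proof is correct and follows essentially the same route as the paper: the paper's proof also reduces to the spanning words $g_1 g_2 \cdots g_r$ of $\mathcal O_n$ with $0 \leq r \leq n$ and then cites Lemmas~\ref{lem:wordExpand} and~\ref{lem:whereSummand}, exactly the bookkeeping you carry out explicitly. Your added care about words of length $r < n$ (relaxing $|\ell| \leq r$ to $|\ell| \leq n$) is just the detail the paper leaves implicit.
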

\begin{proof} 
We mentioned below Definition
\ref{def:qOnsager}
that
$\mathcal O_n$ is spanned by
the products
$g_1g_2\cdots g_r$ such that
$0 \leq r \leq n$ and
$g_i$ is among $A,B$ for $1 \leq i \leq r$.
The result follows from this and
Lemmas
\ref{lem:wordExpand},
\ref{lem:whereSummand}.
\end{proof}

\begin{definition}
\label{def:projGrad}
\rm
For $r\in \mathbb Z$ define an $\mathbb F$-linear
map $\pi_r: \square \to \square$ such that
$(\pi_r-I) \square_r = 0$ and
$\pi_r \square_s = 0$ for $s \in \mathbb Z$, $s\not=r$.
Thus $\pi_r$ is the projection from $\square$ onto
$\square_r$.
\end{definition}

\begin{lemma}
\label{lem:comp}
For $n \in \mathbb N$ and 
$r \in \mathbb Z$,
the composition
\begin{equation*}
\begin{CD} 
\mathcal O_n  @>>  \sharp >  
               \square  @>> \pi_r >  \square
                  \end{CD}
\end{equation*}
is zero unless $-n \leq r \leq n$.
\end{lemma}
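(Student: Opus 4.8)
The plan is to reduce the statement directly to Lemma~\ref{lem:spread}, which already confines the image $\sharp(\mathcal{O}_n)$ to a bounded, symmetric band of graded components, and then to invoke the defining property of the projection $\pi_r$ from Definition~\ref{def:projGrad}. I do not expect any new computation here: all the substantive work has been done upstream in Lemmas~\ref{lem:wordExpand}, \ref{lem:whereSummand}, and \ref{lem:spread}, which track how many $+$ and $-$ superscripts occur in the expansion of $\sharp(g_1 g_2 \cdots g_r)$ and thereby pin each summand to a single graded piece $\square_\ell$ with $|\ell| \le n$.

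First I would recall from Lemma~\ref{lem:spread} that
\[
\sharp(\mathcal{O}_n) \subseteq \sum_{\ell=-n}^n \square_\ell .
\]
Next I would fix $r \in \mathbb{Z}$ lying outside the interval, that is $r < -n$ or $r > n$, and observe that then $r \neq \ell$ for every integer $\ell$ with $-n \leq \ell \leq n$. By Definition~\ref{def:projGrad}, the projection $\pi_r$ satisfies $\pi_r \square_s = 0$ for all $s \neq r$; applying this with $s = \ell$ for each $\ell$ in the displayed range shows that $\pi_r$ kills every summand $\square_\ell$, hence kills the entire sum $\sum_{\ell=-n}^n \square_\ell$. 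Composing with the inclusion above gives $\pi_r(\sharp(\mathcal{O}_n)) = 0$, which is precisely the assertion.

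The only step that warrants even a moment's attention is a bookkeeping one: confirming that the band in Lemma~\ref{lem:spread} is exactly the symmetric interval $[-n,n]$, so that the excluded indices $\{r : r<-n \text{ or } r>n\}$ form its disjoint complement. Because the grading $\square = \sum_{m \in \mathbb{Z}} \square_m$ is direct by Proposition~\ref{prop:Grading}, the projections are unambiguous and no graded component outside the band can sneak a contribution back into $\square_r$. I therefore expect the argument to occupy a single short paragraph, with no genuine obstacle beyond citing Lemma~\ref{lem:spread} and the definition of $\pi_r$.
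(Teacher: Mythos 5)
Your argument is correct and is exactly the paper's proof: the paper also deduces the lemma directly from Lemma~\ref{lem:spread} together with Definition~\ref{def:projGrad}, with no additional computation. Your extra remark about the directness of the grading (Proposition~\ref{prop:Grading}) making the projections well defined is a harmless elaboration of what the paper leaves implicit.
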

\begin{proof}
By Lemma
\ref{lem:spread}
and Definition
\ref{def:projGrad}.
\end{proof}

\begin{definition}
\label{def:comp}
Pick $n \in \mathbb N$ and recall the $\mathbb F$-vector
space 
$\overline{\mathcal O}_n = \mathcal O_n /\mathcal O_{n-1}$
from Section 4.
Define the $\mathbb F$-linear map
$\varphi_n : \mathcal O_n \to \square$ to be 
the composition
\begin{equation}
\begin{CD} 
\varphi_n: \mathcal O_n  @>>  \sharp >  
               \square  @>> \pi_n >  \square.
                  \end{CD}
\label{eq:compVarphi}
\end{equation}
By Lemma
\ref{lem:comp}
we have $\varphi_n(\mathcal O_{n-1})=0$.
Therefore
$\varphi_n$ induces an $\mathbb F$-linear
map $\overline{\varphi}_n: 
\overline{\mathcal O}_n \to \square$ that sends
$u+\mathcal O_{n-1} \mapsto \varphi_n(u)$ for all 
$u \in \mathcal O_n$.
\end{definition}

\begin{lemma} 
\label{lem:basisAction}
For $n \in \mathbb N$ 
the map 
$\overline{\varphi}_n: \overline {\mathcal O}_n \to \square$
is described as follows.
For $1 \leq i \leq n$ pick $g_i \in \lbrace A,B\rbrace$.
Then $\overline{\varphi}_n$ sends
$\overline{g}_1 \overline{g}_2 \cdots \overline{g}_n \mapsto
g^+_1g^+_2 \cdots g^+_n$, where we recall $A^+= a x_0$
and $B^+ = b x_2$.
\end{lemma}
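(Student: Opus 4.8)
The plan is to unwind the definition of $\overline{\varphi}_n$ and then apply the projection $\pi_n$ to the expansion of $\sharp$ on a word, keeping only the top-degree contribution. First I would recall from Definition \ref{def:comp} that $\overline{\varphi}_n$ sends $u + \mathcal O_{n-1} \mapsto \varphi_n(u) = \pi_n(\sharp(u))$ for $u \in \mathcal O_n$. Taking $u = g_1 g_2 \cdots g_n$, the construction of the product in $\overline{\mathcal O}$ gives $\overline{g}_1 \overline{g}_2 \cdots \overline{g}_n = g_1 g_2 \cdots g_n + \mathcal O_{n-1}$, so the computation reduces to evaluating $\pi_n(\sharp(g_1 g_2 \cdots g_n))$.

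Next I would invoke Lemma \ref{lem:wordExpand} to expand $\sharp(g_1 g_2 \cdots g_n)$ as the sum of the $2^n$ terms $g_1^{\epsilon_1} g_2^{\epsilon_2} \cdots g_n^{\epsilon_n}$ over all sign sequences with $\epsilon_i \in \lbrace +,-\rbrace$. By Lemma \ref{lem:whereSummand} the summand indexed by $(\epsilon_1,\ldots,\epsilon_n)$ lies in $\square_\ell$, where $\ell$ is the number of $+$'s minus the number of $-$'s. Since there are exactly $n$ indices in total, the condition $\ell = n$ forces every $\epsilon_i = +$, so the term $g_1^+ g_2^+ \cdots g_n^+$ is the unique summand lying in $\square_n$.

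Finally, since $\pi_n$ is the projection of $\square$ onto $\square_n$ (Definition \ref{def:projGrad}), applying $\pi_n$ to the expansion annihilates every summand except the all-$+$ one, yielding $\pi_n(\sharp(g_1 g_2 \cdots g_n)) = g_1^+ g_2^+ \cdots g_n^+$. One checks consistency by noting that each $g_i^+$ lies in $\square_1$ by $(\ref{eq:ABloc})$, so $g_1^+ g_2^+ \cdots g_n^+ \in \square_n$ by Lemma \ref{lem:grading2} and is therefore fixed by $\pi_n$. Combining the three steps gives $\overline{\varphi}_n(\overline{g}_1 \overline{g}_2 \cdots \overline{g}_n) = g_1^+ g_2^+ \cdots g_n^+$, as claimed. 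I do not anticipate a genuine obstacle here: the argument is purely a matter of tracking the $\mathbb Z$-grading, and the only point requiring care is the degree count showing that $\ell = n$ singles out the single all-$+$ term, which is immediate once the expansion of Lemma \ref{lem:wordExpand} and the grading locations of Lemma \ref{lem:whereSummand} are in place.
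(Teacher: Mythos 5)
Your proposal is correct and follows essentially the same route as the paper's proof: unwind Definition \ref{def:comp} to reduce to computing $\pi_n(\sharp(g_1g_2\cdots g_n))$, expand via Lemma \ref{lem:wordExpand}, and use Lemma \ref{lem:whereSummand} with Definition \ref{def:projGrad} to see that $\pi_n$ fixes the all-$+$ summand and annihilates the rest. Your degree count ($\ell=n$ forces every $\epsilon_i=+$) and the consistency check via Lemma \ref{lem:grading2} just make explicit what the paper leaves implicit.
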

\begin{proof}
By construction 
$
\overline{g}_1 \overline{g}_2 \cdots \overline{g}_n
=
g_1 g_2 \cdots g_n+\mathcal O_{n-1} \in \overline{\mathcal O}_n$.
By this and Definition
\ref{def:comp},
the map 
$\overline{\varphi}_n$ sends
$\overline{g}_1 \overline{g}_2 \cdots \overline{g}_n$
to $\varphi_n(
g_1 g_2 \cdots g_n)$, which is
equal to
$\pi_n(\sharp(g_1 g_2 \cdots g_n))$.
To compute this last quantity, apply
$\pi_n$ to each side of
(\ref{eq:xiExpand}), and
evaluate the result using
Lemma
\ref{lem:whereSummand} and
Definition
\ref{def:projGrad}.
For the sum
on the right in
(\ref{eq:xiExpand}),
$\pi_n$ fixes
the summand 
$g^+_1g^+_2 \cdots g^+_n$ and sends the remaining
summands to zero.
The result follows.
\end{proof}

\begin{lemma}
\label{lem:rsvarphi}
Let $r,s \in \mathbb N$. Then
$\overline{\varphi}_r(u)
\overline{\varphi}_s(v)
=
\overline{\varphi}_{r+s}(uv)$
for
all $u\in \overline{\mathcal O}_r$ and
 $v\in \overline{\mathcal O}_s$.
\end{lemma}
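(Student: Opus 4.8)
The plan is to exploit the bilinearity of both sides in $(u,v)$ and reduce the claim to the spanning elements of $\overline{\mathcal O}_r$ and $\overline{\mathcal O}_s$ described in Section 4, on which the action of $\overline{\varphi}$ is given explicitly by Lemma \ref{lem:basisAction}.

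First I would observe that the map $(u,v)\mapsto \overline{\varphi}_r(u)\,\overline{\varphi}_s(v)$ is $\mathbb F$-bilinear, since $\overline{\varphi}_r$ and $\overline{\varphi}_s$ are $\mathbb F$-linear and multiplication in $\square$ is $\mathbb F$-bilinear; likewise $(u,v)\mapsto \overline{\varphi}_{r+s}(uv)$ is $\mathbb F$-bilinear, since the product $\overline{\mathcal O}_r\times \overline{\mathcal O}_s\to \overline{\mathcal O}_{r+s}$ is $\mathbb F$-bilinear and $\overline{\varphi}_{r+s}$ is $\mathbb F$-linear. Recall from Section 4 that $\overline{\mathcal O}_r$ is spanned by the products $\overline g_1\overline g_2\cdots \overline g_r$ with each $g_i$ among $A,B$, and similarly for $\overline{\mathcal O}_s$. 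Hence it suffices to verify the asserted identity when $u=\overline g_1\cdots \overline g_r$ and $v=\overline h_1\cdots \overline h_s$ for $g_i,h_j\in \lbrace A,B\rbrace$.

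For such $u,v$ I would compute both sides using Lemma \ref{lem:basisAction}. On the left, that lemma gives $\overline{\varphi}_r(u)=g^+_1\cdots g^+_r$ and $\overline{\varphi}_s(v)=h^+_1\cdots h^+_s$, so $\overline{\varphi}_r(u)\,\overline{\varphi}_s(v)=g^+_1\cdots g^+_r h^+_1\cdots h^+_s$. On the right, the definition of the product on $\overline{\mathcal O}$ from Section 4 gives $uv=g_1\cdots g_r h_1\cdots h_s+\mathcal O_{r+s-1}=\overline g_1\cdots \overline g_r\overline h_1\cdots \overline h_s$, a product of $r+s$ classes of generators; applying Lemma \ref{lem:basisAction} once more yields $\overline{\varphi}_{r+s}(uv)=g^+_1\cdots g^+_r h^+_1\cdots h^+_s$, the same expression. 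Thus the two sides agree on spanning elements, and by bilinearity they agree for all $u\in \overline{\mathcal O}_r$ and $v\in \overline{\mathcal O}_s$.

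The computation here is essentially bookkeeping; the only point requiring care is that the product $uv$ formed in $\overline{\mathcal O}_{r+s}$ is again represented by a word of length $r+s$ in $A,B$, so that Lemma \ref{lem:basisAction} applies directly to it. This is immediate from the way the product on the associated graded algebra was defined in Section 4 (the product of $u+\mathcal O_{r-1}$ and $v+\mathcal O_{s-1}$ is $uv+\mathcal O_{r+s-1}$), so I do not anticipate any genuine obstacle.
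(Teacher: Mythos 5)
Your proof is correct and is exactly the argument the paper intends: the paper's proof consists of the single line ``Use Lemma \ref{lem:basisAction},'' and your write-up (bilinear reduction to spanning words in $\overline{A},\overline{B}$, then computing both sides via Lemma \ref{lem:basisAction}) is just that argument carried out in detail.
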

\begin{proof} Use Lemma
\ref{lem:basisAction}.
\end{proof}

\begin{definition}
\label{def:varphi}
Define an $\mathbb F$-linear map
$\overline{\varphi} :\overline{\mathcal O} \to 
\square$ that acts on
$\overline{\mathcal O}_n$ as $\overline{\varphi}_n$
 for $n \in \mathbb N$.
\end{definition}

\begin{lemma} The map
$\overline{\varphi} :\overline{\mathcal O} \to 
\square$ from Definition
\ref{def:varphi} is an $\mathbb F$-algebra homomorphism.
\end{lemma}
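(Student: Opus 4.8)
The plan is to verify the three defining properties of an $\mathbb F$-algebra homomorphism: $\mathbb F$-linearity, compatibility with multiplication, and preservation of the multiplicative identity. The first is immediate, since $\overline{\varphi}$ is defined in Definition \ref{def:varphi} to be the $\mathbb F$-linear map that restricts to $\overline{\varphi}_n$ on each $\overline{\mathcal O}_n$. So the substance lies in the remaining two properties, both of which have essentially been prepared by the preceding lemmas.

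For multiplicativity, first I would reduce to homogeneous arguments. Because the sum $\overline{\mathcal O} = \sum_{n \in \mathbb N} \overline{\mathcal O}_n$ is direct and $\overline{\varphi}$ is $\mathbb F$-linear, it suffices to establish $\overline{\varphi}(uv) = \overline{\varphi}(u)\overline{\varphi}(v)$ for homogeneous $u \in \overline{\mathcal O}_r$ and $v \in \overline{\mathcal O}_s$; the general case then follows by bilinearity. For such $u,v$ one has $\overline{\varphi}(u) = \overline{\varphi}_r(u)$ and $\overline{\varphi}(v) = \overline{\varphi}_s(v)$, while the product $uv$ lies in $\overline{\mathcal O}_{r+s}$, so that $\overline{\varphi}(uv) = \overline{\varphi}_{r+s}(uv)$. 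The required identity $\overline{\varphi}_r(u)\overline{\varphi}_s(v) = \overline{\varphi}_{r+s}(uv)$ is then exactly the content of Lemma \ref{lem:rsvarphi}.

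It remains to check that $\overline{\varphi}(1) = 1$. The identity lies in $\overline{\mathcal O}_0$, so $\overline{\varphi}(1) = \overline{\varphi}_0(1)$, and applying Lemma \ref{lem:basisAction} with $n = 0$ the empty product $\overline{g}_1 \cdots \overline{g}_n$ is sent to the empty product $g^+_1 \cdots g^+_n$; that is, $\overline{\varphi}_0(1) = 1$. This completes all three verifications.

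I would emphasize that there is no genuine obstacle here: the real work, namely the explicit description of $\overline{\varphi}_n$ on the spanning products and the grading compatibility of these maps, was already carried out in Lemmas \ref{lem:basisAction} and \ref{lem:rsvarphi}. The only point requiring mild care is the passage from homogeneous to arbitrary elements, which rests on the directness of the decomposition $\overline{\mathcal O} = \sum_{n \in \mathbb N} \overline{\mathcal O}_n$ recorded when $\overline{\mathcal O}$ was constructed.
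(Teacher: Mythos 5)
Your proof is correct and takes essentially the same route as the paper: the paper's own proof simply declares the multiplicativity check ``routine using Lemma \ref{lem:rsvarphi}, Definition \ref{def:varphi}, and the definition of the $\mathbb F$-algebra $\overline{\mathcal O}$,'' and your reduction to homogeneous components via the directness of $\overline{\mathcal O} = \sum_{n \in \mathbb N}\overline{\mathcal O}_n$, followed by the application of Lemma \ref{lem:rsvarphi}, is exactly that routine argument made explicit. Your additional verification that $\overline{\varphi}(1)=1$ via Lemma \ref{lem:basisAction} with $n=0$ is a detail the paper leaves implicit, and it is handled correctly.
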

\begin{proof} 
It suffices to check that 
$\overline{\varphi}(u)
\overline{\varphi}(v)=
\overline{\varphi}(uv)$ 
 for
all $u,v\in \overline{\mathcal O}$.
This checking is routine
using Lemma
\ref{lem:rsvarphi}, Definition
\ref{def:varphi}, and 
the definition of the $\mathbb F$-algebra
$\overline{\mathcal O}$ in Section 4.
\end{proof}

\begin{lemma}
\label{lem:barphiAction}
The map
$\overline{\varphi} :\overline{\mathcal O} \to 
\square$ from Definition
\ref{def:varphi} sends
$\overline{A} \mapsto ax_0$ and
$\overline{B} \mapsto bx_2$.
\end{lemma}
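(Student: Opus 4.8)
The plan is to read off the result as the $n=1$ special case of Lemma \ref{lem:basisAction}. First I would recall that $\overline{A}=A+\mathcal O_0$ and $\overline{B}=B+\mathcal O_0$ lie in $\overline{\mathcal O}_1$, and that by Definition \ref{def:varphi} the map $\overline{\varphi}$ acts on $\overline{\mathcal O}_1$ as $\overline{\varphi}_1$. Thus it suffices to compute $\overline{\varphi}_1(\overline{A})$ and $\overline{\varphi}_1(\overline{B})$.

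Next I would apply Lemma \ref{lem:basisAction} with $n=1$. Taking $g_1=A$ there, the lemma asserts that $\overline{\varphi}_1$ sends $\overline{A}\mapsto A^+$, and we recall $A^+=ax_0$. Taking $g_1=B$, the lemma asserts that $\overline{\varphi}_1$ sends $\overline{B}\mapsto B^+$, and $B^+=bx_2$. Hence $\overline{\varphi}(\overline{A})=ax_0$ and $\overline{\varphi}(\overline{B})=bx_2$, as desired.

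There is essentially no obstacle here: all the substance has already been carried out in constructing the maps $\overline{\varphi}_n$ and in verifying the explicit formula of Lemma \ref{lem:basisAction}, and the present statement is simply the bottom-degree instance of that formula. The only point requiring a moment's care is the bookkeeping that $\overline{A}$ and $\overline{B}$ genuinely sit in degree $1$, so that $\overline{\varphi}_1$ (rather than some other $\overline{\varphi}_n$) is the relevant restriction of $\overline{\varphi}$; this is immediate from the definitions of $\overline{A},\overline{B}$ recalled from Section 4.
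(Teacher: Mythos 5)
Your proof is correct and matches the paper's own argument: both note that $\overline{A},\overline{B}\in\overline{\mathcal O}_1$, invoke Definition \ref{def:varphi} so that $\overline{\varphi}$ restricts to $\overline{\varphi}_1$ on degree one, and then read off $\overline{\varphi}_1(\overline{A})=A^+=ax_0$ and $\overline{\varphi}_1(\overline{B})=B^+=bx_2$ from the $n=1$ case of Lemma \ref{lem:basisAction}.
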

\begin{proof} We have
$\overline{A}, \overline{B}  \in \overline{\mathcal O}_1$.
By
Lemma
\ref{lem:basisAction} and
Definition
\ref{def:varphi},
\begin{eqnarray*}
\overline{\varphi}(\overline{A}) = 
\overline{\varphi}_1(\overline{A}) = A^+ = ax_0,
\qquad \qquad 
\overline{\varphi}(\overline{B}) = 
\overline{\varphi}_1(\overline{B}) = B^+ = bx_2.
\end{eqnarray*}
\end{proof}

\noindent
By 
Lemma
\ref{lem:xi} there exists an automorphism of
$U^+$ that sends $X \mapsto aX$ and
$Y \mapsto bY$. Denote this automorphism by $\phi$.
Recall the surjective $\mathbb F$-algebra homomorphism
$\psi: U^+ \to \overline{\mathcal O}$ from
above
Lemma
\ref{lem:munu}.
By Proposition
\ref{thm:tensorDecPreM}(i)
there exists an injective $\mathbb F$-algebra
homomorphism $U^+ \to \square$ that sends
$X \mapsto x_0$ and 
$Y \mapsto x_2$.
Call this homomorphism $\flat $.

\begin{lemma}
\label{lem:bigCom}
With the above notation, the following diagram commutes:

\begin{equation*}
\begin{CD}
U^+ @>\psi > >
                 \overline{\mathcal O} 
           \\ 
          @V \phi VV                     @VV\overline{\varphi} V \\
                U^+ @>>\flat> 
               \square 
                   \end{CD}
\end{equation*}

\end{lemma}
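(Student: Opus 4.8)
The plan is to use the fact that all four maps in the square are $\mathbb F$-algebra homomorphisms, so that commutativity reduces to a check on a generating set of the source. Since $U^+$ is generated by $X$ and $Y$, it suffices to verify that the two composites $\overline{\varphi}\circ \psi$ and $\flat \circ \phi$ agree on $X$ and on $Y$; I would assemble this from the explicit actions of $\psi$, $\phi$, $\overline{\varphi}$, and $\flat$ that have already been recorded.

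First I would chase $X$ around the diagram. Going across and then down, $\psi$ sends $X \mapsto \overline{A}$, and then by Lemma \ref{lem:barphiAction} the map $\overline{\varphi}$ sends $\overline{A}\mapsto a x_0$, so $\overline{\varphi}(\psi(X)) = a x_0$. Going down and then across, the automorphism $\phi$ from Lemma \ref{lem:xi} sends $X \mapsto aX$, and then $\flat$ sends $X \mapsto x_0$, so $\flat(\phi(X)) = \flat(aX) = a x_0$. The two routes agree. Chasing $Y$ in exactly the same way, the across-then-down route gives $Y \mapsto \overline{B} \mapsto b x_2$ via $\psi$ and Lemma \ref{lem:barphiAction}, while the down-then-across route gives $Y \mapsto bY \mapsto b x_2$ via $\phi$ and $\flat$; again the results coincide.

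Since $\overline{\varphi}\circ \psi$ and $\flat \circ \phi$ are $\mathbb F$-algebra homomorphisms out of $U^+$ that agree on the generators $X$ and $Y$, they agree on all of $U^+$, and the diagram commutes. I expect no genuine obstacle here: the one nontrivial ingredient is knowing how $\overline{\varphi}$ acts on $\overline{A}$ and $\overline{B}$, and that has already been established in Lemma \ref{lem:barphiAction}, so the argument is a direct comparison on generators with no remaining computation.
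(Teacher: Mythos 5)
Your proposal is correct and matches the paper's own proof exactly: since all four maps are $\mathbb F$-algebra homomorphisms and $U^+$ is generated by $X,Y$, one chases $X$ and $Y$ around the diagram using Lemma \ref{lem:barphiAction} (together with the defining actions of $\psi$, $\phi$, $\flat$), which is precisely what you did. No issues.
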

\begin{proof} Each map in the diagram
is an $\mathbb F$-algebra homomorphism.
The $\mathbb F$-algebra $U^+$ is generated by
$X,Y$.
To verify that the diagram commutes,
chase $X,Y$ around
the diagram using
Lemma
\ref{lem:barphiAction}.
\end{proof}

\begin{proposition}
\label{prop:twoInj}
Theorem
\ref{thm:main1}
holds. Moreover
the map
$\overline{\varphi}$ is injective.
\end{proposition}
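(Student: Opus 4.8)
The plan is to exploit the commuting square of Lemma~\ref{lem:bigCom}, namely the relation $\overline{\varphi}\circ\psi = \flat\circ\phi$, together with the properties of its four maps that are already in hand. Recall that $\psi\colon U^+\to\overline{\mathcal O}$ is known to be surjective, that $\phi$ is an automorphism of $U^+$ (Lemma~\ref{lem:xi}), and that $\flat\colon U^+\to\square$ is injective (Proposition~\ref{thm:tensorDecPreM}(i)). Thus no new computation is required; the whole proposition is a formal consequence of these facts.

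First I would observe that $\flat\circ\phi$ is injective, being the composite of the injection $\flat$ with the bijection $\phi$. Since the diagram of Lemma~\ref{lem:bigCom} commutes, $\overline{\varphi}\circ\psi=\flat\circ\phi$ is therefore injective. Injectivity of a composition forces injectivity of the map applied first, so $\psi$ is injective. As $\psi$ is also surjective, it is an isomorphism of $\mathbb F$-algebras; this is precisely the assertion of Theorem~\ref{thm:main1}.

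Having established that $\psi$ is bijective, I would then solve the commuting relation for $\overline{\varphi}$, writing $\overline{\varphi}=\flat\circ\phi\circ\psi^{-1}$. Each factor on the right is injective ($\psi^{-1}$ and $\phi$ are bijections and $\flat$ is injective), so their composite is injective; hence $\overline{\varphi}$ is injective, completing the proof.

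There is essentially no analytic obstacle here: all of the substantive work---the explicit basis for $\widetilde\square_q$, the reduction rules, and the verification that the relevant square commutes---has already been carried out in the earlier sections, so what remains is a purely formal diagram chase. The only point requiring a moment's care is the logical step that injectivity of $\overline{\varphi}\circ\psi$ yields injectivity of $\psi$ (and not directly of $\overline{\varphi}$); this is immediate from the definition of injectivity, but it is the hinge on which the whole argument turns, since it is what upgrades the already-known surjectivity of $\psi$ to the isomorphism of Theorem~\ref{thm:main1} and only afterward delivers the injectivity of $\overline{\varphi}$.
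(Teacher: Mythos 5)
Your proof is correct and follows essentially the same route as the paper: both exploit the commuting square $\overline{\varphi}\circ\psi=\flat\circ\phi$ of Lemma~\ref{lem:bigCom}, using that $\phi$ is an automorphism, $\flat$ is injective, and $\psi$ is surjective, to conclude first that $\psi$ is injective (hence an isomorphism, giving Theorem~\ref{thm:main1}) and then that $\overline{\varphi}=\flat\circ\phi\circ\psi^{-1}$ is injective. The paper's proof is just a terser statement of exactly this diagram chase.
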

\begin{proof} 
Consider the commuting diagram in Lemma
\ref{lem:bigCom}.
By construction $\phi $ is an isomorphism,
$\flat$ is injective,
and $\psi$ is surjective.
Now since the diagram commutes, 
$\psi$ and $\overline{\varphi}$ are injective.
The map $\psi$ is an isomorphism since
it is both injective and surjective.
Therefore Theorem
\ref{thm:main1}
holds.
\end{proof}

\begin{theorem}
\label{thm:xiInj}
The map $\sharp $ from
Proposition
\ref{prop:ABxyiLongerM} is injective.
\end{theorem}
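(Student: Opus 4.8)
The plan is to exploit the filtration $\{\mathcal O_n\}_{n\in\mathbb N}$ of $\mathcal O$ together with the injectivity of the map $\overline{\varphi}:\overline{\mathcal O}\to\square$ just established in Proposition \ref{prop:twoInj}. The guiding principle is the standard one for filtered algebras: a filtered map whose associated graded map is injective must itself be injective, and $\overline{\varphi}$ is precisely the graded map attached to $\sharp$.

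First I would take an arbitrary nonzero $w\in\mathcal O$ and aim to show $\sharp(w)\neq 0$. Since $\mathcal O=\bigcup_{n\in\mathbb N}\mathcal O_n$ and $\mathcal O_{-1}=0$, there is a unique minimal $n\in\mathbb N$ with $w\in\mathcal O_n$; for this $n$ we have $w\notin\mathcal O_{n-1}$, so the image $\overline w=w+\mathcal O_{n-1}$ is a nonzero element of $\overline{\mathcal O}_n$.

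Next I would connect $\sharp$ to $\overline{\varphi}$ through the top-degree projection $\pi_n$. By Definition \ref{def:comp} the map $\varphi_n=\pi_n\circ\sharp|_{\mathcal O_n}$ satisfies $\varphi_n(\mathcal O_{n-1})=0$ and induces $\overline{\varphi}_n$, while by Definition \ref{def:varphi} the map $\overline{\varphi}$ acts on $\overline{\mathcal O}_n$ as $\overline{\varphi}_n$. Hence $\overline{\varphi}(\overline w)=\overline{\varphi}_n(\overline w)=\varphi_n(w)=\pi_n(\sharp(w))$. Since $\overline{\varphi}$ is injective and $\overline w\neq 0$, the left-hand side is nonzero, so $\pi_n(\sharp(w))\neq 0$, and therefore $\sharp(w)\neq 0$. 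As $w$ was an arbitrary nonzero element, $\sharp$ is injective.

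The work here is light because all the substance has been front-loaded: the explicit description of $\overline{\varphi}_n$ on products of generators (Lemma \ref{lem:basisAction}), the multiplicativity of $\overline{\varphi}$, and ultimately the injectivity of $\overline{\varphi}$ obtained from the commuting square of Lemma \ref{lem:bigCom}. The only point that needs care is the minimal-degree bookkeeping, namely ensuring that extracting the top graded component via $\pi_n$ does not kill the leading term of $\sharp(w)$; this is exactly what the combination of $\overline w\neq 0$ and the injectivity of $\overline{\varphi}$ delivers. No further calculation inside $\square_q$ is required.
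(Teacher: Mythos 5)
Your proposal is correct and follows essentially the same route as the paper: pick a nonzero element, locate its minimal filtration degree $n$, identify $\pi_n\circ\sharp$ on $\mathcal O_n$ with $\varphi_n$, and invoke the injectivity of $\overline{\varphi}$ from Proposition \ref{prop:twoInj} to see that the top graded component $\pi_n(\sharp(w))$ is nonzero. The paper's proof is the same argument phrased via the statement that $\ker\varphi_n=\mathcal O_{n-1}$.
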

\begin{proof}
Pick a nonzero $u \in \mathcal O$.
We show that
$\sharp (u)\not=0$.
There exists a unique $n \in \mathbb N$ such that $
u \in \mathcal O_n$ and
$ u \not\in \mathcal O_{n-1}$.
The map $\overline{\varphi}_n$ is injective,
by Definition
\ref{def:varphi} and since $\overline{\varphi}$ is injective by
Proposition
\ref{prop:twoInj}. By this and
Definition
\ref{def:comp}, the kernel of $\varphi_n$
is equal to  
 $\mathcal O_{n-1}$. This kernel does not
contain $u$, so 
$\varphi_n(u) \not=0$. Now $\sharp (u)\not=0$ in view of 
(\ref{eq:compVarphi}).
The result follows.
\end{proof}

\begin{theorem}
\label{thm:naturalINJ}
The map $\natural$ from Proposition
\ref{prop:ABxyiLonger}
is injective.
\end{theorem}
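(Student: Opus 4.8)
The plan is to deduce injectivity of $\natural$ from the injectivity of $\sharp$ together with the commuting square of Lemma \ref{lem:naturalSharp}. That square asserts $can \circ \natural = \sharp$, where $can : \widehat \square_q \to \square_q$ is the canonical homomorphism and $\sharp : \mathcal O \to \square_q$ is the map of Proposition \ref{prop:ABxyiLongerM} built from the nonzero scalars $a,b$ attached to $\natural$ via (\ref{eq:canonsend}).

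First I would invoke Theorem \ref{thm:xiInj}, which tells us that $\sharp$ is injective. Now suppose $u \in \mathcal O$ lies in the kernel of $\natural$. Applying $can$ and using $can \circ \natural = \sharp$, we obtain $\sharp(u) = can(\natural(u)) = can(0) = 0$, whence $u = 0$ by injectivity of $\sharp$. Thus the kernel of $\natural$ is zero and $\natural$ is injective.

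I do not expect any genuine obstacle at this stage: the substantive content lives entirely in the earlier results. The injectivity of $\sharp$ is the hard part, and it has already been secured through the $\mathbb Z$-grading of $\square$ (Proposition \ref{prop:Grading}) and the associated-graded comparison culminating in Proposition \ref{prop:twoInj}, while the commuting square is a routine generator chase. Consequently the present theorem is, in effect, a one-line corollary of Lemma \ref{lem:naturalSharp} and Theorem \ref{thm:xiInj}, and the write-up amounts to recording the implication ``$can \circ \natural$ injective $\Rightarrow \natural$ injective.''
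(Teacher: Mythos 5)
Your proposal is correct and matches the paper's own proof exactly: the paper proves Theorem \ref{thm:naturalINJ} by citing precisely Lemma \ref{lem:naturalSharp} (the commuting square $can \circ \natural = \sharp$) together with Theorem \ref{thm:xiInj} (injectivity of $\sharp$), and your kernel argument is the standard one-line implication these two facts yield.
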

\begin{proof} By Lemma
\ref{lem:naturalSharp}
and Theorem 
\ref{thm:xiInj}.
\end{proof}

\section{The $q$-tetrahedron algebra $\boxtimes_q$ }

\noindent  The
$q$-tetrahedron algebra $\boxtimes_q$ was introduced in
\cite{qtet} and investigated further in
\cite{irt},
\cite{miki}.
In this section, we consider how 
$\mathcal O$ and $\square_q$ are related to $\boxtimes_q$.
We first 
display an injective $\mathbb F$-algebra
homomorphism $\square_q \to
\boxtimes_q$. Next, we compose this  map with
the map
$\sharp: \mathcal O \to \square_q$ from
Proposition
\ref{prop:ABxyiLongerM}, 
to get an injective $\mathbb F$-algebra
homomorphism
$\mathcal O \to \boxtimes_q$.

\begin{definition}
\label{def:tet}
\rm
(See \cite[Definition~6.1]{qtet}.)
Let $\boxtimes_q$ denote the $\F$-algebra defined by generators
\begin{eqnarray}
\lbrace x_{ij}\;|\;i,j \in
\mathbb Z_4,
\;\;j-i=1 \;\mbox{\rm {or}}\;
j-i=2\rbrace
\label{eq:gen}
\end{eqnarray}
and the following relations:
\begin{enumerate}
\item[\rm (i)] For $i,j \in 
\Z_4$ such that $j-i=2$,
\begin{eqnarray}
 x_{ij}x_{ji} =1.
 \label{eq:tet1}
 \end{eqnarray}
 \item[\rm (ii)] For $i,j,k \in 
 \Z_4$ such that $(j-i,k-j)$ is one of $(1,1)$, $(1,2)$, 
 $(2,1)$,
 \begin{eqnarray}
 \label{eq:tet2}
 \frac{qx_{ij}x_{jk} - q^{-1} x_{jk}x_{ij}}{q-q^{-1}}=1.
 \end{eqnarray}
 \item[\rm (iii)] For $i,j,k,\ell \in 
 \Z_4$ such that $j-i=k-j=\ell-k=1$,
 \begin{eqnarray}
 \label{eq:tet3}
 x^3_{ij}x_{k\ell}
 -
 \lbrack 3 \rbrack_q
 x^2_{ij}x_{k\ell} x_{ij}
 +
 \lbrack 3 \rbrack_q
 x_{ij}x_{k\ell} x^2_{ij}
 -
 x_{k\ell} x^3_{ij} = 0.
 \end{eqnarray}
 \end{enumerate}
 We call
 $\boxtimes_q$ the {\it $q$-tetrahedron algebra}.
\end{definition}

\begin{lemma}
\label{lem:aut1Tet}
There exists an automorphism $\varrho$ of 
$\boxtimes_q$ that sends each generator
$x_{ij} \mapsto x_{i+1,j+1}$.
 Moreover $\varrho^4=1$.
\end{lemma}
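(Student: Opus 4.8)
The plan is to invoke the universal property of the algebra $\boxtimes_q$ as presented by generators and relations in Definition \ref{def:tet}. To produce an $\mathbb F$-algebra endomorphism $\varrho$ sending each generator $x_{ij}\mapsto x_{i+1,j+1}$, it suffices to verify that the proposed images satisfy the defining relations (\ref{eq:tet1})--(\ref{eq:tet3}). First I would observe that the assignment $x_{ij}\mapsto x_{i+1,j+1}$ is well defined on the generating set (\ref{eq:gen}): the generators are indexed by the pairs $(i,j)\in \mathbb Z_4\times \mathbb Z_4$ with $j-i\in\{1,2\}$, and the shift $(i,j)\mapsto(i+1,j+1)$ preserves the difference $j-i$, hence carries this index set bijectively onto itself. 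Thus $\varrho$ merely permutes the eight generators.

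Next I would check the relations. The essential point is that the translation $i\mapsto i+1$ on $\mathbb Z_4$ preserves every difference condition that indexes a relation: it fixes the condition $j-i=2$ governing (\ref{eq:tet1}), it fixes the set $\{(1,1),(1,2),(2,1)\}$ of consecutive-difference patterns governing (\ref{eq:tet2}), and it fixes the condition $j-i=k-j=\ell-k=1$ governing (\ref{eq:tet3}). Consequently, shifting the index tuple of any defining relation yields another defining relation of exactly the same type. It follows that the images $x_{i+1,j+1}$ satisfy the relations (\ref{eq:tet1})--(\ref{eq:tet3}), so by the universal property $\varrho$ extends to an $\mathbb F$-algebra homomorphism $\boxtimes_q\to\boxtimes_q$.

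Finally, to obtain $\varrho^4=1$: since $i+4=i$ in $\mathbb Z_4$, the composite $\varrho^4$ fixes every generator $x_{ij}$, and as the generators generate $\boxtimes_q$ we conclude $\varrho^4=1$. In particular $\varrho$ is invertible with inverse $\varrho^3$, so $\varrho$ is an automorphism of $\boxtimes_q$.

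This argument is routine and exactly parallels the proofs of the analogous Lemmas \ref{lem:aut1M}, \ref{lem:aut2}, and \ref{lem:aut1} for $\square_q$, $\widetilde\square_q$, and $\widehat\square_q$. There is no genuine obstacle; the only point requiring care is the bookkeeping that confirms each shifted relation lands in the correct one of the three relation families, which is immediate from the translation-invariance of the difference conditions.
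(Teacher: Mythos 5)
Your proof is correct: the paper states this lemma without proof, treating it as routine, and your argument via the universal property of the presentation in Definition \ref{def:tet} (the index shift permutes the generators and preserves each difference condition indexing the relations, and $\varrho^4$ fixes all generators) is exactly the intended argument, parallel to Lemmas \ref{lem:aut1M}, \ref{lem:aut2}, and \ref{lem:aut1}.
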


\begin{lemma}
\label{lem:mapn}
There exists an $\mathbb F$-algebra homomorphism
$\square_q \to \boxtimes_q$ that sends
$x_i \mapsto x_{i-1,i}$ for $i \in  \mathbb Z_4$.
\end{lemma}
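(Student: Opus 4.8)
The plan is to invoke the presentation of $\square_q$ from Definition \ref{def:boxqV1M}: since $\square_q$ is the $\mathbb F$-algebra with generators $\lbrace x_i\rbrace_{i\in\mathbb Z_4}$ subject only to the relations (\ref{eq:centralM}) and (\ref{eq:qSerreM}), an $\mathbb F$-algebra homomorphism $\square_q \to \boxtimes_q$ sending $x_i \mapsto x_{i-1,i}$ exists if and only if the proposed images $\lbrace x_{i-1,i}\rbrace_{i\in\mathbb Z_4}$ satisfy those same relations inside $\boxtimes_q$. Thus the whole task reduces to two routine verifications, one for each $i\in\mathbb Z_4$. I would first note that each symbol $x_{i-1,i}$ is a genuine generator of $\boxtimes_q$: because $i-(i-1)=1$ in $\mathbb Z_4$, it belongs to the generating set (\ref{eq:gen}).

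Next I would check the image of the $q$-Weyl relation (\ref{eq:centralM}). Under the map $x_i\mapsto x_{i-1,i}$ and $x_{i+1}\mapsto x_{i,i+1}$, this relation becomes
\[
\frac{q\,x_{i-1,i}\,x_{i,i+1}-q^{-1}\,x_{i,i+1}\,x_{i-1,i}}{q-q^{-1}}=1,
\]
which is exactly relation (\ref{eq:tet2}) of $\boxtimes_q$ applied to the triple of indices $(i-1,i,i+1)$, for which $(j-i,k-j)=(1,1)$, one of the three admissible cases. Then I would check the image of the $q$-Serre relation (\ref{eq:qSerreM}). Here $x_i\mapsto x_{i-1,i}$ and $x_{i+2}\mapsto x_{i+1,i+2}$, so the image is precisely the identity obtained from relation (\ref{eq:tet3}) with the four consecutive indices $(i-1,i,i+1,i+2)$, which satisfy $i-(i-1)=(i+1)-i=(i+2)-(i+1)=1$. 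Hence each image relation is literally an instance of a defining relation of $\boxtimes_q$.

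Since both defining relations of $\square_q$ are thereby satisfied by the images for every $i\in\mathbb Z_4$, the homomorphism exists, completing the proof. There is no genuine obstacle here; the only point requiring care is the bookkeeping of indices modulo $4$. In particular I would double-check the wraparound cases, for instance $i=3$, where $x_0\mapsto x_{3,0}$ and one must use $0-3=1$ in $\mathbb Z_4$ to see that $x_{3,0}$ is a valid generator and that the relevant images of (\ref{eq:centralM}) and (\ref{eq:qSerreM}) still fall under the stated cases of (\ref{eq:tet2}) and (\ref{eq:tet3}).
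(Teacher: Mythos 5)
Your proposal is correct and is exactly the paper's argument: the paper's proof is the one-line instruction ``Compare Definitions \ref{def:boxqV1M}, \ref{def:tet},'' and your verification that the images of (\ref{eq:centralM}) and (\ref{eq:qSerreM}) are instances of (\ref{eq:tet2}) and (\ref{eq:tet3}) is precisely that comparison spelled out. The index bookkeeping, including the wraparound case, is handled correctly.
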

\begin{proof} Compare Definitions
\ref{def:boxqV1M},
\ref{def:tet}.
\end{proof}

\begin{definition}
\label{def:canonBoxTet}
\rm
The homomorphism 
$\square_q \to \boxtimes_q$ from 
Lemma \ref{lem:mapn}
will be called {\it canonical}.
\end{definition}

\noindent Our next goal is to show that the canonical homomorphism
$\square_q \to \boxtimes_q$ is injective.

\begin{definition} 
\label{def:tetsub}
\rm
(See \cite[Section~4.1]{miki}.)
Define the subalgebras 
$ \boxtimes^{\rm even}_q$,
$ \boxtimes^{\rm odd}_q$,
$ \boxtimes^{\times }_q$
of $ \boxtimes_q$ such that
\begin{enumerate}
\item[\rm (i)]
$ \boxtimes^{\rm even}_q$
 is 
generated by $x_{30}, x_{12}$;
\item[\rm (ii)]
$ \boxtimes^{\rm odd}_q$
  is
generated by $x_{01}, x_{23}$;
\item[\rm (iii)]
$ \boxtimes^{\times }_q$
  is
generated by 
$
x_{02}, x_{20},
x_{13}, x_{31}
$.
\end{enumerate}
\end{definition}

\begin{proposition}
\label{prop:miki}
{\rm (See 
Miki \cite[Prop.~4.1]{miki}.)}
The following {\rm (i)--(iv)} hold:
\begin{enumerate}
\item[\rm (i)] 
there exists an $\mathbb F$-algebra isomorphism
$U^+ \to  \boxtimes^{\rm even}_q$ that sends
$X\mapsto x_{30}$ and
$Y\mapsto x_{12}$;
\item[\rm (ii)] 
there exists an $\mathbb F$-algebra isomorphism
$U^+ \to  \boxtimes^{\rm odd}_q$ that sends
$X\mapsto x_{01}$ and
$Y\mapsto x_{23}$;
\item[\rm (iii)] 
the $\mathbb F$-algebra 
$\boxtimes^{\times }_q$ has a presentation by
generators 
$x_{02},
x_{20},
x_{13},
x_{31}$ and relations
\begin{eqnarray*}
x_{02}x_{20} = 
x_{20}x_{02} =  1, \qquad \qquad
x_{13}x_{31} = 
x_{31}x_{13} =  1;
\end{eqnarray*}
\item[\rm (iv)] 
the following is an isomorphism of
$\mathbb F$-vector spaces:
\begin{eqnarray*}
 \boxtimes^{\rm even}_q
\otimes
 \boxtimes^{\times }_q
\otimes 
 \boxtimes^{\rm odd}_q
& \to &  \boxtimes_q
\\
 u \otimes v \otimes w  &\mapsto & uvw
 \end{eqnarray*}
\end{enumerate}
\end{proposition}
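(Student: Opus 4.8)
The plan is to prove (i)--(iv) simultaneously by the device used for $\widetilde\square_q$ in Proposition~\ref{prop:tensorDec}: manufacture a candidate triangular decomposition from surjections off suitable ``free'' models, then show it is an isomorphism by constructing a faithful module on which a cyclic vector inverts the multiplication map. First I would record three surjective $\mathbb F$-algebra homomorphisms onto the subalgebras in question. The path relation~(\ref{eq:tet3}) forces $x_{30},x_{12}$ (resp.\ $x_{01},x_{23}$) to satisfy the $q$-Serre relations, so there are surjections $e:U^+\to\boxtimes^{\rm even}_q$ with $X\mapsto x_{30},\,Y\mapsto x_{12}$ and $o:U^+\to\boxtimes^{\rm odd}_q$ with $X\mapsto x_{01},\,Y\mapsto x_{23}$. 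Let $\mathcal G$ denote the $\mathbb F$-algebra on generators $a^{\pm1},b^{\pm1}$ whose only relations are $aa^{-1}=a^{-1}a=bb^{-1}=b^{-1}b=1$; by~(\ref{eq:tet1}) there is a surjection $t:\mathcal G\to\boxtimes^{\times}_q$ with $a\mapsto x_{02},\,b\mapsto x_{13}$. Composing with the multiplication map $m$ yields an $\mathbb F$-linear map
\[
\Phi:\ U^+\otimes\mathcal G\otimes U^+\ \xrightarrow{e\otimes t\otimes o}\ \boxtimes^{\rm even}_q\otimes\boxtimes^{\times}_q\otimes\boxtimes^{\rm odd}_q\ \xrightarrow{\ m\ }\ \boxtimes_q,
\]
and it suffices to prove that $\Phi$ is a bijection.

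For surjectivity I would show that $\boxtimes^{\rm even}_q\,\boxtimes^{\times}_q\,\boxtimes^{\rm odd}_q$ is a subalgebra of $\boxtimes_q$; since it contains all eight generators it is then all of $\boxtimes_q$. This is a straightening argument in the spirit of Lemma~\ref{lem:uLong}. Each $q$-Weyl relation~(\ref{eq:tet2}) relates the two orders of a composable pair, so it may be read as a rule that rewrites a product in the ``wrong'' order as a scalar multiple of the product in the ``right'' order plus a scalar correction. Using these rules one pushes the even generators $x_{30},x_{12}$ to the left and the odd generators $x_{01},x_{23}$ to the right, collecting the two-step generators in the middle (which do not interact among themselves, matching the freeness of $\mathcal G$). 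The bookkeeping point is that every adjacent transposition one needs is governed by an instance of~(\ref{eq:tet1}) or~(\ref{eq:tet2}), and that each correction term is a scalar and hence involves strictly fewer generators, so the process terminates.

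The heart of the proof, and the main obstacle, is injectivity of $\Phi$. Mimicking Lemma~\ref{lem:TTP}, I would turn $V=U^+\otimes\mathcal G\otimes U^+$ into a $\boxtimes_q$-module: the even generators act by left multiplication by $X,Y$ on the first factor, the two-step generators act on the middle factor through $\mathcal G$, and the odd generators act on the last factor after being commuted inward by explicit straightening formulas modeled on Lemma~\ref{lem:uLong}. Acting on $U^+$ renders the $q$-Serre relations~(\ref{eq:tet3}) automatic on the even and odd factors; the content is the verification of the remaining relations~(\ref{eq:tet1}),~(\ref{eq:tet2}) on $V$, which is the bulk of the computation and the step I expect to be hardest. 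Granting the module, the composite of $\Phi$ with the map $z\mapsto z\,(1\otimes1\otimes1)$ from $\boxtimes_q$ back to $V$ is the identity on $V$, whence $\Phi$ is injective.

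Finally I would read off the four assertions. As $\Phi=m\circ(e\otimes t\otimes o)$ is injective, so is $e\otimes t\otimes o$; over a field this forces each of the surjections $e,t,o$ to be injective, hence an isomorphism. This gives (i), (ii), and the presentation statement~(iii). With $e\otimes t\otimes o$ now a bijection and $\Phi$ injective, the multiplication map $m$ is injective, and it is surjective by the straightening step; thus $m$ is a bijection, which is~(iv).
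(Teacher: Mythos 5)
Your skeleton (surjections $e,t,o$, the map $\Phi=m\circ(e\otimes t\otimes o)$, injectivity via a module in which $1\otimes 1\otimes 1$ is cyclic, and the formal deductions of (i)--(iv) at the end) is the right one, and it is what the paper intends: the paper itself gives no proof of Proposition \ref{prop:miki}, citing Miki and remarking that a proof analogous to that of Proposition \ref{thm:tensorDecPreM} exists. But your execution has a genuine gap, and it sits exactly where the paper's own development does its hardest work. You build the module directly on $V=U^+\otimes\mathcal G\otimes U^+$ and declare the $q$-Serre relations ``automatic.'' Two things go wrong. First, the action as you describe it cannot satisfy the relations (\ref{eq:tet2}): your operators for $x_{30},x_{12}$ (left multiplication on the first factor) and for $x_{02},x_{13}$ (multiplication on the middle factor) commute with one another as operators, whereas in $\boxtimes_q$ one has $q\,x_{30}x_{02}-q^{-1}x_{02}x_{30}=q-q^{-1}$, which together with commutativity would force $x_{30}x_{02}=1$. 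Unlike the central elements $c_i$ of $\widetilde\square_q$, the two-step generators are not central, so they too must be commuted past the first factor by straightening rules; the analogy with Lemma \ref{lem:TTP} needs modification here, not imitation.

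Second, and more fundamentally: because your first factor is $U^+$ rather than the free algebra $T$, the straightening formulas defining the odd (and two-step) operators are given on linear combinations of words, and you must prove they yield the same output on two expressions differing by an element of the $q$-Serre ideal. This well-definedness is not among the relations (\ref{eq:tet1})--(\ref{eq:tet3}); it is logically prior to having operators at all, and it is precisely the analog of Proposition \ref{lem:deltaCom} (that $x_{i\pm 1}S_i=q^{\pm 4}S_i x_{i\pm 1}$, i.e.\ straightening past a Serre element produces no correction terms), which the paper establishes by explicit coefficient tables. The paper's strategy avoids your difficulty by working first in the relation-free algebra $\widetilde\square_q$, where words form a basis and the module of Lemma \ref{lem:TTP} exists for free, doing the Serre computation algebraically once, and only then passing to the quotient via the ideal description of Lemma \ref{lem:Jdesc}. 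Your shortcut hides that computation rather than eliminating it; it cannot be avoided, since assertion (iv) would be false if the Serre elements failed to quasi-commute with the remaining generators. (A smaller slip: in your surjectivity argument the corrections are not always scalars; exchanges such as $x_{13}x_{12}$ require conjugated relations like $q\,x_{12}x_{13}-q^{-1}x_{13}x_{12}=(q-q^{-1})x_{13}^2$, whose corrections lie in $\boxtimes^{\times}_q$. The induction survives if you measure by the number of even and odd letters, but the bookkeeping claim as stated is wrong.)
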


\begin{note}
\rm 
In \cite[Prop.~4.1]{miki} Miki
assumes that $\mathbb F = \mathbb C$ and $q$ is
not a root of unity. We emphasize that
Proposition
\ref{prop:miki} holds without this assumption.
There is a proof of
Proposition
\ref{prop:miki} that is
 analogous to our proof of
Proposition
\ref{thm:tensorDecPreM}.
\end{note}

\begin{proposition}
\label{prop:sBI}
The canonical homomorphism
$\square_q \to \boxtimes_q$ is injective.
\end{proposition}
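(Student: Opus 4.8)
The plan is to exploit the two multiplicative tensor-space decompositions already in hand and reduce the injectivity of the canonical map to the fact that its restrictions to the even and odd subalgebras are isomorphisms.

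First I would track the canonical homomorphism $\square_q \to \boxtimes_q$ on generators. Since it sends $x_i \mapsto x_{i-1,i}$, we get $x_0 \mapsto x_{30}$, $x_1 \mapsto x_{01}$, $x_2 \mapsto x_{12}$, $x_3 \mapsto x_{23}$. Comparing with Definitions \ref{def:squareTeToM} and \ref{def:tetsub}, this shows that the canonical map carries $\square^{\rm even}_q$ into $\boxtimes^{\rm even}_q$ and $\square^{\rm odd}_q$ into $\boxtimes^{\rm odd}_q$; write $\mathrm{can}^{\rm even}$ and $\mathrm{can}^{\rm odd}$ for the resulting restrictions. Second, I would show that $\mathrm{can}^{\rm even}$ and $\mathrm{can}^{\rm odd}$ are isomorphisms. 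By Proposition \ref{thm:tensorDecPreM}(i) there is an isomorphism $U^+ \to \square^{\rm even}_q$ with $X \mapsto x_0$, $Y \mapsto x_2$, and by Proposition \ref{prop:miki}(i) there is an isomorphism $U^+ \to \boxtimes^{\rm even}_q$ with $X \mapsto x_{30}$, $Y \mapsto x_{12}$. Composing the first with $\mathrm{can}^{\rm even}$ and then with the inverse of the second yields a map $U^+ \to U^+$ that fixes $X$ and $Y$, hence the identity; so $\mathrm{can}^{\rm even}$ is forced to be an isomorphism. The analogous argument with parts (ii) of the two propositions gives that $\mathrm{can}^{\rm odd}$ is an isomorphism.

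Third, I would assemble these into the full statement using the decomposition isomorphisms. By Proposition \ref{thm:tensorDecPreM}(iii) the multiplication map gives a vector-space isomorphism $\square^{\rm even}_q \otimes \square^{\rm odd}_q \to \square_q$, and by Proposition \ref{prop:miki}(iv) the multiplication map gives a vector-space isomorphism $\boxtimes^{\rm even}_q \otimes \boxtimes^{\times}_q \otimes \boxtimes^{\rm odd}_q \to \boxtimes_q$. Because the canonical map is an algebra homomorphism, it sends a product $uv$ (with $u \in \square^{\rm even}_q$, $v \in \square^{\rm odd}_q$) to $\mathrm{can}^{\rm even}(u)\,\mathrm{can}^{\rm odd}(v)$, which under the $\boxtimes_q$ decomposition is the image of $\mathrm{can}^{\rm even}(u)\otimes 1 \otimes \mathrm{can}^{\rm odd}(v)$. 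Thus, after transporting along the two decomposition isomorphisms, the canonical map becomes $u \otimes v \mapsto \mathrm{can}^{\rm even}(u)\otimes 1 \otimes \mathrm{can}^{\rm odd}(v)$. This is the composite of the isomorphism $\mathrm{can}^{\rm even}\otimes\mathrm{can}^{\rm odd}\colon \square^{\rm even}_q \otimes \square^{\rm odd}_q \to \boxtimes^{\rm even}_q \otimes \boxtimes^{\rm odd}_q$ with the insertion $u'\otimes v' \mapsto u'\otimes 1 \otimes v'$; the latter is injective because $1$ spans a one-dimensional subspace of $\boxtimes^{\times}_q$ and tensoring over the field $\mathbb F$ with the injection $\mathbb F\,1 \hookrightarrow \boxtimes^{\times}_q$ preserves injectivity. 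Hence the canonical homomorphism is injective.

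The step I expect to require the most care is the third: verifying that the image of $\square_q$ really lands in the ``one-slice'' $\boxtimes^{\rm even}_q \otimes \mathbb F\,1 \otimes \boxtimes^{\rm odd}_q$ of the $\boxtimes_q$ decomposition, and that inserting $1$ into the $\boxtimes^{\times}_q$ factor preserves injectivity. Both reduce to the observation that $1$ is a nonzero element of $\boxtimes^{\times}_q$ and that the direct-sum structure of the decomposition keeps the three tensor factors from interfering, so no delicate estimate is needed, only careful bookkeeping with the two product decompositions.
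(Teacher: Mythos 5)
Your proposal is correct and is essentially the paper's own argument: the paper's proof is the one-line instruction ``Compare Proposition \ref{thm:tensorDecPreM} and Proposition \ref{prop:miki},'' and your three steps (restricting the canonical map to the even/odd subalgebras, identifying both sides with $U^+$ to see these restrictions are isomorphisms, and then matching the two tensor-product decompositions with $1$ inserted in the $\boxtimes^{\times}_q$ slot) are exactly the comparison the paper intends, spelled out in full.
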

\begin{proof} Compare
Proposition
\ref{thm:tensorDecPreM} and
Proposition
\ref{prop:miki}.
\end{proof}


\begin{proposition}
\label{cor:tetqmain}
Pick nonzero $a,b \in \mathbb F$.
Then there exists an $\mathbb F$-algebra homomorphism
$\mathcal O \to 
\boxtimes_q$ that sends
\begin{eqnarray*}
A \mapsto a x_{30}+ a^{-1} x_{01},
\qquad \qquad
B \mapsto  b x_{12}+ b^{-1} x_{23}.
\end{eqnarray*}
This homomorphism is injective.
\end{proposition}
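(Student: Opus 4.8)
The plan is to realize the desired homomorphism as a composition of two maps already constructed, and then to read off injectivity from the injectivity of each factor.

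First I would recall the $\mathbb F$-algebra homomorphism $\sharp : \mathcal O \to \square_q$ from Proposition \ref{prop:ABxyiLongerM}, which sends $A \mapsto a x_0 + a^{-1} x_1$ and $B \mapsto b x_2 + b^{-1} x_3$. Next I would recall the canonical homomorphism $\square_q \to \boxtimes_q$ from Lemma \ref{lem:mapn}, which sends $x_i \mapsto x_{i-1,i}$ for $i \in \mathbb Z_4$. The desired map is the composition
\begin{equation*}
\begin{CD}
\mathcal O  @>> \sharp >
               \square_q  @>> can >  \boxtimes_q
                  \end{CD}
\end{equation*}

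To verify that this composition acts as claimed, I would track the images of the four generators $x_i$ under the canonical map, reading subscripts modulo $4$: thus $x_0 \mapsto x_{30}$ (since $-1 \equiv 3$), $x_1 \mapsto x_{01}$, $x_2 \mapsto x_{12}$, and $x_3 \mapsto x_{23}$. Applying this to the images of $A$ and $B$ under $\sharp$ yields $A \mapsto a x_{30} + a^{-1} x_{01}$ and $B \mapsto b x_{12} + b^{-1} x_{23}$, exactly as required.

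Finally, injectivity is immediate, since the composition is built from two injective maps: the map $\sharp$ is injective by Theorem \ref{thm:xiInj}, and the canonical homomorphism $\square_q \to \boxtimes_q$ is injective by Proposition \ref{prop:sBI}. There is no genuine obstacle in this argument; the only point demanding care is the index bookkeeping $i \mapsto (i-1,i)$ in $\mathbb Z_4$, which must be carried out consistently so that the stated images of $A$ and $B$ emerge correctly.
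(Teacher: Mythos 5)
Your proposal is correct and follows exactly the paper's own proof: compose $\sharp$ from Proposition \ref{prop:ABxyiLongerM} with the canonical homomorphism of Lemma \ref{lem:mapn}, then deduce injectivity from Theorem \ref{thm:xiInj} and Proposition \ref{prop:sBI}. The only difference is that you spell out the generator bookkeeping $x_i \mapsto x_{i-1,i}$, which the paper leaves implicit.
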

\begin{proof}
The desired homomorphism is the composition
of the homomorphism
$\sharp: \mathcal O \to \square_q$ from
Proposition
\ref{prop:ABxyiLongerM}
and the canonical homomorphism 
$\square_q \to \boxtimes_q$.
The last assertion follows from
Theorem
\ref{thm:xiInj}
and  Proposition
\ref{prop:sBI}.
\end{proof}

\section{The quantum affine algebra
$U_q(\widehat {\mathfrak {sl}}_2)$ }

\noindent In the literature on the
$q$-Onsager algebra $\mathcal O$,
there are algebra homomorphisms from $\mathcal O$ into 
the quantum algebra $U_q(\widehat {\mathfrak {sl}}_2)$ due to
P.~Baseilhac and S.~Belliard
 \cite[line (3.15)]{basXXZ}, 
 \cite[line (3.18)]{basXXZ}
 and S.~Kolb
\cite[Example~7.6]{kolb}.
Also, there are algebra homomorphisms from $\mathcal O$ into
the $q$-deformed loop algebra
$U_q(L({\mathfrak{sl}}_2))$ due to
P.~Baseilhac
    \cite[Prop.~2.2]{bas6},
and also
T.~Ito and the present author
\cite[Prop.~8.5]{qRacahIT},
\cite[Props.~1.1,~1.13]{ITaug}.
In the case of \cite[Example~7.6]{kolb} and
\cite[Props.~1.1,~1.13]{ITaug}
the homomorphism was shown to be injective. In fact,
all of the above homomorphisms are injective,
and this can be established using the results from the previous
sections of the present paper.
In each case the proof is similar.
In this section we illustrate what is going on with a single example
\cite[Example~7.6]{kolb}.
This example involves
$U_q(\widehat {\mathfrak {sl}}_2)$, which we now define.

\begin{definition}
\label{def:Chevalley}
\rm 
(See \cite[p.~262]{cp3}.) Let  
$U_q(\widehat {\mathfrak {sl}}_2)$ denote the $\mathbb F$-algebra
with generators $e^{\pm}_i$, $k^{\pm 1}_i$, $i \in \lbrace 0,1\rbrace$
and the following relations:
\begin{eqnarray*}
&& k_i k^{-1}_i = k^{-1}_i k_i = 1,
\\
&& k_0 k_1 = k_1 k_0,
\\
&& k_i e^{\pm}_i k^{-1}_i = q^{\pm 2}e^{\pm}_i,
\\
&& k_i e^{\pm}_j k^{-1}_i = q^{\mp 2}e^{\pm}_j, \qquad i\not=j,
\\
&&
\lbrack e^+_i, e^-_i\rbrack = \frac{k_i - k^{-1}_i}{q-q^{-1}},
\\
&&
\lbrack e^{\pm}_0, e^{\mp}_1\rbrack = 0,
\\
&&
(e^{\pm}_i)^3 e^{\pm}_j
-
\lbrack 3 \rbrack_q 
(e^{\pm}_i)^2 e^{\pm}_j e^{\pm}_i
+
\lbrack 3 \rbrack_q 
e^{\pm}_i e^{\pm}_j (e^{\pm}_i)^2
-
 e^{\pm}_j (e^{\pm}_i)^3 = 0, \qquad i \not=j.
 \end{eqnarray*}
\noindent We call 
$e^{\pm}_i$, $k^{\pm 1}_i$, $i \in \lbrace 0,1\rbrace$
the {\it Chevalley generators} for
$U_q(\widehat {\mathfrak {sl}}_2)$.
\end{definition}

\noindent In the following three lemmas we describe some automorphisms of 
$U_q(\widehat {\mathfrak {sl}}_2)$; the proofs are routine
and omitted.

\begin{lemma}
\label{lem:omegaAut}
There exists an automorphism of
$U_q(\widehat {\mathfrak {sl}}_2)$ that sends
\begin{eqnarray*}
e^+_i \mapsto e^-_i, \qquad \qquad e^-_i \mapsto e^+_i,
\qquad \qquad k^{\pm 1}_i \mapsto k^{\mp 1}_i,
\qquad \qquad i \in \lbrace 0,1\rbrace.
\end{eqnarray*}
\end{lemma}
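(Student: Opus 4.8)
The plan is to invoke the universal property of the presentation of $U_q(\widehat{\mathfrak{sl}}_2)$ by generators and relations from Definition \ref{def:Chevalley}. To produce the desired map, I would define a candidate assignment $\omega$ on the Chevalley generators by $e^+_i \mapsto e^-_i$, $e^-_i \mapsto e^+_i$, and $k^{\pm 1}_i \mapsto k^{\mp 1}_i$ for $i \in \lbrace 0,1\rbrace$, and then check that these images satisfy each defining relation. Once this is verified, $\omega$ extends uniquely to an $\mathbb{F}$-algebra homomorphism $U_q(\widehat{\mathfrak{sl}}_2) \to U_q(\widehat{\mathfrak{sl}}_2)$.

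First I would handle the relations among the $k^{\pm 1}_i$: the relations $k_i k^{-1}_i = k^{-1}_i k_i = 1$ and $k_0 k_1 = k_1 k_0$ are clearly preserved, since replacing each $k_i$ by $k^{-1}_i$ sends these relations to themselves (in the second case after taking inverses). Next, for each relation of the form $k_i e^{\pm}_j k^{-1}_i = q^{\pm 2} e^{\pm}_j$ (with the appropriate signs according to whether $i=j$ or $i \neq j$), applying $\omega$ yields an equation of the form $k^{-1}_i e^{\mp}_j k_i = q^{\pm 2} e^{\mp}_j$, which one recognizes as a consequence of the original relation for $e^{\mp}_j$ upon conjugating by $k^{-1}_i$. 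For the commutator relation $\lbrack e^+_i, e^-_i\rbrack = (k_i - k^{-1}_i)/(q-q^{-1})$, applying $\omega$ gives $\lbrack e^-_i, e^+_i\rbrack = (k^{-1}_i - k_i)/(q-q^{-1})$, and since both sides change sign this recovers the original relation. Finally, the relations $\lbrack e^{\pm}_0, e^{\mp}_1\rbrack = 0$ and the $q$-Serre relations are each sent by $\omega$ to another relation of the same family with the opposite superscript, so they too are preserved.

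Having established that $\omega$ is a well-defined $\mathbb{F}$-algebra homomorphism, I would show that it is an automorphism by observing that it is an involution. Indeed $\omega^2$ fixes each Chevalley generator, since $e^+_i \mapsto e^-_i \mapsto e^+_i$ and likewise for the remaining generators; hence $\omega^2$ agrees with the identity on a generating set and therefore $\omega^2 = 1$. Thus $\omega$ is its own inverse and is an automorphism.

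I do not anticipate any genuine obstacle here: every step is a direct substitution into the relations of Definition \ref{def:Chevalley}, and the verification succeeds precisely because those relations are symmetric under simultaneously interchanging $e^+$ with $e^-$ and inverting the $k_i$. The only point requiring mild care is the sign bookkeeping in the conjugation and commutator relations, but this causes no real difficulty, which is why the paper is content to call the proof routine.
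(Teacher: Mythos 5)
Your proof is correct and is exactly the routine verification the paper has in mind: the paper omits the proof entirely, remarking only that ``the proofs are routine and omitted,'' and your argument (check each defining relation of Definition \ref{def:Chevalley} under the swap $e^{\pm}_i \leftrightarrow e^{\mp}_i$, $k^{\pm 1}_i \mapsto k^{\mp 1}_i$, then conclude via $\omega^2 = 1$ that the resulting homomorphism is an automorphism) is the standard way to supply it.
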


\begin{lemma} 
\label{lem:autZeta}
There exists
an automorphism  of 
$U_q(\widehat {\mathfrak {sl}}_2)$ that sends
\begin{eqnarray*}
e^+_i \mapsto  e^+_i k_i, 
\qquad \qquad 
e^-_i \mapsto  k^{-1}_i e^-_i,
\qquad \qquad 
k^{\pm 1}_i \mapsto  k^{\pm 1}_i,
\qquad \qquad i \in \lbrace 0,1\rbrace.
\end{eqnarray*}
\end{lemma}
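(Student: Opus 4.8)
The plan is to use the universal property of the presentation in Definition \ref{def:Chevalley}: since $U_q(\widehat{\mathfrak{sl}}_2)$ is given by generators and relations, to produce an algebra endomorphism $\zeta$ with the stated action it suffices to check that the proposed images of the Chevalley generators satisfy every defining relation. Having obtained $\zeta$, I would then exhibit an inverse to upgrade it to an automorphism. Throughout I write $\zeta(e^+_i)=e^+_i k_i$, $\zeta(e^-_i)=k^{-1}_i e^-_i$, and $\zeta(k^{\pm 1}_i)=k^{\pm 1}_i$.

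Since $\zeta$ fixes each $k^{\pm 1}_i$, the relations $k_i k^{-1}_i=k^{-1}_i k_i=1$ and $k_0 k_1=k_1 k_0$ hold automatically. For $k_i e^{\pm}_j k^{-1}_i=q^{\pm 2}e^{\pm}_j$ and its $i\neq j$ analogue, one conjugates $e^{\pm}_j k^{\mp 1}_j$ by $k_i$; conjugation by $k_i$ rescales $e^{\pm}_j$ by the appropriate power of $q$ and fixes $k_j$, so these eigenvalue relations are preserved. The Cartan relation $\lbrack e^+_i,e^-_i\rbrack=(k_i-k^{-1}_i)/(q-q^{-1})$ needs only a short computation: in $\zeta(e^+_i)\zeta(e^-_i)=e^+_i k_i k^{-1}_i e^-_i=e^+_i e^-_i$ the central-looking $k$-factors cancel, while in $\zeta(e^-_i)\zeta(e^+_i)=k^{-1}_i e^-_i e^+_i k_i$ the outer $k^{\mp 1}_i$ conjugate $e^-_i e^+_i$ back to itself (the $q$-powers from $k^{-1}_i e^-_i k_i$ and $k^{-1}_i e^+_i k_i$ cancel). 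Hence $\zeta$ fixes the commutator, and it fixes the right-hand side as well. The relation $\lbrack e^{\pm}_0, e^{\mp}_1\rbrack=0$ is checked the same way: pushing the $k$-factors across produces a common scalar $q^4$ on both sides, and the original commutation $e^+_0 e^-_1=e^-_1 e^+_0$ then gives equality.

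The main step, and the one I expect to require the most care, is the $q$-Serre relation $(e^{\pm}_i)^3 e^{\pm}_j-\lbrack 3\rbrack_q (e^{\pm}_i)^2 e^{\pm}_j e^{\pm}_i+\lbrack 3\rbrack_q e^{\pm}_i e^{\pm}_j (e^{\pm}_i)^2-e^{\pm}_j (e^{\pm}_i)^3=0$. Substituting the images and pushing every $k$-factor to the right using $k_i e^{\pm}_i=q^{\pm 2}e^{\pm}_i k_i$ and $k_i e^{\pm}_j=q^{\mp 2}e^{\pm}_j k_i$ for $i\neq j$, the key observation is that in each of the four summands the accumulated power of $q$ equals $1$ and the trailing $k$-monomial is identical (in the $+$ case it is $k^3_i k_j$). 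Thus $\zeta$ applied to the $q$-Serre expression factors as the original $q$-Serre expression times the invertible element $k^3_i k_j$, and therefore vanishes by the relation already holding in $U_q(\widehat{\mathfrak{sl}}_2)$; the $-$ case follows by the same bookkeeping (or by transporting through the automorphism of Lemma \ref{lem:omegaAut}). This produces the endomorphism $\zeta$.

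Finally, to see that $\zeta$ is an automorphism, I would introduce the analogous map $\zeta'$ sending $e^+_i\mapsto e^+_i k^{-1}_i$, $e^-_i\mapsto k_i e^-_i$, $k^{\pm 1}_i\mapsto k^{\pm 1}_i$, which is an endomorphism by the identical verification with the signs of the $q$-powers reversed. One then checks on generators that $\zeta\circ\zeta'=\zeta'\circ\zeta=\mathrm{id}$; for instance $\zeta(\zeta'(e^+_i))=\zeta(e^+_i k^{-1}_i)=e^+_i k_i k^{-1}_i=e^+_i$, and similarly for $e^-_i$ and $k^{\pm 1}_i$. Since two algebra homomorphisms agreeing on a generating set coincide, $\zeta$ and $\zeta'$ are mutually inverse, so $\zeta$ is the desired automorphism.
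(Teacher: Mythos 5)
Your proof is correct, and it is precisely the routine generators-and-relations verification that the paper itself omits (the paper states that the proofs of this lemma and its two neighbors are ``routine and omitted''). All the key computations check out — in particular the observation that each $q$-Serre term acquires coefficient $1$ and a common trailing factor $k_i^3 k_j$, and that the mixed commutators pick up a common scalar $q^4$ — the only blemish being the harmless notational slip ``$e^{\pm}_j k^{\mp 1}_j$'', since the images being conjugated are $e^+_j k_j$ and $k_j^{-1}e^-_j$.
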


\begin{lemma}
\label{lem:uqsl2Aut}
Let $\varepsilon_0,\varepsilon_1$ denote nonzero scalars in $\mathbb F$.
Then there exists an automorphism of
$U_q(\widehat {\mathfrak {sl}}_2)$ that sends
\begin{eqnarray*}
e^+_i \mapsto \varepsilon_i e^+_i,
\qquad \qquad
e^-_i \mapsto \varepsilon^{-1}_i e^-_i,
\qquad \qquad
k^{\pm 1}_i \mapsto  k^{\pm 1}_i,
\qquad \qquad i \in \lbrace 0,1\rbrace.
\end{eqnarray*}
\end{lemma}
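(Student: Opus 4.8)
The plan is to verify that the prescribed action on the Chevalley generators is compatible with the defining relations in Definition \ref{def:Chevalley}, so that it extends to an $\mathbb{F}$-algebra endomorphism $\sigma$ of $U_q(\widehat{\mathfrak{sl}}_2)$, and then to produce a two-sided inverse. Concretely, I would first declare the assignment $e^+_i \mapsto \varepsilon_i e^+_i$, $e^-_i \mapsto \varepsilon^{-1}_i e^-_i$, $k^{\pm 1}_i \mapsto k^{\pm 1}_i$ on the generators and check each family of relations in turn.

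Several relations are immediate. The relations $k_i k^{-1}_i = 1$ and $k_0 k_1 = k_1 k_0$ involve only the $k^{\pm 1}_i$, which are fixed. In each conjugation relation $k_i e^{\pm}_j k^{-1}_i = q^{\pm 2} e^{\pm}_j$ (including the $i \neq j$ analog), applying $\sigma$ scales both sides by the same factor $\varepsilon^{\pm 1}_j$, since this scalar pulls through $k_i$; hence the relation is preserved. The relations $[e^{\pm}_0, e^{\mp}_1] = 0$ survive because scaling each generator by a nonzero scalar only multiplies the commutator by a nonzero scalar, leaving $0$ fixed.

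The two families that explain the pairing of $\varepsilon_i$ with $\varepsilon^{-1}_i$ are the mixed bracket relations and the $q$-Serre relations. Under $\sigma$ the commutator $[e^+_i, e^-_i]$ is sent to $[\varepsilon_i e^+_i, \varepsilon^{-1}_i e^-_i] = \varepsilon_i \varepsilon^{-1}_i [e^+_i, e^-_i] = [e^+_i, e^-_i]$, while the right-hand side $(k_i - k^{-1}_i)/(q - q^{-1})$ is fixed; this is exactly why $e^-_i$ must be scaled by $\varepsilon^{-1}_i$. Each $q$-Serre relation is homogeneous: every monomial in the $+$ relation has degree $3$ in $e^+_i$ and degree $1$ in $e^+_j$, so $\sigma$ multiplies the whole relation by $\varepsilon^3_i \varepsilon_j$, and likewise the $-$ relation is multiplied by $\varepsilon^{-3}_i \varepsilon^{-1}_j$; in both cases a nonzero scalar times $0$ is still $0$.

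Having confirmed that $\sigma$ is a well-defined endomorphism, I would define $\tau$ by the same rule with $\varepsilon_i$ replaced by $\varepsilon^{-1}_i$; the identical verification shows $\tau$ is an endomorphism. Since $\sigma \tau$ and $\tau \sigma$ fix every generator, each equals the identity, so $\sigma$ is an automorphism with inverse $\tau$. There is no genuine obstacle here; the only points demanding care are the exponent bookkeeping in the mixed bracket relation and the homogeneity count in the Serre relations, which together force the choice of $\varepsilon^{-1}_i$ on the lowering generators.
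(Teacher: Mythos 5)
Your proof is correct: you verify that the prescribed assignment preserves each defining relation of Definition \ref{def:Chevalley} and then exhibit the inverse by inverting the scalars. The paper omits this proof as routine, and your argument is exactly the routine verification intended, with the key bookkeeping (the cancellation $\varepsilon_i\varepsilon_i^{-1}=1$ in the mixed bracket and the homogeneity of the $q$-Serre relations) handled correctly.
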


\begin{definition}
\label{def:xi}
\rm
The automorphism of
$U_q(\widehat {\mathfrak {sl}}_2)$ 
from
Lemma \ref{lem:uqsl2Aut}
will be denoted by $\xi(\varepsilon_0,\varepsilon_1)$.
\end{definition}

\noindent We now recall the 
equitable presentation of 
$U_q(\widehat {\mathfrak {sl}}_2)$.

\begin{lemma}
\label{thm:equitUq}
{\rm  (See \cite[Theorem~2.1]{uqsl2hat}.)}
The $\mathbb F$-algebra $U_q(\widehat {\mathfrak {sl}}_2)$ 
has a presentation
 with generators $y^{\pm }_i$, $k^{\pm 1}_i$, $i \in \lbrace 0,1\rbrace$
and relations
\begin{eqnarray*}
&&k_i k^{-1}_i = k^{-1}_i k_i = 1,
\\
&&
\mbox{\rm $k_0 k_1$ is central},
\\
&& \frac{q y^+_i k_i - q^{-1} k_i y^+_i}{q-q^{-1}} = 1,
\\
&& \frac{q k_i y^-_i - q^{-1} y^-_i k_i}{q-q^{-1}} = 1,
\\
&& \frac{q y^{-}_i y^+_i - q^{-1} y^+_i y^{-}_i}{q-q^{-1}} = 1,
\\
&& \frac{q y^{+}_i y^-_j - q^{-1} y^-_j y^{+}_i}{q-q^{-1}} = k^{-1}_0 k^{-1}_1,
\qquad \qquad i \not=j,
\\
&&
(y^{\pm}_i)^3 y^{\pm}_j - 
\lbrack 3 \rbrack_q 
(y^{\pm}_i)^2 y^{\pm}_j y^{\pm}_i 
+
\lbrack 3 \rbrack_q 
y^{\pm}_i y^{\pm}_j (y^{\pm}_i)^2
-
 y^{\pm}_j (y^{\pm}_i)^3 = 0, \qquad \qquad i\not=j.
 \end{eqnarray*}
An isomorphism with the presentation in Definition
\ref{def:Chevalley} is given by:
\begin{eqnarray}
k^{\pm}_i &\mapsto& k^{\pm}_i,
\label{lem:kmove}
\\
y^-_i &\mapsto& k^{-1}_i + e^-_i(q-q^{-1}),
\label{lem:ymmove}
\\
y^+_i &\mapsto & k^{-1}_i - 
k^{-1}_i e^+_i
q(q-q^{-1}).
\label{lem:ypmove}
\end{eqnarray}
\noindent The inverse of this isomorphism is given by:
\begin{eqnarray*}
k^{\pm}_i &\mapsto& k^{\pm}_i,
\\
e^-_i &\mapsto & (y^-_i - k^{-1}_i)(q-q^{-1})^{-1},
\\
e^+_i &\mapsto & (1-k_i y^+_i)q^{-1}(q-q^{-1})^{-1}.
\end{eqnarray*}
 \end{lemma}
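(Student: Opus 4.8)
The plan is to realize the equitable presentation as an abstract algebra $\mathcal U$ and then exhibit mutually inverse homomorphisms between $\mathcal U$ and $U_q(\widehat{\mathfrak{sl}}_2)$. Let $\mathcal U$ denote the $\mathbb F$-algebra presented by generators $y^\pm_i, k^{\pm 1}_i$ ($i \in \lbrace 0,1\rbrace$) together with the relations listed in the statement. It then suffices to prove three things: (a) the assignment (\ref{lem:kmove})--(\ref{lem:ypmove}) extends to an $\mathbb F$-algebra homomorphism $\sigma: \mathcal U \to U_q(\widehat{\mathfrak{sl}}_2)$; (b) the displayed inverse assignment extends to an $\mathbb F$-algebra homomorphism $\tau: U_q(\widehat{\mathfrak{sl}}_2) \to \mathcal U$; and (c) the composites $\sigma\tau$ and $\tau\sigma$ fix every generator. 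Given these, $\sigma$ and $\tau$ are mutually inverse isomorphisms, which is exactly the asserted presentation together with the stated change-of-generators formulas.

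For (a) I would check that the images under (\ref{lem:kmove})--(\ref{lem:ypmove}) satisfy each defining relation of $\mathcal U$. The relations $k_i k^{-1}_i = k^{-1}_i k_i = 1$ and the centrality of $k_0 k_1$ hold trivially since $\sigma$ fixes the $k^{\pm 1}_i$. The $q$-Weyl relations reduce to short computations using only $k_i e^\pm_i k^{-1}_i = q^{\pm 2} e^\pm_i$ and $k_i e^\pm_j k^{-1}_i = q^{\mp 2} e^\pm_j$; for example, substituting $y^+_i = k^{-1}_i - k^{-1}_i e^+_i q(q-q^{-1})$ and using $e^+_i k_i = q^{-2} k_i e^+_i$ gives $q y^+_i k_i - q^{-1} k_i y^+_i = q - q^{-1}$, hence $\frac{q y^+_i k_i - q^{-1} k_i y^+_i}{q-q^{-1}} = 1$. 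The relation involving $y^-_i y^+_i$ is handled the same way, now invoking $\lbrack e^+_i, e^-_i\rbrack = (k_i - k^{-1}_i)/(q-q^{-1})$, while the mixed relation $\frac{q y^+_i y^-_j - q^{-1} y^-_j y^+_i}{q-q^{-1}} = k^{-1}_0 k^{-1}_1$ for $i \neq j$ uses $\lbrack e^\pm_0, e^\mp_1\rbrack = 0$ together with the $k$-commutations to produce the central factor $k^{-1}_0 k^{-1}_1$.

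The main obstacle is the cubic $q$-Serre relations for the equitable generators. I would treat the $+$ case (the $-$ case being parallel), writing $y^+_i = k^{-1}_i - q(q-q^{-1}) k^{-1}_i e^+_i$ and likewise for $y^+_j$. Expanding the four-term relation and moving every $k^{-1}$ factor to the left, one finds that all resulting monomials carry the same $k$-part $k^{-3}_i k^{-1}_j$, so the whole expression equals $k^{-3}_i k^{-1}_j$ times a polynomial in $e^+_i, e^+_j$. In that polynomial the top-degree part, in which each of the four equitable factors contributes its $e$-term, reproduces the Chevalley cubic relation $(e^+_i)^3 e^+_j - \lbrack 3 \rbrack_q (e^+_i)^2 e^+_j e^+_i + \lbrack 3 \rbrack_q e^+_i e^+_j (e^+_i)^2 - e^+_j (e^+_i)^3$ and hence vanishes, while the lower-degree contributions coming from the constant terms cancel among the four terms precisely because of the $\lbrack 3 \rbrack_q$ coefficients. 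The careful accounting of the $q$-powers generated in this normal-ordering, and the verification of the lower-order cancellation, is the delicate part of the argument, although it uses no idea beyond the commutation relations of Definition \ref{def:Chevalley}.

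For (b) I would symmetrically verify that the images $e^-_i \mapsto (y^-_i - k^{-1}_i)(q-q^{-1})^{-1}$ and $e^+_i \mapsto (1 - k_i y^+_i) q^{-1}(q-q^{-1})^{-1}$ satisfy the Chevalley relations of Definition \ref{def:Chevalley}, each check reducing to a relation of $\mathcal U$; the cubic Chevalley relation again requires the bookkeeping of the previous paragraph, run in reverse. Finally, (c) is a direct substitution: for instance $\sigma(k_i y^+_i) = 1 - q(q-q^{-1}) e^+_i$, so $\sigma\tau(e^+_i) = \sigma\bigl((1 - k_i y^+_i)q^{-1}(q-q^{-1})^{-1}\bigr) = e^+_i$, and the analogous computations show that $\sigma\tau$ and $\tau\sigma$ act as the identity on the generators $e^\pm_i, k^{\pm 1}_i$ and $y^\pm_i, k^{\pm 1}_i$ respectively. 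This completes the proof.
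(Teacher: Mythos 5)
The paper offers no proof of Lemma \ref{thm:equitUq} at all: it is quoted from \cite[Theorem~2.1]{uqsl2hat}, so there is no internal argument to compare yours against. Your proposal is, in effect, a self-contained reconstruction of the cited result, and its logical skeleton is correct and complete: realizing the equitable presentation as an abstract algebra $\mathcal U$, producing homomorphisms $\sigma:\mathcal U\to U_q(\widehat{\mathfrak{sl}}_2)$ and $\tau:U_q(\widehat{\mathfrak{sl}}_2)\to\mathcal U$ from the two displayed assignments, and checking that the composites fix the generators does prove the lemma. Your structural claims about the hard step also check out: writing $y^+_i\mapsto k_i^{-1}\bigl(1-q(q-q^{-1})e^+_i\bigr)$ and normal-ordering, every monomial in the expanded cubic relation does carry the factor $k_i^{-3}k_j^{-1}$; the degree-four part reproduces the Chevalley $q$-Serre expression with all relative $q$-powers equal to $1$; and the parts of degree $\le 3$ cancel through identities such as $\lbrack 3\rbrack_q q^2-(q^4+q^2+1)=0$ and $\lbrack 3\rbrack_q q^{-2}-(1+q^{-2}+q^{-4})\cdot q^{0}\cdot\frac{q^{-2}+q^{-4}+q^{-6}}{q^{-2}+q^{-4}+q^{-6}}=0$, i.e.\ exactly the ``$\lbrack 3\rbrack_q$ bookkeeping'' you describe. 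So the approach would not fail anywhere.

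One step in part (b) deserves to be made explicit, because as written it is not quite true that ``each check reduces to a relation of $\mathcal U$.'' The Chevalley relation $k_i e^{\pm}_j k_i^{-1}=q^{\mp 2}e^{\pm}_j$ for $i\not=j$ has no counterpart among the listed equitable relations: nothing in the presentation of $\mathcal U$ directly relates $k_i$ to $y^{\pm}_j$ when $i\not=j$. You must derive this commutation by writing $k_i=(k_0k_1)k_j^{-1}$, using the centrality of $k_0k_1$, and then invoking the $q$-Weyl relations for the index $j$ (the same trick also yields $k_0k_1=k_1k_0$, which is likewise not listed but is needed for the Chevalley presentation). Once that derived commutation rule is in hand, your verification of the remaining Chevalley relations, including the cubic ones by the reverse normal-ordering argument, goes through as you outline.
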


\begin{definition}
\label{def:tau}
\rm Referring to Lemma
\ref{thm:equitUq}, we call
$y^{\pm}_i$, $k^{\pm 1}_i$, $i \in \lbrace 0,1\rbrace$
the {\it equitable generators} for 
$U_q(\widehat {\mathfrak {sl}}_2)$.
The isomorphism described in 
(\ref{lem:kmove})--(\ref{lem:ypmove}) will be denoted by
$\tau$.
\end{definition}

\noindent 
We now describe an $\mathbb F$-algebra
homomorphism $\widehat \square_q \to 
U_q(\widehat {\mathfrak {sl}}_2)$ and
an
$\mathbb F$-algebra
homomorphism 
$U_q(\widehat {\mathfrak {sl}}_2) \to
\boxtimes_q$.
 In this description we use
the equitable presentation of 
$U_q(\widehat {\mathfrak {sl}}_2)$. 

\begin{lemma}
\label{lem:sigmai}
There exists an $\mathbb F$-algebra
homomorphism $\widehat \sigma: \widehat \square_q \to 
U_q(\widehat {\mathfrak {sl}}_2)$ such that
\bigskip

\centerline{
\begin{tabular}[t]{c||cccc|cccc}
$u$
& $x_0$ & $x_1$ & $x_2$ 
& $x_3$ &
$c_0$ & $c_1$ & $c_2$ & $c_3$
\\
\hline
$\widehat \sigma(u)$ &
$y^-_0$ & $y^+_0$ & $y^-_1$ & $y^+_1$ &
$1$ & $k^{-1}_0 k^{-1}_1$ & $1$ & $k^{-1}_0 k^{-1}_1$
     \end{tabular}}
\medskip

\end{lemma}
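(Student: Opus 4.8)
The plan is to invoke the universal property of the presentation in Definition \ref{def:boxqV1}: to produce the $\mathbb{F}$-algebra homomorphism $\widehat{\sigma}$ it suffices to check that the proposed images of the generators $c^{\pm 1}_i, x_i$ satisfy the four families of defining relations \eqref{eq:Box1}--\eqref{eq:Box4}. I would carry out this verification inside $U_q(\widehat{\mathfrak{sl}}_2)$ using the equitable presentation from Lemma \ref{thm:equitUq}, since the images $y^{\pm}_i$ are precisely the equitable generators and the defining relations of $\widehat{\square}_q$ are engineered to mirror those of that presentation.

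First I would dispose of the central relations. The images of $c_0, c_2$ are $1$, which is central and self-inverse, so \eqref{eq:Box1} and \eqref{eq:Box2} hold trivially in those cases. The images of $c_1, c_3$ are $k^{-1}_0 k^{-1}_1 = (k_0 k_1)^{-1}$; since the equitable presentation asserts that $k_0 k_1$ is central, its inverse is central with two-sided inverse $k_1 k_0$, which gives \eqref{eq:Box1} and \eqref{eq:Box2} for these as well.

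Next I would treat the $q$-Weyl relations \eqref{eq:central}. For each $i \in \mathbb{Z}_4$ I substitute the images and read off the matching equitable relation: for $i=0$ the expression $\tfrac{q y^-_0 y^+_0 - q^{-1} y^+_0 y^-_0}{q-q^{-1}}$ equals $1 = \widehat{\sigma}(c_0)$, and the $i=2$ case is the same computation with index $1$, yielding $1 = \widehat{\sigma}(c_2)$. For $i=1$ the expression $\tfrac{q y^+_0 y^-_1 - q^{-1} y^-_1 y^+_0}{q-q^{-1}}$ equals $k^{-1}_0 k^{-1}_1 = \widehat{\sigma}(c_1)$ by the cross relation with $(i,j)=(0,1)$, and for $i=3$ the analogous cross relation with $(i,j)=(1,0)$ gives $k^{-1}_0 k^{-1}_1 = \widehat{\sigma}(c_3)$. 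Finally, for the $q$-Serre relations \eqref{eq:Box4}, each of the four cases $i=0,1,2,3$ collapses to a $q$-Serre relation among a like-signed pair $y^{\mp}_0, y^{\mp}_1$, which is exactly one of the cubic equitable relations; the pairing $x_0, x_2 \mapsto y^-_0, y^-_1$ (even, minus sign) and $x_1, x_3 \mapsto y^+_0, y^+_1$ (odd, plus sign) guarantees the signs never mix.

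There is no serious obstacle here; the content is a direct relation-by-relation check, and the interest lies in seeing that the defining data of $\widehat{\square}_q$ are tailored to the equitable presentation. The only point requiring care is the bookkeeping of indices: the cyclic structure of $\widehat{\square}_q$ carries the pair $(x_i, x_{i+1})$ across the boundary $i=3 \to i+1=0$, and one must confirm that the $i=3$ instance of \eqref{eq:central}, namely the ordering $x_3 x_0 \mapsto y^+_1 y^-_0$, lands on the cross relation so that the right-hand side is $k^{-1}_0 k^{-1}_1$ rather than $1$. Once the four values $\widehat{\sigma}(c_i)$ are seen to be forced in this way by \eqref{eq:central}, the existence of $\widehat{\sigma}$ follows from the universal property.
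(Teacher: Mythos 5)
Your proof is correct and follows exactly the paper's approach: the paper's own proof of Lemma \ref{lem:sigmai} simply says to compare the defining relations (\ref{eq:Box1})--(\ref{eq:Box4}) of $\widehat\square_q$ with the equitable presentation of $U_q(\widehat{\mathfrak{sl}}_2)$ in Lemma \ref{thm:equitUq}, which is precisely the relation-by-relation check you carry out in detail (including the index bookkeeping at $i=3$ and the centrality of $(k_0k_1)^{-1}$).
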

\begin{proof} Compare the defining relations
(\ref{eq:Box1})--(\ref{eq:Box4})
for $\widehat \square_q$, with the defining relations
for 
$U_q(\widehat {\mathfrak {sl}}_2)$ given in Lemma
\ref{thm:equitUq}.
\end{proof}

\begin{lemma}
\label{lem:sigmaiT}
There exists an $\mathbb F$-algebra
homomorphism $ \sigma: 
U_q(\widehat {\mathfrak {sl}}_2) \to
\boxtimes_q$ 
such that 
\bigskip

\centerline{
\begin{tabular}[t]{c||cccc|cccc}
$u$ & $y^-_0$ & $y^+_0$ & $y^-_1$ 
& $y^+_1$ & $k_0$ & $k^{-1}_0$ & $k_1$ & $k^{-1}_1$
   \\ \hline  
$\sigma(u)$ 
& $x_{30}$ & $x_{01}$ & $x_{12}$ & $x_{23}$
& $x_{13}$ & $x_{31}$ & $x_{31}$ & $x_{13}$
\\
     \end{tabular}}
\medskip

\end{lemma}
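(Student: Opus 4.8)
The plan is to invoke the equitable presentation of $U_q(\widehat{\mathfrak{sl}}_2)$ recorded in Lemma \ref{thm:equitUq}. Since that presentation has generators $y^{\pm}_i, k^{\pm 1}_i$ ($i \in \lbrace 0,1\rbrace$) together with the explicit list of relations given there, to produce $\sigma$ it suffices to check that the proposed images in $\boxtimes_q$ of these generators satisfy each of those relations. Once this is verified, the universal property of a presentation by generators and relations yields the desired $\mathbb{F}$-algebra homomorphism $\sigma: U_q(\widehat{\mathfrak{sl}}_2) \to \boxtimes_q$.

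First I would organize the equitable relations into three families and match each family to one of the defining relations (\ref{eq:tet1})--(\ref{eq:tet3}) of $\boxtimes_q$ from Definition \ref{def:tet}. The relations $k_i k^{-1}_i = k^{-1}_i k_i = 1$ and the centrality of $k_0 k_1$ are handled by (\ref{eq:tet1}): indeed $\sigma(k_0)\sigma(k^{-1}_0) = x_{13}x_{31} = 1$ and $\sigma(k_1)\sigma(k^{-1}_1) = x_{31}x_{13}=1$, while $\sigma(k_0 k_1) = x_{13}x_{31} = 1$ is central. The $q$-Weyl-type relations involving the pairs $(y^+_i, k_i)$, $(k_i, y^-_i)$, $(y^-_i, y^+_i)$, and the mixed pair $(y^+_i, y^-_j)$ will all reduce, under $\sigma$, to instances of (\ref{eq:tet2}); the $q$-Serre relations in $y^{\pm}_i$ will reduce to instances of (\ref{eq:tet3}).

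The bulk of the work, and the only place where care is needed, is the bookkeeping of indices modulo $4$: for each equitable relation one must check that the resulting pair (or consecutive run) of symbols $x_{ij}$ meets the index hypothesis of the target relation. For example, $\frac{q y^+_0 k_0 - q^{-1} k_0 y^+_0}{q-q^{-1}} = 1$ maps to $\frac{q x_{01}x_{13} - q^{-1}x_{13}x_{01}}{q-q^{-1}}$, and here $(i,j,k) = (0,1,3)$ gives $(j-i,k-j) = (1,2)$, so (\ref{eq:tet2}) applies and the value is $1$. Likewise the mixed relation $\frac{q y^+_0 y^-_1 - q^{-1} y^-_1 y^+_0}{q-q^{-1}} = k^{-1}_0 k^{-1}_1$ has left side mapping to $\frac{q x_{01}x_{12} - q^{-1}x_{12}x_{01}}{q-q^{-1}} = 1$ (via (\ref{eq:tet2}) with $(0,1,2)$) and right side mapping to $x_{31}x_{13} = 1$ (via (\ref{eq:tet1})), so the two sides agree. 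For a $q$-Serre relation, the symbols occurring, say $x_{01}$ and $x_{23}$ for the $(y^+_0, y^+_1)$ relation, correspond to $(i,j,k,\ell) = (0,1,2,3)$ with $j-i=k-j=\ell-k=1$, exactly the hypothesis of (\ref{eq:tet3}); the three remaining Serre relations work out the same way after a cyclic shift of indices.

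I expect no genuine obstacle: every equitable relation matches a defining relation of $\boxtimes_q$ after the translation $y^-_0, y^+_0, y^-_1, y^+_1 \mapsto x_{30}, x_{01}, x_{12}, x_{23}$ and $k_0, k_1 \mapsto x_{13}, x_{31}$. The one subtlety to stay vigilant about is that $k_0$ and $k_1$ map to the \emph{mutually inverse} symbols $x_{13}$ and $x_{31}$, so that $\sigma(k_0 k_1)$ collapses to $1$; this is precisely what makes the mixed relation (whose right side is $k^{-1}_0 k^{-1}_1$) come out correctly. After completing these routine index verifications, the existence of $\sigma$ follows.
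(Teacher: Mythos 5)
Your proposal is correct and follows exactly the paper's approach: the paper's proof is simply ``compare the defining relations for $U_q(\widehat{\mathfrak{sl}}_2)$ given in Lemma \ref{thm:equitUq} with the defining relations for $\boxtimes_q$ given in Definition \ref{def:tet},'' and your argument is that comparison carried out explicitly, with the index bookkeeping (including the key point that $k_0, k_1$ map to the mutually inverse elements $x_{13}, x_{31}$ so that $\sigma(k_0^{-1}k_1^{-1})=1$) all verified correctly.
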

\begin{proof}
Compare the defining relations for
$U_q(\widehat {\mathfrak {sl}}_2)$ given in
Lemma 
\ref{thm:equitUq},
with the defining relations for
$\boxtimes_q$ given in
Definition
\ref{def:tet}.
\end{proof}

\begin{lemma}
\label{lem:backtoCan}
The following diagram commutes:

\begin{equation*}
\begin{CD}
\widehat \square_q @>\widehat \sigma > >
                      U_q(\widehat {\mathfrak {sl}}_2) 
	   \\ 
          @V can VV                   @VV \sigma V \\
            \square_q @>>can > 
             \boxtimes_q 
                   \end{CD}
\end{equation*}

\end{lemma}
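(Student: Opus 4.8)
The plan is to invoke the standard principle that two $\mathbb{F}$-algebra homomorphisms out of $\widehat\square_q$ coincide as soon as they agree on a set of generators. Every arrow in the square is an $\mathbb{F}$-algebra homomorphism, so the two compositions around the diagram, namely $\sigma \circ \widehat\sigma$ and the composite of the canonical maps $\widehat\square_q \to \square_q \to \boxtimes_q$, are both $\mathbb{F}$-algebra homomorphisms $\widehat\square_q \to \boxtimes_q$. By Definition \ref{def:boxqV1} the algebra $\widehat\square_q$ is generated by $\lbrace x_i\rbrace_{i\in\mathbb{Z}_4}$ together with $\lbrace c^{\pm 1}_i\rbrace_{i\in\mathbb{Z}_4}$, so it suffices to chase these generators around the diagram and check that the two resulting images agree in each case.

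First I would treat the generators $x_i$. Going down then across, the canonical homomorphism $\widehat\square_q\to\square_q$ fixes each $x_i$ (Lemma \ref{lem:can12}), and then the canonical homomorphism $\square_q\to\boxtimes_q$ sends $x_i\mapsto x_{i-1,i}$ (Lemma \ref{lem:mapn}); so the composite sends $x_i\mapsto x_{i-1,i}$. Going across then down, $\widehat\sigma$ sends $x_0,x_1,x_2,x_3$ to $y^-_0,y^+_0,y^-_1,y^+_1$ (Lemma \ref{lem:sigmai}), and $\sigma$ carries these to $x_{30},x_{01},x_{12},x_{23}$ respectively (Lemma \ref{lem:sigmaiT}). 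These are exactly $x_{i-1,i}$ for $i=0,1,2,3$, so the two paths agree on every $x_i$.

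Next I would treat the central generators $c^{\pm 1}_i$. Along the canonical route, $c^{\pm 1}_i\mapsto 1$ under $\widehat\square_q\to\square_q$ (Lemma \ref{lem:can12}) and $1\mapsto 1$ under $\square_q\to\boxtimes_q$, so the composite image is $1$. Along the other route, $\widehat\sigma$ sends $c_0,c_2\mapsto 1$ (giving image $1$ immediately) and $c_1,c_3\mapsto k^{-1}_0 k^{-1}_1$. Here the one point needing care is that $\sigma(k^{-1}_0 k^{-1}_1)=1$ in $\boxtimes_q$: by Lemma \ref{lem:sigmaiT} we have $\sigma(k^{-1}_0)=x_{31}$ and $\sigma(k^{-1}_1)=x_{13}$, and the inversion relation (\ref{eq:tet1}), applied with $j-i=2$, yields $x_{31}x_{13}=1$. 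Thus both compositions send every $c^{\pm 1}_i$ to $1$.

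Having matched both compositions on all generators, we conclude that $\sigma\circ\widehat\sigma$ equals the composite of the two canonical homomorphisms, so the diagram commutes. There is no genuine obstacle in this argument; the only step demanding attention is the bookkeeping for $c_1,c_3$, where one must use the $\boxtimes_q$ inversion relation (\ref{eq:tet1}) to see that the product $\sigma(k^{-1}_0)\sigma(k^{-1}_1)=x_{31}x_{13}$ collapses to $1$, thereby matching the image $1$ produced along the canonical route.
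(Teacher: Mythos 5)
Your proof is correct and follows exactly the paper's approach: both arguments note that all maps are $\mathbb F$-algebra homomorphisms and then chase the generators $x_i$, $c^{\pm 1}_i$ around the diagram using Lemmas \ref{lem:can12}, \ref{lem:mapn}, \ref{lem:sigmai}, \ref{lem:sigmaiT}. You simply make explicit the detail the paper leaves to the reader, namely that $\sigma(k^{-1}_0)\sigma(k^{-1}_1)=x_{31}x_{13}=1$ by the inversion relation (\ref{eq:tet1}), which is precisely the right justification.
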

\begin{proof} 
Each map in the diagram is an $\mathbb F$-algebra homomorphism.
To verify that the diagram commutes, chase the 
$\widehat \square_q $
generators $x_i, c^{\pm 1}_i$ $(i \in \mathbb Z_4)$ around the diagram using
Definitions
\ref{def:cancan},
\ref{def:canonBoxTet}
and
Lemmas \ref{lem:sigmai},
\ref{lem:sigmaiT}.
\end{proof}


\noindent We now obtain
the homomorphism $\mathcal O \to
U_q(\widehat {\mathfrak {sl}}_2)$ due to  Kolb
 \cite[Example~7.6]{kolb}.

\begin{proposition}
\label{prop:Kolb}
{\rm (See \cite[Example~7.6]{kolb}.)}
For
$i \in \lbrace 0,1\rbrace$ pick
$s_i\in \mathbb F$ and $0 \not= \gamma_i \in \mathbb F$.
Then 
for $ U_q(\widehat {\mathfrak {sl}}_2)$
the elements
\begin{eqnarray}
B_i = e^-_i -\gamma_i e^+_i k^{-1}_i + s_i k^{-1}_i
\qquad \qquad i \in \lbrace 0,1\rbrace
\label{eq:BiKolb}
\end{eqnarray}
satisfy
\begin{eqnarray}
&&
B_0^3 B_1 - \lbrack 3 \rbrack_q B_0^2 B_1 B_0 +
\lbrack 3 \rbrack_q B_0 B_1 B_0^2 -B_1 B_0^3 = q(q+q^{-1})^2 \gamma_0 
(B_1B_0-B_0B_1),
\label{eq:AAABKolb}
\\
&&
B_1^3 B_0 - \lbrack 3 \rbrack_q B_1^2 B_0 B_1 +
\lbrack 3 \rbrack_q B_1 B_0 B_1^2 -B_0 B_1^3 = 
q(q+q^{-1})^2 \gamma_1(B_0B_1-B_1B_0).
\label{eq:BBBAKolb}
\end{eqnarray}
\end{proposition}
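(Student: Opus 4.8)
The plan is to recognise the elements $B_0,B_1$ inside the framework built above, using the homomorphism $\widehat\sigma$ of Lemma \ref{lem:sigmai} together with the $q$-Dolan/Grady-type identities of Corollary \ref{prop:ABxyiLong}. First I would rewrite each $B_i$ in the equitable presentation of Lemma \ref{thm:equitUq}: substituting $e^-_i=(y^-_i-k^{-1}_i)(q-q^{-1})^{-1}$ and $e^+_i=(1-k_iy^+_i)q^{-1}(q-q^{-1})^{-1}$, a short linear-algebra computation expresses $B_i$ as a linear combination of $y^-_i$, $y^+_i$ and $k^{-1}_i$. Matching the coefficients of $e^-_i$ and $e^+_ik^{-1}_i$ against $\widehat\sigma(\alpha_{2i}x_{2i}+\alpha_{2i+1}x_{2i+1})$ forces $\alpha_0=(q-q^{-1})^{-1}$, $\alpha_1=q\gamma_0(q-q^{-1})^{-1}$ (and likewise $\alpha_2,\alpha_3$ with $\gamma_1$), and it pins down a distinguished value $s_i^{\ast}=(q-q^{-1})^{-1}(1+q\gamma_i)$ of the parameter. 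Writing $B_i^{\ast}$ for $B_i$ at $s_i=s_i^{\ast}$, I then have $\widehat\sigma(\alpha_0x_0+\alpha_1x_1)=B_0^{\ast}$ and $\widehat\sigma(\alpha_2x_2+\alpha_3x_3)=B_1^{\ast}$, since $\widehat\sigma(c_0)=\widehat\sigma(c_2)=1$.

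For these special parameters the conclusion is immediate: applying the algebra homomorphism $\widehat\sigma$ to the two relations (\ref{eq:qDGLong1}), (\ref{eq:qDGLong2}) of Corollary \ref{prop:ABxyiLong} yields the relations (\ref{eq:AAABKolb}), (\ref{eq:BBBAKolb}) for $B_0^{\ast},B_1^{\ast}$. The one thing to check is that the scalar on the right transforms correctly, namely $(q^2-q^{-2})^2\alpha_0\alpha_1=q(q+q^{-1})^2\gamma_0$ and $(q^2-q^{-2})^2\alpha_2\alpha_3=q(q+q^{-1})^2\gamma_1$; these reduce to the factorisation $(q^2-q^{-2})^2=(q-q^{-1})^2(q+q^{-1})^2$ together with $\alpha_0\alpha_1=q\gamma_0(q-q^{-1})^{-2}$ and $\alpha_2\alpha_3=q\gamma_1(q-q^{-1})^{-2}$. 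Thus both relations hold at $s_i=s_i^{\ast}$ with no extra work beyond transporting Corollary \ref{prop:ABxyiLong} through $\widehat\sigma$.

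The remaining, and main, task is to remove the restriction $s_i=s_i^{\ast}$. I would write $B_i=B_i^{\ast}+t_ik^{-1}_i$ with $t_i=s_i-s_i^{\ast}$ and show the relations are unchanged under this shift. This cannot be arranged by an automorphism — none of the maps of Lemmas \ref{lem:omegaAut}, \ref{lem:autZeta}, \ref{lem:uqsl2Aut} adds a multiple of $k^{-1}_i$ to $B_i^{\ast}$, and the element $k^{-1}_i$ lies outside $\widehat\sigma(\widehat\square_q)$ — so a direct computation in $U_q(\widehat{\mathfrak{sl}}_2)$ is required. The key structural fact is an eigenspace decomposition for conjugation by $k^{-1}_i$: the summands $e^-_i$, $s_ik^{-1}_i$, $-\gamma_ie^+_ik^{-1}_i$ of $B_i$ are eigenvectors with eigenvalues $q^2,1,q^{-2}$, while under conjugation by $k^{-1}_j$ with $j\neq i$ the eigenvalues are $q^{-2},1,q^2$. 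The governing identity is that the $q$-Serre form $z^3w-\lbrack 3\rbrack_q z^2wz+\lbrack 3\rbrack_q zwz^2-wz^3$ vanishes whenever $w$ is a $q^{\mu}$-eigenvector for conjugation by $z$ with $\mu\in\{2,0,-2\}$, since $q^{3\mu}-\lbrack 3\rbrack_q q^{2\mu}+\lbrack 3\rbrack_q q^{\mu}-1=(q^{\mu}-q^2)(q^{\mu}-1)(q^{\mu}-q^{-2})$ has its roots exactly there. Taking $z=k^{-1}_i$ shows that the $t_i^{3}$ contributions produced by the shift vanish outright, which makes the deformation tractable.

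What then remains is to collect the $t_i^{1}$ and $t_i^{2}$ contributions to the left-hand side and match them against the change induced in the right-hand commutator $q(q+q^{-1})^2\gamma_i(B_jB_i-B_iB_j)$, using the commutation rules of Definition \ref{def:Chevalley} together with the fact that $k^{-1}_0,k^{-1}_1$ commute. I expect this term-by-term bookkeeping to be the principal obstacle: it is the only genuinely computational part, and since the shift lies outside the image of $\widehat\sigma$ the $\square_q$ machinery organizes but does not bypass it. Nevertheless it is a finite linear-algebra calculation controlled by the vanishing identity above, and the required cancellation is forced once the special case $s_i=s_i^{\ast}$ is in hand.
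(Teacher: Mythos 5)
Your reduction to the special case $s_i=s_i^{\ast}$ is correct and agrees with the paper's mechanism (transport Corollary \ref{prop:ABxyiLong} through a homomorphism built from $\widehat\sigma$, checking the scalar via $(q^2-q^{-2})^2=(q-q^{-1})^2(q+q^{-1})^2$), and your observation that $t^3-\lbrack 3\rbrack_q t^2+\lbrack 3\rbrack_q t-1=(t-1)(t-q^2)(t-q^{-2})$ kills the top-degree shift terms is valid. But the proposal has a genuine gap precisely where you say the main work lies: the $t_i^{1}$ and $t_i^{2}$ contributions are never computed, and the closing claim that ``the required cancellation is forced once the special case $s_i=s_i^{\ast}$ is in hand'' is not an argument --- the special case does not force the general one. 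As written, the proposition is proved only for the single parameter value $s_i=s_i^{\ast}$.

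The gap stems from a missed degree of freedom. You assert that matching coefficients against $\widehat\sigma(\alpha_0x_0+\alpha_1x_1)$ \emph{forces} $\alpha_0=(q-q^{-1})^{-1}$ and $\alpha_1=q\gamma_0(q-q^{-1})^{-1}$, but that is only true if you use $\widehat\sigma$ (composed with $\tau$) alone. The paper inserts the rescaling automorphism $\xi(\varepsilon_0,\varepsilon_1)$ of Lemma \ref{lem:uqsl2Aut} into the composition $\zeta=\xi(\varepsilon_0,\varepsilon_1)\circ\tau\circ\widehat\sigma$, with $\varepsilon_0=\alpha_0(q-q^{-1})$, $\varepsilon_1=\alpha_2(q-q^{-1})$. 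This frees $\alpha_0$ to be arbitrary (nonzero), subject only to the product constraint $\alpha_0\alpha_1=\gamma_0 q(q-q^{-1})^{-2}$ coming from the $e^+_0k^{-1}_0$ coefficient; since both $y^-_0$ and $y^+_0$ contribute a $k^{-1}_0$ term, the $k^{-1}_0$ coefficient of $\zeta(\alpha_0x_0+\alpha_1x_1)$ is the \emph{sum} $\alpha_0+\alpha_1$. Passing to the algebraic closure of $\mathbb F$, one may take $\alpha_0,\alpha_1$ to be the two roots of $\lambda^2-s_0\lambda+\gamma_0q(q-q^{-1})^{-2}$, so that $\alpha_0+\alpha_1=s_0$ exactly; similarly for $\alpha_2,\alpha_3$ and $s_1$. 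With this choice $\zeta$ sends $A\mapsto B_0$, $B\mapsto B_1$, $c_0,c_2\mapsto 1$ for \emph{arbitrary} $s_0,s_1$, and the whole proposition follows from Corollary \ref{prop:ABxyiLong} with no shift argument and no residual bookkeeping. If you want to salvage your route instead, you must actually carry out the degree-one and degree-two computations in $t_i$ (the degree-one terms must reproduce $q(q+q^{-1})^2\gamma_0(B_1k^{-1}_0-k^{-1}_0B_1)$ and the degree-two terms must vanish); until that is done, the proof is incomplete.
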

\begin{proof}
Let $\lambda$ denote an indeterminate.
Replacing $\mathbb F$ by its algebraic closure if necessary, we 
may assume without loss that 
$\mathbb F$ is algebraically closed.
There exist scalars 
$\lbrace \alpha_i \rbrace_{i \in \mathbb Z_4}$  in $\mathbb F$
such that
$\alpha_0, \alpha_1$ are the roots of the
polynomial
\begin{eqnarray*}
\lambda^2 - s_0 \lambda + \gamma_0 q (q-q^{-1})^{-2}
\end{eqnarray*}
and $\alpha_2, \alpha_3$ are the roots of the polynomial 
\begin{eqnarray*}
\lambda^2 - s_1 \lambda + \gamma_1 q (q-q^{-1})^{-2}.
\end{eqnarray*}
By construction
\begin{eqnarray*}
&&
\alpha_0 + \alpha_1 
= 
s_0,
\qquad \qquad 
\alpha_0 \alpha_1 = \gamma_0 q (q-q^{-1})^{-2},
\\
&&
\alpha_2 + \alpha_3 
= 
s_1,
\qquad \qquad 
\alpha_2 \alpha_3 = \gamma_1 q (q-q^{-1})^{-2}.
\end{eqnarray*}
Note that $\alpha_i \not=0$ for $i \in \mathbb Z_4$.
Define
\begin{eqnarray*}
\varepsilon_0 = \alpha_0 (q-q^{-1}),
\qquad \qquad 
\varepsilon_1 = \alpha_2 (q-q^{-1}).
\end{eqnarray*} 
Note that $\varepsilon_0 \not=0$ and $\varepsilon_1 \not=0$.
In the algebra $\widehat \square_q$ define
\begin{eqnarray*}
A = \alpha_0 x_0 + \alpha_1 x_1, \qquad \qquad
B = \alpha_2 x_2 + \alpha_3 x_3.
\end{eqnarray*}
Then
$A,B$ satisfy
(\ref{eq:qDGLong1}), 
(\ref{eq:qDGLong2}) by
Corollary
\ref{prop:ABxyiLong}.
 Consider the composition
\begin{eqnarray}
\label{eq:cdlong}
\begin{CD} 
\zeta: \quad \widehat \square_q  @>>  \widehat \sigma >  
U_q(\widehat {\mathfrak {sl}}_2)
@>> \tau > 
U_q(\widehat {\mathfrak {sl}}_2)
               @>>
	      \xi(\varepsilon_0,\varepsilon_1) 
	       > 
U_q(\widehat {\mathfrak {sl}}_2),
		  \end{CD}
\end{eqnarray}
where $\widehat \sigma$ is from
Lemma
\ref{lem:sigmai}, 
$\tau$ is from
Definition
\ref{def:tau},
and 
$\xi(\varepsilon_0,\varepsilon_1)$ is from
Definition \ref{def:xi}.
By construction $\zeta:\widehat \square_q \to
U_q(\widehat {\mathfrak {sl}}_2)$ is an $\mathbb F$-algebra homomorphism.
Using (\ref{eq:cdlong}) and
 $k_0 e^+_0 = q^{2} e^+_0 k_0$ we find that
$\zeta$ sends $A\mapsto B_0$. Similarly $\zeta$ sends
 $B \mapsto B_1$.
By 
Lemma
\ref{lem:sigmai}, 
$\widehat \sigma$ sends $c_0\mapsto 1$ and $c_2 \mapsto 1$. Therefore
$\zeta$ sends
$c_0\mapsto 1$ and $c_2 \mapsto 1$.
Applying $\zeta$ to each side of
(\ref{eq:qDGLong1}), 
(\ref{eq:qDGLong2})
we  obtain
(\ref{eq:AAABKolb}),
(\ref{eq:BBBAKolb}).
\end{proof}

\begin{proposition}
\label{prop:KolbB}
{\rm (See \cite[Example~7.6]{kolb}.)}
Referring to Proposition 
\ref{prop:Kolb}, assume that
\begin{eqnarray}
\label{eq:gammaChoice}
\gamma_0 = q^{-1}(q-q^{-1})^2, \qquad \qquad
\gamma_1 = q^{-1}(q-q^{-1})^2.
\end{eqnarray}
Then there exists an $\mathbb F$-algebra homomorphism
$\mathcal O \to 
U_q(\widehat {\mathfrak {sl}}_2)$ that sends
$A\mapsto B_0$ and
$B\mapsto B_1$.
This homomorphism is injective.
\end{proposition}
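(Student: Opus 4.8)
The plan is to treat existence and injectivity separately, leaning entirely on the machinery already built around $\widehat \square_q$. For existence I would first note that the choice $\gamma_0=\gamma_1=q^{-1}(q-q^{-1})^2$ is engineered precisely so that the right-hand coefficient in (\ref{eq:AAABKolb}), (\ref{eq:BBBAKolb}) collapses to the $q$-Onsager value:
\[
q(q+q^{-1})^2\gamma_i=(q+q^{-1})^2(q-q^{-1})^2=(q^2-q^{-2})^2 .
\]
Hence, by Proposition \ref{prop:Kolb}, the elements $B_0,B_1$ satisfy exactly the $q$-Dolan/Grady relations (\ref{eq:AAAB}), (\ref{eq:BBBA}). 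Since $\mathcal O$ is presented by generators $A,B$ subject to these relations (Definition \ref{def:qOnsager}), the universal property yields an $\mathbb F$-algebra homomorphism $\kappa:\mathcal O\to U_q(\widehat {\mathfrak {sl}}_2)$ sending $A\mapsto B_0$ and $B\mapsto B_1$.

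For injectivity my strategy is to realize $\kappa$ as a composite $\kappa=\zeta\circ\natural$, where $\natural:\mathcal O\to\widehat \square_q$ is the map of Proposition \ref{prop:ABxyiLonger} (injective by Theorem \ref{thm:naturalINJ}) and $\zeta$ is the composite (\ref{eq:cdlong}). The subtle point is matching the two parametrizations. The proof of Proposition \ref{prop:Kolb} uses \emph{scalar} roots $\beta_i$ with $\beta_0\beta_1=1=\beta_2\beta_3$ and records $\zeta(\beta_0x_0+\beta_1x_1)=B_0$, whereas $\natural$ requires $\widehat C$-elements $\alpha_i$ subject to $\alpha_0\alpha_1c_0=1$ and $\alpha_2\alpha_3c_2=1$. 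I would reconcile these by setting $\alpha_0=\beta_0$, $\alpha_1=\beta_1c_0^{-1}$, $\alpha_2=\beta_2$, $\alpha_3=\beta_3c_2^{-1}$, all lying in $\widehat C$; then the product conditions hold. Because $\widehat\sigma$ sends $c_0,c_2\mapsto 1$ (Lemma \ref{lem:sigmai}), one computes $\widehat\sigma(\natural(A))=\beta_0y^-_0+\beta_1y^+_0$, so $\zeta(\natural(A))=\xi(\varepsilon_0,\varepsilon_1)\,\tau(\beta_0y^-_0+\beta_1y^+_0)=B_0$, and likewise $\zeta(\natural(B))=B_1$. Comparing on the generators $A,B$ of $\mathcal O$ then gives $\kappa=\zeta\circ\natural$.

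Finally, since $\zeta=\xi(\varepsilon_0,\varepsilon_1)\circ\tau\circ\widehat\sigma$ with $\xi(\varepsilon_0,\varepsilon_1)$ and $\tau$ automorphisms of $U_q(\widehat {\mathfrak {sl}}_2)$, the map $\kappa=\xi(\varepsilon_0,\varepsilon_1)\circ\tau\circ(\widehat\sigma\circ\natural)$ is injective if and only if $\widehat\sigma\circ\natural$ is injective. To establish the latter I would post-compose with $\sigma:U_q(\widehat {\mathfrak {sl}}_2)\to\boxtimes_q$ and invoke the commuting square of Lemma \ref{lem:backtoCan} together with Lemma \ref{lem:naturalSharp}:
\[
\sigma\circ\widehat\sigma\circ\natural=\mathrm{can}\circ\mathrm{can}\circ\natural=\mathrm{can}\circ\sharp,
\]
where the two canonical maps are $\widehat \square_q\to\square_q$ and $\square_q\to\boxtimes_q$. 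The right side is injective because $\sharp$ is injective (Theorem \ref{thm:xiInj}) and $\mathrm{can}:\square_q\to\boxtimes_q$ is injective (Proposition \ref{prop:sBI}); therefore $\widehat\sigma\circ\natural$ is injective, and hence so is $\kappa$.

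I expect the main obstacle to be exactly the bookkeeping of the middle paragraph: pinning down $\alpha_i\in\widehat C$ that simultaneously make $\natural$ well defined \emph{and} yield $\zeta\circ\natural=\kappa$. The friction is that the central element $c_0$ enters the defining condition for $\natural$ but is absent from the scalar relation $\beta_0\beta_1=1$ used to produce $B_0,B_1$; the bridge is the vanishing $\widehat\sigma(c_0)=\widehat\sigma(c_2)=1$. Once the factorization $\kappa=\zeta\circ\natural$ is secured, injectivity follows immediately from the already-established commuting diagrams, with no further computation.
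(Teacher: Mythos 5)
Your proof is correct and follows essentially the same route as the paper: existence comes from the collapse of the coefficients in (\ref{eq:AAABKolb}), (\ref{eq:BBBAKolb}) to $(q^2-q^{-2})^2$, and injectivity is obtained by transporting the homomorphism into $\boxtimes_q$ through the automorphisms $\tau$, $\xi$ and the map $\sigma$, reducing everything to the injectivity of $\mathcal O \to \boxtimes_q$ (Theorem \ref{thm:xiInj} together with Proposition \ref{prop:sBI}, i.e.\ Proposition \ref{cor:tetqmain}). Your middle paragraph, factoring $\kappa=\zeta\circ\natural$ via the $\widehat C$-valued choices $\alpha_1=\beta_1 c_0^{-1}$, $\alpha_3=\beta_3 c_2^{-1}$ and the diagram Lemmas \ref{lem:backtoCan}, \ref{lem:naturalSharp}, is simply an explicit rendering of the step the paper leaves as ``one checks that $\eta$ coincides with the homomorphism from Proposition \ref{cor:tetqmain}.''
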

\begin{proof} The desired $\mathbb F$-algebra
homomorphism
$\mathcal O \to U_q(\widehat {\mathfrak {sl}}_2)$ 
exists, since under the assumption 
(\ref{eq:gammaChoice}) the relations
(\ref{eq:AAABKolb}),
(\ref{eq:BBBAKolb})
become the $q$-Dolan/Grady relations.
Call the above homomorphism $\partial$.
We show that $\partial$ is injective.
Consider the composition
\begin{eqnarray*}
\begin{CD} 
\eta: 
\quad
\mathcal O @>> \partial >  
U_q(\widehat {\mathfrak {sl}}_2)
@>> \xi(\varepsilon^{-1}_0, \varepsilon^{-1}_1) > 
U_q(\widehat {\mathfrak {sl}}_2)
               @>>
	      \tau^{-1} 
	       > 
U_q(\widehat {\mathfrak {sl}}_2)
@>> \sigma >  \boxtimes_q,
		  \end{CD}
\end{eqnarray*}
where 
$\xi(\varepsilon^{-1}_0,\varepsilon^{-1}_1)$ is from
Definition \ref{def:xi},
$\tau$ is from
Definition
\ref{def:tau}, and
$\sigma$ is from
Lemma
\ref{lem:sigmaiT}.
By construction
$\eta:\mathcal O
\to
\boxtimes_q$
is an $\mathbb F$-algebra homomorphism.
One checks that $\eta$
coincides with the $\mathbb F$-algebra homomorphism
$\mathcal O \to \boxtimes_q$ from Proposition
\ref{cor:tetqmain}, where $a=\alpha_0$ and $b=\alpha_2$.
The map from Proposition
\ref{cor:tetqmain} is injective, 
so $\partial$ is injective.
\end{proof}

\begin{note}\rm
In 
\cite[Example~7.6]{kolb} Kolb assumes
 that $\mathbb F$ has
characteristic zero and $q$ is not a root of unity.
We emphasize that
Propositions
\ref{prop:Kolb} and
\ref{prop:KolbB} hold without this assumption.
\end{note}



\section{Directions for future research}
\noindent In this section we give some suggestions for future research.

\begin{problem}\rm
For the $\mathbb F$-algebras 
$\widetilde \square_q$,
$\widehat \square_q$, $\square_q$ find their
automorphism group.
\end{problem}

\begin{problem}\rm
The Lusztig automorphisms of  $U_q(\widehat {\mathfrak {sl}}_2)$ are
described in
\cite[p.~294]{damiani}. Find analogous automorphisms for
$\widetilde \square_q$,
$\widehat \square_q$, $\square_q$.
\end{problem}

\begin{problem}\rm
\label{prob:N}
Referring to  the algebra $\square_q$, for
$i \in \mathbb Z_4$ define
\begin{eqnarray*}
N_i = \frac{q(1-x_ix_{i+1})}{q-q^{-1}} =
\frac{q^{-1}(1-x_{i+1}x_i)}{q-q^{-1}}.
\end{eqnarray*}
Note that $N_i x_i = q^{2}x_i N_i$ and
$N_i x_{i+1} = q^{-2}x_{i+1} N_i$.
Determine how $N_i$ is related to $x_{i+2}$ and $x_{i+3}$.
\end{problem}

\begin{problem}\rm
Referring to Problem \ref{prob:N},
determine how $N_i$, $N_j$ are
related for $i,j \in \mathbb Z_4$.
\end{problem}

\begin{problem}\rm
Referring to Problem \ref{prob:N},
consider the $q$-exponential
$E_i={\rm exp}_q (N_i)$.   
For $u \in \square_q$ compute
$E_i u E_i^{-1}$ and determine if the result is contained in
$\square_q$. If it always is, then conjugation by $E_i$ gives an
automorphism of $\square_q$. In this case, describe the subgroup of
${\rm Aut}(\square_q)$ generated by
$\lbrace E^{\pm 1}_i\rbrace_{i \in \mathbb Z_4}$.
\end{problem}

\begin{conjecture}\rm
Let $V$ denote a finite-dimensional irreducible
 $\widetilde \square_q$-module. Then 
$V$ becomes a $\boxtimes_q$-module
such that for $i \in \mathbb Z_4$
the action of $x_i$ on $V$ is a scalar multiple of 
the action of $x_{i-1,i}$ on $V$. The
$\boxtimes_q$-module $V$ is  irreducible.
\end{conjecture}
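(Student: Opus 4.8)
The plan is to route the argument through the intermediate algebra $\widehat\square_q$ and then transport the module along the maps relating $\widehat\square_q$, $U_q(\widehat{\mathfrak{sl}}_2)$ and $\boxtimes_q$. First I would reduce to the case that $\mathbb F$ is algebraically closed (extending scalars to a splitting field and then handling descent separately) and, for the eigenvalue arguments below, that $q$ is not a root of unity. Under these hypotheses each central generator $c^{\pm1}_i$ acts on the finite-dimensional irreducible $V$ as a nonzero scalar $\gamma_i$ by Schur's lemma, so $V$ factors through the quotient of $\widetilde\square_q$ by the ideal generated by $\{c_i-\gamma_i\}_{i\in\mathbb Z_4}$.

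Next I would show that the $q$-Serre elements $S_i$ of Definition \ref{def:Deltai} act as zero on $V$. The key leverage is Proposition \ref{lem:deltaCom}, giving $x_{i+1}S_i = q^4 S_i x_{i+1}$ and $x_{i-1}S_i = q^{-4}S_i x_{i-1}$. Because $V$ is finite dimensional and $q$ is not a root of unity, these relations severely constrain $S_i$: for example $x_{i+1}S_i$ and $S_i x_{i+1}$ are forced to be nilpotent, and I would combine the two relations with a weight-space analysis of $V$ (with respect to the $\mathbb Z$-grading in which $x_0,x_2$ carry degree $+1$ and $x_1,x_3$ degree $-1$) to conclude $S_i=0$. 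Once every $S_i$ vanishes, $V$ descends to a module for $\widehat\square_q$ by Lemma \ref{lem:canKer}.

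I expect the genuine obstacle to live at this stage, in two parts. The first is the \emph{balance condition} $\gamma_0\gamma_2 = \gamma_1\gamma_3$: this is automatic for one-dimensional modules (there $\gamma_0\gamma_2$ and $\gamma_1\gamma_3$ are the same product of four commuting scalars) but is a real constraint in higher dimensions, and I would try to derive it from the interplay of the four reduction rules of Lemma \ref{lem:RR} acting on the finite-dimensional $V$, again using that $q$ is not a root of unity. Granting balance, a twist by the automorphism $\widehat g(\alpha_0,\alpha_1,\alpha_2,\alpha_3)$ of Lemma \ref{lem:xiadj} with $\alpha_i\alpha_{i+1}\gamma_i=1$ rescales the $x_i$ so that all four $q$-commutators become $1$; this is exactly the normalization present in $\square_q$, so $V$ becomes a $\square_q$-module, which embeds in $\boxtimes_q$ by Proposition \ref{prop:sBI}. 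The second and harder part is to extend this $\square_q$-action along the injection $\square_q\hookrightarrow\boxtimes_q$, that is, to define the action of the diagonal generators $x_{i,i+2}$ so that all relations of Definition \ref{def:tet} hold.

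For this extension I would construct the diagonal operators from the Cartan-type elements $N_i$ of Problem \ref{prob:N}, which satisfy $N_i x_i = q^2 x_i N_i$ and $N_i x_{i+1}=q^{-2}x_{i+1}N_i$, precisely matching how the image of $x_{i,i+2}$ (a Cartan element $k_i^{\pm1}$ under the map $\sigma$ of Lemma \ref{lem:sigmaiT}) acts in $U_q(\widehat{\mathfrak{sl}}_2)$. Concretely I would verify that the $q$-exponential $E_i=\exp_q(N_i)$ is a well-defined invertible operator on the finite-dimensional $V$ and let $x_{i,i+2}$ act (up to the same scalars) by $E_i^{\pm1}$; the relations (i)--(iii) of Definition \ref{def:tet} then reduce to the commutation identities of Problem \ref{prob:N} together with the now-established $q$-Serre relations, and the bookkeeping is cleanest when phrased through the commuting diagram of Lemma \ref{lem:backtoCan}, so that the $\boxtimes_q$-action genuinely extends the $\square_q$-action. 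Irreducibility of the resulting $\boxtimes_q$-module is then immediate: the edge generators $x_{i-1,i}$ act as nonzero scalar multiples of the $x_i$ and the $c_i$ act as scalars, so the subalgebra generated by the edges already acts irreducibly on $V$, hence so does $\boxtimes_q$. The chief difficulties remain the balance condition $\gamma_0\gamma_2=\gamma_1\gamma_3$ and the invertibility of $E_i$, since these are exactly where finite-dimensionality must be converted into the rigidity enjoyed by $\boxtimes_q$-modules.
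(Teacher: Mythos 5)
First, a point of comparison: there is no proof in the paper to measure your proposal against. The statement is one of the conjectures in the closing section on directions for future research, and the paper offers no argument for it. Your proposal must therefore stand on its own, and as written it is a research plan rather than a proof: its two central steps are exactly the ones you yourself flag as ``chief difficulties,'' and neither is carried out. Concretely, (i) the vanishing of the $q$-Serre elements $S_i$ on $V$: the relations of Proposition \ref{lem:deltaCom} together with finite-dimensionality yield (in characteristic zero, with $q$ not a root of unity) only that products such as $x_{i+1}S_i$ act nilpotently, via trace arguments, and there is no route from that to $S_i|_V=0$. Your proposed ``weight-space analysis with respect to the $\mathbb Z$-grading'' is not available: the $\mathbb Z$-grading of Proposition \ref{prop:Grading} is a grading of the algebra $\square_q$ (and is constructed only \emph{after} the $q$-Serre relations are imposed); it does not induce a weight decomposition of an abstract finite-dimensional $\widetilde\square_q$-module, and $\widetilde\square_q$ has no designated semisimple Cartan-type element whose eigenspaces could play that role. (ii) The balance condition $\gamma_0\gamma_2=\gamma_1\gamma_3$, which you correctly identify as necessary for the twist $\widehat g(\alpha_0,\alpha_1,\alpha_2,\alpha_3)$ of Lemma \ref{lem:xiadj} to normalize all central characters to $1$, is asserted to follow ``from the interplay of the four reduction rules'' of Lemma \ref{lem:RR}, but no derivation is given.

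The extension step is likewise unproven and, moreover, rests on statements that the paper itself lists as open problems. Letting the diagonal generators $x_{i,i+2}$ act by $q$-exponentials $E_i^{\pm 1}$ with $E_i=\exp_q(N_i)$ requires (a) that $\exp_q(N_i)$ be a well-defined invertible operator on $V$ (e.g.\ that $N_i$ act nilpotently), and (b) that the resulting operators satisfy relations (i) and (ii) of Definition \ref{def:tet}, i.e.\ the $q$-Weyl relations coupling diagonal and edge generators. Neither follows from $N_ix_i=q^2x_iN_i$ and $N_ix_{i+1}=q^{-2}x_{i+1}N_i$; how $N_i$ interacts with $x_{i+2}$, $x_{i+3}$, and how $N_i$, $N_j$ interact, are precisely the content of Problem \ref{prob:N} and the problems following it, which the paper leaves open. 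So your plan invokes unresolved statements as though they were available. In addition, the reduction to algebraically closed $\mathbb F$ with $q$ not a root of unity weakens the claim relative to the paper's blanket hypotheses (arbitrary field, arbitrary $q$ with $q^2\neq 1$), and the promised descent argument and root-of-unity case are never addressed. The one solid piece is the last one: granting all of the above, irreducibility of the resulting $\boxtimes_q$-module is indeed immediate, since the edge generators act as nonzero scalar multiples of the $x_i$ and $V$ is irreducible over $\widetilde\square_q$ with the $c_i$ acting as scalars. In short: a sensible strategy, consistent with how the paper's machinery (Lemma \ref{lem:canKer}, Lemma \ref{lem:xiadj}, Proposition \ref{prop:sBI}, Lemma \ref{lem:backtoCan}) would naturally be deployed, but the genuinely hard steps remain open --- which is presumably why the paper states this as a conjecture rather than a theorem.
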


\begin{conjecture}\rm Let $A,B$ denote a tridiagonal pair over $\mathbb F$
that has $q$-Racah type in the sense of
\cite[p.~259]{bockting}.
Then the underlying vector space $V$ becomes a $\square_q$-module
on which $A$ (resp. $B$) is a linear combination of $x_0, x_1$ 
(resp. 
$x_2, x_3$).
The $\square_q$-module $V$ is irreducible.
\end{conjecture}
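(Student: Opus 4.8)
The plan is to reduce the conjecture to the representation theory of the $q$-tetrahedron algebra $\boxtimes_q$, exploiting the canonical injection $\square_q \to \boxtimes_q$ established in Proposition \ref{prop:sBI}. First I would record that, since $A,B$ form a tridiagonal pair of $q$-Racah type, the pair satisfies the $q$-Dolan/Grady relations (\ref{eq:AAAB}), (\ref{eq:BBBA}), so $V$ is automatically an $\mathcal O$-module on which $A,B$ act as the generators. This observation alone is not enough: the homomorphism $\mathcal O \to \square_q$ of Proposition \ref{prop:ABxyiLongerM} points in the wrong direction to transport module structures, since it allows one to restrict $\square_q$-modules to $\mathcal O$-modules but not to promote an $\mathcal O$-module to a $\square_q$-module. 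The desired $\square_q$-action must therefore be built from the finer combinatorial data attached to the pair.

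The key step is to produce four operators $x_0,x_1,x_2,x_3 \in \mathrm{End}(V)$ satisfying the defining relations of $\square_q$. For this I would invoke the split decomposition $V = \bigoplus_{i=0}^{d} U_i$ of the tridiagonal pair together with the raising and lowering maps associated with the two eigenspace orderings of $A$ and of $B$. For a pair of $q$-Racah type these data are known to assemble into a finite-dimensional irreducible module for $\boxtimes_q$, in which the generators $x_{i-1,i}$ act through the raising/lowering maps while $x_{02},x_{20},x_{13},x_{31}$ act by the relevant scalars on the summands $U_i$; this is the substance of the Ito--Terwilliger analysis relating tridiagonal pairs to $U_q(\widehat{\mathfrak{sl}}_2)$ (cf. \cite{irt} and \cite{bockting}). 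Granting the $\boxtimes_q$-module structure, I restrict it along the canonical injection $\square_q \to \boxtimes_q$, under which $x_i \mapsto x_{i-1,i}$. This turns $V$ into a $\square_q$-module, and by Proposition \ref{cor:tetqmain} the operators $A = a x_{30}+a^{-1}x_{01}$ and $B = b x_{12}+b^{-1}x_{23}$ are exactly the actions of $a x_0 + a^{-1} x_1$ and $b x_2 + b^{-1} x_3$; after an optional rescaling via Lemma \ref{lem:aM} this exhibits $A$ as a linear combination of $x_0,x_1$ and $B$ as a linear combination of $x_2,x_3$, as required.

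Irreducibility of the resulting $\square_q$-module is then immediate: any $\square_q$-submodule $W \subseteq V$ is invariant under each $x_i$, hence under $A$ and $B$, and since a tridiagonal pair is by definition irreducible (no proper subspace of $V$ is invariant under both $A$ and $B$), we conclude $W=0$ or $W=V$. The main obstacle is the key step above, namely showing that the raising/lowering data of a $q$-Racah tridiagonal pair genuinely obey the relations (\ref{eq:tet1})--(\ref{eq:tet3}) of $\boxtimes_q$, or equivalently, after restriction, the $q$-Weyl relations $q x_i x_{i+1} - q^{-1} x_{i+1} x_i = q-q^{-1}$ between consecutive operators and the $q$-Serre relations (\ref{eq:qSerreM}) for the operators two steps apart. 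Verifying these requires the full eigenvalue and split-decomposition apparatus for $q$-Racah pairs, and carrying it out over an arbitrary field (in the generality of the present paper rather than under the usual hypotheses that $\mathbb F = \mathbb C$ and $q$ is not a root of unity) is precisely the part that lies beyond the linear-algebra methods developed here.
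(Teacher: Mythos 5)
This statement is posed in the paper as an open conjecture, in the final section on directions for future research; the paper contains no proof of it, so your argument must stand on its own, and it does not: there is a genuine gap at its center. The entire mathematical content of the conjecture is the construction of four operators $x_0,x_1,x_2,x_3$ on $V$ satisfying the defining relations (\ref{eq:centralM}), (\ref{eq:qSerreM}) of $\square_q$, with $A$ and $B$ the stated linear combinations. Your proposal never constructs them. Instead it asserts that the split decomposition and raising/lowering maps of a $q$-Racah tridiagonal pair ``are known to assemble into'' a finite-dimensional irreducible $\boxtimes_q$-module, citing \cite{irt} and \cite{bockting}; via the embedding $\square_q \to \boxtimes_q$ of Proposition \ref{prop:sBI}, that claim is at least as strong as the conjecture itself, so the proposal is circular unless the claim is actually a theorem. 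It is not stated as such in those references, and in any case the relevant literature (\cite{qRacahIT}, \cite{irt}, \cite{bockting}) works under the hypotheses that $\mathbb F$ is algebraically closed and $q$ is not a root of unity, whereas the present paper assumes only that $\mathbb F$ is a field and $q^2\neq 1$. Even under the classical hypotheses one would still have to verify that the raising/lowering operators obey the $q$-Weyl relation with the exact normalization of (\ref{eq:centralM}) and the $q$-Serre relations (\ref{eq:qSerreM}) -- a verification you explicitly defer. Your closing sentence concedes that this step ``lies beyond'' the available methods; since that step \emph{is} the conjecture, what you have written is a reduction and research plan, not a proof.

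The material surrounding the gap is sound and worth retaining. Your observation that $\sharp:\mathcal O \to \square_q$ points the wrong way -- it lets one restrict $\square_q$-modules to $\mathcal O$-modules but cannot promote the $\mathcal O$-module $V$ (which exists because a $q$-Racah pair satisfies (\ref{eq:AAAB}), (\ref{eq:BBBA})) to a $\square_q$-module -- is correct and rules out the one cheap argument a reader might hope for. Likewise, granting a module structure in which $A = a x_0 + a^{-1}x_1$ and $B = b x_2 + b^{-1}x_3$, your irreducibility argument is complete: a $\square_q$-submodule $W$ is invariant under each $x_i$, hence under $A$ and $B$, so $W=0$ or $W=V$ by the irreducibility axiom in the definition of a tridiagonal pair. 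In short, you have correctly shown that the conjecture is equivalent to your ``key step''; but that key step is exactly what remains open.
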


\begin{problem}\rm
For the $\mathbb F$-algebras 
$\widetilde \square_q$,
$\widehat \square_q$, $\square_q$ find their center.
\end{problem}


\section{Acknowledgments}
The author thanks 
      Pascal Baseilhac, Stefan Kolb,
       and Kazumasa Nomura 
      for giving this paper a close reading and offering valuable
      suggestions.

\noindent Paul Terwilliger \hfil\break
\noindent Department of Mathematics \hfil\break
\noindent University of Wisconsin \hfil\break
\noindent 480 Lincoln Drive \hfil\break
\noindent Madison, WI 53706-1388 USA \hfil\break
\noindent email: {\tt terwilli@math.wisc.edu }\hfil\break


\begin{thebibliography}{10}


\bibitem{bas2}
P.~Baseilhac.
\newblock An integrable structure related with tridiagonal
algebras.
\newblock {\em
Nuclear Phys. B}
 705
(2005)
605--619;
{\tt arXiv:math-ph/0408025}.

\bibitem{bas1}
P.~Baseilhac.
\newblock Deformed {D}olan-{G}rady relations in quantum integrable
models.
\newblock {\em
Nuclear Phys. B}
 709
(2005)
491--521;
{\tt arXiv:hep-th/0404149}.
 

\bibitem{BK05}
P.~Baseilhac and K.~Koizumi.
\newblock A new (in)finite dimensional algebra for
quantum integrable models.
\newblock {\em
 Nuclear Phys. B}  720  (2005) 325--347;
  {\tt arXiv:math-ph/0503036}.

  \bibitem{bas4}
 P.~Baseilhac and K.~Koizumi.
  \newblock
 A deformed analogue of Onsager's symmetry
  in the
 $XXZ$ open spin chain.
\newblock {\em
 J. Stat. Mech. Theory Exp.}  2005,  no. 10, P10005, 15 pp. (electronic);
  {\tt arXiv:hep-th/0507053}.

\bibitem{bas5}
 P.~Baseilhac.
 \newblock
 The $q$-deformed analogue of the Onsager algebra:
  beyond the Bethe ansatz approach.
 \newblock {\em Nuclear Phys. B}  754
 (2006) 309--328;
 {\tt arXiv:math-ph/0604036}.


    \bibitem{bas6}
    P.~Baseilhac.
    \newblock
    A family of tridiagonal pairs and related symmetric functions.
    \newblock
    {\em J. Phys. A}  39
    (2006) 11773--11791;
    {\tt arXiv:math-ph/0604035}.

   \bibitem{bas7}
    P.~Baseilhac and K.~Koizumi.
    \newblock
     Exact spectrum of the $XXZ$ open spin chain from the
      $q$-Onsager algebra representation theory.
        \newblock {\em  J. Stat. Mech. Theory Exp.}
     2007,  no. 9, P09006, 27 pp. (electronic);
       {\tt arXiv:hep-th/0703106}.

 \bibitem{bas8}
 P.~Baseilhac and S.~Belliard.
 \newblock
 Generalized $q$-Onsager algebras and boundary affine Toda field theories.
 \newblock {\em
 Lett. Math. Phys.} {\bf 93}  (2010)  213--228.


 \bibitem{basnc}
 P.~Baseilhac and K.~Shigechi.
 \newblock
 A new current algebra and the reflection equation.
 \newblock{\em
 Lett. Math. Phys. }
  92
 (2010)   47--65.

\bibitem{basXXZ}
P.~Baseilhac, S.~Belliard.
\newblock
The half-infinite XXZ chain in Onsager's approach.
\newblock
{\em 
Nuclear Phys. B} 873 (2013) 550--584. 

\bibitem{basKojima}
P.~Baseilhac, T.~Kojima.
\newblock
Correlation functions of the half-infinite XXZ 
spin chain with a triangular boundary. 
\newblock{\em
Nuclear Phys. B} 880 (2014) 378--413. 


\bibitem{beck3}
J.~Beck.
\newblock
Braid group action and quantum affine algebras.
\newblock {\em
Commun. Math. Phys.} 
165 (1994) 555--568.


\bibitem{beck2}
J.~Beck, V.~Chari, A.~Pressley.
\newblock
An algebraic characterization of the affine canonical basis.
\newblock
{\em
Duke Math. J.} 99 (1999) 455--487.


\bibitem{beck}
J.~Beck, H.~Nakajima.
\newblock
Crystal bases and two-sided cells of quantum affine
algebras.
\newblock
{\em Duke Math. J.} 123 (2004) 335--402.

\bibitem{bc}
S.~Belliard, N.~Crampe.
\newblock 
Coideal algebras from twisted Manin triples.
\newblock
{\em J. Geom. Phys.} 62 (2012) 2009--2023;
{\tt arXiv:1202.2312}


\bibitem{bockting}
S.~Bockting-Conrad.
\newblock
Tridiagonal pairs of $q$-Racah type,
the double lowering operator $\psi$, and the quantum algebra 
$U_q(\mathfrak{sl}_2)$.
\newblock {\em
Linear Algebra Appl.} 445 (2014) 256--279.


\bibitem{carter}
R.~Carter.
\newblock  
{\em Lie algebras of finite and affine type}.
\newblock Cambridge Studies in Advanced Mathematics 96.
Cambridge University Press, 2005.


\bibitem{cp3}
V.~Chari and A.~Pressley.
\newblock Quantum affine algebras.
\newblock{\em 
Commun. Math. Phys.}
142 (1991) 261--283.

\bibitem{damiani}
I.~Damiani.
\newblock
A basis of type Poincare-Birkoff-Witt for the quantum
algebra of $\widehat{\mathfrak{sl}}_2$.
\newblock {\em
J. Algebra} 161 (1993) 291--310.






\bibitem{irt}
T.~Ito, H.~Rosengren, P.~Terwilliger. 
\newblock
Evaluation modules for the $q$-tetrahedron algebra.
\newblock{\em
Linear Algebra Appl.}  451 (2014) 107--168;
\newblock
{\tt arXiv:1308.3480}.

\bibitem{TD00}
T.~Ito, K.~Tanabe, P.~Terwilliger.
\newblock Some algebra related to ${P}$- and ${Q}$-polynomial association
 schemes,  in:
 \newblock {\em Codes and Association Schemes (Piscataway NJ, 1999)}, Amer.
  Math. Soc., Providence RI, 2001, pp.
        167--192;
    {\tt arXiv:math.CO/0406556}.





\bibitem{shape}
T.~Ito, P.~Terwilliger. 
\newblock
The shape of a tridiagonal pair.
\newblock{\em
Journal of Pure and Applied Algebra} 188 (2004) 145--160.


\bibitem{uqsl2hat}
T.~Ito and P.~Terwilliger.
\newblock
Tridiagonal pairs and the quantum affine algebra
$U_q(\widehat {\mathfrak {sl}}_2)$. 
\newblock
{\em
Ramanujan J.} 13 (2007) 39--62;
\newblock
{\tt arXiv:math/0310042}.

\bibitem{nonnil}
T.~Ito and P.~Terwilliger.
\newblock
Two non-nilpotent linear transformations that satisfy the cubic $q$-Serre 
relations.
\newblock {\em
J. Algebra Appl.}  6 (2007) 477--503;
\newblock
{\tt arXiv:math/0508398}.

\bibitem{qtet}
T.~Ito and P.~Terwilliger.
\newblock
The $q$-tetrahedron algebra and its finite-dimensional irreducible modules.
\newblock
{\em Comm. Algebra}  35 (2007) 3415--3439;
\newblock
{\tt arXiv:math/0602199}.

\bibitem{qRacahIT}
T.~Ito and P.~Terwilliger.
\newblock
Tridiagonal pairs of $q$-Racah type.
\newblock{\em 
J. Algebra} 322 (2009) 68--93.

\bibitem{ITaug}
T.~Ito and P.~Terwilliger.
\newblock
The augmented tridiagonal algebra.
\newblock{\em 
Kyushu J. Math.} 64 (2010) 81--144. 




\bibitem{kolb}
S.~Kolb.
\newblock
Quantum symmetric Kac-Moody pairs.
\newblock {\em
Adv. Math.} 267 (2014) 395-469.

\bibitem{lusztig}
G.~Lusztig.
\newblock
{\em
Introduction to Quantum Groups}.
\newblock
Progress in Mathematics, Vol. 110, Birkhauser, Boston, 1993.




\bibitem{miki}
K.~Miki.
\newblock
Finite dimensional modules for the $q$-tetrahedron algebra.
\newblock {\em
Osaka J. Math}. 47 (2010) 559--589. 

\bibitem{tersub3}
P.~Terwilliger.
\newblock The subconstituent algebra of an association scheme III.
\newblock{\em
J. Algebraic Combin. }
2  (1993) 177--210.

\bibitem{LS99}
P.~Terwilliger.
\newblock Two linear transformations each tridiagonal with respect to an
 eigenbasis of the other.
  \newblock {\em Linear Algebra Appl.}   330 (2001) 149--203;
    {\tt arXiv:math.RA/0406555}.




\bibitem{TwoRel}
P.~Terwilliger.
\newblock
Two relations that generalize the $q$-Serre relations and the
Dolan-Grady relations.
\newblock{\em
Physics and combinatorics 1999 (Nagoya), 377-398},
\newblock
World Sci. Publ., River Edge, NJ, 2001.





\bibitem{uaw}
P.~Terwilliger.
\newblock
The universal Askey-Wilson algebra. 
\newblock
{\em
SIGMA Symmetry Integrability Geom. Methods Appl.}
7 (2011), Paper 069, 24 pp.






 \end{thebibliography}
\end{document}